\setlist[enumerate,1]{label=(\roman*),font=\itshape}
\crefname{equation}{}{}
\crefname{enumi}{}{}
\DeclarePairedDelimiter\ceil{\lceil}{\rceil}
\DeclarePairedDelimiter\floor{\lfloor}{\rfloor}
\numberwithin{equation}{section}
\theoremstyle{definition}
\crefname{question}{Question}{Questions}
\newtheorem{definition}{Definition}[section]
\crefname{definition}{Definition}{Definitions}
\crefname{ex}{Example}{Examples}
\theoremstyle{plain}
\newtheorem{thm}[definition]{Theorem}
\crefname{thm}{Theorem}{Theorems}
\newtheorem{conj}{Conjecture}[section]
\crefname{conj}{Conjecture}{Conjectures}
\newtheorem{prob}[conj]{Problem}
\crefname{prob}{Problem}{Problems}
\newtheorem{lemma}[definition]{Lemma}
\crefname{lemma}{Lemma}{Lemmas}
\newtheorem{cor}[definition]{Corollary}
\crefname{cor}{Corollary}{Corollaries}
\newtheorem{prop}[definition]{Proposition}
\crefname{prop}{Proposition}{Propositions}
\newtheorem{obs}[definition]{Observation}
\crefname{obs}{Observation}{Observations}
\newtheorem{claim}[definition]{Claim}
\crefname{claim}{Claim}{Claims}
\newtheorem{step}{Step}
\crefname{step}{Step}{Steps}
\crefname{property}{Property}{Property}
\newtheorem*{clm*}{Claim}
\theoremstyle{remark}
\newcommand{\oldqed}{}
\newcommand{\qedClaim}{\hfill$\blacksquare$}
\newenvironment{claimproof}[1][Proof of Claim]{
        \renewcommand{\oldqed}{\qedsymbol}
	\renewcommand{\qedsymbol}{\qedClaim}
	\begin{proof}[\underline{#1}]
}{
	\end{proof}
	\renewcommand{\qedsymbol}{\oldqed}
} 
\newcommand{\Pro}[1]{\mathbb{P} \left[#1\right]}
\newcommand{\Exp}[1]{\mathbb{E} \left[ #1 \right]}
\def\EE{\mathbb E}
\def\PP{\mathbb P}
\newcommand{\str}[1]{\ifmmode\text{\sout{\ensuremath{#1}}}\else\sout{#1}\fi}
\newcommand{\R}{\mathbb{R}}
\newcommand{\Z}{\mathbb{Z}}
\newcommand{\N}{\mathbb{N}}
\newcommand{\1}{\mathbbm{1}}
\def\NN{\mathbb N}
\newcommand{\e}{\varepsilon}
\newcommand{\eps}{\varepsilon}
\newcommand{\cF}{\mathcal{F}}
\newcommand{\cM}{\mathcal{M}}
\newcommand{\cT}{\mathcal{T}}
\newcommand{\cU}{\mathcal{U}}
\newcommand{\cV}{\mathcal{V}}
\newcommand{\pzc}[1]{\mathcal{#1}}
\newcommand{\uu}{\underline{u}}
\newcommand{\uw}{\underline{w}}
\newcommand{\uv}{\underline{v}}
\newcommand{\logb}[1]{\log\left(#1\right)}
\newcommand{\noe}{\overline{e}}
\newcommand{\card}[1]{\left| #1 \right|}
\DeclareMathOperator{\tr}{Tr}
\DeclareMathOperator{\rg}{rg}
\newsavebox{\abstractbox}
\renewenvironment{abstract}
{\begin{lrbox}{0}\begin{minipage}{\textwidth}
 \begin{center} \vspace{6mm}\normalfont\sectfont\abstractname\end{center}\quotation}
 {\endquotation\end{minipage}\end{lrbox}%
 \global\setbox\abstractbox=\box0 }
\author{Peter Allen 
\and Julia B\"{o}ttcher  
\and Jan Corsten 
\and Ewan Davies 
\and Matthew Jenssen\footnote{Research supported by a UK Research and Innovation Future Leaders Fellowship MR/W007320/1. } 
\and Patrick Morris\footnote{Research supported by a Deutsche Forschungsgemeinschaft (DFG, German Research
Foundation) Walter Benjamin Fellowship (Project Number: 504502205).} \and Barnaby Roberts
\and Jozef Skokan}
\date{\today}
\title{\LARGE A robust Corr\'adi--Hajnal Theorem}
\begin{document}

\begin{abstract}
\vspace{-4mm}
    For a graph $G$ and $p\in[0,1]$,  we denote by   $G_p$  the random sparsification of $G$ obtained by keeping each edge of $G$ independently, with probability $p$.  We show that there exists a $C>0$ such that if $p\geq C(\log n)^{1/3}n^{-2/3}$ and $G$ is an $n$-vertex graph with $n\in 3\N$ and $\delta(G)\geq \tfrac{2n}{3}$, then with high probability $G_p$ contains a triangle factor. Both the minimum degree condition and the probability condition, up to the choice of $C$, are tight. Our result can be viewed as a common strengthening of the seminal theorems of Corr\'adi and Hajnal, which deals with the extremal minimum degree condition for containing triangle factors (corresponding to $p=1$ in our result), and Johansson, Kahn and Vu, which deals with the threshold for the appearance of a triangle factor in $G(n,p)$ (corresponding to $G=K_n$ in our result). It also implies a lower bound on the number of triangle factors in graphs with minimum degree at least $\tfrac{2n}{3}$ which gets close to the truth.
    \vspace{6mm}
\end{abstract}

\maketitle

\section{Introduction} \label{sec:intro}

As a natural generalisation of perfect matchings in graphs, triangle factors
are a fundamental object in graph theory with a wealth of results studying their
appearance.  Here, a \emph{triangle factor} in a graph~$G$ is a collection of
vertex-disjoint triangles which completely cover the vertex set of $G$.  Note
that for a graph $G$ to contain a triangle factor, the number of vertices of $G$
must be divisible by 3. In extremal graph theory, a fundamental result is the
well-known theorem of Corr\'adi and Hajnal~\cite{Corradi1963}, which determines
the smallest \emph{minimum degree}~$\delta(G)$ guaranteeing the existence of a
triangle factor.

\begin{thm}[Corr\'{a}di, Hajnal~\cite{Corradi1963}]\label{thm:CorradiHajnal}
  Any $n$-vertex graph~$G$ with $n\in 3 \N$ and $\delta(G) \geq \tfrac{2n}{3}$
  contains a triangle factor.
\end{thm}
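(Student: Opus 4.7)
The plan is the classical extremal/exchange argument. Assume for contradiction that $G$ has no triangle factor, and let $\cT=\{T_1,\dots,T_k\}$ be a maximum collection of vertex-disjoint triangles in $G$, so $k<n/3$. Write $U=V(G)\setminus\bigcup_i V(T_i)$; then $|U|=n-3k\in 3\N$ and $|U|\geq 3$. To streamline the exchange arguments below, I would additionally choose $\cT$ among all maximum packings so as to minimise the number of edges inside $G[U]$ (a common secondary-extremal choice that prevents trivially improvable configurations).

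Since $\delta(G)\geq 2n/3$, every $u\in U$ has at least $2n/3-|U|+1$ neighbours in the covered set $V(G)\setminus U$. Fix any triple $\{u_1,u_2,u_3\}\subseteq U$ and consider the bipartite graph between this triple and the $k$ triangles of $\cT$. A straightforward averaging argument shows that some triangle $T^*\in\cT$ meets $\{u_1,u_2,u_3\}$ in at least six of the nine possible edges (for $|U|=3$ this is immediate from the total edge count $\geq 2n-6$ spread over $k=(n-3)/3$ triangles; for larger $|U|$ one picks the triple $\{u_1,u_2,u_3\}$ more carefully using the fact that the covered set shrinks).

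Given this dense $3\times 3$ bipartite configuration on $V(T^*)\cup\{u_1,u_2,u_3\}$, the goal is to partition these six vertices into two vertex-disjoint triangles; replacing $T^*$ by these two then contradicts the maximality of $\cT$. The analysis splits according to whether $\{u_1,u_2,u_3\}$ spans an edge of $G$. If some $u_iu_j\in E(G)$, a short finite case check on the bipartite graph with at most three missing edges exhibits a triangle $\{u_i,u_j,x\}$ with $x\in V(T^*)$, together with a triangle on the three remaining vertices. If on the other hand $\{u_1,u_2,u_3\}$ is independent, one triangle of $\cT$ cannot absorb the triple; instead I would locate a second triangle $T^{**}\in\cT$ that also sends many edges to $\{u_1,u_2,u_3\}$ (which exists because $U$ being independent forces every $u_i$ to send \emph{all} of its $\geq 2n/3$ neighbours into the covered set), and attempt a three-for-two swap covering $V(T^*)\cup V(T^{**})\cup\{u_1,u_2,u_3\}$ by three disjoint triangles.

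The main obstacle will be this independent-$U$ case: proving that the dense tripartite structure on nine vertices always admits three vertex-disjoint triangles covering all of them. I expect this to require either a careful case analysis over the few missing-edge patterns across $V(T^*)$, $V(T^{**})$ and $\{u_1,u_2,u_3\}$, or a leveraging of the secondary-extremal choice: any failure of the three-for-two swap should be arranged to permit an alternative swap that keeps $|\cT|$ maximum while strictly decreasing $e(G[U])$, contradicting the minimality of that quantity.
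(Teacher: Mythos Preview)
The paper does not prove this theorem; it is quoted with a citation to Corr\'adi and Hajnal's 1963 paper as classical background. (The paper's own main result does recover it for large $n$ by setting $p=1$, but through an entirely different and far longer route via regularity and entropy, not via an exchange argument.)

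Your outline follows the classical extremal/exchange approach, which is the right genre, but it stops short of a proof precisely at the hard step, and one of your two proposed closures for that step cannot work. In the independent-$U$ case you correctly observe that a two-for-one swap on six vertices is impossible (two triangles cannot cover three pairwise non-adjacent $u_i$) and move to a three-for-two swap on $V(T^*)\cup V(T^{**})\cup\{u_1,u_2,u_3\}$. But your secondary-extremal fallback---minimising $e(G[U])$ among maximum packings---is a dead end here: when $U$ is independent we already have $e(G[U])=0$, so no alternative swap can strictly decrease it. That leaves only the finite case analysis on the nine vertices, which you do not carry out and which is the actual substance of the theorem. In the original paper and in standard expositions one tracks, for each $u_i$ and each $T_j\in\cT$, how many of the three vertices of $T_j$ are neighbours of $u_i$, uses the degree condition (which forces the average over $j$ to strictly exceed $2$ once $U$ is independent) to locate two triangles with a specific favourable pattern, and then exhibits the repacking. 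This is a genuine piece of casework, not a short check; your proposal identifies where it sits but does not do it.

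A smaller point: even the ``edge in $U$'' case is not as short as you suggest. Replacing $T^*$ by a triangle $\{u_i,u_j,x\}$ with $x\in V(T^*)$ leaves two vertices of $T^*$ uncovered; you still need a second triangle on the remaining three vertices, and with only a bound of six edges across the $3\times 3$ bipartite graph the missing edges can be arranged so that no such pair of triangles exists. One typically needs a sharper count (five edges from the adjacent pair $u_i,u_j$ alone into some $T^*$) or an additional exchange.
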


A breakthrough by Johansson, Kahn and Vu~\cite{Johansson2008} in probabilistic
graph theory, on the other hand, established the threshold for the
\emph{binomial random graph} $G(n,p)$ to contain a triangle-factor. Here,
$G(n,p)$ is obtained by including each possible edge among~$n$ vertices
independently at random with probability $p=p(n)$, and $p^*(n)$ is a
\emph{threshold} for a graph property~$P$ if the probability that $G(n,p)$
has~$P$ tends to~$0$ as~$n$ tends to infinity whenever $p(n)/p^*(n)\to 0$ and
to~$1$ whenever $p^*(n)/p(n)\to 0$. Johansson, Kahn and Vu~\cite{Johansson2008}
showed that the threshold for the appearance of a triangle-factor is $(\log
n)^{1/3}n^{-2/3}$.

In this paper we are interested in a combination of these two results, giving a
so-called \emph{robustness version} of the Corr\'adi--Hajnal Theorem.  More
precisely, we consider graphs~$G$ satisfying a minimum degree condition and ask
for which~$p$ their \emph{random sparsification}~$G_p$, which is obtained by
keeping every edge of~$G$ independently with probability $p$, contains a
triangle-factor. Such a robustness result follows already from the sparse blow-up
lemma~\cite[Theorem~1.11]{AllenBoettcherHanKohayakawaPerson-blowup-sparse}: For
every $\gamma>0$ and $p\ge C(\frac{\log n}{n})^{1/2}$ any $n$-vertex graph~$G$
with minimum degree $\delta(G) \geq(\tfrac{2}{3}+\gamma)n$ satisfies that~$G_p$
has a triangle factor whp. Here, we say a property holds \emph{with high
probability}, abbreviated \emph{whp}, if the probability it holds tends to~$1$
as~$n$ tends to infinity.

Turning this into an exact result (in terms of the minimum degree condition)
requires more work, and moving to smaller probabilities~$p$ is substantially
harder. Here we achieve both, showing that graphs~$G$ satisfying the properties
of the Corr\'adi--Hajnal Theorem are strongly robust for triangle factors: $G_p$
retains a triangle factor all the way down to the threshold probability~$p$
for triangle factors. Hence, our result is a common strengthening of two
cornerstone theorems in extremal and probabilistic graph theory, implying that
both the minimum degree condition and the condition on the probability are tight.

\begin{thm}[main result]\label{thm:main}
  There is $C >0$ such that for all $n \in 3\N$ and $p \geq C (\log n)^{1/3}n^{-2/3}$ the following holds. If  $G$ is an $n$-vertex graph with $\delta(G) \geq \tfrac{2n}{3}$ then whp $G_p$ has a triangle factor. 
\end{thm}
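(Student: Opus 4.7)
The plan is to prove the theorem via a stability-plus-absorption strategy, combined with recent spread-distribution techniques to attain the tight Johansson--Kahn--Vu threshold. I would first fix a small $\gamma>0$ and split into two cases according to whether $G$ is $\gamma$-close (in edit distance) to one of the extremal configurations of the Corr\'adi--Hajnal theorem (essentially the balanced tripartite graph $K_{n/3,n/3,n/3}$, together with a small family of local modifications) or $\gamma$-far from all such configurations. Call these the \emph{extremal} and \emph{robust} cases.

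In the robust case the min-degree condition $\delta(G)\ge 2n/3$ together with the distance from extremality provides enough structural slack for an absorption argument. I would first locate a small absorber $A\subseteq V(G)$ with $|A|=o(n)$ and $|A|\in 3\NN$ such that $G_p[A\cup T]$ contains a triangle factor for every balanced $T\subseteq V(G)\setminus A$ with $|T|$ at most a fixed fraction of $|A|$. Such $A$ exists by a standard random construction, provided every pair of vertices has many constant-size absorbing gadgets in $G$; because the gadgets are of bounded size and $p$ is polynomial in $n^{-1}$, a typical choice of gadgets survives in $G_p$. Given $A$, it remains to find an almost-triangle factor of $G_p[V(G)\setminus A]$ covering all but $o(|A|)$ vertices, which are then absorbed. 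The almost-factor step is where the tight threshold must be earned: I would use a spread-distribution argument in the spirit of Frankston--Kahn--Narayanan--Park (and its quantitative refinements by Pham--Sah--Sawhney--Simkin) applied to the uniform measure on triangle factors of $G[V(G)\setminus A]$. Establishing that this measure is $O\bigl((\log n)^{1/3}n^{-2/3}\bigr)$-spread would then deliver an almost-factor in $G_p$ whp.

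In the extremal case I would treat each near-extremal configuration by a direct argument. For $G$ close to $K_{n/3,n/3,n/3}$ there is an approximate tripartition $(X_1,X_2,X_3)$; finding a triangle factor essentially reduces to producing a consistent system of perfect matchings in the three bipartite random graphs $G_p[X_i,X_j]$, which can be done by verifying Hall-type conditions at the JKV threshold, with particular care for the $o(n)$ vertices whose neighbourhoods do not respect the tripartition. A few related extremal constructions (for example those built from unbalanced tripartitions and local perturbations) would need analogous ad hoc treatment.

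The main obstacle is the spread argument in the robust case. Proving sufficient spreadness of the measure on triangle factors of a general host graph $G$ with $\delta(G)\ge 2n/3$ requires uniform lower bounds on the number of triangle factors in arbitrary large subgraphs of $G$, matched to the Johansson--Kahn--Vu counts up to constants. This extends the entropy-based counting of JKV from $G(n,p)$ to the setting of arbitrary dense hosts, and must be robust enough that the stability dichotomy captures exactly those graphs where such spreadness can fail. I expect the bulk of the technical work to lie in establishing this spread estimate and in certifying that the extremal case analysis covers every configuration not handled by the spread method.
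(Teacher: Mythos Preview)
Your plan diverges substantially from the paper's approach, and it contains a genuine gap in the extremal case.

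The paper does not use absorption or spread distributions at all. Its main technical theorem (\cref{thm:main-super-reg}) shows that for any $(\eps,d^+)$-super-regular balanced tripartite graph $\Gamma$, the random sparsification $\Gamma_p$ whp contains a triangle factor at the JKV threshold. This is proved by an entropy-based \emph{counting} argument: one lower-bounds the number of ordered partial triangle factors $|\Psi^t(\Gamma_p)|$ for all $t\le n$, extending the factor one triangle at a time. The key device is a Local Distribution Lemma showing that in a uniformly random partial factor the triangle at a typical vertex is nearly uniformly distributed over its triangle neighbourhood; this is established via entropy and a switching argument comparing vertices through their common $\Gamma$-neighbourhood. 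The reduction of \cref{thm:main} to the super-regular tripartite case then proceeds via the Regularity Lemma and a three-way case split (no large sparse set, one, or two), together with a stability result for fractional Hajnal--Szemer\'edi to balance cluster sizes.

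Your main gap is the sentence ``finding a triangle factor essentially reduces to producing a consistent system of perfect matchings in the three bipartite random graphs $G_p[X_i,X_j]$, which can be done by verifying Hall-type conditions at the JKV threshold''. A consistent system of matchings \emph{is} a triangle factor, i.e.\ a $3$-dimensional matching; Hall-type conditions govern $2$-dimensional matchings, whose threshold is $\Theta((\log n)/n)$, not $p^*_3$. Proving that $\Gamma_p$ has a triangle factor when $\Gamma$ is (near-)complete tripartite at $p\approx (\log n)^{1/3}n^{-2/3}$ is precisely the Johansson--Kahn--Vu theorem, and extending it to super-regular $\Gamma$ is the paper's core contribution, occupying Sections~3--6. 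So the case you label easy is in fact where essentially all the work lies.

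There is also a circularity concern in your robust case. Establishing $O(p^*_3)$-spread for the uniform measure on triangle factors of an arbitrary host with $\delta(G)\ge 2n/3$ requires, for every small edge set $S$, sharp lower bounds on the number of triangle factors of $G$ avoiding (or containing) $S$. Such bounds are essentially what the paper derives as a \emph{consequence} of its method (\cref{cor:triangle factor count}); you would need an independent route to them. Finally, the claim that bounded-size absorbing gadgets ``survive in $G_p$'' at $p\approx n^{-2/3}$ needs care: a gadget with $k$ edges survives with probability $p^k$, so you need $n^{\Omega(k)}$ gadgets per vertex pair and a union bound over $\Theta(n)$ absorption events, which is delicate exactly at the threshold and is one reason the paper avoids absorption entirely.
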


Our proof of \cref{thm:main} builds on an alternative proof of the threshold for
triangle factors in $G(n,p)$ due to Kohayakawa and a subset of the
authors~\cite{Allen2020+}. This proof in turn shares some of the key ideas with
that of Johansson, Kahn and Vu~\cite{Johansson2008} (as well as
\cite{Kahn2019,kahn2020hitting}), in particular the use of entropy, but follows
a different scheme of `building' our triangle factor one triangle at a
time. This scheme provides the opportunity for us to strengthen the proof to
deal with incomplete graphs~$G$. We defer a detailed discussion of our proof to
\cref{sec:proofoverview}.

As a corollary to \cref{thm:main}, we can provide a lower bound on the number of
triangle factors in every graph $G$ with $\delta(G)\geq \tfrac{2n}{3}$.

\begin{cor} \label{cor:triangle factor count}
There is $c>0$ such that any graph~$G$ on~$n\in 3\N$ vertices with $\delta(G)\geq \tfrac{2n}{3}$ contains at least 
\[\bigg(\frac{c n}{(\log n)^{1/2}}\bigg)^{2n/3}\]
triangle factors. 
\end{cor}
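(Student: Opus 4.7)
The plan is to deduce \cref{cor:triangle factor count} from \cref{thm:main} by a direct first-moment argument applied to the random sparsification $G_p$. Fix $C$ as in \cref{thm:main} and set $p = C(\log n)^{1/3}n^{-2/3}$, exactly at the threshold. Let $\mathcal{T}$ denote the set of triangle factors of $G$, and observe that each $F \in \mathcal{T}$ consists of $n/3$ vertex-disjoint triangles and hence uses exactly $n$ edges. If $X = \sum_{F \in \mathcal{T}} \mathbf{1}[F \subseteq G_p]$ counts the triangle factors of $G$ that survive in $G_p$, then independence of the edge events gives
\[
\mathbb{E}[X] = |\mathcal{T}| \cdot p^n.
\]

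Every triangle factor of $G_p$ is in particular a triangle factor of $G$ contained in $G_p$, so the event $\{X \geq 1\}$ coincides with the event that $G_p$ contains a triangle factor. By \cref{thm:main} this probability tends to $1$ as $n \to \infty$, and is therefore at least $\tfrac12$ for all sufficiently large $n$. Markov's inequality then yields
\[
\tfrac12 \leq \mathbb{P}[X \geq 1] \leq \mathbb{E}[X] = |\mathcal{T}| \cdot p^n,
\]
which after rearranging becomes
\[
|\mathcal{T}| \geq \tfrac12 p^{-n} = \tfrac12\, C^{-n}\, \frac{n^{2n/3}}{(\log n)^{n/3}}.
\]

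For any fixed $c$ with $c < C^{-3/2}$, the right-hand side exceeds $(cn/(\log n)^{1/2})^{2n/3} = c^{2n/3}\, n^{2n/3}(\log n)^{-n/3}$ once $n$ is large enough, and by shrinking $c$ further one absorbs the finitely many remaining small values of $n \in 3\mathbb{N}$ (for which \cref{thm:CorradiHajnal} already guarantees at least one triangle factor). No real obstacle arises beyond bookkeeping: the key observation is simply that a triangle factor has $n$ edges rather than $n/3$, which is precisely what makes the exponent $-n$ on $p$ match the target exponent $2n/3$ on $n$ when $p$ sits at the triangle-factor threshold.
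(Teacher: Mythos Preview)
Your proof is correct and follows essentially the same argument as the paper: apply \cref{thm:main} at the threshold probability, observe that each triangle factor survives with probability $p^n$, and use Markov's inequality (equivalently, $\Pro{X\ge 1}\le \Exp{X}$) to lower-bound $|\mathcal{T}|$ by $\tfrac12 p^{-n}$. Your write-up is slightly more explicit about the edge count and the treatment of small $n$, but the method is identical.
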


\cref{cor:triangle factor count} follows easily from \cref{thm:main} by
considering the expected number of triangle factors in $G_p$ and the fact that
each triangle factor survives in $G_p$ with probability $p^n$. Indeed, for a
graph~$F$ let $T(F)$ denote the number of triangle factors
in~$F$. \cref{thm:main} implies $\Pro{T(G_p)\ge 1}\ge\frac12$ for $p \geq C
(\log n)^{1/3}n^{-2/3}$, for~$G$ as in \cref{cor:triangle factor count}, and
for~$n$ sufficiently large. Since further $\Exp{T(G_p)}=T(G)\cdot p^n$ we get
\[
\frac12\le\Pro{T(G_p)\ge 1}\le\Exp{T(G_p)}=T(G)\cdot  \bigg(C\frac{(\log n)^{1/3}}{n^{2/3}}\bigg)^n\,,
\]
implying \cref{cor:triangle factor count} for~$c$ sufficiently small.

To our knowledge, \cref{cor:triangle factor count} is the first of its kind and
it gets close to the truth. Indeed, letting $n\in 3 \N$ and $H=G(n,q)$ be the
binomial random graph with $q=\tfrac{2}{3}+o(1)$, we have that whp $H$ has
minimum degree at least $\tfrac{2n}{3}$ and the expected number of triangle
factors in $H$ is
\[\frac{q^nn!}{\left(n/3\right)!6^{n/3}}=\left((1+o(1))\frac{2}{e(\sqrt{3})^3}n\right)^{2n/3}.\]
It is believable that every graph as in \cref{cor:triangle factor count} has at
least this many triangle factors. As a first step, removing the $(\log n)^{1/2}$
from the expression in \cref{cor:triangle factor count} poses an interesting
open problem.

\paragraph{\textbf{Related work: Hamiltonicity.}}

To put our work into context, let us briefly discuss robustness results with
respect to another graph property, where these types of questions have been
explored extensively. A \emph{Hamilton cycle} in a graph $G$ is a cycle covering
all the vertices of $G$ and a graph that contains a Hamilton cycle is said to
\emph{Hamiltonian}. The classical extremal theorem of Dirac~\cite{Dirac1952}
states that any $n$-vertex graph $G$ with $\delta(G)\geq \tfrac n2$ is
Hamiltonian. The idea that graphs satisfying Dirac's condition are robustly
Hamiltonian in some sense, has been around for some time, with various measures
of robustness being proposed. For example, S\'ark\"ozy, Selkow and Szemer\'edi
\cite{sarkozy2003number} showed that there is $c>0$ such that any
$n$-vertex graph $G$ with $\delta(G)\geq \tfrac n2$ contains at least $c^nn!\geq
(c^2n)^n$ Hamilton cycles. This is tight up to the value of $c$ and the authors
of~\cite{sarkozy2003number} conjectured that one can in fact take
$c=\tfrac12-o(1)$, which was settled by Cuckler and
Kahn~\cite{cuckler2009hamiltonian}. This value of~$c$ is best possible, as can
be seen by considering a $G(n,p)$ with $p=\tfrac12+o(1)$.

Having a large number of Hamilton cycles is compelling evidence for such graphs
being robustly Hamiltonian but this property alone does not preclude the
possibility that these Hamilton cycles are somehow concentrated on a small part
of the graph, for example that many of them share a small subset of
edges. Further research has gone into proving stronger notions of robustness,
for example showing the existence of many edge-disjoint Hamilton cycles or the
existence of a Hamilton cycle when an adversary forbids the use of certain
combinations of edges (see the nice survey of
Sudakov~\cite{sudakov2017robustness} and the references therein).

An essentially optimal robustness result concerning random sparsifications of
graphs satisfying Dirac's condition was obtained by Krivelevich, Lee and
Sudakov~\cite{Krivelevich2014}, who proved that for any $n$-vertex graph~$G$
with $\delta(G) \geq \tfrac n2$, whp $G_p$ is Hamiltonian when $p \geq C
(\log n)/n$ for sufficiently large~$C$. For comparison, as proved by
Kor{\v{s}}unov~\cite{Korshunov1976} and P\'osa~\cite{Posa1976} the threshold for
$G(n,p)$ to be Hamiltonian is also $(\log n)/n$.

The robustness given by the theorem of Krivelevich, Lee and
Sudakov~\cite{Krivelevich2014} is relatively strong in that is can easily be
used to infer other notions of robustness. For example, as every Hamilton cycle
in a graph $G$ survives in $G_p$ with probability $p^n$, by considering the
expected number of Hamilton cycles in $G_p$ analogously to our derivation of \cref{cor:triangle factor count}, we can conclude that any graph $G$
with $\delta(G)\geq \tfrac n2$ has at least $(\tfrac{cn}{\log n})^n$ Hamilton
cycles for some $c>0$, which is only slightly weaker than the aforementioned
results counting Hamilton cycles. One can also obtain many edge-disjoint
Hamilton cycles by considering a random partition of the edges of $G$.

Several further results have built on the idea of using random sparsifications
to give robustness, such as those of Johansson~\cite{johansson2020hamilton} and
Alon and Krivelevich~\cite{alon2019hitting} concerning `hitting times'. Other
graph properties, such as the existence of long paths and cycles or perfect
matchings, have also been investigated in the random sparsification setting; we
refer again to the survey~\cite{sudakov2017robustness} for details.

\paragraph{\textbf{Additional note.}} Since this paper was first submitted and a preprint posted online,  Pham, Sah, Sawhney and Simkin \cite{pham2022toolkit} have provided a general method for proving robust threshold results. Their approach uses spread measures and the pioneering result of Frankston, Kahn, Narayanan and Park \cite{frankston2019thresholds} which allows one to upper bound thresholds in terms of how \emph{spread} the graph property is. In the context of clique factors, they could use their methods to prove an analogue to Theorem~\ref{thm:main} for $K_k$-factors for all $k\geq 3$ and also to answer Problem \ref{prob:no of factors} from our concluding remarks in the affirmative, establishing a lower bound on the number of clique factors in graphs above the extremal threshold.  In particular, they provide an alternative proof of Theorem \ref{thm:main} and  remove the $\log$ factor in Corollary \ref{cor:triangle factor count}. Their proof follows the same general scheme as ours in first reducing to a partite super-regular setting which we give here as our main technical theorem, Theorem~\ref{thm:main-super-reg}. The reduction is very similar to ours given here and indeed they use some of the tools we develop here including a stability version for the fractional Hajnal-Szemer\'edi theorem (Theorem~\ref{thm:HSzFrac} of this paper).  It is in the proof of  Theorem \ref{thm:main-super-reg},  that our approaches  diverge completely. As previously mentioned, they use the recent breakthrough result \cite{frankston2019thresholds} on thresholds which reduces the problem to finding an appropriate spread measure. In order to get the correct $\log$ factor in the robust threshold, they also need to transition to finding perfect matchings in random hypergraphs, by using coupling results of Riordan \cite{Riordan2018}. On the other hand, our approach to Theorem \ref{thm:main-super-reg}  is based on entropy, builds on the original proof of Johansson, Kahn and Vu \cite{Johansson2008} for the threshold of clique factors and is self-contained. Whilst the proof of Pham, Sah, Sawhney and Simkin is more succinct and generalises immediately to other settings, we believe that both proof methods develop exciting new ideas and have great potential to be used in further work. 

\paragraph{\textbf{Organisation.}}

The remainder of the paper is organised as follows.
In \cref{sec:prelim} we collect some basic definitions and a variety of tools that we shall need for our proof. In particular, we discuss large matchings of cliques in \cref{sec:det-partial}, mention concentration inequalities we use in \cref{sec:probtools}, introduce what we need from the regularity method in \cref{sec:regularity}, and list useful facts about entropy in \cref{sec:entropybasics}.

In \cref{sec:proofoverview} we explain that the main instrument for proving \cref{thm:main} is a result on triangle factors in random sparsifications of super-regular tripartite graphs, \cref{thm:main-super-reg}. We then give an overview of the proof of this main technical theorem, state the main propositions and lemmas needed for this and show how these imply \cref{thm:main-super-reg}. More precisely, we shall formulate one proposition, \cref{prop:almost-factor}, allowing us to count certain partial triangle factors, one proposition, \cref{prop:full-step}, allowing us to extend a partial triangle factor by one triangle, and a key lemma, which we call the Local Distribution Lemma  (\cref{lem:LDL}). After this, we provide some results on triangle counts in \cref{sec:smallsubgraphs}, which will be useful in the proofs of our propositions. In \cref{sec:otherlemmas}, we prove \cref{prop:almost-factor} and \cref{prop:full-step}, using \cref{lem:LDL} as a black box.
In \cref{sec:LDL}, we show \cref{lem:LDL}. An important ingredient of this proof is a lemma which we call the  Entropy Lemma (\cref{lem:entropy lemma}).

This  will complete the proof of the main technical theorem, Theorem~\ref{thm:main-super-reg}, and it will remain to deduce \cref{thm:main} from Theorem~\ref{thm:main-super-reg}. Before embarking on this, we need to build some more theory. We begin in \cref{sec:HSzFrac} by  providing a stability statement of a fractional version of the Hajnal--Szemer\'edi theorem, which may be of independent interest. Next, in \cref{sec:triangle matchings}, we  derive a sequence of probabilistic lemmas which imply the existence of~$K_3$-matchings in various random sparsification settings. 
In \cref{sec:reduction}, finally, we show  how
\cref{thm:main-super-reg} implies \cref{thm:main}. 
The basic approach we use is a combination of the regularity method with an analysis of the extremal cases, as is common in the area.

Finally in \cref{sec:conclude} we provide some concluding remarks.

\subsection*{Acknowledgements}

The authors would like to thank Yoshiharu Kohayakawa for insightful discussions leading to 
this project. Additionally, the sixth author would like to thank Michael
Anastos, Shagnik Das, and Jie Han for enlightening conversations about this paper
and related results. Finally, we thank the anonymous reviewer for their suggestions. 

\section{Preliminaries}
\label{sec:prelim}

Here we collect the notation we will use and provide some of the necessary definitions and tools.

\subsection{Notation} \label{sec:notation}

\paragraph{\textbf{Basics:}}
We use $[n]_0$ to denote $[n]\cup \{0\}$. For $0\leq t \leq n\in\N$, we define $n!_t$ to be the number of ways to select a list of $t$ distinct numbers from $[n]$. That is, $n!_0\coloneq 
1$ and for $1\leq t \leq n$, we have \[n!_t\coloneq\frac{n!}{(n-t)!}=n\cdot (n-1)\cdots (n-t+1).\]
We use the notation $x=y \pm z$ to denote that $x\leq y+z$ and $x \geq y-z$. Throughout we use~$\log$ to denote the natural (base~$e$) logarithm function. Finally, we drop ceilings and floors unless necessary, so as not to clutter the arguments. 

\paragraph{\textbf{Constants:}}
 At times we will define  constant hierarchies within proofs, writing  statements such as the following: Choose constants \begin{equation} \label{eq:constant-hierarchy-example}
0 < c_1 \ll c_2 \ll \ldots \ll c_\ell  \ll d.
\end{equation} 
This should be taken to mean that given some constant $d$ (given by the statement we aim to prove), one can choose all the remaining constants (the $c_i$) from right to left so that all the subsequent constraints are satisfied. That is, there exist increasing functions $f_i$ for $i\in [\ell+1]$ such that whenever $c_{i}\leq f_{i+1}(c_{i+1})$ for all $i\in[\ell-1]$ and $c_{\ell}\leq f_{\ell+1}(d)$, all constraints on these constants that are in the proof, are satisfied.

\paragraph*{\textbf{Neighbourhoods and degrees:}}
Given a graph~$G$, a vertex~$v \in V(G)$ and a set~$U \subseteq V(G)$, we define the \emph{neighbourhood} of~$v$ in~$U$ as~$N_G(v;U) \coloneq \{u \in U: uv \in E(G)\}$. If~$U = V(G)$, we simply write~$N_G(v)$ and if~$G$ is clear from context we drop the subscript.
If two vertices~$u_1,u_2 \in V(G)$ are given, then~$N_G(u_1,u_2) \coloneq N_G(u_1) \cap N_G(u_2)$ denotes the \emph{common neighbourhood} of~$u_1$ and~$u_2$. We will also use this notation for an edge~$e=u_1u_2$, taking that~$N_G(e)=N_G(u_1,u_2)$. 
Similarly, if~$S\subset V(G)$ is some subset of vertices,~$N_G(S)\coloneq\cap_{u\in S}N_G(u)$ denotes the common neighbourhood of the vertices in~$S$ and if~$\uu=(u_1,\ldots,u_\ell)$ is a tuple of vertices (an ordered set),~$N_G(\uu)\coloneq\cap_{j\in[\ell]}N_G(u_j)$ denotes the common neighbourhood of the set of vertices in~$\uu$. 
The parameters~$N_G(u_1,u_2;U)$,~$N_G(S;U)$ and~$N_G(\uu;U)$ 
are all defined analogously as the sets of common neighbours that lie in~$U$.
We follow the convention that~$N_G(\emptyset) = V(G)$. 
We also define \emph{degrees}~$\deg_G(u)=|N_G(u)|$ with~$\deg_G(u;U), \deg_G(S),$ $ \deg_G(S;U),$ $\deg_G(\uu)$ and~$\deg_G(\uu;U)$ 
defined analogously. Again, if the graph~$G$ is clear from the context then we drop the subscripts. Finally, we let $\delta(G):=\min_{u\in V(G)}\deg_G(u)$ denote the minimum degree of the graph $G$ and $\Delta(G):=\max_{u\in V(G)}\deg_G(u)$ the maximum degree.

\paragraph*{\textbf{Edge subsets as subgraphs:}}
Sometimes, given a graph~$G$ and a subset of edges~$E'\subseteq E(G)$, we will think of~$E'$ as the subgraph~$H_{E'}\coloneq(V(E'), E')$ of~$G$, where~$V(E')$ is the set of vertices that lie in edges in~$E'$. We then  use notation like 
$\delta(E') \coloneq \delta(H_{E'})$ and~$\deg_{E'}(v) \coloneq \deg_{H_{E'}}(v)$. Furthermore, for a vertex set~$A\subset V(G)$,~$E'[A]$ denotes the edges induced by~$H_{E'}$ on~$A$. That is,~$E'[A]:=\{e\in E':e\subset A\}$.

\paragraph*{\textbf{Triangles and cliques:}}
 For a graph~$G$ and~$r\in \NN$,~$r\geq 2$, we define~$K_r(G)$ to be the set of copies of~$K_r$ in~$G$.  For example,~$K_2(G)=E(G)$.  Given a set of~$r$-cliques~$\Sigma\subseteq K_r(G)$, we use the notation~$V(\Sigma)$ to denote all vertices that feature in cliques in~$\Sigma$, i.e.~$V(\Sigma):=\cup_{S\in\Sigma}S$. 
 For~$u\in V(G)$ we let~$K_r(G,u)\subseteq K_r(G)$ denote the subset of cliques containing~$u$.
 
 Now for a vertex~$v\in V(G)$, we let~$\tr_v(G)$ denote the \emph{triangle neighbourhood of~$v$}: the set of edges in~$E(G)$ that form a triangle with~$v$ in~$G$. That is,~$\tr_v(G)=\{e\in E(G):v\in N_G(e)\}$.  Note that~$K_3(G,u)=\{f\cup\{u\}:f\in \tr_u(G)\}$.

\paragraph*{\textbf{Matchings and factors:}}
For $r
\geq 2$, a~\emph{$K_r$-matching} in~$G$ is a collection of vertex-disjoint copies of~$K_r$ in~$G$. The \emph{size} of a~$K_r$-matching is the number of  vertex-disjoint copies of~$K_r$ in the collection.  Note that when~$r=2$ is a single edge, a~$K_r$-matching is simply a matching and when~$r=3$, we will also refer to a~$K_3$-matching as a \emph{triangle matching}. If a~$K_r$-matching covers the vertex set of~$G$ (implying that~$n\in r\mathbb{N}$), then we refer to the~$K_r$-matching as a~\emph{$K_r$-factor} in~$G$. Thus, when~$r=2$, a~$K_2$-factor is a perfect matching and when~$r=3$, we also refer to a~$K_3$-factor as a \emph{triangle factor}. At times, we will refer to a~$K_r$-matching as a \emph{partial~$K_r$-factor}. Although these two terms refer to the same objects, we reserve the use of partial factors for when there is an aim for the partial~$K_r$-factor/$K_r$-matching to  contribute to a full~$K_r$-factor.

\paragraph*{\textbf{Vertex sets and tuples in tripartite graphs:}}
For a large part of our proof, we will be concerned with the host graph being a balanced tripartite graph. In such a setting, we will take as convention that the disjoint vertex sets that form the tripartition are labelled~$V^1, V^2$ and~$V^3$ and are each of size~$n$. 
It will be useful for us to considered ordered tuples of vertices from these vertex sets. We therefore fix~$\pzc{V}\coloneq\{\emptyset\} \cup V^1 \cup (V^1 \times V^2) \cup (V^1\times V^2\times V^3)$.
That is, an element~$\uu\in \pzc{V}$ is a vector of some \emph{length}~$0\leq \ell(\uu)\leq  3$ such that for each~$i\leq \ell(\uu)$, we have that~$\uu$ contains exactly one vertex from~$V^i$.

\paragraph*{\textbf{Vertex sets with elements removed:}}
Given a graph~$G$, a collection of vertices~$u_1,\ldots,u_\ell \in V(G)$ and a subset of vertices~$W\subseteq V(G)$, we use the notation~$W_{\hat u_1,\ldots,\hat u_\ell}$ to denote the subset~$W$ with the~$u_i$ removed. That is, \begin{equation*} \label{eq:adjusted set}
W_{\hat u_1,\ldots,\hat u_\ell}\coloneq W\setminus (W\cap \{u_1,\ldots,u_\ell\}).
\end{equation*}
Note that we do not impose that  the~$u_i$ need lie in~$W$. We remark that we add a hat on the removed vertices~$u_i$ in this notation to distinguish it from similar notation (see below)  where vertices appear in subscripts without hats, signalling that these vertices are used for certain purposes.

To ease notation, we will sometimes group together some of the collection of vertices we wish to omit, as an ordered tuple. For example, if
$\uu=(u_1,\ldots,u_\ell) \in \pzc{V}$ for some~$\ell\in[3]_0$ as above, we define~$W_{\hat \uu}\coloneq W_{\hat u_1,\ldots,\hat u_\ell}$.

\paragraph*{\textbf{Partial triangle factors in tripartite graphs:}}
We will be concerned with embedding partial triangle factors in a given host tripartite graph. For~$t \in [n]_0$, we therefore define~$D_t$ to be the graph on vertex set~$[t]\times [3]$, whose edge set consists of the edges~$\{\{(s,i),(s,j)\}:  s\in [t], i\neq j \in [3]\}$. Thus~$D_t$ simply consists of~$t$ labelled vertex-disjoint triangles.

Given a graph~$G$ on a fixed vertex partition~$V^1 \cup V^2 \cup V^3$ as above, we define~$\Psi^t(G)$ to be the collection of labelled embeddings of~$D_t$ into~$G$, that map~$[t]\times\{i\}$ to a subset of~$V^i$ for~$i\in [3]$.
We will be interested in embeddings that fix certain vertices to be isolated.
Given~$\uu=(u_1,\ldots,u_\ell) \in \pzc{V}$ of length~$\ell \leq 3$ as above and~$t\in [n-1]$, we define~$\Psi_{\hat{\uu}}^t(G) \subseteq \Psi^t(G)$ to be those~$\psi \in \Psi^t(G)$ for which~$\psi((s,i)) \neq u_i$ for all~$i \in [\ell]$ and~$s\in [t]$. That is, we fix the~$\ell$  vertices in~$\uu$ to be isolated in the embedding of~$D_t$.

We remark that if~$\uu=\emptyset$, then~$\Psi_{\hat{\uu}}^t(G) = \Psi^t(G)$ and also note that for an arbitrary~$\uu\in \pzc{V}$ one has that~$\Psi_{\hat{\uu}}^t(G)=\Psi^t(G_{\hat \uu})$ where~$G_{\hat \uu}$ is considered as a tripartite graph on partition~$V^1_{\hat \uu}\cup V^2_{\hat \uu}\cup V^3_{\hat \uu}$. 

Finally, given a vertex~$v \in V^1$, we denote by~$\Psi_{v}^t(G) \subseteq \Psi^t(G)$ the set of embeddings~$\psi \in \Psi^t(G)$ for which~$\psi((1,1)) = v$.

\paragraph*{\textbf{Induced subgraphs:}}
For a graph~$G = (V,E)$ and some~$U \subseteq V$, we define~$G[U]$ to be the subgraph of~$G$ \emph{induced} by~$U$, that is~$V(G[U]) = U$ and~$E(G[U]) = \{e \in E: e \subset U\}$.
Similarly, given disjoint subsets~$U_1, \ldots, U_k \subset V$, we define~$G[U_1, \ldots, U_k]$ to be the~$k$-partite subgraph of~$G$ induced by~$U_1, \ldots, U_k$, that is~$V(G[U]) = U_1 \cup \ldots \cup U_k$ and
\[E(G[U_1, \ldots, U_k]) = \{e \in E: e \subset U_1 \cup \ldots \cup U_k \text{ and } |e\cap U_i| \leq 1 \text{ for all } i \in[k]\}.\]
Given a graph~$G$ and a collection of vertices~$u_1,\ldots, u_\ell$, we consider the graph induced after removing the~$u_i$, by defining the shorthand~$G_{\hat u_1,\ldots,\hat u_k}\coloneq G[V_{\hat u_1,\ldots,\hat u_k}]$, where~$V=V(G)$. For a tuple of vertices~$\uu$, the graph~$G_{\hat \uu}$ is defined analogously.

\subsection{$K_k$-matchings in dense graphs}
\label{sec:det-partial}
The Hajnal--Szemer\'{e}di Theorem \cite{Hajnal1970} states that any graph with maximum degree $\Delta$ has an \emph{equitable colouring} with $\Delta+1$ colours, that is, a colouring where the colour classes differ in size by at most one. Applying this to the complement of $G$, which has maximum degree $n-1-\delta(G)$, we find a collection of $n-\delta(G)$ vertex-disjoint cliques in $G$ whose sizes differ by at most one and that cover $V(G)$. We will make use of the following corollary, which we obtain from the fact that when $\delta(G)=(\tfrac{k-1}{k} - x) n$ for some $0\le x<1$, then the Hajnal--Szemer\'{e}di Theorem provides us with  $(\frac1k+x)n$ vertex-disjoint cliques. If $0<x<\tfrac{1}{k(k-1)}$, some of these cliques, say $\alpha$, are of size~$k$, and the others are of size~$k-1$, hence we have $n=\alpha k +((\frac1k+x)n-\alpha)(k-1)=\alpha+\frac{n}{k}(1+kx)(k-1)$. Solving this for~$\alpha$ gives the following result.

\begin{thm}[Hajnal, Szemer\'edi~\cite{Hajnal1970}]\label{thm:HajSze-factor}
Let~$n,k \geq 2$ be integers and let~$0 \leq x <1$. Suppose that~$G$ is an~$n$-vertex graph with~$\delta(G) \geq \big(\tfrac{k-1}{k} - x\big) n$.
Then~$G$ contains a~$K_k$-matching of size at least~$(1-(k-1)kx) \lfloor \tfrac{n}{k} \rfloor$.
\end{thm}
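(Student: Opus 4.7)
The plan is to apply the Hajnal--Szemer\'edi theorem (in its original equitable-colouring formulation) to the complement graph $\bar{G}$. The minimum degree hypothesis translates to $\Delta(\bar{G}) = n - 1 - \delta(G) \leq (\tfrac{1}{k} + x)n - 1$, so Hajnal--Szemer\'edi delivers an equitable colouring of $\bar{G}$ with at most $r \coloneq \lceil (\tfrac{1}{k} + x)n \rceil$ colour classes. Each class is an independent set in $\bar{G}$, hence a clique in $G$, and by equitability the cliques have sizes pairwise differing by at most one.

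Next, I would split on $x$. If $x \geq \tfrac{1}{k(k-1)}$ then $(1 - k(k-1)x)\lfloor n/k \rfloor \leq 0$ and there is nothing to prove, so it suffices to handle $0 \leq x < \tfrac{1}{k(k-1)}$. In that range $\tfrac{n}{k} \leq r < \tfrac{n}{k-1}$, which forces every clique in the partition to have size either $k-1$ or $k$, reducing the problem to counting the cliques of size exactly $k$.

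Finally, let $\alpha$ denote that count. A simple vertex count gives $n = \alpha k + (r-\alpha)(k-1) = \alpha + r(k-1)$, and rearranging together with $r \leq (\tfrac{1}{k}+x)n$ yields $\alpha \geq \tfrac{n}{k}(1 - k(k-1)x) \geq (1 - k(k-1)x)\lfloor \tfrac{n}{k}\rfloor$. These $\alpha$ cliques form the desired $K_k$-matching. There is essentially no obstacle here: the entire argument is the complementation trick plus the one-line computation already sketched in the paragraph immediately preceding the statement, with a preliminary case split to absorb the (otherwise trivial) regime $x \geq \tfrac{1}{k(k-1)}$.
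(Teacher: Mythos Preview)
Your proposal is correct and follows precisely the paper's own derivation (given in the paragraph immediately preceding the theorem): complement, apply Hajnal--Szemer\'edi to obtain an equitable clique partition, dispose of the trivial range $x \ge \tfrac{1}{k(k-1)}$, and solve the vertex count for the number $\alpha$ of size-$k$ cliques. One small notational slip: you define $r$ as $\lceil(\tfrac{1}{k}+x)n\rceil$ but then invoke $r \le (\tfrac{1}{k}+x)n$ in the final line; taking $r$ to be the actual number $n-\delta(G)$ of colour classes, as the paper does, avoids this inconsistency.
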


This statement is often used in extremal graph theory, and in particular the case $x=0$, which gives the best possible minimum degree condition for containing a  $K_k$-factor.

\subsection{Concentration Inequalities}\label{sec:probtools}
We will frequently use the following concentration inequalities for random variables. The first such inequality, Chernoff's inequality \cite{Chernoff1952} (see also \cite[Corollary 2.3]{Janson2011}), deals with the case of binomial random variables.  

\begin{thm}[Chernoff's concentration inequality] \label{thm:chernoff}
Let~$X$ be the sum of a set of mutually  independent Bernoulli random variables and let~$\lambda=\EE[X]$. Then for any~$0<\delta<\tfrac{3}{2}$, we have that 
\[\PP[X\geq (1+\delta)\lambda]\leq  e^{-\delta^2\lambda/3 } \hspace{2mm} \mbox{ and } \hspace{2mm} \PP[X\leq (1-\delta)\lambda] \leq  e^{-\delta^2\lambda/2 }.\]
\end{thm}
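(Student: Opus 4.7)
The standard route for a Chernoff-type bound is Bernstein's exponential moment method, and that is what I would follow. Write $X = \sum_i X_i$ with $X_i \sim \mathrm{Ber}(p_i)$ mutually independent, so that $\lambda = \sum_i p_i$. For any $t > 0$, Markov's inequality applied to the non-negative random variable $e^{tX}$ gives
\[
\PP[X \geq (1+\delta)\lambda] \;=\; \PP\bigl[e^{tX} \geq e^{t(1+\delta)\lambda}\bigr] \;\leq\; e^{-t(1+\delta)\lambda}\,\EE[e^{tX}].
\]
By independence, $\EE[e^{tX}] = \prod_i \EE[e^{tX_i}]$, and for each Bernoulli factor
\[
\EE[e^{tX_i}] \;=\; 1 + p_i(e^t-1) \;\leq\; \exp\bigl(p_i(e^t-1)\bigr),
\]
using $1+x \leq e^x$. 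Multiplying these bounds yields $\EE[e^{tX}] \leq \exp\bigl(\lambda(e^t-1)\bigr)$.

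The next step is to optimise the free parameter $t$. Choosing $t = \log(1+\delta) > 0$ (valid since $\delta > 0$) gives
\[
\PP[X \geq (1+\delta)\lambda] \;\leq\; \left(\frac{e^{\delta}}{(1+\delta)^{1+\delta}}\right)^{\!\lambda}.
\]
To obtain the clean form claimed in the statement, I would then verify the analytic inequality
\[
\frac{e^{\delta}}{(1+\delta)^{1+\delta}} \;\leq\; e^{-\delta^2/3} \qquad \text{for } 0 < \delta < \tfrac{3}{2},
\]
or equivalently $f(\delta) \coloneq \delta - (1+\delta)\log(1+\delta) + \delta^2/3 \leq 0$ on this range. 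Since $f(0) = 0$, $f'(\delta) = -\log(1+\delta) + 2\delta/3$ and $f''(\delta) = -1/(1+\delta) + 2/3$, which is negative for $\delta < 1/2$ and positive thereafter, a short analysis of the sign of $f'$ (which vanishes at $0$ and at a unique point in $(0,3/2)$) shows that $f$ stays non-positive throughout $[0,3/2)$. Combining this with the previous display yields the upper tail bound.

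For the lower tail I would repeat the argument with $t < 0$: apply Markov to $e^{-|t|X}$, obtain $\PP[X \leq (1-\delta)\lambda] \leq \exp\bigl(\lambda(e^{-s}-1) + s(1-\delta)\lambda\bigr)$ with $s = -t > 0$, optimise at $s = -\log(1-\delta)$ to reach $\bigl(e^{-\delta}/(1-\delta)^{1-\delta}\bigr)^{\lambda}$, and then verify the stronger analytic inequality
\[
\frac{e^{-\delta}}{(1-\delta)^{1-\delta}} \;\leq\; e^{-\delta^2/2}
\]
for $0 < \delta < 1$ (which is where the denominator in the exponent improves from $3$ to $2$), by an entirely analogous two-derivative calculation.

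I do not expect a genuine obstacle here: the exponential moment method is mechanical and the two analytic inequalities for $f$ and its lower-tail analogue are standard. The only place requiring mild care is checking the range of validity of these inequalities, in particular making sure the upper tail bound indeed extends up to $\delta < 3/2$ as stated (and not just $\delta \leq 1$), which is why I would carry out the second-derivative sign analysis of $f$ explicitly rather than relying on a crude Taylor estimate.
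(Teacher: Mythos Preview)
The paper does not prove this statement: Theorem~\ref{thm:chernoff} is simply quoted as a standard concentration inequality with a reference to Chernoff~\cite{Chernoff1952} and \cite[Corollary~2.3]{Janson2011}, and is then used as a black box throughout. So there is no ``paper's own proof'' to compare against.

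That said, your proposal is the standard and correct derivation. The exponential moment method gives the Chernoff bound $\bigl(e^{\delta}/(1+\delta)^{1+\delta}\bigr)^{\lambda}$ for the upper tail and $\bigl(e^{-\delta}/(1-\delta)^{1-\delta}\bigr)^{\lambda}$ for the lower tail, and the two elementary calculus verifications you outline do establish the clean exponents $\delta^2/3$ and $\delta^2/2$ on the stated ranges. Your sign analysis of $f''$ and the resulting shape of $f$ is accurate; one only needs to note at the end that $f(3/2)=9/4-(5/2)\log(5/2)<0$, so $f\le 0$ on all of $[0,3/2]$ and in particular on $(0,3/2)$. The lower-tail auxiliary function is even simpler since its second derivative is negative throughout $(0,1)$. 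There is no gap.
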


Recall that given a graph $G$ and some $p \in [0,1]$, we denote by $G_p$ the random subgraph of $G$ with $V(G_p) = V(G)$ in which every edge of $G$ is present independently with probability $p$. Given a subgraph $F \subset E(G)$ of $G$ (given by its edge set), we denote by $I_{F}$ the indicator random variable which is $1$ if $F$ is present in $G_p$ and $0$ otherwise.
Chernoff's inequality is particularly useful to give sharp bounds on random variables of the form $X = \sum_{F \in \cF} I_{F}$, where $\cF \subset 2^{E(G)}$ is a collection of edge-disjoint subgraphs of $G$.

However, when $\cF$ consists of not-necessarily edge disjoint subgraphs of $G$, the situation becomes more complicated.
Janson's inequality \cite{Janson1990} (see also \cite[Theorem 2.14]{Janson2011}) provides a bound for the lower tail in this case.

\begin{lemma}[Janson's concentration inequality]\label{lem:janson}
Let $G$ be a graph and $\cF \subset 2^{E(G)}$ be a collection of subgraphs of $G$ and let $p \in [0,1]$. Let $X = \sum_{F \in \cF} I_{F}$, let $\lambda = \Exp X$ and let
\[\bar \Delta = \sum_{(F,F') \in \cF^2: \ F \cap F' \not = \emptyset} \Exp{I_F I_{F'}}.\]
Then, for every $ \e \in (0,1)$, we have
\[\Pro{X \leq (1-\e)\lambda} \leq \exp \left(- \frac{\e^2\lambda^2}{2 \bar \Delta} \right).\]
\end{lemma}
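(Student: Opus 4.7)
The plan is to apply the Chernoff exponential-moment method. Writing $X = \sum_{F \in \cF} I_F$, Markov's inequality applied to $e^{-tX}$ yields, for any $t > 0$,
\[\Pro{X \le (1-\e)\lambda} \le e^{t(1-\e)\lambda}\cdot \EE\bigl[e^{-tX}\bigr].\]
The task then reduces to establishing the moment generating function bound $\log\EE[e^{-tX}] \le -t\lambda + t^2 \bar{\Delta}/2$ for $t \in [0,1]$, after which the choice $t = \e\lambda/\bar{\Delta}$ — legitimate because the diagonal pairs $(F,F)$ already contribute $\lambda$ to $\bar{\Delta}$, so $t \le \e \le 1$ — yields the claimed tail bound $\exp(-\e^2\lambda^2/(2\bar{\Delta}))$.

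To establish the MGF bound, I would set $\psi(t) := \log \EE[e^{-tX}]$, observe $\psi(0) = 0$ and $\psi'(0) = -\lambda$, and aim to show $\psi''(t) \le \bar{\Delta}$ uniformly in $t \in [0,1]$; two integrations then finish the job via Taylor's theorem. A direct calculation gives
\[\psi''(t) \;=\; \mathrm{Var}_t(X) \;=\; \sum_{F,F'\in\cF}\mathrm{Cov}_t(I_F, I_{F'}),\]
where $\mathrm{Var}_t,\mathrm{Cov}_t$ denote variance and covariance under the tilted probability measure with density $e^{-tX}/\EE[e^{-tX}]$. One then splits the sum by intersection pattern. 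For pairs $(F,F')$ with $F \cap F' \ne \emptyset$, the crude bound $\mathrm{Cov}_t(I_F, I_{F'}) \le \EE_t[I_F I_{F'}] \le \EE[I_F I_{F'}]$ — the last step being Harris--FKG applied to the increasing variable $I_F I_{F'}$ and the decreasing tilt $e^{-tX}$ — sums to at most $\bar{\Delta}$. For edge-disjoint pairs $F \cap F' = \emptyset$, the indicators $I_F, I_{F'}$ are independent under the original product measure and one expects $\mathrm{Cov}_t(I_F, I_{F'}) \le 0$ by a further Harris--FKG argument exploiting that the tilt suppresses joint occurrences.

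The main obstacle is precisely this last FKG-style argument: formalising that the tilted measure induces the correct non-positive correlation between $I_F$ and $I_{F'}$ for edge-disjoint $F,F'$ is delicate, because the tilt $e^{-tX}$ couples all the $I_{F''}$ simultaneously and prevents any naive factorisation across edge-disjoint subgraphs. A cleaner alternative, bypassing differentiation entirely, is to write $e^{-tX} = \prod_{F\in\cF}(1 - s I_F)$ with $s = 1 - e^{-t} \in (0,1)$, expand the product as an alternating sum over subsets of $\cF$, and invoke a direct correlation inequality in the spirit of Janson's original treatment of $\Pro{X=0}$. In that approach the bound on $\EE[e^{-tX}]$ falls out after collecting first-order terms in $s \approx t$ into $-t\lambda$ and bounding the second-order cross-terms by exactly $\bar{\Delta}$, since only pairs $(F,F')$ with $F \cap F' \ne \emptyset$ generate non-trivial dependent contributions.
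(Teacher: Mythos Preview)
The paper does not prove this lemma---it is quoted from the literature---so there is no in-paper proof to compare against. Your overall framework (exponential Markov, then $\psi(t) \le -t\lambda + t^2\bar\Delta/2$, then $t = \e\lambda/\bar\Delta \le 1$) is correct and would give the stated bound if the MGF estimate held. The gap is exactly where you locate it: establishing $\mathrm{Cov}_t(I_F,I_{F'}) \le 0$ for edge-disjoint $F,F'$ under the tilted measure. Harris--FKG yields \emph{positive} correlation of increasing functions under log-supermodular measures; it does not produce negative-correlation statements of the type you need, and the tilted measure is in any case not log-supermodular (since $X$ is supermodular in the edge indicators, so $e^{-tX}$ is log-\emph{sub}modular). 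Your product-expansion alternative is not developed enough to fill the hole---the alternating sum over subsets of $\cF$ has no evident cancellation structure beyond what is already captured by the covariance decomposition.

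The standard argument in the cited references avoids the tilted measure entirely by working with $\psi'$ rather than $\psi''$. One writes $-\psi'(t) = \sum_F \EE_t[I_F]$, conditions on $\{I_F=1\}$, applies Harris--FKG \emph{on the original product measure} to the two decreasing functions $e^{-tX_F}$ and $e^{-t(X-X_F)}$ (where $X_F = \sum_{F':F'\cap F\neq\emptyset} I_{F'}$), uses independence of $I_F$ and $X - X_F$, and then Jensen, to obtain $\EE_t[I_F] \ge \EE[I_F]\,e^{-t\,\EE[X_F\mid I_F=1]}$. A second application of Jensen (over $F$, weighted by $\EE[I_F]/\lambda$) gives $-\psi'(t) \ge \lambda e^{-t\bar\Delta/\lambda}$; integrating once and optimising in $t$ yields the bound. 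The key point is that FKG is only ever invoked on the product measure, where it is immediate.
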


If we additionally require a bound for the upper tail, we will use the Kim--Vu inequality \cite{Kim2000} (see also \cite[Theorem 7.8.1]{Alon2015}).
 Let $X = \sum_{F \in \cF} I_{F}$ as above. Given an edge $e \in E(G)$, we write~$t_e$ for $I_{\{e\}}$. With this we can write $X$ as a polynomial with variables $t_e$:
 \[X = \sum_{F \in \cF} \prod_{e \in F} t_e.\]
 Given some $A \subset E(G)$, we obtain $X_A$ from $X$ by deleting all summands corresponding to $F \in \cF$ which do not contain $A$ and replacing every $t_e$ with $e \in A$ by $1$.
 That is,
 \[X_A = \sum_{F \in \cF: \ A \subseteq F} \prod_{e \in F \setminus A} t_e.\]
 In other words, $X_A$ is the number of $F \in \cF$ that contain $A$ and are present in $G_p\cup A$.
 \begin{lemma}[Kim--Vu polynomial concentration]\label{lem:Kim--Vu}
 For every $k \in \N$, there is a constant $c = c(k) >0$ such that the following is true. Let $G$ be a graph and $\cF \subset 2^{E(G)}$ be a collection of subgraphs of $G$, each with at most $k$ edges. Let $X=\sum_{F \in \cF} I_F$ as above and $\lambda\coloneq \Exp{X}$. For $i \in [k]$, define $E_i \coloneq \max \{\Exp{X_A}: \ A \subset E(G), \ |A| = i\}$. Further define $E' \coloneq \max_{i \in [k]} E_i$ and $E = \max \{\lambda,E'\}$.
 Then, for every $\mu >1$, we have
   \[ \Pro{|X - \lambda| > c \cdot (EE')^{1/2} \mu^k} \leq c\cdot e(G)^{k-1} e^{-\mu}.\]
 \end{lemma}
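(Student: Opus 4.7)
The plan is to prove this by induction on the degree $k$ of the polynomial $X = \sum_{F \in \cF} \prod_{e \in F} t_e$. The base case $k=1$ is immediate: $X$ is then a weighted sum of independent Bernoulli indicators, so Chernoff's inequality (\cref{thm:chernoff}) gives the stated concentration with room to spare, since in this case $E_0 = \lambda$ and $E_1 = \max_e c_e$.

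For the inductive step, I would reveal the variables $t_e$ one at a time in a fixed order $e_1, \ldots, e_{e(G)}$ and form the Doob martingale $Y_j = \Exp{X \mid t_{e_1}, \ldots, t_{e_j}}$. The key algebraic identity underpinning the argument is that the formal partial derivative $\partial X / \partial t_e$ equals $X_{\{e\}}$, and iterating, the mixed partial derivative with respect to $\{t_e : e \in A\}$ equals $X_A$. Consequently the martingale increment $|Y_j - Y_{j-1}|$ is controlled, in expectation, by $\Exp{X_{\{e_j\}}}$, and its higher conditional moments by sums involving $X_A$ for $|A| \geq 2$. Since each $X_A$ is itself a polynomial in the remaining $t_e$'s of degree $k-|A| < k$ with expectation at most $E_{|A|}$, the inductive hypothesis applied to each $X_A$ yields tail bounds, and hence bounds on the conditional variance and higher conditional moments of the martingale.

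These moment bounds would then be combined via exponential moment generating functions (equivalently, a Freedman- or Bernstein-type martingale inequality). The factor $\mu^k$ in the final deviation arises because each level of the induction costs a multiplicative factor of order $\mu$ in the tolerated deviation, while the prefactor $e(G)^{k-1}$ comes from union-bounding over the subsets $A \subseteq E(G)$ that enter through the recursion.

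The main obstacle will be the bookkeeping: ensuring that the accumulated failure probabilities from applying the inductive hypothesis to every relevant $X_A$, multiplied by the number of such $A$, still fit inside the target $e^{-\mu}$, while the deviation $(EE')^{1/2} \mu^k$ absorbs all accumulated constants $c(k)$. Extracting the factor $(EE')^{1/2}$, rather than simply $E$, further requires a careful symmetric treatment of the centred polynomial $X - \Exp{X}$ against its partial derivatives $X_A$, essentially using that both $\lambda$ (controlling the ``global'' expectation) and the maximal $E_i$ (controlling the ``local'' influence of small edge sets) contribute geometrically to the typical fluctuation.
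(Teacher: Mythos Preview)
The paper does not prove this lemma at all: it is stated in the preliminaries (Section~\ref{sec:probtools}) as a known tool and attributed to Kim--Vu~\cite{Kim2000}, with a textbook reference to~\cite[Theorem~7.8.1]{Alon2015}. So there is no ``paper's own proof'' to compare against; the authors simply quote the inequality and move on.

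Your sketch is, at the level you have written it, the shape of the original Kim--Vu argument: induction on the degree, control of the polynomial's partial derivatives $X_A$, and a martingale/moment analysis to push the bound through. That is the correct high-level picture. However, as written it is only an outline and not a proof, and the ``main obstacle'' you flag --- the bookkeeping that produces exactly $(EE')^{1/2}\mu^k$ with the prefactor $c\cdot e(G)^{k-1}$ --- is genuinely the entire content of the result. If you actually wanted to include a proof, you would need to carry out that bookkeeping (or, more sensibly, just cite Kim--Vu as the paper does). In particular, your description of the base case is slightly off: when $k=1$ the relevant quantities are $E=\lambda$ and $E'=E_1$, and one needs the multiplicative Chernoff bound in the form that yields deviation $\sqrt{\lambda E_1}\,\mu$, which is not quite ``\cref{thm:chernoff} with room to spare'' without a line of calculation.
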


 Finally we will need a basic concentration result for the \emph{hypergeometric distribution}: A random variable~$X$ is \emph{hypergeometrically distributed} with parameters~$N \in \N$ and ~$K,t \in [N]_0$  if for all~$k\in [K]_0$,~$\Pro{X=k}$ is the probability that when drawing~$t$ balls from a set of~$N$ balls ($K$ of which are blue and~$N-K$ red) without replacement, exactly~$k$ are blue. That is,
\[
\Pro{X=k} = \frac{\binom{K}{k} \binom{N-K}{t-k}}{\binom{N}{t}}.
\]
We will use the following concentration inequality, which Chv\'{a}tal~\cite{Chvatal1979} deduced from  Hoeffding's inequality~\cite{Hoeffding1963}, see also~\cite{skala2013hypergeometric}.

\begin{lemma}\label{lem:conc-hypergeometric}
	Let~$X$ be hypergeometrically distributed with parameters~$N \in \N$,~$K \in [N]_0$ and~$t \in[N]_0$ and let~$\lambda \coloneq \Exp{X} = \tfrac{tK}{N}$. Then, for all~$\e > 0$, we have
	\[
	\Pro{|X-\lambda| > \e \lambda} \leq 2 e^{-2\e^2 (K/N)  \lambda}.
	\]
\end{lemma}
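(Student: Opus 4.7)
The plan is to derive this bound from Hoeffding's inequality in the form it takes for sampling without replacement, essentially following Chv\'{a}tal's original argument. First I would represent $X$ as a sum of Bernoulli indicators: imagine drawing the $t$ balls in some fixed order and let $X_i$ be the indicator that the $i$-th drawn ball is blue, so $X = \sum_{i=1}^t X_i$ and $\Exp{X_i} = K/N$ for every $i$. These indicators are not independent, but they are an exchangeable sequence arising from sampling without replacement from the $0/1$-population with exactly $K$ ones and $N-K$ zeros.

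The key classical input is Hoeffding's observation that the moment generating function of a sum obtained by sampling $t$ values without replacement from a finite bounded population is dominated, for every real $s$, by the moment generating function of the corresponding sum of i.i.d.\ samples with replacement. Assuming this, the next step is the usual Chernoff argument: write
\[
\Pro{X \geq (1+\e)\lambda} \leq e^{-s(1+\e)\lambda}\,\Exp{e^{sX}}
\]
for $s > 0$, replace $\Exp{e^{sX}}$ by the with-replacement MGF $(1-p+pe^s)^t$ with $p = K/N$, and optimise over $s$. This produces the Kullback--Leibler bound
\[
\Pro{X \geq (p+\delta)t} \;\leq\; \left(\Bigl(\tfrac{p}{p+\delta}\Bigr)^{p+\delta}\Bigl(\tfrac{1-p}{1-p-\delta}\Bigr)^{1-p-\delta}\right)^{t},
\]
with $\delta = \e p$. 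An elementary calculus comparison with the quadratic $2\delta^2$ (the standard Pinsker-type estimate for the binary KL divergence) bounds the right-hand side by $\exp(-2\delta^2 t)$. Substituting $\delta = \e p$ and $\lambda = pt$ yields the upper-tail bound $\exp(-2\e^2 (K/N)\lambda)$, as desired.

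For the lower tail one can either repeat the Chernoff argument with $s < 0$, or simply apply the upper-tail bound to the complementary random variable $t - X$, which is itself hypergeometric with parameters $(N,\, N-K,\, t)$ and the roles of ``blue'' and ``red'' swapped; its mean is $t - \lambda$, and a short calculation shows that the same $\e$ and the same exponent $2\e^2(K/N)\lambda$ appear (one has to be slightly careful that the factor in the exponent is $K/N$ rather than $(N-K)/N$, which is where using $\Exp{X}=\lambda$ rather than $\Exp{t-X}$ in the normalisation matters). Adding the two tail bounds gives the factor $2$ in the statement.

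The only non-routine step, and the one I would expect to take real care with, is Hoeffding's MGF domination lemma for sampling without replacement; everything else is bookkeeping around the standard exponential-moment/Chernoff recipe. Since the statement is quoted from \cite{Chvatal1979, skala2013hypergeometric}, in practice I would simply cite those references rather than reproduce the MGF comparison, and present only the short Chernoff-plus-Pinsker reduction that turns Chv\'{a}tal's $\exp(-2\delta^2 t)$ into the form $2\exp(-2\e^2(K/N)\lambda)$ used throughout the paper.
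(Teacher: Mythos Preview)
The paper does not give its own proof of this lemma: it simply states the inequality and attributes it to Chv\'atal's deduction from Hoeffding's inequality, with a pointer to~\cite{Chvatal1979,Hoeffding1963,skala2013hypergeometric}. Your proposal is precisely that derivation --- Hoeffding's MGF-domination for sampling without replacement, the Chernoff optimisation giving $\exp(-2\delta^2 t)$, and the substitution $\delta=\e K/N$ so that $2\delta^2 t=2\e^2(K/N)\lambda$ --- so you are reproducing exactly the argument the paper cites rather than diverging from it. Your treatment of the lower tail via the two-sided Hoeffding bound (rather than the complementary hypergeometric, which as you note would put $(N-K)/N$ in the exponent) is the cleaner route and matches the stated form.
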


\subsection{Regularity} \label{sec:regularity}
We will use the famous regularity lemma due to Szemer\'edi~\cite{szemeredi} which is an extremely powerful tool in modern extremal combinatorics. The lemma and its consequences appeared in the form we give here, in a survey of Koml\'os and Simonovits~\cite{Komlos1996}, which we also recommend for further details on the subject. First we introduce some necessary terminology. 
Let~$G$ be a graph and let~$A,B \subset V(G)$  be disjoint subsets of the vertices of~$G$. For non-empty sets~$X\subseteq A$,~$Y\subseteq B$, we define the \emph{density of~$G[X,Y]$} to be~$d_G(X, Y)\coloneq \tfrac{e_G(X,Y)}{|X||Y|}$.
Given~$\e > 0$, we say that a pair~$(A,B)$ is~\emph{$\e$-regular} in~$G$ if for all sets~$X \subseteq A$ and~$Y \subseteq B$ with~$|X|\geq \e |A|$ and~$|Y| \geq \e |B|$ we have~$|d_G(A,B) - d_G(X,Y)| < \e$.
We say that~$(A,B)$ is~$(\e,d)$-regular if~$(A,B)$ is~$\e$-regular and~$d_G(A,B)=d$.

Furthermore, we say~$(A,B)$ is~$(\e,d,\delta)$-\emph{super-regular} if~$(A,B)$ is~$(\e,d)$-regular and satisfies $\deg_G(v;A) \geq \delta|A|$ for all~$v\in B$ and likewise~$\deg(v;B)\geq \delta |B|$ for all~$v\in A$. We say that~$(A,B)$ is~$(\e,d)$-super-regular if it is~$(\e,d,d-\e)$-super-regular.
We say that a~$k$-tuple~$(A_1,\ldots, A_k)$ of (pairwise disjoint) subsets of~$V(G)$ is~$(\e,d)$-(super-)regular if each of the pairs~$(A_i,A_j)$ with~$i\neq j\in[k]$  is~$(\e,d)$-(super-)regular.
We call a~$k$-partite graph~$G$ with parts~$A_1,\ldots,A_k$,~\emph{$(\e,d)$-(super-)regular} if~$(A_1, \ldots, A_k)$ is an~$(\e,d)$-(super-)regular tuple in~$G$.
In the interest of brevity, we use the term   \emph{(super-)regular tuple} interchangeably to refer to the tuple of vertex sets~$(A_1,\ldots,A_k)$ and also to refer to the   (super-)regular~$k$-partite graph~$G[A_1, \ldots, A_k]$ that~$G$ induces on~$A_1 \cup \ldots \cup A_k$. Finally we say that~$(A,B)$  is~\emph{$(\eps,d^+)$-regular} if it is~$(\eps,d')$-regular for some~$d'\geq d$. Similarly, we say~$(A,B)$ is~\emph{$(\eps,d^+,\delta)$-super-regular}  if it is~$(\eps,d',\delta)$-super-regular for some~$d'\geq d$ and we say~$(A,B)$ is~$(\eps,d^+)$-super-regular  if it is~$(\eps,d',d-\eps)$-super-regular for some~$d'\geq d$.   The corresponding definitions are made analogously for regular tuples where we require the densities between all pairs involved to be at least~$d$ (and do not require these densities to be equal).  

\smallskip

We say that a partition~$V(G)=V_0\cup V_1\cup\dots\cup V_t$ is an~\emph{$\eps$-regular partition} if~$|V_0|\le\eps |V(G)|$,~$|V_1|=\cdots=|V_t|$, and for all but at most~$\eps t^2$ pairs~$(i,j)\in [t]\times [t]$, the pair~$(V_i,V_j)$ is~$\eps$-regular. We refer to the sets~$V_i$ for~$i\in [t]$  as \emph{clusters} and also use this term to refer to subsets~$V_i'\subset V_i$ for~$i\in [t]$.   We refer to~$V_0$ as the \emph{exceptional set} and the vertices in~$V_0$ are \emph{exceptional vertices}. Given an~$\eps$-regular partition and~$d\in[0,1]$, we say~$R$ is the~$(\eps,d)$-reduced graph of~$G$ (with respect to the partition) if~$V(R)=[t]$ and~$ij\in E(R)$ if and only if~$(V_i,V_j)$ is~$(\eps,d^+)$-regular. 
We will use  Szemer\'{e}di's Regularity Lemma~\cite{szemeredi} in the following form which follows easily from e.g.~\cite[Theorem 1.10]{Komlos1996}.
\begin{lemma}[Regularity Lemma]
\label{lem:reglem}
   For all~$0<\eps\le 1$ and~$m_0\in \N$
 there exists~$M_0\in \N$ such that for every~$0<d<\gamma<1$, every graph~$G$ on~$n>M_0$ vertices with minimum
 degree~$\delta(G)\ge\gamma n$ has an~$\eps$-regular partition~$V_0\cup V_1\cup\dots\cup V_m$ with~$(\eps,d)$-reduced graph~$R$ on~$m$
 vertices such that~$m_0\le m\le M_0$ and~$\delta(R)\ge(\gamma-d-2\eps)m$.
\end{lemma}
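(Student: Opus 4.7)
I would deduce the lemma from the classical Szemer\'edi Regularity Lemma~\cite{szemeredi} by a short cleanup argument to eliminate clusters incident to too many irregular pairs. Specifically, set $\eps' \coloneq \eps^2/100$ and $m_0' \coloneq \max\{2 m_0, 4/\eps\}$, and apply the standard lemma to obtain a constant $M_0 = M_0(\eps, m_0)$ and an initial partition $V_0' \cup V_1' \cup \dots \cup V_{m'}'$ with $m_0' \le m' \le M_0$, $|V_0'| \le \eps' n$, equal cluster sizes $L$, and at most $\eps'(m')^2$ non-$\eps'$-regular (and hence non-$\eps$-regular) pairs. Call a cluster $V_i'$ \emph{bad} if it belongs to at least $\eps m'/8$ non-$\eps'$-regular pairs; a double-counting shows that at most $16\eps' m'/\eps$ clusters are bad. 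Move all bad clusters into the exceptional set to form the final partition $V_0 \cup V_1 \cup \dots \cup V_m$. A routine calculation verifies the conditions $|V_0| \le \eps n$, $m \ge m_0$, equipartition, and at most $\eps m^2$ non-$\eps$-regular pairs, so the partition is $\eps$-regular in the sense of the lemma. Moreover, every surviving cluster $V_i$ satisfies $B_i < \eps m/4$, where $B_i$ denotes the number of $j \ne i$ for which $(V_i, V_j)$ is not $\eps$-regular in $G$.

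The key step is then to verify the minimum degree condition on the reduced graph $R$. Fix any cluster $V_i$ and sum $\deg_G(v) \ge \gamma n$ over $v \in V_i$; decomposing the resulting edge count according to the cluster containing the other endpoint gives
\begin{equation*}
\gamma n L \le \eps n L + L^2 + L^2 \deg_R(i) + d L^2 (m - 1 - \deg_R(i) - B_i) + L^2 B_i,
\end{equation*}
where the five summands bound respectively the edges into $V_0$ (using $|V_0|\le\eps n$), within $V_i$, into $R$-neighbours of $V_i$, into $\eps$-regular pairs of density less than $d$ (each contributing at most $d L^2$ edges, by definition of density), and into $\eps$-irregular pairs. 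Dividing by $L$, using $n/L \ge m$ (as $L m \le n$) and $n/L \le m/(1-\eps)$ (as $L m \ge (1-\eps) n$) to handle the various $n/L$ terms, and rearranging yields
\begin{equation*}
\deg_R(i) \ge (\gamma - d - \eps) m - 1 - B_i.
\end{equation*}

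The main obstacle is precisely the $B_i$ term: a priori it could be as large as $m-1$ for individual clusters, even though its average over $i$ is only of order $\eps' m$. The cleanup step above is designed to bound it uniformly: since every surviving cluster has $B_i < \eps m/4$, and since $m \ge m_0' \ge 4/\eps$ makes the additive $-1$ negligible compared with $\eps m$, the displayed inequality gives $\deg_R(i) \ge (\gamma - d - \eps) m - \eps m/4 - \eps m/4 \ge (\gamma - d - 2\eps) m$, as required.
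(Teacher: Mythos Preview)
The paper does not prove this lemma; it simply states it as a well-known consequence of Szemer\'edi's original lemma and cites \cite[Theorem~1.10]{Komlos1996}. Your proposal supplies an explicit argument, and it is the standard one: apply the classical lemma with a tighter parameter, discard clusters that lie in too many irregular pairs, and then run an edge-counting argument per cluster to lower-bound~$\deg_R(i)$. This is correct in outline and in essentially all details.

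One small arithmetic slip: after discarding the bad clusters you have $m\ge(1-16\eps'/\eps)m'\ge(1-0.16\eps)m_0'$, not $m\ge m_0'$ as you write when invoking $m\ge 4/\eps$. Since the final inequality only needs $1+B_i\le\eps m$, and your actual bounds give $1+B_i<\eps m/2$ comfortably, this is harmless; if you want the line ``$m\ge m_0'\ge 4/\eps$'' to be literally true, take $m_0'=\max\{2m_0,8/\eps\}$ instead.
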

We will further make use of the following well-known results about (super-)regular tuples. See, for example,~\cite[Facts 1.3 and 1.5]{Komlos1996}. 

\begin{lemma}[Slicing Lemma]\label{lem:reg-slicing}
Let~$0<\e <\beta,d \le 1$ and let~$(V_1,V_2)$ be an~$(\e,d)$-regular pair. Then any pair~$(U_1,U_2)$ with~$|U_i| \geq \beta |V_i|$ and~$U_i\subseteq V_i$,~$i=1,2$, is~$(\e',d')$-regular with~$\e' = \max\{ \tfrac{\e}{\beta}, 2\e \}$ and some~$d'>0$ such that~$|d' -d|\leq \e$.
\end{lemma}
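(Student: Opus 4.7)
The plan is to derive both conclusions directly from the defining property of $\eps$-regularity applied to the pair $(V_1,V_2)$, using the fact that the subsets $U_i$ are themselves large enough to witness $\eps$-regularity.

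First, I would establish the existence of $d'$. Since $|U_i| \ge \beta |V_i| > \eps|V_i|$ for $i=1,2$, the $\eps$-regularity of $(V_1,V_2)$ applied with $X=U_1, Y=U_2$ gives $|d_G(U_1,U_2)-d_G(V_1,V_2)| < \eps$. Setting $d' \coloneq d_G(U_1,U_2)$ yields $|d'-d|<\eps$ as required (and $d'>0$ is ensured by $\beta,d>0$ being bounded away from $0$ relative to $\eps$; strictly one takes $\eps$ small enough that this holds, which is implicit in $\eps<\beta,d$).

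Next, to verify $\eps'$-regularity, I would take arbitrary subsets $X \subseteq U_1$ and $Y\subseteq U_2$ with $|X|\ge \eps'|U_1|$ and $|Y|\ge \eps'|U_2|$, and show $|d_G(X,Y)-d'|<\eps'$. The key observation is that
\[
|X| \ge \eps' |U_1| \ge \tfrac{\eps}{\beta}\cdot \beta |V_1| = \eps|V_1|,
\]
and analogously $|Y|\ge \eps|V_2|$. Hence by $\eps$-regularity of $(V_1,V_2)$ we have $|d_G(X,Y)-d_G(V_1,V_2)|<\eps$. Combined with $|d'-d_G(V_1,V_2)|<\eps$ from the first step, the triangle inequality gives $|d_G(X,Y)-d'|<2\eps\le \eps'$, which is exactly the $\eps'$-regularity condition.

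There is no real obstacle here: the lemma is essentially just an unpacking of the definition, and the choice $\eps'=\max\{\eps/\beta,2\eps\}$ is calibrated precisely so that (a) the subsets $X,Y$ remain large enough in $V_1,V_2$ to be witnesses of $\eps$-regularity there, and (b) the two density-comparison errors of $\eps$ each (one for $(U_1,U_2)$ vs.\ $(V_1,V_2)$ and one for $(X,Y)$ vs.\ $(V_1,V_2)$) can be absorbed by the triangle inequality. Since the conclusion is a straightforward consequence of definitions, I would present it as a short two-line calculation after setting up notation.
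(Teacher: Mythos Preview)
Your argument is correct and is the standard textbook proof of the Slicing Lemma. The paper does not supply its own proof; it simply cites \cite[Facts 1.3 and 1.5]{Komlos1996}, so there is nothing to compare against beyond noting that your direct verification from the definition is exactly what that reference contains.
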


\begin{lemma}\label{lem:reg-degree}
  Let~$0<\e<d\le 1$ and~$(V_1, V_2)$ be an~$(\e,d)$-regular pair  and let~$X_2 \subseteq V_2$ with~$|X_2| \geq \e |V_2|$. Then all but at most~$\e |V_1|$ vertices~$v \in V_1$ satisfy~$\deg(v;X_2) \geq (d-\e)|X_2|$. Likewise, all  but at most~$\e |V_1|$ vertices~$v \in V_1$ satisfy~$\deg(v;X_2) \leq (d+\e)|X_2|$
\end{lemma}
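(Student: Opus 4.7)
The plan is a direct proof by contradiction using the definition of $\e$-regularity applied to a ``bad'' subset of $V_1$. First I would handle the lower bound. Let $Y \subseteq V_1$ consist of those vertices $v \in V_1$ with $\deg(v; X_2) < (d-\e)|X_2|$, and suppose for contradiction that $|Y| \geq \e|V_1|$. Summing the degree bound over $v \in Y$ gives
\[
e_G(Y, X_2) \;=\; \sum_{v \in Y} \deg(v; X_2) \;<\; (d-\e)\,|X_2|\,|Y|,
\]
so that $d_G(Y, X_2) < d - \e$. Since $|Y| \geq \e|V_1|$ and $|X_2| \geq \e|V_2|$ by hypothesis, the $\e$-regularity of $(V_1, V_2)$ applies to the pair $(Y, X_2)$ and yields $|d_G(Y, X_2) - d| < \e$, i.e.\ $d_G(Y, X_2) > d - \e$, a contradiction.

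The upper bound is symmetric: defining $Y' \subseteq V_1$ to be the set of vertices with $\deg(v; X_2) > (d+\e)|X_2|$ and assuming $|Y'| \geq \e|V_1|$, one gets $d_G(Y', X_2) > d + \e$ by the analogous averaging, while regularity forces $d_G(Y', X_2) < d + \e$.

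There is no real obstacle here since the statement is essentially just an unpacking of the definition of $\e$-regularity, with the set of vertices violating the average-degree bound playing the role of the witness subset forbidden by regularity. The only small point to be careful about is ensuring that the hypothesis $|X_2| \geq \e|V_2|$ (together with $|Y| \geq \e|V_1|$ derived from the contradiction assumption) is exactly the condition needed to invoke $\e$-regularity on the pair $(Y, X_2)$; this is precisely why the threshold $\e|V_1|$ appears in the conclusion.
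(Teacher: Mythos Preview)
Your proof is correct and is the standard argument for this classical fact about regular pairs. The paper does not actually supply a proof of this lemma; it simply cites it as a well-known result (referring to Facts~1.3 and~1.5 in the Koml\'os--Simonovits survey), so there is nothing to compare against beyond noting that your argument is exactly the intended one.
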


The following lemma can be proven by combining the two previous lemmas. 

\begin{lemma}\label{lem:super-reg}
Let~$k\in \NN$  and~$0<\e<d\le 1$ with~$\e \leq \tfrac{1}{2k}$. If~$Z = (V_1, \ldots, V_k)$ is an~$(\eps,d^+)$-regular tuple of disjoint vertex sets 
  of size~$n$, 
  then there are subsets~$\tilde V_1 \subseteq V_1, \ldots, \tilde V_k \subseteq V_k$ with
$|\tilde{V}_i| = \lceil (1-k\e) n \rceil$ for all~$i \in [k]$ so that the~$k$-tuple~$\tilde Z = (\tilde V_1, \ldots, \tilde V_k)$ is~$(2\e, (d-\e)^+, d - k\e)$-super-regular.
\end{lemma}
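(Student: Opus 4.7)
The plan is to obtain $\tilde V_i$ by removing from each $V_i$ a small ``exceptional'' set of vertices that have abnormally low degree into some~$V_j$, and then taking an arbitrary subset of the prescribed size. This is a standard combination of Lemmas~\ref{lem:reg-slicing} and~\ref{lem:reg-degree}.

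More concretely, for each ordered pair $(i,j)\in [k]^2$ with $i\ne j$, let $d_{ij}\ge d$ denote the density of $(V_i,V_j)$. Applying Lemma~\ref{lem:reg-degree} to this $(\eps,d_{ij})$-regular pair with $X_2=V_j$ shows that the set
\[
B_{i,j}\coloneq\{v\in V_i:\deg(v;V_j)<(d-\e)n\}
\]
has size at most $\e n$. Setting $B_i\coloneq\bigcup_{j\ne i} B_{i,j}$, we obtain $|B_i|\le (k-1)\e n$, so $|V_i\setminus B_i|\ge (1-(k-1)\e)n\ge \lceil(1-k\e)n\rceil$ (the $\e n$ slack suffices whenever $\e n\ge 1$, and the small cases can be dealt with trivially). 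I would then pick $\tilde V_i$ to be any subset of $V_i\setminus B_i$ of size exactly $\lceil(1-k\e)n\rceil$.

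For the regularity conclusion, note that the assumption $\e\le 1/(2k)$ ensures $|\tilde V_i|\ge (1-k\e)n\ge n/2$ for every $i$. Applying Lemma~\ref{lem:reg-slicing} to each pair $(V_i,V_j)$ with $\beta=1/2$ then yields that $(\tilde V_i,\tilde V_j)$ is $(2\e,d'_{ij})$-regular with $|d'_{ij}-d_{ij}|\le \e$, hence $d'_{ij}\ge d-\e$. This shows that $\tilde Z$ is $(2\e,(d-\e)^+)$-regular.

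The minimum-degree condition follows from the fact that any $v\in \tilde V_i$ avoids $B_{i,j}$ and therefore satisfies $\deg(v;V_j)\ge (d-\e)n$; since $|V_j\setminus\tilde V_j|\le k\e n$, we conclude that
\[
\deg(v;\tilde V_j)\ge (d-\e)n-k\e n \ge (d-k\e)|\tilde V_j|,
\]
where the last inequality is a routine arithmetic check using $|\tilde V_j|\le n$. The only mildly delicate point of the argument is keeping track of constants in this final inequality; the rest is a direct two-line application of Lemmas~\ref{lem:reg-degree} and~\ref{lem:reg-slicing}, so I do not expect any genuine obstacle.
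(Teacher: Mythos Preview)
Your approach is exactly what the paper intends: it gives no proof, only remarking that the lemma ``can be proven by combining the two previous lemmas'' (Lemmas~\ref{lem:reg-slicing} and~\ref{lem:reg-degree}), which is precisely what you do.

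There is one genuine slip, however, in your final ``routine arithmetic check''. You obtain $\deg(v;\tilde V_j)\ge (d-\e)n-k\e n=(d-(k+1)\e)n$, and then claim this is at least $(d-k\e)|\tilde V_j|$ using $|\tilde V_j|\le n$. But that inequality goes the wrong way: $|\tilde V_j|\le n$ gives $(d-k\e)|\tilde V_j|\le (d-k\e)n$, which is \emph{larger} than $(d-(k+1)\e)n$, not smaller. Concretely, with $k=2$, $\e=0.1$, $d=0.25$ (which satisfies all hypotheses) your bound gives $\deg(v;\tilde V_j)\ge -0.05n$, while $(d-k\e)|\tilde V_j|\approx 0.04n$. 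The argument as written therefore only delivers $(2\e,(d-\e)^+,d-(k+1)\e)$-super-regularity. This is entirely adequate for every application in the paper (where $\e\ll d$), and one recovers the exact stated constant $d-k\e$ by instead removing only $\lfloor(k-1)\e n\rfloor$ vertices from each part, i.e.\ taking $|\tilde V_i|=\lceil(1-(k-1)\e)n\rceil$; then $|V_j\setminus\tilde V_j|\le (k-1)\e n$ and $\deg(v;\tilde V_j)\ge (d-\e)n-(k-1)\e n=(d-k\e)n\ge (d-k\e)|\tilde V_j|$ goes through cleanly.
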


Our next lemma shows that any sufficiently dense pair is automatically regular. It follows directly from the definition of regularity.

\begin{lemma}\label{lem:very-dense-implies-regular}
    Let~$0<\e<1$ and ~$(V_1,V_2)$ be a pair of vertex sets such that~$\deg(v_i;V_{3-i})\geq \left(1-\eps^2\right)|V_{3-i}|$ for all~$i\in[2]$ and~$v_i\in V_i$. Then~$(V_1,V_2)$ form an~$\left(\eps,\left(1-\eps^2\right)^+\right)$-super-regular pair. 
\end{lemma}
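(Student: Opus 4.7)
The plan is to unpack the nested definitions and verify directly the three ingredients making up the conclusion. Write $d := d(V_1,V_2)$. The degree hypothesis immediately gives at least $(1-\eps^2)|V_1||V_2|$ edges between $V_1$ and $V_2$, so $d \geq 1-\eps^2$, settling the density part of $(\eps,(1-\eps^2)^+)$-super-regularity. Moreover, unpacking the definition of $(\eps,d_0^+)$-super-regularity with $d_0 = 1-\eps^2$, the super-regular degree bound that must be checked is $\deg(v_i;V_{3-i}) \geq (d_0 - \eps)|V_{3-i}| = (1-\eps^2-\eps)|V_{3-i}|$, which is trivially weaker than the hypothesis.

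It therefore remains only to establish that $(V_1,V_2)$ is $\eps$-regular. First I would fix arbitrary $X \subseteq V_1$ and $Y \subseteq V_2$ with $|X| \geq \eps|V_1|$ and $|Y| \geq \eps|V_2|$. Each $v \in X$ has at most $\eps^2|V_2|$ non-neighbours in $V_2$, and hence at most $\eps^2|V_2|$ non-neighbours in $Y$. Summing over $v \in X$, the number of non-edges between $X$ and $Y$ is at most $|X|\,\eps^2|V_2|$, which gives
\[
d(X,Y) \;\geq\; 1 - \frac{\eps^2|V_2|}{|Y|} \;\geq\; 1-\eps,
\]
using $|Y| \geq \eps|V_2|$ in the final step. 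Combining this with $d \geq 1-\eps^2 \geq 1-\eps$ and the trivial upper bounds $d, d(X,Y) \leq 1$, we conclude $|d-d(X,Y)| \leq \eps$, establishing $\eps$-regularity of the pair.

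There is no genuine obstacle here: the degree hypothesis is so strong that every sufficiently large sub-pair already has density within $\eps$ of $1$, which automatically forces the density inequality in the definition of regularity (any residual strict-versus-non-strict issue at the boundary $|Y|=\eps|V_2|$ is routine and in any case $1-d \leq \eps^2 < \eps$, so one side of the inequality is always slack). The only mild point of care is to keep the three nested notions (plain $\eps$-regular, $(\eps,d)$-super-regular, and $(\eps,d^+)$-super-regular) straight, so that the degree bound one must verify for super-regularity is indeed $(1-\eps^2-\eps)|V_{3-i}|$ rather than something involving the actual density $d$.
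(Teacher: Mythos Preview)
Your argument is correct and is exactly the direct verification from the definition that the paper intends; the paper itself omits the proof, simply stating that it ``follows directly from the definition of regularity.'' The only minor point is the strict-versus-nonstrict inequality in the definition of $\eps$-regularity, which you flag and which is indeed harmless (if $d=1$ then $d(X,Y)=1$ as well, so the boundary case never bites).
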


We will also need the following lemma which is closely related to the well-known counting lemma  and can be derived easily from the definition of~$\e$-regularity, we omit the proof here. 

\begin{lemma}\label{lem:super-reg-triangle-count}
Let~$0<\e< d_{1,2},d_{1,3},d_{2,3}\le 1$ and let~$\Gamma$ be a  
tripartite graph with parts~$V^1, V^2, V^3$ of size~$n$  such that~$(V^i,V^j)$ is~$(\e, d_{i,j})$-regular for all~$1 \leq i < j \leq 3$. 
Let~$X_i \subseteq V^i$ with~$|X_i| \geq \e n$ for all~$i \in [3]$.
Then,
\[ \card{K_3(\Gamma[X_1 \cup X_2 \cup X_3])} = d_{1,2}d_{1,3}d_{2,3} |X_1||X_2||X_3| \pm 10 \e n^3.\]
\end{lemma}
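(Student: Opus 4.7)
The plan is to carry out the standard triangle-counting argument: fix a vertex $v\in X_1$ and count edges in $\Gamma[Y_2(v), Y_3(v)]$, where $Y_i(v)\coloneq N_\Gamma(v)\cap X_i$ for $i\in\{2,3\}$. This gives
\[
|K_3(\Gamma[X_1 \cup X_2 \cup X_3])| = \sum_{v \in X_1} e_\Gamma(Y_2(v), Y_3(v)),
\]
so it suffices to show the summand is close to $d_{1,2}d_{1,3}d_{2,3}|X_2||X_3|$ on average. Applying \cref{lem:reg-degree} to the regular pair $(V^1,V^2)$ with subset $X_2$, and to $(V^1,V^3)$ with subset $X_3$ (both of size at least $\e n = \e|V^i|$), yields a ``bad'' set $B\subseteq X_1$ with $|B|\le 2\e n$ such that every good vertex $v\in X_1\setminus B$ satisfies $|Y_i(v)| = (d_{1,i}\pm\e)|X_i|$ for both $i\in\{2,3\}$.

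For each good $v$ I would distinguish two sub-cases depending on whether $Y_2(v), Y_3(v)$ are large enough to invoke $\e$-regularity of $(V^2, V^3)$. In Case~1, if both $|Y_2(v)|\ge\e n$ and $|Y_3(v)|\ge\e n$, regularity gives $e_\Gamma(Y_2(v), Y_3(v)) = (d_{2,3}\pm\e)|Y_2(v)||Y_3(v)|$, and substituting the typicality estimates on $|Y_2(v)|, |Y_3(v)|$ yields
\[
e_\Gamma(Y_2(v), Y_3(v)) = d_{1,2}d_{1,3}d_{2,3}|X_2||X_3| \pm (3\e + 3\e^2 + \e^3)|X_2||X_3|.
\]
In Case~2, if (say) $|Y_2(v)|<\e n$, then good-vertex typicality forces $(d_{1,2}-\e)|X_2|<\e n$, hence $d_{1,2}|X_2|<2\e n$. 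Consequently both $e_\Gamma(Y_2(v), Y_3(v))\le |Y_2(v)||X_3|\le \e n^2$ and the main term $d_{1,2}d_{1,3}d_{2,3}|X_2||X_3|\le 2\e n^2$, so the per-vertex discrepancy is at most $2\e n^2$.

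Summing over at most $n$ vertices in each category, Case~1 contributes at most $(3\e + 3\e^2 + \e^3)n^3$, Case~2 at most $2\e n^3$, and the $\le 2\e n$ bad vertices (each with discrepancy at most $n^2$) contribute at most $2\e n^3$; these combine to at most $10\e n^3$ once $\e$ is bounded away from $1$, the remaining regime being trivial since both sides are bounded by $n^3 \le 10 \e n^3$. The main (and essentially only) subtlety I anticipate is that the hypothesis allows densities $d_{i,j}$ as small as $\e$ and sets $X_i$ as small as $\e n$, so that a good vertex can still have $|Y_i(v)|<\e n$ and thus fail the hypothesis for $\e$-regularity of $(V^2, V^3)$; the Case~2 observation that smallness of $|Y_i(v)|$ then forces smallness of the main term itself is what bypasses this obstacle.
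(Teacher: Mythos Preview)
Your argument is correct and is exactly the standard derivation the paper alludes to when it says the lemma ``can be derived easily from the definition of $\e$-regularity'' and omits the proof. One small bookkeeping slip: as stated, \cref{lem:reg-degree} gives at most $\e n$ vertices failing the lower degree bound and separately at most $\e n$ failing the upper bound, so applying it for both $X_2$ and $X_3$ yields $|B|\le 4\e n$ rather than $2\e n$. This does not affect the outcome: the total error becomes $(4\e + 2\e + 3\e + 3\e^2 + \e^3)n^3 = (9\e + 3\e^2 + \e^3)n^3$, which is still at most $10\e n^3$ for $\e\le\tfrac{1}{10}$, and your trivial-regime observation handles $\e>\tfrac{1}{10}$.
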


Finally, the following lemma further allows us to control the exact density of a super-regular pair by deleting edges if necessary. We recall here that we say a pair $(A,B)$ of disjoint vertex sets in $(\e,d^+)$-super-regular if it is $(\e,d',d-\e)$-super-regular for some $d'\ge d$. 

\begin{lemma}\label{lem:reg-exact-density}
	For all~$0<\e<1$, there is some~$n_0 > 0$, such that the following is true for every~$n \geq n_0$ and every bipartite graph~$G$ with parts~$V_1, V_2$ of size~$n$.
	Suppose that~$(V_1,V_2)$ is~$(\e^2,d^+)$-super-regular for some~$d$ such that~$4\e\le d \le 1$ and~$dn^2\in \NN$. Then there is a spanning subgraph~$G' \subseteq G$ so that~$(V_1,V_2)$ is~$(4\e,d)$-super-regular in~$G'$.
\end{lemma}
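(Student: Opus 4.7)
The plan is a two-stage random edge-deletion argument. If we simply sampled each edge of $G$ independently with probability $d/d^\ast$ where $d^\ast\coloneq e(G)/n^2\ge d$, then a vertex just meeting the super-regularity threshold $\deg_G(v)=(d-\e^2)n$ could end up with degree well below $(d-4\e)n$ when $d^\ast\gg d$. To avoid this, we protect all edges incident to the (few) low-degree vertices and only randomise the rest. Assume $d^\ast>d$ (otherwise take $G'=G$). Let $L_i\coloneq\{v\in V_i:\deg_G(v)<(d^\ast-\e^2)n\}$ for $i\in\{1,2\}$; applying \cref{lem:reg-degree} to the $(\e^2,d^\ast)$-regular pair $(V_1,V_2)$ gives $|L_i|\le\e^2 n$. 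Let $E_L\subseteq E(G)$ be the edges incident to $L_1\cup L_2$ and $E_H\coloneq E(G)\setminus E_L$; then $|E_L|\le 2\e^2 n^2$. Construct $G'$ by keeping all of $E_L$ together with a uniformly random subset $F\subseteq E_H$ of size $t\coloneq dn^2-|E_L|$. The hypotheses $d\ge 4\e$ and $d\le d^\ast$ ensure $0\le t\le|E_H|$, so this is well-defined; deterministically $|E(G')|=dn^2$, giving the required density $d$. Setting $p\coloneq t/|E_H|$, a direct calculation using $|E_L|\le 2\e^2 n^2$ and $d^\ast\ge 4\e$ yields $|pd^\ast-d|\le\e$.

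For the minimum-degree condition, every $v\in L_1\cup L_2$ has all of its edges in $E_L$, so $\deg_{G'}(v)=\deg_G(v)\ge(d-\e^2)n\ge(d-4\e)n$ automatically. For $v\notin L_1\cup L_2$, the random contribution $|F\cap N_G(v)|$ is hypergeometrically distributed with mean $p\cdot(\deg_G(v)-\deg_G(v;L))\ge p(d^\ast-2\e^2)n$; combining $pd^\ast=d\pm\e$, the bound $2\e^2/d^\ast\le\e/2$, and \cref{lem:conc-hypergeometric} with deviation~$\e$, we obtain $\deg_{G'}(v)\ge(d-5\e/2)n\ge(d-4\e)n$ with failure probability $\exp(-\Omega(\e^2 n))$ per vertex, which a union bound over the $2n$ vertices absorbs.

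The main obstacle is verifying $4\e$-regularity, namely $|d_{G'}(X,Y)-d|<4\e$ for every pair $X\subseteq V_1$, $Y\subseteq V_2$ with $|X|,|Y|\ge 4\e n$. Let $\alpha\coloneq|X\cap L_1|/|X|$ and $\beta\coloneq|Y\cap L_2|/|Y|$, each at most $\e^2 n/(4\e n)=\e/4$, and split $e_{G'}(X,Y)=|E_L\cap(X\times Y)|+|F\cap(X\times Y)|$. The deterministic $L$-contribution satisfies $|E_L\cap(X\times Y)|\le|X\cap L_1|\cdot|Y|+|X|\cdot|Y\cap L_2|\le(\alpha+\beta)|X||Y|\le(\e/2)|X||Y|$. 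For the random part, the $(\e^2,d^\ast)$-regularity of $(V_1,V_2)$ applied to $(X\setminus L_1,\,Y\setminus L_2)$ (whose sizes exceed $(4\e-\e^2)n\ge\e^2 n$) gives $|E_H\cap(X\times Y)|=(d^\ast\pm\e^2)(1-\alpha)(1-\beta)|X||Y|$, and \cref{lem:conc-hypergeometric} with deviation $\e$ yields $|F\cap(X\times Y)|=p\cdot|E_H\cap(X\times Y)|\cdot(1\pm\e)$ with failure probability $\exp(-\Omega(\e^5 n^2))$ per pair. A union bound over the $\le 4^n$ choices of $(X,Y)$ is easily absorbed for $n\ge n_0(\e)$. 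Substituting $pd^\ast=d\pm\e$, $\alpha+\beta\le\e/2$, and carefully tracking the cross terms yields $|d_{G'}(X,Y)-d|\le 3\e+O(\e^2)$, which is strictly less than $4\e$ since the meaningful range of the lemma forces $\e\le 1/4$. Hence all three conditions hold simultaneously with positive probability, establishing the existence of the desired $G'$.
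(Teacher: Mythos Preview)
Your proof is correct and follows essentially the same approach as the paper's: identify the small set of low-degree vertices via \cref{lem:reg-degree}, protect all edges incident to them, take a uniformly random subset of the remaining edges of exactly the right size to hit density~$d$, and then verify both the minimum-degree and the $4\e$-regularity conditions by hypergeometric concentration (\cref{lem:conc-hypergeometric}) together with a union bound. The notation differs (your $L_i,E_L,E_H,d^\ast$ correspond to the paper's $Y_i,E_Y,E,d'$), and you are somewhat more explicit in tracking the error terms, but the structure of the argument is the same.
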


\begin{proof}
Let~$d'\geq d$ be the density of~$(V_1,V_2)$.	For~$i \in [2]$, let~$Y_i \coloneq \{v \in V_i: \deg(v;V_{3-i}) \leq \big(d'-\e^2\big)n\}$ and observe that by the~$\e^2$-regularity of~$(V_1,V_2)$ and \cref{lem:reg-degree}, we have~$|Y_i| \leq \e^2n$ for both~$i \in [2]$. Let~$E_Y \subset E(G)$ be the set of edges with at least one vertex in~$Y\coloneq Y_1 \cup Y_2$ and let~$E \coloneq E(G) \setminus E_Y$. Let~$m \coloneq |E_Y| \leq 2\e^2n^2$.
	Let~$p\coloneq \tfrac{d n^2 - m}{|E|} = \tfrac{d \pm 2 \e^2}{d'}$.
	Let~$E'$ be a uniformly random subset of~$E$ of size exactly~$p|E|\in \NN$ and let~$G'$ be the spanning subgraph of~$G$ with edge set~$E' \cup E_Y$. By construction, we have~$d_{G'}(V_1,V_2) = d$; we will show that~$(V_1,V_2)$ is whp~$(4\e,d,d-\e)$-super-regular in~$G'$.

	Let~$A_i \subseteq V_i$ with~$A_i \geq 4\e n$, and let~$A'_i = A_i \setminus Y_i$ and~$B_i = A_i \setminus A'_i$ for both~$i \in [2]$.
	By~$\e^2$-regularity in~$G$, we have~$Z \coloneq \card{E_{G}(A'_1,A'_2)} = (d' \pm \e^2) |A'_1||A'_2|$. Let now~$X \coloneq \card{E_{G'}(A'_1,A'_2)}$. Then~$X$ is hypergeometrically distributed with parameters~$N=|E|, K = Z, t = p|E|$ and thus~$\lambda \coloneq \Exp X = p  Z = (d \pm 2\e) |A'_1||A'_2|$.
	Since~$\lambda \geq 8\e^3n^2$, it follows from \cref{lem:conc-hypergeometric} that
	\[
	\Pro{|X-\lambda| > \e  \lambda} \leq  2e^{-2\e^2 (K/N)  \lambda} \leq 2e^{-\e^8 n^2}.
	\]
	In particular, we have~$\Pro{d_{G'}(A_1,A_2) = d \pm 4\e} \geq 1- 2e^{-\e^8 n^2}$.
	By taking a union bound over all choices of~$A_1, A_2$, we deduce that~$(V_1,V_2)$ is~$4\e$-regular with probability at least~$1- 2e^{2n-\e^8n^2}$.
	Similarly, we deduce that~$\deg_{G'}(v_i;V_{3-i}) \geq (d-\e)n$ for each~$i \in [2]$ and~$v_i \in V_i$ with probability at least~$1 - 4ne^{-\e^8n}$. Note that this is automatically true for all~$v \in Y$ as these vertices retain their neighbours from~$G$. 
	Hence, taking another union bound, it follows that~$(V_1,V_2)$ is  whp~$(4\e,d,d-\e)$-super-regular in~$G'$. Therefore, for all large enough~$n$, there is a suitable choice for~$E'$.
\end{proof}


\subsection{Entropy}\label{sec:entropybasics}

In this section we explain basic definitions and properties related to the entropy function, which will play a central r\^ole in our proof. 
We will be following  the notes of Galvin~\cite{Galvin2014} and all proofs we do not include here can be found or follow immediately from the results there. 
Throughout this subsection we fix a finite probability space~$(\Omega,\mathbb P)$. Recall also that~$\log$ denotes the natural logarithm function.

Let~$X: \Omega \to S$ be a random variable, and note that we will sometimes use the notation~$X(\omega)$, which is an element of~$S$, for the value of~$X$ given the outcome~$\omega\in\Omega$. Given~$x \in S$, we denote~$p(x) \coloneq\Pro{X=x}$. We define the \emph{entropy} of~$X$ by
\[h(X)\coloneq\sum_{x\in S}-p(x)\log p(x).\]
Entropy can be interpreted as a measure of the ``uncertainty'' of a random variable, or of how much information is ``gained'' by revealing~$X$.
The following lemma shows that the entropy is maximised when~$X$ is uniform, corresponding to maximal ``uncertainty''.
Define the \emph{range} of~$X$ as the set of values that~$X$ takes with positive probability, that is~$\rg(X) = \{x \in S: p(x) > 0\}$.
\begin{lemma}[maximal entropy]\label{lem:maximality of the uniform}
	For every random variable~$X: \Omega \to S$, we have $h(X) \leq \log(|\rg(X)|) \leq \log(|S|)$ with equality if and only if~$p(x) = \tfrac{1}{|S|}$ for all~$x \in S$.
\end{lemma}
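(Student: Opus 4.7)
The plan is to split the claim into its two inequalities and then chase down the equality case. The rightmost inequality $\log(|\rg(X)|) \leq \log(|S|)$ is immediate from $\rg(X) \subseteq S$ and the monotonicity of $\log$, with equality if and only if $\rg(X) = S$, i.e.\ $p(x) > 0$ for every $x \in S$. So the substantive content lies in establishing $h(X) \leq \log(|\rg(X)|)$ together with its equality case.

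For the main inequality, I would apply Jensen's inequality to the strictly concave function $\log$. Writing the entropy as
\[
h(X) = \sum_{x \in \rg(X)} p(x) \log \tfrac{1}{p(x)},
\]
and viewing the right-hand side as the expectation of $\log(1/p(X))$ with respect to the distribution of $X$, Jensen's inequality gives
\[
h(X) = \EE\bigl[\log(1/p(X))\bigr] \leq \log \EE\bigl[1/p(X)\bigr] = \log \left( \sum_{x \in \rg(X)} p(x) \cdot \tfrac{1}{p(x)} \right) = \log(|\rg(X)|).
\]
Note that the sum is well-defined because the terms with $p(x) = 0$ are excluded by restricting to $\rg(X)$ (and by the usual convention $0 \log 0 = 0$ they would contribute nothing to $h(X)$ either).

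For the equality case, I would use the strict concavity of $\log$: Jensen's inequality is an equality precisely when the random variable $1/p(X)$ is almost surely constant, which forces $p(x)$ to take the same value for every $x \in \rg(X)$. Since these probabilities sum to $1$, this common value must be $1/|\rg(X)|$. Combining this with the equality condition for the trivial inequality (namely $\rg(X) = S$), we conclude that $h(X) = \log(|S|)$ if and only if $p(x) = 1/|S|$ for all $x \in S$, as required.

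The proof is essentially a one-line application of Jensen's inequality, so there is no genuine obstacle; the only point requiring a modicum of care is to phrase the argument so that vanishing probabilities $p(x) = 0$ are handled cleanly, which is why restricting the sum to $\rg(X)$ at the outset is convenient.
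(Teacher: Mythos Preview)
Your proof is correct. The paper does not actually prove this lemma; it cites Galvin's notes~\cite{Galvin2014} for all the basic entropy facts and omits the proofs, so there is nothing to compare against. Your Jensen's inequality argument is the standard one and handles both inequalities and the equality case cleanly.
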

\cref{lem:maximality of the uniform} provides the key to using entropy in combinatorial arguments. Indeed, the basic method relies on taking a uniformly random object~$F$ from some family~$\cF$ whose cardinality we are interested in estimating. By analysing the entropy of the random variable~$F$, using the tools listed below, we can obtain bounds on the entropy which translate to bounds on the size of~$\cF$ via \cref{lem:maximality of the uniform}. We now further develop the theory.

Given random variables~$X_i : \Omega \to S_i$ for~$i \in [n]$, we denote the entropy of the random vector~$(X_1, \ldots, X_n)$ by~$h(X_1, \ldots, X_n) \coloneq h((X_1, \ldots, X_n))$.
The entropy function has the following subadditivity property.
\begin{lemma}[subadditivity]\label{lem:subadditivity}
	Given random variables~$X_i: \Omega \to S_i$,~$i \in [n]$, we have
	\[h(X_1, \ldots, X_n) \leq \sum_{i=1}^n h(X_i),\]
	with equality if and only if the~$X_i$ are mutually independent.
\end{lemma}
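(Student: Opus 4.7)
The plan is to reduce subadditivity to the classical fact that \emph{conditioning reduces entropy}, via the chain rule for entropy. So the first step will be to define the conditional entropy
\[h(Y \mid X) \coloneq \sum_{x \in \rg(X)} p(x)\, h(Y \mid X = x) = -\sum_{x,y} p(x,y) \log p(y \mid x),\]
where $p(y \mid x) = \Pro{Y=y \mid X=x}$, and establish the chain rule $h(X,Y) = h(X) + h(Y \mid X)$ by splitting $-\log p(x,y) = -\log p(x) - \log p(y\mid x)$ and summing against $p(x,y)$. Iterating this identity yields
\[h(X_1, \ldots, X_n) = \sum_{i=1}^n h\bigl(X_i \mid X_1, \ldots, X_{i-1}\bigr).\]

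The second step will be to prove $h(Y \mid X) \leq h(Y)$, with equality iff $X$ and $Y$ are independent. For this I would compute
\[h(Y) - h(Y \mid X) = \sum_{x,y} p(x,y) \log \frac{p(x,y)}{p(x)\,p(y)},\]
which is the relative entropy of the joint distribution with respect to the product of its marginals. Applying Jensen's inequality (equivalently, the log-sum inequality, using concavity of $\log$) to $-\sum p(x,y) \log\bigl(p(x)p(y)/p(x,y)\bigr)$ gives that this quantity is at least $-\log \sum_{x,y} p(x) p(y) = 0$. Since $\log$ is strictly concave, equality holds iff $p(x,y) = p(x)p(y)$ for all $(x,y) \in \rg(X,Y)$, i.e.\ iff $X$ and $Y$ are independent.

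The third step will be to combine these two facts. Applying $h(X_i \mid X_1, \ldots, X_{i-1}) \leq h(X_i)$ (which follows from step two applied to $Y = X_i$ and the vector $X = (X_1, \ldots, X_{i-1})$) to each term in the chain rule expansion immediately gives
\[h(X_1, \ldots, X_n) \leq \sum_{i=1}^n h(X_i).\]
For the equality case, equality in every term of the chain rule forces $X_i$ to be independent of $(X_1, \ldots, X_{i-1})$ for each $i$, which is equivalent to mutual independence of $X_1, \ldots, X_n$; conversely, mutual independence makes each conditional entropy equal to the corresponding unconditional one.

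There is no real obstacle here: the only subtlety is handling the equality case cleanly, which will reduce to tracking the equality condition in Jensen's inequality through the chain rule, and observing that pairwise independence of each $X_i$ from $(X_1,\ldots,X_{i-1})$ is precisely the definition of mutual independence.
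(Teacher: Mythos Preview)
Your proposal is correct and follows the standard route. The paper does not actually give its own proof of this lemma: it states at the start of \S\ref{sec:entropybasics} that all proofs not included there can be found in Galvin's notes~\cite{Galvin2014}, and this is one of those results. Your argument via the chain rule (which the paper records as \cref{lem:chain rule}) together with the fact that conditioning does not increase entropy (recorded as \cref{lem:dropping conditioning}) is exactly the classical proof one would find in such a reference.
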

Intuitively, this means that revealing a random vector cannot give us more information than revealing each component separately.
We say a random variable~$X: \Omega \to S_X$ \emph{determines} another random variable~$Y: \Omega \to S_{Y}$ if the outcome of~$Y$ is completely determined by~$X$. For example if~$X$ is the outcome of rolling a regular six-sided die and~$Y$ is~$1$ if this outcome is even, and~$0$ otherwise, then~$X$ determines~$Y$. 
Formally,~$X$ \emph{determines}~$Y$ if there is a function~$f: S_X \to S_Y$ such that~$Y(\omega) = f(X(\omega))$ for all~$\omega \in \Omega$.
If~$X$ determines~$Y$, then no additional information is needed to reveal~$Y$ once~$X$ is revealed. This is formalised in the following lemma.
\begin{lemma}[redundancy]\label{lem:redundancy}
	If~$X:\Omega \to S_X$ and~$Y: \Omega \to S_Y$ are random variables and~$X$ determines~$Y$, then
	$h(X)=h(X,Y)$.
\end{lemma}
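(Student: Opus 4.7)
The plan is to prove the redundancy lemma directly from the definition of entropy, using the fact that determinism collapses the support of the joint distribution $(X,Y)$ onto the graph of the function $f$.

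First I would fix the function $f : S_X \to S_Y$ witnessing that $X$ determines $Y$, so that $Y(\omega) = f(X(\omega))$ for all $\omega \in \Omega$. Then I would analyse the joint distribution of $(X,Y)$: for each $(x,y) \in S_X \times S_Y$, the event $\{X = x, Y = y\}$ is empty whenever $y \neq f(x)$, and coincides with $\{X = x\}$ when $y = f(x)$. Consequently
\[
\Pr[(X,Y) = (x,y)] = \begin{cases} \Pr[X = x] & \text{if } y = f(x), \\ 0 & \text{otherwise.} \end{cases}
\]
In particular, $\mathrm{rg}(X,Y) = \{(x, f(x)) : x \in \mathrm{rg}(X)\}$.

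Next I would plug this into the definition of entropy. Since terms corresponding to probability zero contribute nothing to the sum (with the usual convention $0 \log 0 = 0$), we obtain
\[
h(X,Y) = \sum_{(x,y) \in S_X \times S_Y} -\Pr[(X,Y) = (x,y)] \log \Pr[(X,Y) = (x,y)] = \sum_{x \in S_X} -\Pr[X = x] \log \Pr[X = x] = h(X),
\]
which is exactly the claim.

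There is no serious obstacle here: the statement is essentially a restatement of the definition once one observes that the joint distribution of $(X,Y)$ is supported on the graph of $f$ and that the marginal probabilities agree. The only minor subtlety is making sure to handle the terms with zero probability correctly, which is standard.
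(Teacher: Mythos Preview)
Your proof is correct. The paper does not actually include its own proof of this lemma; it defers all the basic entropy facts in this subsection to Galvin's notes~\cite{Galvin2014}, so there is nothing to compare against. Your direct argument from the definition---observing that the joint distribution of $(X,Y)$ is supported on the graph of $f$ with the same marginal probabilities as $X$---is exactly the standard proof one would expect.
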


If~$E \subset \Omega$ is an event with positive probability, we define the \emph{conditional entropy} given the event as
\[h(X|E) \coloneq \sum_{x\in S}-p(x|E)\log p(x|E),\]
where~$p(x|E)=\Pro{X=x|E}$.
Note that~$h(X|E)$ is the entropy of the random variable obtained from~$X$ by conditioning on~$E$, so that if~$Z$ has distribution~$\Pro{Z=x} = \Pro{X=x|E}$ then~$h(Z)=h(X|E)$.
Given two random variables~$X : \Omega \to S_X$ and~$Y: \Omega \to S_Y$, the conditional entropy of~$X$ given~$Y$ is defined as
\begin{align}
h(X|Y)
\coloneq\mathbb{E}_Y[h(X|Y=y)]
&=\sum_{y\in S_Y}p(y) h(X|Y=y)\label{eq:conditionalentropydef}\\
&=\sum_{\omega \in \Omega} \Pro{\omega} h(X|Y=Y(\omega)),\label{eq:conditionalentropydef-2}
\end{align}
where~$p(y)=\Pro{Y=y}$.
As conditioning on an event or another random variable only gives us more information, we have the following inequalities. 
\begin{lemma}[dropping conditioning]\label{lem:dropping conditioning}
	Given random variables~$X: \Omega \to S_X$ and~$Y: \Omega \to S_Y$, and an event~$E \subset \Omega$ we have
	\[h(X|Y) \leq h(X) \quad\text{and}\quad h(X) \geq \Pro{E} h(X|E).\]
	Furthermore, if~$Y' :\Omega \to S_{Y'}$ is another random variable and~$Y$ determines~$Y'$, then
	\[h(X|Y) \leq h(X|Y').\]
\end{lemma}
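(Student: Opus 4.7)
The plan is to reduce everything to the concavity of the entropy functional $H(q)=-\sum_x q(x)\log q(x)$ on the probability simplex, which follows immediately from the strict concavity of $\phi(p)=-p\log p$ (with $\phi(0)=0$). This yields the standard \emph{mixing inequality}: if a distribution $q$ on a finite set decomposes as a convex combination $q=\sum_i \lambda_i q_i$ of distributions $q_i$, then $H(q)\ge\sum_i \lambda_i H(q_i)$. All three assertions will come from applying this at the right level of conditioning.

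For the first inequality, I would observe that the total-probability identity
\[
\Pro{X=x}=\sum_{y\in S_Y}\Pro{Y=y}\,\Pro{X=x\mid Y=y}
\]
expresses the marginal distribution of $X$ as a convex combination of the conditional distributions $(X\mid Y=y)$, with weights $p(y)=\Pro{Y=y}$. Applying the mixing inequality and recalling~\eqref{eq:conditionalentropydef}, we get $h(X)\ge \sum_y p(y)\,h(X\mid Y=y)=h(X|Y)$.

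For the second inequality, I would apply the first with $Y$ replaced by the indicator $Z=\1_E$. Directly from~\eqref{eq:conditionalentropydef},
\[
h(X\mid Z)=\Pro{E}\,h(X\mid E)+\Pro{E^c}\,h(X\mid E^c)\ge \Pro{E}\,h(X\mid E),
\]
since the second summand is non-negative. Combining with $h(X)\ge h(X\mid Z)$ from the previous paragraph finishes the second claim.

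Finally, since $Y$ determines $Y'$, write $Y'=f(Y)$. For each $y'\in \rg(Y')$, the deterministic implication $f(y)=y'\Rightarrow \{Y=y\}\subseteq\{Y'=y'\}$ gives the cell-wise mixture decomposition
\[
\Pro{X=x\mid Y'=y'}=\sum_{y\in f^{-1}(y')}\Pro{Y=y\mid Y'=y'}\,\Pro{X=x\mid Y=y}.
\]
Applying the mixing inequality inside each cell yields $h(X\mid Y'=y')\ge \sum_{y\in f^{-1}(y')}\Pro{Y=y\mid Y'=y'}\,h(X\mid Y=y)$. Summing over $y'$ with weights $p(y')$, and using that $p(y')\,\Pro{Y=y\mid Y'=y'}=p(y)$ for each $y\in f^{-1}(y')$, the right-hand side collapses to $\sum_y p(y)\,h(X\mid Y=y)=h(X|Y)$, proving $h(X|Y)\le h(X|Y')$. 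There is no serious obstacle; the only subtle point is the cell-wise mixture identity above, which depends purely on the deterministic relation $Y=y\Rightarrow Y'=f(y)$, so that joint and singly-conditioned probabilities agree term by term on the relevant events.
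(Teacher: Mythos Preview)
Your proof is correct. The paper does not supply its own proof of this lemma; it states at the start of \S\ref{sec:entropybasics} that all proofs omitted there can be found in (or follow immediately from) Galvin's lecture notes~\cite{Galvin2014}. Your concavity/mixing-inequality argument is one of the standard routes to these facts and would be entirely appropriate here.
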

The following chain rule strengthens \cref{lem:subadditivity}.
\begin{lemma}[chain rule]\label{lem:chain rule}
	Given random variables~$X: \Omega \to S_X$ and~$Y: \Omega \to S_Y$, we have
	\[h(X,Y) = h(X) + h(Y|X)\]
	and more generally, for random variables~$X_i: \Omega \to S_i$,~$i \in [n]$, we have
	\[h(X_1, \ldots, X_n) = \sum_{i=1}^n h(X_i|X_1, \ldots, X_{i-1}).\]
\end{lemma}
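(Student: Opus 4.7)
The plan is to prove the two-variable identity $h(X,Y) = h(X) + h(Y|X)$ directly from the definitions, and then to extend it to $n$ variables by a short induction using only this base case.

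For the base case, I would start by writing out $h(X,Y)$ according to the definition as $h(X,Y) = -\sum_{x \in S_X, y \in S_Y} p(x,y) \log p(x,y)$, where $p(x,y) = \Pro{X=x, Y=y}$. The key identity is $p(x,y) = p(x) \cdot p(y|x)$ whenever $p(x) > 0$ (with the usual convention $0 \log 0 = 0$, so that the pairs $(x,y)$ with $p(x) = 0$ contribute nothing). Substituting this and splitting the logarithm gives
\[
h(X,Y) = -\sum_{x,y} p(x,y) \log p(x) \; - \; \sum_{x,y} p(x,y) \log p(y|x).
\]
For the first sum, I would marginalise over $y$ using $\sum_y p(x,y) = p(x)$, producing exactly $-\sum_x p(x) \log p(x) = h(X)$. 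For the second sum, I would use $p(x,y) = p(x) p(y|x)$ once more to rewrite it as
\[
\sum_{x: p(x)>0} p(x) \left(-\sum_{y} p(y|x) \log p(y|x)\right) = \sum_{x} p(x) \, h(X | Y=x) = h(Y|X),
\]
where the last equality is the definition \eqref{eq:conditionalentropydef} of conditional entropy. This proves the two-variable case.

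For the general case, I would proceed by induction on $n$, with the base $n = 1$ being trivial and $n = 2$ just proved. For the inductive step, applying the two-variable case to the pair consisting of $X^{(n-1)} \coloneq (X_1, \ldots, X_{n-1})$ and $X_n$ yields
\[
h(X_1, \ldots, X_n) = h(X^{(n-1)}) + h(X_n \mid X^{(n-1)}) = h(X_1, \ldots, X_{n-1}) + h(X_n \mid X_1, \ldots, X_{n-1}),
\]
and the inductive hypothesis expands $h(X_1, \ldots, X_{n-1})$ as the telescoping sum $\sum_{i=1}^{n-1} h(X_i \mid X_1, \ldots, X_{i-1})$, completing the proof.

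I do not expect any genuine obstacle here; this is a routine manipulation of the definitions. The only care required is the bookkeeping around zero-probability outcomes when splitting $\log p(x,y)$ into $\log p(x) + \log p(y|x)$, which is handled by the standard convention $0 \log 0 = 0$ and by restricting the outer sum in the definition of $h(Y|X)$ to those $x$ with $p(x) > 0$, as is implicit in \eqref{eq:conditionalentropydef}.
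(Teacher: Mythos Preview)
Your proof is correct and entirely standard; the paper itself does not give a proof of this lemma but defers to Galvin's survey, where the same direct-computation-plus-induction argument appears. One small typo: in the displayed line for the second sum you write $h(X\mid Y=x)$ where you mean $h(Y\mid X=x)$, since the inner sum $-\sum_y p(y|x)\log p(y|x)$ is the entropy of $Y$ conditioned on the event $\{X=x\}$.
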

%

%
%
\cref{lem:maximality of the uniform,lem:subadditivity,lem:chain rule} have the following conditional versions.
Given a random variable~$X:\Omega \to S_X$ and an event~$E \subset \Omega$, we define the conditional range of~$X$ given~$E$ by~$\rg(X|E) = \{x \in S_X: p(x|E) > 0\}$.
\begin{lemma}[maximal conditional entropy]\label{lem:cond maximality of the uniform}
	For every random variable~$X: \Omega \to S$ and event~$E\subset \Omega$, we have
	\[h(X|E) \leq \log\left(\card{\rg(X|E)}\right).\]
\end{lemma}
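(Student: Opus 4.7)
The plan is to reduce this statement directly to the unconditional version in \cref{lem:maximality of the uniform}. The observation made in the excerpt immediately after the definition of $h(X|E)$ is the key: the quantity $h(X|E)$ is by definition the entropy of a random variable $Z$ whose distribution is the conditional distribution of $X$ given $E$. That is, if we set $p_Z(x) \coloneq \Pro{X = x \mid E}$ for each $x \in S$, then $(p_Z(x))_{x \in S}$ is a probability distribution on $S$, and any random variable $Z$ with this distribution satisfies $h(Z) = h(X \mid E)$.

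First I would spell out this passage carefully: since $\Pro{E} > 0$ (otherwise $h(X \mid E)$ is undefined), the function $x \mapsto p_Z(x) = \Pro{X = x \mid E}$ is a legitimate probability mass function on $S$. Hence there exists a random variable $Z \colon \Omega' \to S$ (on some auxiliary finite probability space) realising it. By the definition of entropy and of conditional entropy given an event, we have
\[
h(Z) = \sum_{x \in S} -p_Z(x) \log p_Z(x) = \sum_{x \in S} -p(x \mid E) \log p(x \mid E) = h(X \mid E).
\]
Moreover $\rg(Z) = \{x \in S : p_Z(x) > 0\} = \{x \in S : p(x \mid E) > 0\} = \rg(X \mid E)$.

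Finally I would apply \cref{lem:maximality of the uniform} to the random variable $Z$, yielding $h(Z) \leq \log(|\rg(Z)|)$. Substituting the two identities above gives exactly
\[
h(X \mid E) \leq \log\bigl(|\rg(X \mid E)|\bigr),
\]
as required. There is essentially no obstacle here; the only thing to be careful about is the bookkeeping that the conditional distribution genuinely defines a random variable on its own, so that \cref{lem:maximality of the uniform} is legitimately applicable. Once that point is made, the lemma is immediate.
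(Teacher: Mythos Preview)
Your proposal is correct and follows exactly the approach the paper signals: immediately after defining $h(X|E)$, the paper remarks that $h(X|E)=h(Z)$ for a random variable $Z$ with distribution $\Pro{Z=x}=\Pro{X=x|E}$, and the lemma then follows from \cref{lem:maximality of the uniform}. The paper omits the proof entirely (deferring to Galvin's notes), so your write-up is precisely the intended argument.
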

\begin{lemma}[conditional subadditivity]\label{lem:cond subadditivity}
	Given random variables~$X_i: \Omega \to S_i$,~$i \in [n]$, and~$Y:\Omega \to S_Y$, we have
	\[h(X_1, \ldots, X_n|Y) \leq \sum_{i=1}^n h(X_i|Y),\]
	with equality if and only if the~$X_i$ are mutually independent conditioned on~$Y$.
\end{lemma}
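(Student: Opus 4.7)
The plan is to reduce conditional subadditivity to the unconditional version (Lemma \ref{lem:subadditivity}) by averaging over the values of $Y$. For each fixed $y$ in the range of $Y$, conditioning on the event $\{Y=y\}$ yields an honest probability distribution on $\Omega$ under which $X_1,\dots,X_n$ are random variables in their own right, so \cref{lem:subadditivity} applies directly to that conditional distribution.

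Concretely, I would first fix an arbitrary $y\in S_Y$ with $p(y)>0$ and apply \cref{lem:subadditivity} to the random vector $(X_1,\dots,X_n)$ under the conditional measure $\Pro{\,\cdot\,\mid Y=y}$. This gives
\[
h(X_1,\dots,X_n\mid Y=y)\;\le\;\sum_{i=1}^{n} h(X_i\mid Y=y),
\]
with equality if and only if $X_1,\dots,X_n$ are mutually independent under $\Pro{\,\cdot\mid Y=y}$. Then I would multiply by $p(y)$ and sum over $y\in S_Y$; by the definition \eqref{eq:conditionalentropydef} of conditional entropy, the left-hand side becomes $h(X_1,\dots,X_n\mid Y)$ and the right-hand side becomes $\sum_{i=1}^{n} h(X_i\mid Y)$, proving the inequality.

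For the equality clause, I would argue both directions. If the $X_i$ are mutually independent conditional on $Y$, then by definition they are mutually independent under $\Pro{\,\cdot\mid Y=y}$ for every $y$ with $p(y)>0$, so the equality case of \cref{lem:subadditivity} gives equality termwise, and summing preserves it. Conversely, if $h(X_1,\dots,X_n\mid Y)=\sum_i h(X_i\mid Y)$, then since we have the pointwise inequality $h(X_1,\dots,X_n\mid Y=y)\le \sum_i h(X_i\mid Y=y)$ weighted by the non-negative coefficients $p(y)$ summing to one, equality on the weighted sum forces equality for every $y$ with $p(y)>0$; the unconditional equality case of \cref{lem:subadditivity} then yields mutual independence of $X_1,\dots,X_n$ under each $\Pro{\,\cdot\mid Y=y}$, which is exactly conditional independence given $Y$.

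The argument is essentially bookkeeping once the right viewpoint (treating each $\{Y=y\}$ as its own probability space) is set up, so there is no serious obstacle; the only care needed is to verify that the pointwise equality characterization lifts cleanly through the convex combination $\sum_y p(y)(\cdot)$, which it does because all terms are non-negative.
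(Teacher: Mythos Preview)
Your proposal is correct and is exactly the standard reduction: condition on each value $y$ of $Y$, apply unconditional subadditivity (\cref{lem:subadditivity}) in the conditional probability space, and average using the definition \eqref{eq:conditionalentropydef}; the equality clause follows because a convex combination of nonnegative terms vanishes only if each term does. The paper does not actually supply its own proof here---it defers all omitted entropy proofs to Galvin's notes \cite{Galvin2014}---and your argument is precisely the one found there.
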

\begin{lemma}[conditional chain rule]\label{lem:cond chain rule}
	Given random variables~$X_i: \Omega \to S_i$,~$i \in [n]$, and~$Y: \Omega \to S_Y$, we have
	\[h(X_1, \ldots, X_n |Y) = \sum_{i=1}^n h(X_i|X_1, \ldots, X_{i-1},Y).\]
\end{lemma}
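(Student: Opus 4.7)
The plan is to derive the conditional chain rule as an immediate consequence of two applications of the unconditional chain rule (\cref{lem:chain rule}) to the joint tuple $(Y, X_1, \ldots, X_n)$, grouped two different ways. This is a routine entropy identity, so I do not expect any genuine obstacle; the only point to watch is that one of the applications must treat $(X_1, \ldots, X_n)$ as a single random vector in the second slot of the two-variable form, which is unproblematic since \cref{lem:chain rule} is stated for random variables valued in arbitrary finite sets.

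First, I would apply \cref{lem:chain rule} to the full $(n+1)$-tuple $(Y, X_1, X_2, \ldots, X_n)$, with $Y$ in the first position, to obtain
\[ h(Y, X_1, \ldots, X_n) \;=\; h(Y) + \sum_{i=1}^{n} h(X_i \mid Y, X_1, \ldots, X_{i-1}). \]
Second, I would apply the two-variable form of \cref{lem:chain rule} with the vector $(X_1, \ldots, X_n)$ playing the role of the second variable, giving
\[ h(Y, X_1, \ldots, X_n) \;=\; h(Y) + h(X_1, \ldots, X_n \mid Y). \]
Equating the right-hand sides of these two displays and cancelling $h(Y)$ yields the claimed identity.

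An alternative, slightly more computational, route is to argue directly from the definition \eqref{eq:conditionalentropydef}: one writes $h(X_1, \ldots, X_n \mid Y) = \sum_{y} p(y)\, h(X_1, \ldots, X_n \mid Y = y)$, applies \cref{lem:chain rule} inside each conditioned probability space $\{Y = y\}$, and then swaps the order of the resulting finite double sum to recognise each inner sum, via \eqref{eq:conditionalentropydef-2}, as $h(X_i \mid X_1, \ldots, X_{i-1}, Y)$. Both routes take only a handful of lines and use no machinery beyond what has already been established in \cref{sec:entropybasics}.
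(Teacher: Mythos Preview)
Your argument is correct; both routes you sketch are standard and valid derivations of the conditional chain rule from \cref{lem:chain rule}. The paper itself does not give a proof of this lemma at all, instead deferring to Galvin's notes~\cite{Galvin2014} (see the sentence at the start of \cref{sec:entropybasics}), so there is no in-paper proof to compare against.
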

The following lemma will play an essential r\^ole in our proof. It sharpens a similar lemma that appeared in~\cite{Johansson2008}. It states that if a random variable has almost maximal entropy, then it must be close to uniform. This can be seen as a stability result for \cref{lem:maximality of the uniform}.
%
\begin{lemma}[almost maximal entropy]\label{lem:large-entropy}
	For all~$\beta > 0$, there is some~$\beta' > 0$ such that the following is true for every finite set~$S$ 
	and every random variable~$X: \Omega \to S$. 
	If~$h(X) \geq \log (\card{S}) - \beta'$, then letting~$a:=\tfrac{1}{|S|}$ and~$J: =\{x\in S:(1-\beta)a\leq \Pro{X=x}\leq (1+\beta)a\}$, we have that
	\begin{equation}\label{eq:large-entropy}
	|J| \geq (1-\beta)|S| \quad \text{ and } \quad \Pro{X\in J} \geq (1-\beta).
	\end{equation}
\end{lemma}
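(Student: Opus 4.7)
The plan is to reduce the hypothesis on entropy to a Kullback–Leibler (KL) divergence bound and then exploit the strict positivity (with quantitative lower bound) of the divergence integrand away from~$1$. Write $p(x)=\Pro{X=x}$, $a=1/|S|$, and $t(x)=p(x)/a$. The hypothesis $h(X)\ge\log|S|-\beta'$ rewrites as
\[
\sum_{x\in S} p(x)\log\bigl(|S|\,p(x)\bigr) \;\le\;\beta'.
\]
Since $\sum_x (p(x)-a)=0$, I can freely add $\sum_x a(1-t(x))=0$ to the left-hand side and obtain
\[
a\sum_{x\in S}\phi\bigl(t(x)\bigr)\;\le\;\beta', \qquad\text{where}\qquad \phi(t):= t\log t - t +1.
\]
The function $\phi$ is convex, nonnegative on $[0,\infty)$, vanishes only at $t=1$, and satisfies $\phi(t)\sim (t-1)^2/2$ near $t=1$; these basic properties are what drives the whole argument.

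To prove the size bound $|J|\ge(1-\beta)|S|$, I note that every $x\in S\setminus J$ has $|t(x)-1|>\beta$, hence $\phi(t(x))\ge c_\beta:=\min\{\phi(1-\beta),\phi(1+\beta)\}>0$. Plugging this into the displayed divergence bound gives $a|S\setminus J|c_\beta\le\beta'$, i.e.\ $|S\setminus J|\le\beta'|S|/c_\beta$, so choosing $\beta'\le\beta c_\beta$ forces $|S\setminus J|\le\beta|S|$.

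For the probability bound $\Pro{X\in J}\ge 1-\beta$, split $S\setminus J$ into $S^-=\{x:t(x)<1-\beta\}$ and $S^+=\{x:t(x)>1+\beta\}$. On $S^-$ one has $p(x)<a$, so $\Pro{X\in S^-}\le a|S\setminus J|\le\beta'/c_\beta$ by the previous step. On $S^+$ the key observation is that $t\mapsto\phi(t)/t=\log t-1+1/t$ is nondecreasing on $[1,\infty)$ (its derivative is $(t-1)/t^2\ge0$), so for all $x\in S^+$ we have $t(x)\le\phi(t(x))\cdot(1+\beta)/\phi(1+\beta)$. Summing,
\[
\Pro{X\in S^+}\;=\;a\sum_{x\in S^+} t(x)\;\le\;\frac{1+\beta}{\phi(1+\beta)}\cdot a\sum_{x}\phi(t(x))\;\le\;\frac{(1+\beta)\beta'}{\phi(1+\beta)}.
\]
Both contributions are $O_\beta(\beta')$, so taking $\beta'$ sufficiently small in terms of $\beta$ (concretely, $\beta'\le \tfrac{\beta}{2}\min\bigl\{c_\beta,\;\phi(1+\beta)/(1+\beta)\bigr\}$) yields $\Pro{X\notin J}\le\beta$, as required.

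There is no real obstacle here; the entire proof hinges on identifying the right convex auxiliary $\phi$, and the only mildly technical point is the monotonicity of $\phi(t)/t$ used to handle the upper tail $S^+$ (the lower tail $S^-$ is trivial because probabilities there are bounded by $a$). This is a cleaner repackaging of a standard Pinsker-type stability argument, and produces an explicit admissible choice of $\beta'$.
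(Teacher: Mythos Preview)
Your proof is correct and takes a genuinely different route from the paper's. The paper argues by contradiction: assuming the set $J^+=\{x:p(x)>(1+\tfrac{\beta}{4})a\}$ is too large, it passes via Karamata's inequality to an auxiliary two-level distribution $X^+$ (constant on a subset of $J^+$, constant on the complement) with $h(X^+)\ge h(X)$, then computes $h(X^+)$ explicitly through a binary-entropy decomposition to contradict $h(X)\ge\log|S|-\beta'$; $J^-$ is handled symmetrically, and the probability bound follows from the size bound. You instead identify the deficit $\log|S|-h(X)$ as the KL divergence $a\sum_x\phi(t(x))$ and read off both conclusions from pointwise lower bounds on $\phi$ away from $1$ together with the monotonicity of $\phi(t)/t$ on $[1,\infty)$ for the upper tail. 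Your argument is shorter and avoids the majorization step, while yielding an equally explicit $\beta'$; the paper's version has the minor advantage of producing a concrete polynomial value $\beta'=\beta^4/2000$ without reference to $\phi$. One small point worth stating explicitly: you should note that one may assume $\beta<1$ (the conclusion is monotone in $\beta$), so that $\phi(1-\beta)$ in your definition of $c_\beta$ is well-defined.
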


\begin{proof}
Let~$\beta>0$ be given and assume that $\beta<\tfrac{1}{10}$. Fix~$\beta'= \tfrac{\beta^4}{2000}$.
Let~$X: \Omega \to S$ be a random variable with~$h(X) \geq \log(|S|) - \beta'$ and let $a$ and $J$ be as defined in the statement of the lemma. 
Further, we define~$J^+ = \{y \in S:  \Pro{X=y} > \big(1+\tfrac{\beta}{4}\big)a \}$ and~$J^- = \{y \in S:  \Pro{X=y} < \big(1-\tfrac{\beta}{4}\big)a \}$. 
Note that~$|J| \geq  |S| - (|J^+| + |J^-|)$.

\begin {claim}
We have~$|J^+| \leq \tfrac{\beta}{4} |S|$.
\end {claim}
\begin{claimproof}
	Choose~$\eta \leq \tfrac{\beta}{4}$ so that~$\eta |S| = \lfloor \tfrac{\beta}{4} |S| \rfloor$. Assume for contradiction that~$|J^+| > \eta |S|$ and let~$\tilde J^+ \subset J^+$ be a set of size exactly~$\eta |S|$.
	Define~$X^+$ by
	\[
	 \Pro{X^+=y} =
	\begin{cases}
	(1 + \eta)a &\text{if } y \in \tilde J^+\\
	(1 - \xi)a &\text{if } y \not\in \tilde J^+,
	\end{cases}
	\]
	where~$\xi \coloneq \tfrac{\eta^2}{1-\eta}$ is chosen so that~$\sum_{y \in S}  \Pro{X^+=y} =1$.
	Now it follows from Karamata's inequality and the fact that $-x\log(x)$ is concave on $[0,1]$, that $h(X^+)\geq h(X)$. 
	We further let~$Y=1$ if~$X^+ \in \tilde J^+$ and~$0$ otherwise. We then have  that
	\begin{align*}
	h(X) \leq h(X^+) = h(X^+,Y)
	&=  h(X^+|Y=1) \Pro{Y=1} + h(X^+|Y=0) \Pro{Y=0} + h(Y),
	\end{align*}
	where we used \cref{lem:redundancy}, the chain rule (\cref{lem:chain rule}) and the definition of conditional entropy.
	Note that~$\Pro{Y=1} = \eta(1+\eta)$ and
	\[h(Y) = -\eta (1+\eta) \log\left(\eta (1+\eta)\right) - (1-\eta (1+\eta)) \log\left(1-(\eta (1+\eta))\right).\]
	Therefore, using also \cref{lem:cond maximality of the uniform}, we get
	\begin{align*}
	h(X)
	&\leq \logb{\eta |S|} \eta (1+\eta) + \logb{(1-\eta) |S|} (1 - \eta (1+\eta)) + h(Y)\\
	&= \logb{|S|} + \log(\eta) \eta(1+\eta) + \log(1-\eta) (1-\eta(1+\eta)) + h(Y)\\
	&= \logb{|S|} + \eta(1+\eta) \left(\log(\eta) - \log(\eta(1+\eta)) \right)\\
	&\phantom{{}=\logb{|S|}} + (1-\eta(1+\eta)) \left( \log(1-\eta) - \log(1-\eta(1+\eta)) \right)\\
	&= \logb{|S|} - \eta(1+\eta) \log(1+\eta) + \left(1-\eta-\eta^2\right) \logb{\tfrac{1-\eta}{1-\eta - \eta^2}}\,.
        \end{align*}
	Using the approximation~$x-\tfrac{x^2}{2} \leq \log(1+x) \leq x$, which holds for all~$x \in (0,1)$, in the forms~$\log(1+\eta) \geq \eta \left(1- \tfrac{\eta}{2}\right)$ and
	$\logb{\tfrac{1-\eta}{1-\eta - \eta^2}}
	= \logb{1+\tfrac{\eta^2}{1-\eta - \eta^2}}
	\leq \tfrac{\eta^2}{1-\eta - \eta^2}$, we conclude
	\begin{align*}
	h(X)
        &{\leq} \logb{|S|} - \eta^2(1+\eta)\left(1-\tfrac{\eta}{2}\right) + (1-\eta-\eta^2) \tfrac{\eta^2}{1-\eta - \eta^2}\\
	&= \logb{|S|} - \eta^2 - \tfrac{\eta^3}{2} + \tfrac{\eta^4}{2} + \eta^2
	\leq \logb{|S|} - \tfrac{\eta^3}{4}
	< \logb{|S|} - \beta',
	\end{align*}
	a contradiction.
\end{claimproof}

Similarly, we can show that~$|J^-| \leq \tfrac{\beta}{4}  |S|$ and conclude that
$|J| \geq |S| - (|J^+| + |J^-|) \geq (1-\beta)|S|$.
Furthermore, by the definition of~$J^-$ we have
\begin{align*}
  \sum_{y \in J}  \Pro{X=y}
  \ge\sum_{y\in S\setminus(J^+\cup J^-)} \left(1-\tfrac{\beta}{4}\right)a
&\geq \left(1-\tfrac{\beta}{2}\right)|S| \left(1-\tfrac{\beta}{4}\right)a
\geq (1-\beta).
\end{align*}
This completes the proof.
\end{proof}

\section{The main technical result and its proof overview} \label{sec:proofoverview}

The main technical result we reduce \cref{thm:main} to is the following partite version with the minimum degree condition replaced by regularity.
\begin{thm}[main technical theorem]\label{thm:main-super-reg}
For every~$0<d \le 1$ there exists constants~$\e>0$ and~$C > 0$ such that the following holds for every~$n \in \N$  and~$p\in (0,1)$ such that~$p \geq C(\log n)^{1/3} n^{-2/3}$. If $\Gamma$ is an~$(\eps,d^+)$-super-regular tripartite graph with parts of size~$n$ 
then~$\Gamma_p$ whp contains a triangle factor.
\end{thm}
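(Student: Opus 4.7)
The plan is to build the triangle factor incrementally, revealing the edges of $\Gamma_p$ in stages and using each stage to absorb one triangle. Split $p$ into $T = n/3$ independent slices $p_1, \dots, p_T$ with $\sum p_i \le p$ and each $p_i = \Theta(p)$; at round $t$ use only $\Gamma_{p_t}$ to extend the current partial factor $\psi_t \in \Psi^t(\Gamma)$ by one triangle on three previously uncovered vertices. In the bulk phase, while $n-t \ge \eta n$ for some small $\eta$, a direct sparsification step in the spirit of \cref{prop:full-step} handles each round. When only $o(n)$ vertices remain, \cref{prop:almost-factor} first produces an almost-factor on all but a very small set of vertices, and \cref{prop:full-step} then closes it up. The final whp conclusion follows from a union bound over the $T$ rounds, provided each round succeeds with probability $1 - o(1/n)$.

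The central technical input is the Local Distribution Lemma \cref{lem:LDL}. Informally, if $\psi$ is drawn from a suitably weighted distribution on $\Psi^t(\Gamma)$, then for most vertices $v$ the conditional probability that any given triangle through $v$ appears in $\psi$ is close to the ``uniform'' value one would see in $K_n$. The natural route is the entropy method: the uniform measure on $\Psi^t(\Gamma)$ has entropy $\log|\Psi^t(\Gamma)|$, and via the conditional chain rule (\cref{lem:cond chain rule}) and subadditivity (\cref{lem:cond subadditivity}) one bounds this in terms of the entropies of the individual coordinates. Combining with a lower bound on $|\Psi^t(\Gamma)|$ coming from \cref{lem:super-reg-triangle-count}, the almost-maximal-entropy stability lemma \cref{lem:large-entropy} forces the marginals to be near-uniform on their supports, which is exactly the content of the LDL; the Entropy Lemma \cref{lem:entropy lemma} will be the technical engine that drives this step.

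Given the LDL, each round of the bulk phase is analysed via Janson's inequality. Let $X_t$ be the number of triangles in $\Gamma_{p_t}$ that extend $\psi_t$ to a partial factor in $\Psi^{t+1}(\Gamma)$. Super-regularity together with \cref{lem:super-reg-triangle-count} gives $\Exp{X_t} \ge c d^3 p_t^3 (n-t)^3$; since $p \ge C (\log n)^{1/3} n^{-2/3}$, while $n-t$ remains linear in $n$ this expectation is at least $c' C^3 \log n$. The LDL is used to estimate the pair-correlation sum $\bar\Delta$ in \cref{lem:janson}, showing that most pairs of candidate triangles interact weakly enough that $\bar\Delta = O(\Exp{X_t})$. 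Janson then yields $\Pro{X_t = 0} \le \exp(-\Omega(\log n))$, which is much smaller than $1/n$ for $C$ large enough; a union bound closes the bulk phase, and \cref{prop:almost-factor,prop:full-step} dispose of the remaining $o(n)$ triangles.

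The main obstacle will be proving the Local Distribution Lemma in the super-regular (rather than complete) setting. In $K_n$ the partial-factor count has a clean closed form and near-uniformity of marginals falls out of entropy comparisons almost directly; in a general super-regular $\Gamma$, lower-order fluctuations in the number of partial factors through different vertex sets could in principle let atypical $\psi$ dominate the uniform distribution, shrinking the entropy gap below what \cref{lem:large-entropy} can exploit. A natural remedy is to replace the uniform measure on $\Psi^t(\Gamma)$ by a weighting that enforces each vertex to be used with roughly its ``correct'' frequency, so that \cref{lem:super-reg-triangle-count} provides a matching target entropy. Making this weighting compatible with the conditional entropy calculation, and tracking the error terms introduced by super-regularity throughout the iterated extension, is where I expect the real work to lie.
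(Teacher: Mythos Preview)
Your proposal has a fatal arithmetic problem that breaks the scheme before any entropy enters. You split $p$ into $T=n/3$ slices with $\sum p_i\le p$ and each $p_i=\Theta(p)$; but $n/3$ terms of size $\Theta(p)$ sum to $\Theta(np)\gg p$. The only consistent choice is $p_i=\Theta(p/n)$, and then in round $t$ the expected number of triangles of $\Gamma_{p_t}$ extending a fixed $\psi_t$ is at most $p_t^3 n^3=p^3=\Theta((\log n)/n^2)\to 0$, so no round can succeed and Janson is beside the point. More structurally, any one-triangle-per-round revealing scheme needs per-round probability at least $n^{-1+o(1)}$ in the bulk and essentially $1$ near the end (when $n-t=O(1)$ and only $O(1)$ candidate triangles exist in $\Gamma$); a total budget of $p=n^{-2/3+o(1)}$ cannot pay for $\Theta(n)$ such rounds. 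This is exactly why a threshold-level result cannot be run as a greedy sequential construction with fresh randomness per triangle.

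The paper's argument is accordingly of a different nature: it works in a \emph{single} realization $\Gamma_p$ and proves a counting statement, namely that whp $|\Psi^t(\Gamma_p)|$ is close to $(pd)^{3t}(n!_t)^3$ for all $t$ simultaneously. \cref{prop:almost-factor} handles all $t\le(1-\eta)n$ (you have the roles of the two propositions reversed), and \cref{prop:full-step} pushes from $t$ to $t+1$ for $(1-\eta)n\le t<n$, each step losing only a constant factor $\alpha$; since only $\eta n$ such steps are needed, the cumulative loss $\alpha^{\eta n}$ is absorbed by the lower bound from \cref{prop:almost-factor}, yielding $|\Psi^n(\Gamma_p)|\ge 1$. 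The Local Distribution Lemma and the Entropy Lemma concern the marginal of a uniformly random $\psi^*\in\Psi_{\hat\uu}^t(\Gamma_p)$ in the \emph{already-revealed} random graph---they are used to compare $|\Psi_{\hat\uu,\hat u}^t(\Gamma_p)|$ across different vertices $u$ via a switching argument, which is what drives \cref{prop:full-step}---and have nothing to do with bounding a Janson $\bar\Delta$.
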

The reduction of \cref{thm:main} to this partite version uses the regularity method together with a stability result for the fractional Hajnal--Szemer\'{e}di Theorem developed in~\cref{sec:HSzFrac} and an analysis of the extremal cases.
We give the full details in~\cref{sec:reduction}.

The main challenge of this paper is proving \cref{thm:main-super-reg}, and in this section we will reduce \cref{thm:main-super-reg} further to two intermediate propositions. We will then discuss the proof of these propositions, outlining  the remainder of the paper and some of the key ideas involved. We encourage the reader to recall the relevant terminology from the notation section (\cref{sec:notation}) on embedding partial factors in tripartite graphs, in particular the definition of~$\Psi^t$.

The first proposition counts partial triangle factors.
\begin{prop}[counting partial-factors]\label{prop:almost-factor}
For all~$0<\eta,d\le 1$ there exists~$\e>0$ and~$C>0$ such that  the following holds for all sufficiently large~$n \in \N$ and for any~$p \geq C(\log n)^{1/3} n^{-2/3}$.   If~$\,\Gamma$ is an~$(\e,d)$-regular tripartite graph with parts of size~$n$,  then whp we have that
	\begin{equation}\label{eq:almost-factor}
	\card{\Psi^{t}(\Gamma_p)} \geq (1-\eta)^{t} (p d)^{3t} \left(n!_t\right)^3,
	\end{equation}
	for all~$t\in \N$ with~$t \leq (1-\eta)n$.
\end{prop}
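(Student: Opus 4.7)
I would proceed by induction on $t$. The base case $t=0$ is immediate since $|\Psi^0(\Gamma_p)| = 1$. For the inductive step, use the decomposition
\[
|\Psi^t(\Gamma_p)| \;=\; \sum_{\psi \in \Psi^{t-1}(\Gamma_p)} N(\psi),
\]
where $N(\psi)$ denotes the number of triangles in $\Gamma_p$ with one vertex in each of the residual parts $V^i \setminus \{\psi((s,i)) : s \in [t-1]\}$ for $i \in [3]$. Given the inductive lower bound on $|\Psi^{t-1}(\Gamma_p)|$, it then suffices to show that whp
\[
\sum_{\psi \in \Psi^{t-1}(\Gamma_p)} N(\psi) \;\ge\; (1-\eta)(pd)^3(n-t+1)^3 \cdot |\Psi^{t-1}(\Gamma_p)|,
\]
since $(n!_t)^3/(n!_{t-1})^3 = (n-t+1)^3$, which closes the induction with a multiplicative loss of $(1-\eta)$ per step.

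For any fixed $\psi \in \Psi^{t-1}(\Gamma)$, the residual tripartite subgraph of $\Gamma$ has parts of size $n-t+1 \ge \eta n$ and, by the slicing lemma (\cref{lem:reg-slicing}), is $(\e',d)$-regular for a suitable $\e'$. By \cref{lem:super-reg-triangle-count} it contains $(1\pm o(1))\,d^3(n-t+1)^3$ triangles. Since $p^3 n^3 \ge C^3 \log n$, Janson's inequality (\cref{lem:janson}) together with the Kim--Vu inequality (\cref{lem:Kim--Vu}) supply very tight concentration of the triangle count in the residual part of $\Gamma_p$ around its expectation $\approx p^3 d^3 (n-t+1)^3$, for any individual $\psi$. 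The main obstacle is that the number of partial embeddings $\psi$ is exponentially large in $n$, so that a direct union bound over $\psi$ is far from affordable; worse, the random variables $N(\psi)$ are highly correlated, so the sum cannot simply be bounded by the typical value times the count.

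To surmount this, I would invoke the Local Distribution Lemma (\cref{lem:LDL}), which I expect to give uniform control on how a partial embedding sampled uniformly from $\Psi^{t-1}(\Gamma_p)$ is distributed across the vertex sets $V^1,V^2,V^3$. Such control lets one replace the intractable supremum over $\psi$ by averaged quantities that can be handled via concentration at the level of edge-triples of $\Gamma_p$ rather than at the level of individual partial embeddings. Concretely, the goal would be to show that all but a vanishing fraction of $\psi \in \Psi^{t-1}(\Gamma_p)$ satisfy $N(\psi) \ge (1-\eta/2)\, p^3 d^3 (n-t+1)^3$, with the remaining $\psi$ contributing negligibly to the sum. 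A final union bound over $t \le (1-\eta)n$ chains the whp events together, which is affordable because the tail probabilities can be made smaller than any polynomial in $n$ by choice of $C$.

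The main technical challenge is the application of the Local Distribution Lemma: a naive union bound over partial embeddings cannot work at this value of $p$, so some form of uniform distributional control on extensions is essential, and teasing out exactly what form of that control is needed, and invoking it cleanly, is where the real work lies. Granting \cref{lem:LDL} as a black box, the rest of the argument should reduce to a relatively standard inductive counting built on the regularity tools of \cref{sec:prelim} together with the concentration inequalities for triangle counts mentioned above.
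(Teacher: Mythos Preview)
Your induction framework is exactly right and matches the paper's approach. The gap is in your assessment of the ``main obstacle'': there is no obstacle, and the Local Distribution Lemma is not needed here.

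The point you are missing is that $N(\psi)$ does not depend on $\psi$ itself, only on the three residual sets $X_i=V^i\setminus\psi([t-1]\times\{i\})$. So you do not need a union bound over the super-exponentially many $\psi$, but only over the at most $2^{3n}$ triples $(X_1,X_2,X_3)$ of subsets. For each such triple with $|X_i|\ge\eta n$, Janson's inequality (exactly as in the paper's \cref{lem:triangle-count}) gives a failure probability of order $e^{-cn}$, which comfortably beats $2^{3n}$. Once you have the uniform triangle count of \cref{lem:triangle-count} as a single whp event, the induction is purely deterministic: every $\psi\in\Psi^{t-1}(\Gamma_p)$ has $N(\psi)\ge(1-\eta)(pd)^3(n-t+1)^3$, and the bound~\eqref{eq:almost-factor} follows immediately. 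No averaging, no ``vanishing fraction of bad $\psi$'', no distributional control is required.

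Invoking \cref{lem:LDL} here is both unnecessary and structurally backwards. In the paper's architecture, \cref{prop:almost-factor} is the \emph{easy} proposition, proved directly from \cref{lem:triangle-count}; the Local Distribution Lemma is reserved for the genuinely hard \cref{prop:full-step}, where $t>(1-\eta)n$, the residual sets have size $o(n)$, and the union-bound-over-subsets argument breaks down because the Janson tail no longer beats $2^{3n}$. That is precisely the regime in which entropy and the LDL enter. Your proposal inverts this logic and would, at best, make the proof of \cref{prop:almost-factor} depend on the hardest part of the paper for no gain.
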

Here the condition~\cref{eq:almost-factor} should be read as~$\Gamma_p$ having roughly the `correct' number of embeddings of~$D_t$,  the graph with~$t$ labelled disjoint triangles. Indeed, the term~$(p d)^{3t} \left(n!_t\right)^3$ is the expected number of embeddings of~$D_t$ in a random sparsification of the complete tripartite graph with probability~$pd$, which provides a sensible benchmark for our model~$\Gamma_p$. The~$(1-\eta)^t$ factor is then an error term which we can control.
In order to go beyond \cref{prop:almost-factor} to counting subgraphs~$D_t$ with larger~$t$,  we need different techniques. Our second proposition allows us to extend partial triangle factors by embedding further triangles one by one.
\begin{prop}[extending by one triangle]\label{prop:full-step}
	 For all~$0<d\le 1$ there exists~$\alpha,\eta,\e>0$ and~$C>0$ such that  for all sufficiently large~$n \in \N$ and for any~$p \geq C(\log n)^{1/3} n^{-2/3}$,  if~$\,\Gamma$ is an~$(\e,d)$-super-regular tripartite graph with parts of size~$n$, then whp the following holds in~$\Gamma_p$  for every~$t\in \N$ with~$\left(1-\eta\right)n \leq t < n$.
	If \begin{equation} \label{eq:full-step1}
	    \card{\Psi^t(\Gamma_p)} \geq \left( 1-\eta\right)^n (pd)^{3t}(n!_t)^3, 	\end{equation} then
	\begin{equation} \label{eq:full-step2} \card{\Psi^{t+1}(\Gamma_p)} \geq \alpha (pd)^3(n-t)^3\card{\Psi^t(\Gamma_p)}.\end{equation}
\end{prop}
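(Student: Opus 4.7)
The plan is to establish~\cref{eq:full-step2} by averaging the number of extensions over $\psi\in\Psi^t(\Gamma_p)$. An embedding~$\psi'\in\Psi^{t+1}(\Gamma_p)$ is uniquely specified by its restriction~$\psi\in\Psi^t(\Gamma_p)$ to its first~$t$ triangles, together with the~$(t+1)$-st triangle, which must be a triangle of~$\Gamma_p$ using one vertex from each remaining part~$V^i_{\hat\psi}$. Hence
\[
|\Psi^{t+1}(\Gamma_p)| = \sum_{\psi\in\Psi^t(\Gamma_p)} \bigl|K_3\bigl(\Gamma_p[V^1_{\hat\psi},V^2_{\hat\psi},V^3_{\hat\psi}]\bigr)\bigr|,
\]
and it suffices to show that, for~$\psi$ chosen uniformly from~$\Psi^t(\Gamma_p)$,
\[
\EE\bigl[|K_3(\Gamma_p[V^1_{\hat\psi},V^2_{\hat\psi},V^3_{\hat\psi}])|\bigr] \geq \alpha (pd)^3(n-t)^3.
\]

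Swapping the order of summation, this expectation equals $\sum_{\tau\in K_3(\Gamma_p)}\Pro{V(\tau)\cap V(\psi)=\emptyset}$, where~$V(\psi)$ denotes the~$3t$ vertices covered by~$\psi$. I would first apply Kim--Vu concentration (\cref{lem:Kim--Vu}) together with the regularity triangle count (\cref{lem:super-reg-triangle-count}) to obtain that whp~$|K_3(\Gamma_p)| \geq \tfrac{1}{2} p^3 d^3 n^3$; the assumed lower bound on~$p$ is precisely what drives the expected triangle count to infinity and triggers concentration. The decisive step is then to invoke the Local Distribution Lemma (\cref{lem:LDL}) to conclude that for a~$(1-o(1))$-fraction of~$\tau\in K_3(\Gamma_p)$, we have $\Pro{V(\tau)\cap V(\psi)=\emptyset} \geq c\bigl(\tfrac{n-t}{n}\bigr)^3$ for some constant~$c=c(d)>0$, i.e.\ comparable to the value one would expect were the~$t$ triangles of~$\psi$ a uniformly random triangle matching. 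The role of hypothesis~\cref{eq:full-step1} is to ensure that~$|\Psi^t(\Gamma_p)|$ is large enough for the uniform measure on~$\Psi^t(\Gamma_p)$ to inherit the quasi-randomness required for the LDL to give useful output.

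Combining the two bounds gives
\[
\EE\bigl[|K_3(\Gamma_p[V^1_{\hat\psi},V^2_{\hat\psi},V^3_{\hat\psi}])|\bigr] \geq \tfrac{c}{2}\, p^3 d^3 n^3 \cdot \bigl(\tfrac{n-t}{n}\bigr)^3 = \tfrac{c}{2}(pd)^3(n-t)^3,
\]
which yields~\cref{eq:full-step2} for a suitable~$\alpha=\alpha(d)>0$.

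The principal obstacle will be extracting a \emph{joint} three-vertex estimate from the LDL. What comes out most naturally from such a lemma is a single-vertex statement of the form $\Pro{v\in V^i_{\hat\psi}}\approx \tfrac{n-t}{n}$, whereas the event $V(\tau)\cap V(\psi)=\emptyset$ concerns three vertices in three different parts and one must rule out that their ``unused'' events become strongly negatively correlated under the uniform measure on~$\Psi^t(\Gamma_p)$. Either the LDL itself must be formulated in a genuinely multi-vertex form, or its single-vertex form must be iterated while carefully tracking how conditioning on one vertex being unused degrades the lower bound on~$|\Psi^t(\Gamma_p)|$ available for conditioning on the next. Managing this interaction without losing~\cref{eq:full-step1} along the way is, I expect, the most delicate part of the argument.
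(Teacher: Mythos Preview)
Your high-level decomposition matches the paper's: express $|\Psi^{t+1}(\Gamma_p)|$ as a sum over extensions and use the LDL to control avoidance. But there is a genuine gap at the step where you claim the LDL yields that a $(1-o(1))$-fraction of $\tau\in K_3(\Gamma_p)$ satisfy $|\Psi^t_{\hat\tau}(\Gamma_p)|\ge c\bigl(\tfrac{n-t}{n}\bigr)^3|\Psi^t(\Gamma_p)|$.

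Iterating the LDL over $\ell=1,2,3$ shows that all but $O(\alpha n^3)$ \emph{triples} in $V^1\times V^2\times V^3$ are good. However $|K_3(\Gamma_p)|\approx (pd)^3n^3=\Theta(n\log n)$, which is negligible compared to $\alpha n^3$; so the bad set of triples could in principle swallow every triangle of $\Gamma_p$. The obstacle you flag in your final paragraph --- negative correlation among the three avoidance events --- is real but secondary. The primary issue is that the LDL excludes only an $\alpha$-fraction of the dense set of all triples, whereas the triangles of $\Gamma_p$ are a \emph{sparse random subset} of density $(pd)^3\ll\alpha$, and nothing prevents this random subset from landing entirely inside the bad set.

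The paper resolves this via an intermediate result, \cref{lem:embed}, which fixes $v_1\in V^1$ and lower bounds $|\Psi^{t+1}_{v_1}(\Gamma_p)|$. The key device is a two-stage revealing argument (\cref{lem:two stage reveal}): whether a pair $(u_2,u_3)$ is ``sound'' --- i.e.\ whether $|\Psi^t_{\hat v_1,\hat u_2,\hat u_3}(\Gamma_p)|$ is large --- can be decided without revealing edges incident to $v_1$ (or to $u_2$, at the preceding step), and is therefore \emph{independent} of whether $\{v_1,u_2,u_3\}$ forms a triangle in $\Gamma_p$. This independence is exactly what lets one conclude that the expected proportion of sound pairs actually become triangles at $v_1$. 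A further wrinkle is that soundness as naturally defined refers to $|\Psi^t_{\hat v_1}(\Gamma_p)|$, which depends on all of $\Gamma_p$; the paper handles this by dyadically ``grading'' the possible values of $|\Psi^t_{\hat v_1}(\Gamma_p)|$ (\cref{clm:graded}) and proving the statement separately at each grade. This revealing/independence mechanism is the idea absent from your sketch, and without it the step from ``most triples are good'' to ``most triangles of $\Gamma_p$ are good'' does not go through.
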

Again the condition~\cref{eq:full-step1} in \cref{prop:full-step} should be read as~$\Gamma_p$ having roughly the `correct' number of embeddings of~$D_t$ and condition~\cref{eq:full-step2} then implies that~$\Gamma_p$ has  roughly the `correct' number of embeddings of~$D_{t+1}$. In contrast to \cref{prop:almost-factor} we now lose control of the error term (given by~$\alpha$) but as we will only apply \cref{prop:full-step} for large~$t$, we can make sure the error term does not accumulate too much. Indeed, recall that our goal is merely to obtain one triangle factor in the end.

We now show how \cref{thm:main-super-reg} follows from these two intermediate propositions before outlining the proofs of these propositions.
\begin{proof}[Proof of \cref{thm:main-super-reg}]
Given~$d$ choose~$0<\e,\tfrac{1}{C}\ll \eta \ll \alpha \ll d$ and note that by choosing~$C>0$ sufficiently large,  we can assume that~$n$ is sufficiently large in what follows, as otherwise the statement is trivially true. Let us fix~$\Gamma$ to be an~$(\e,d^+)$-super-regular tripartite graph with parts of size~$n$. We can assume that~$dn^2\in\NN$. Indeed, if this is not the case,  then replace~$d$ with the minimum~$d'>d$ such that~$d'n^2\in \NN$ and note that, after redefining~$d$ (if necessary), we maintain that~$\Gamma$  is~$(\e,d^+)$-super-regular.  Now let~$\Gamma'$ be the~$(4\sqrt{\e},d)$-super-regular tripartite graph obtained by applying \cref{lem:reg-exact-density} between each of the parts of~$\Gamma$. As~$\Gamma'$ is a spanning subgraph of~$\Gamma$ it suffices to find our triangle factor in~$\Gamma'$.

Note that by our choice of constants, we have that whp both the conclusion of \cref{prop:almost-factor} (with~$\eta$ replaced by~$\eta^2$) and the conclusion of  \cref{prop:full-step} hold in~$\Gamma'$ simultaneously.
We will now assume they hold and show that this implies
\begin{equation}\label{eq:main-super-reg}
  \card{\Psi^{t}(\Gamma'_p)} \geq \left(1-\eta^2\right)^n\alpha^{t-\left(1-\eta^2\right)n} (p d)^{3t} \left(n!_t\right)^3,
\end{equation}
for all~$\left(1-\eta^2\right)n \leq t \leq n$.
Indeed, for~$t = \left(1-\eta^2\right)n$,~\cref{eq:main-super-reg} readily follows from (the assumed conclusion of) \cref{prop:almost-factor}. Assume now~\cref{eq:main-super-reg} holds for some~$\left(1-\eta^2\right)n \leq t < n$.
Since~$\eta \ll \alpha$, we have that
\begin{align*}
 \left(1-\eta^2\right)^n\alpha^{t-\left(1-\eta^2\right)n}
  &\geq \left(1-\eta^2\right)^n \alpha^{\eta^2 n}
  = \left(1-\eta^2\right)^n e^{-\log(1/\alpha)\eta^2 n}\\
  &\geq \left(1-\eta^2\right)^n \left(1 - \log\big(\tfrac{1}{\alpha}\big) \eta^2\right)^n
  \geq \left(1-\eta\right)^n.
\end{align*}
It follows from (the assumed conclusion of) \cref{prop:full-step} that~\cref{eq:main-super-reg} holds for~$t+1$.
In particular, we have
\[\card{\Psi^{n}(\Gamma_p)} \geq \card{\Psi^{n}(\Gamma'_p)} \geq  \left(1-\eta^2\right)^n\alpha^{\eta^2 n} (p d)^{3n} \left(n!\right)^3 \geq 1,\]
completing the proof.
\end{proof}

Thus it remains to prove \cref{prop:almost-factor,prop:full-step}.
Proving \cref{prop:almost-factor} is relatively straightforward: It follows
from embedding the triangles of~$D_t$ one by one greedily and counting in how
many ways we can embed each such triangle by using that all large enough vertex
sets whp induce roughly the `correct' number of triangles in~$\Gamma_p$,
which we establish in \cref{lem:triangle-count} using regularity and Janson's
inequality (\cref{lem:janson}).  The details for deriving
\cref{prop:almost-factor} are provided in \cref{sec:almostfactors}.

\smallskip

The proof of \cref{prop:full-step} is much more involved and the main challenge of this paper. In order to count embeddings of partial triangle factors in~$\Psi^{t+1}(\Gamma_p)$, one na\"ive idea would be to proceed as follows: We fix any triple~$\uu=(u_1,u_2,u_3)\in \pzc{V}$ of vertices and count in how many partial triangle factors from~$\Psi^t(\Gamma_p)$ these are isolated.
If this number were roughly the same for each triple of vertices
then we would be able to bound the size of~$\Psi^{t+1}(\Gamma_p)$ using bounds on how many triples actually form triangles in~$\Gamma_p$ to extend a partial triangle factor from~$\Psi^t(\Gamma_p)$ by one triangle.
However, we do not know how to prove that all triples of vertices behave similarly in this sense.
Hence, we need to resort to a more refined strategy, still considering embeddings which leave certain vertices isolated, but doing so in stages, growing our set of isolated vertices one vertex at a time.
This step-by-step process is made precise in the following Local Distribution Lemma, which is a key step of our argument.
We will show that this lemma implies \cref{prop:full-step} in \cref{sec:extendingfactors}.
\begin{lemma}[Local Distribution Lemma]\label{lem:LDL}
	For all~$0<\alpha,d\le 1$ and~$K>0$ there exists~$\eta,\e>0$ and~$C>0$ such that  for all sufficiently large~$n \in \N$ and for any~$p \geq C(\log n)^{1/3} n^{-2/3}$,  if~$\,\Gamma$ is an~$(\e,d)$-super-regular tripartite graph with parts of size~$n$, if $t\in \N$ is such that~$\left(1-\eta\right)n \leq t < n$, if $\ell\in[3]$ and $\uu=(u_1,\ldots,u_{\ell-1})\in \pzc{V}$ then the following holds in~$\Gamma_p$ with probability at least~$1-n^{-K}$. If
	\begin{equation} \label{eq:LDLhypothesis}\card{\Psi_{\hat{\uu}}^t(\Gamma_p)} \geq (1-\eta)^n (pd)^{3t}((n-1)!_{t})^{\ell-1} (n!_t)^{4-\ell},\end{equation}
	then for all but at most~$\alpha n$ vertices~$u_\ell\in V^\ell~$
        we have, with~$\uv=(\uu,u_{\ell})\in \pzc{V}$, that        
	\begin{equation}\label{eq:LDL}
		\card{ \Psi_{\hat{\uv}}^{t}(\Gamma_p)} \geq \left(\frac{d}{10}\right)^2  \left(\frac{n-t}{n}\right)\card{\Psi_{\hat{\uu}}^t(\Gamma_p)}\,.
  \end{equation}
\end{lemma}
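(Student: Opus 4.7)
The plan is to fix a good event on $\Gamma_p$ and then argue deterministically via entropy. Let $\Psi$ be a uniformly random element of $\Psi_{\hat{\uu}}^t(\Gamma_p)$, and show that its distribution is sufficiently close to uniform on its support that, for all but $\alpha n$ vertices $u_\ell\in V^\ell$, the probability $\Pro{\Psi\in\Psi_{\hat{\uv}}^t(\Gamma_p)}$ is at least $(d/10)^2(n-t)/n$. Since this probability equals $\card{\Psi_{\hat{\uv}}^t(\Gamma_p)}/\card{\Psi_{\hat{\uu}}^t(\Gamma_p)}$, that is precisely~\cref{eq:LDL}.

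The good event on $\Gamma_p$ should capture, for every vertex $v\in V(\Gamma)$ and every not-too-large set $S\subset V(\Gamma)$, that both $\card{N_{\Gamma_p}(v;V^i\setminus S)}$ and the number of triangles at $v$ in $\Gamma_p$ avoiding $S$ are close to their expectations (computable from the super-regularity of $\Gamma$). Since $p\ge C(\log n)^{1/3}n^{-2/3}$, these expected counts are large enough for Chernoff (\cref{thm:chernoff}), Janson (\cref{lem:janson}) and Kim--Vu (\cref{lem:Kim--Vu}) to yield sharp concentration, and a union bound over the relevant $S$ leaves failure probability at most~$n^{-K}$. Writing $\Psi=(\Psi^1,\Psi^2,\Psi^3)$ with $\Psi^i$ the ordered $t$-tuple of part-$i$ images, decompose $h(\Psi)$ using the chain rule (\cref{lem:chain rule}), ordering so that $\Psi^\ell$ appears first. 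The hypothesis~\cref{eq:LDLhypothesis} combined with \cref{lem:maximality of the uniform} gives
\[
h(\Psi)=\log\card{\Psi_{\hat{\uu}}^t(\Gamma_p)}\ge\log\bigl[(pd)^{3t}((n-1)!_t)^{\ell-1}(n!_t)^{4-\ell}\bigr]-O(\eta n).
\]

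The heart of the argument will be a matching upper bound on $h(\Psi)$ from the Entropy Lemma (\cref{lem:entropy lemma}): bound the conditional entropies $h(\Psi^{\ell'}\mid\Psi^\ell)$ and $h(\Psi^{\ell''}\mid\Psi^\ell,\Psi^{\ell'})$ by quantities controlled by the good event, with the trivial bound $h(\Psi^\ell)\le\log(n!_t)$ for the first factor. Summed, these should match the lower bound up to $O(\eta n)$, forcing $h(\Psi^\ell)$ to lie within $O(\eta n)$ of $\log(n!_t)$, and hence the entropy of the induced unordered image $X^\ell\subset V^\ell$ to lie within $O(\eta n)$ of $\log\binom{n}{t}$. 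Applying \cref{lem:large-entropy} with parameter $\beta\ll\alpha$ and taking $\eta$ small enough relative to $\beta$, the distribution of $X^\ell$ is almost uniform on $\binom{V^\ell}{t}$; an averaging argument over $V^\ell$ then gives $\Pro{u_\ell\notin X^\ell}\ge(1-O(\beta))(n-t)/n$ for all but $\alpha n$ vertices $u_\ell$, which exceeds $(d/10)^2(n-t)/n$ once $\beta$ is a small absolute constant. The main obstacle I anticipate is the Entropy Lemma itself: producing an upper bound on $h(\Psi)$ sharp enough to match the hypothesised lower bound requires combining the super-regularity of $\Gamma$ with sharp concentration in $\Gamma_p$, whereas the subsequent steps are either routine concentration estimates or standard manipulations of the entropy framework from~\cref{sec:entropybasics}.
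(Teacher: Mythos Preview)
Your approach has a genuine quantitative gap at the step where you pass from ``$h(X^\ell)$ is close to $\log\binom{n}{t}$'' to a statement about individual vertices. The hypothesis~\cref{eq:LDLhypothesis} gives an entropy deficit of order $\eta n$ (from the factor $(1-\eta)^n$), and no matching upper bound can do better than this, so at best you obtain $h(X^\ell)\ge\log\binom{n}{t}-\Theta(\eta n)$. But \cref{lem:large-entropy} requires a \emph{constant} deficit $\beta'$, not one growing with $n$. Even ignoring this, the conclusion of \cref{lem:large-entropy} with parameter $\beta$ yields only
\[
\Pro{u_\ell\notin X^\ell}\;\ge\;(1-\beta)\Bigl(\tfrac{n-t}{n}-\beta\Bigr),
\]
which is useless because $\tfrac{n-t}{n}\le\eta$ while $\beta$ is a fixed constant; to get anything you would need $\beta\ll\eta$, and then $\beta'(\beta)$ is a tiny constant, far smaller than the actual deficit $\eta n$. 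Concretely, an entropy deficit of $\eta n$ in $h(X^\ell)$ is consistent with $X^\ell$ deterministically containing a fixed set $B$ of $\Theta(n)$ vertices (each such constraint costs entropy $\log(n/t)\approx\eta$), giving $\Pro{u_\ell\notin X^\ell}=0$ for all $u_\ell\in B$ --- far more than $\alpha n$ exceptions.

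The paper circumvents this by first \emph{averaging}: since $\sum_{u}\card{\Psi_{\hat{\uu},\hat u}^t}=(n-t)\card{\Psi_{\hat{\uu}}^t}$, there is some $u^*\in V^\ell$ with $\card{\Psi_{\hat{\uu},\hat{u^*}}^t}\ge\tfrac{n-t}{n}\card{\Psi_{\hat{\uu}}^t}$. The problem then reduces to a comparison (\cref{lem:compare}): show $\card{\Psi_{\hat{\uu},\hat v}^t}\ge(d/10)^2\card{\Psi_{\hat{\uu},\hat{u^*}}^t}$ for all but $\alpha n$ vertices $v$. Entropy is used here, but on the \emph{per-vertex} random variable $\psi_v^*\in\tr_v(\Gamma_p)$ (the triangle at $v$ in a uniform $\psi^*\in\Psi_{\hat{\uu},\hat{u^*}}^t$), not on the global set $X^\ell$. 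The Entropy Lemma (\cref{lem:entropy lemma}) shows $h(\psi_v^*\mid Y_v=1)$ is within a \emph{constant} of its maximum $H(n,p,d)$ for most $v$ --- the global $\eta n$ deficit is spread over $n$ vertices, so most have $O(1)$ deficit, and now \cref{lem:large-entropy} genuinely applies. This near-uniformity is then fed into a switching argument: edges in the common $\Gamma$-neighbourhood of $u^*$ and $v$ carry weights counting extensions, the Entropy Lemma forces these weights to be well-behaved, and a two-stage reveal of random edges at $u^*$ and $v$ (\cref{lem:two stage reveal}) transfers embeddings avoiding $u^*$ to embeddings avoiding $v$. Your proposed ``good event'' is also problematic (a union bound over all not-too-large $S$ is exponential in $n$ while the concentration at threshold $p$ is only polynomial), but this is secondary; the scaling mismatch in the entropy step is the essential obstruction.
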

Again,~\cref{eq:LDLhypothesis} should be read as~$\Gamma_p$ having roughly the `correct' number (up to the error term~$(1-\eta)^n$) of embeddings of~$D_t$ that avoid using vertices in~$\uu$, where correct means what we expect in a random sparsification of the complete tripartite graph with probability~$pd$. The conclusion of \cref{lem:LDL} then tells us that  that for most choices of extending~$\uu$ to~$\uv$, we have roughly the correct number of embeddings of~$D_t$ that avoid using the vertices in~$\uv$.

For proving \cref{prop:full-step} in \cref{sec:extendingfactors}, we shall use  \cref{lem:LDL} with~$\ell=2$ and~$\ell=3$ to prove a lemma, \cref{lem:embed}, which states that if for a vertex~$w\in V^1$ we have
\begin{equation} \label{eq:many avoiding w}
\card{\Psi_{\hat{w}}^t(\Gamma_p)} \geq \left(1-\eta\right)^n (pd)^{3t}(n-1)!_t (n!_t)^2,    
\end{equation}
then
\begin{equation} \label{eq:many fixing w}
\card{\Psi_w^{t+1}(\Gamma_p)} \geq \alpha (pd)^3  (n-t)^2\card{\Psi_{\hat{w}}^t(\Gamma_p)}\,,\end{equation}
where we recall that~$\Psi_{w}^t(G)$ is the set of embeddings~$\psi \in \Psi^t(G)$ for which~$\psi((1,1)) = w$, that is, the first triangle is embedded so that its first vertex is~$w$. Indeed, using \cref{lem:LDL}, we can see that if there are many embeddings of $D_t$ avoiding $w$ \eqref{eq:many avoiding w}, then for almost all choices  of further vertices $w_2\in V^2$ and $w_3\in V^3$, there will be many embeddings of $D_t$ avoiding all $3$ vertices $w,w_2,w_3$. Intuitively, \eqref{eq:many fixing w} then follows due to the fact that we can expect many of these triangles $w,w_2,w_3$ to feature in $\Gamma_p$ and each triangle that does, gives an embedding of $D_{t+1}$ which maps $w$ to a triangle. We have to be very careful with the dependence of these different random variables here and the essence of the proof of \cref{lem:embed} (which is done in \cref{sec:extendingfactors}) is to work with random variables that are independent of each other. Now  together with \cref{lem:LDL} for~$\ell=1$ and our assumption~\cref{eq:full-step1}, using the conclusion of \cref{lem:embed} (namely \eqref{eq:many fixing w}),    \cref{prop:full-step} follows readily as almost all choices of $w\in V^1$ satisfy \eqref{eq:many avoiding w}.

We will now sketch some of the ideas involved in proving \cref{lem:LDL}. To ease the discussion, let us fix~$\ell=1$ and hence~$\uu=\emptyset$; the other cases are similar. In this case our assumption~\cref{eq:LDLhypothesis} simply states that~$\Gamma_p$ has roughly the correct number of embeddings of~$D_t$ and a simple averaging argument will find some~$u=u_\ell$ for which~\cref{eq:LDL} holds with~$\uv=u$.
Fix some such vertex~$u$.
The challenge now is to show that~\cref{eq:LDL} holds for \emph{almost all} choices of~$u_\ell$.

In order to do this, we fix some typical vertex~$v\in V^1\setminus \{u\}$. We will aim to lower bound the size of~$\Psi^t_{\hat v}(\Gamma_p)$ by comparing it to the size of~$\Psi^t_{\hat u}(\Gamma_p)$. Let us suppose, momentarily,  that~$\tr_u(\Gamma_p)=\tr_v(\Gamma_p)$. In such a case, we can easily compare the sizes of~$\Psi^t_{\hat v}(\Gamma_p)$ and~$\Psi^t_{\hat u}(\Gamma_p)$. Indeed, for every embedding~$\psi\in \Psi^t_{\hat u}(\Gamma_p)$ there are  two cases.
Firstly, if~$v$ is not in a triangle in~$\psi(D_t)$ then~$\psi\in \Psi^t_{\hat v}(\Gamma_p)$ already.
Secondly, if~$v$ is in a triangle~$\{v,w_2,w_3\}$ of~$\psi(D_t)$, then~$\tr_u(\Gamma_p)=\tr_v(\Gamma_p)$ implies that~$\{u,w_2,w_3\}$ is also a triangle, hence we can
switch the triangle~$\{v,w_2,w_3\}$ with~$\{u,w_2,w_3\}$ in~$\psi$ to get an embedding~$\psi'\in \Psi^t_{\hat v}(\Gamma_p)$. This gives an injection from~$\Psi^t_{\hat u}(\Gamma_p)$ to~$\Psi^t_{\hat v}(\Gamma_p)$, proving that~$\Psi^t_{\hat v}(\Gamma_p)$ is also of roughly the `correct' size. 

Of course, the situation that~$\tr_u(\Gamma_p)=\tr_v(\Gamma_p)$ is wildly unrealistic. Let us loosen this and suppose instead  that
\begin{equation} \label{eq:simplified}\card{\tr_u(\Gamma_p)\cap\tr_v(\Gamma_p)}=\Omega( p^3n^2)\,.\end{equation}
As we expect every vertex to be in~$\Theta(p^3n^2)$ triangles,~\cref{eq:simplified} can be interpreted as saying that a constant fraction of the set of edges that form a triangle with~$v$, also form a triangle with~$u$. We can only expect this to happen when~$p$ is constant and this is also a gross oversimplification of our setting but serves to demonstrate a key idea of the proof. So for now, we take~\cref{eq:simplified} to be the case and note that as above, we can perform a switching, replacing triangles containing~$v$ with triangles containing~$u$ to map embeddings in~$\Psi^t_{\hat u}(\Gamma_p)$ to embeddings in~$\Psi^t_{\hat v}(\Gamma_p)$, whenever the embedding~$\psi\in \Psi^t_{\hat u}(\Gamma_p)$ has~$v$ in a triangle~$\{v,w_2,w_3\}$ such that~$\{w_2,w_3\}\in \tr_u(\Gamma_p)$. We have, by~\cref{eq:simplified}, that a constant proportion of the triangles containing~$v$ can be switched in this way but we do \emph{not} know that this translates to having a constant proportion of the \emph{embeddings} in~$\Psi^t_{\hat u}(\Gamma_p)$ being switchable. It could well be that almost all (or even all) of the embeddings in~$\Psi^t_{\hat u}(\Gamma_p)$ map~$v$ to a triangle~$\{v,w_2,w_3\}$ such that~$\{u,w_2,w_3\}\notin K_3(\Gamma_p)$. What we need then, is to be able to discount such a situation and show that each triangle containing~$v$ contributes to roughly the same number of embeddings~$\psi\in \Psi^t_{\hat u}(\Gamma_p)$. Put differently,  when we consider a \emph{uniformly random embedding}~$\psi^*\in \Psi^t_{\hat u}(\Gamma_p)$, we want that the random variable~$T_v$, which encodes the triangle containing~$v$ in~$\psi^*(D_t)$, induces a  roughly uniform distribution  on the set~$\tr_v(\Gamma_p)$. Note that it is possible that~$\psi^*$ leaves~$v$ isolated but this is unlikely (as~$t$ is large) and so we ignore this possibility for this discussion. 

We can now see how entropy enters the picture as it provides a tool for studying distributions, and how far they are from being uniform.
Let us now consider~$v$ as not fixed any more.
Our argument will take a uniformly random~$\psi^*\in \Psi_{\hat u}^t(\Gamma_p)$ and consider the random variables~$T_v$ which describe the triangle containing each vertex~$v\in V^1$. Due to the fact that~$\Psi_{\hat u}^t(\Gamma_p)$ is roughly the `correct' size, we have that~$\psi^*$ has large entropy. Moreover,~$\psi^*$ is completely described (up to labelling) by the set~$\{T_v:v\in V^1\}$ and so the entropy of~$\psi^*$ can be decomposed as a sum of  individual entropy values~$h(T_v)$ of the~$T_v$, using the chain rule (\cref{lem:chain rule}) for example. We will be able to use random properties of~$\Gamma_p$ (for example that no vertex is in too many triangles) to conclude that no single~$T_v$ has too large entropy. This will thus imply that for almost all vertices~$v\in V^1$, the entropy of~$T_v$ is large. Therefore, by applying \cref{lem:large-entropy}, we will be able to conclude that for a typical vertex~$v\in V^1$, the random variable~$T_v$ induces a roughly uniform distribution  on~$\tr_v(\Gamma_p)$, as desired. This idea is formalised in what we call the Entropy Lemma (\cref{lem:entropy lemma}). 

\smallskip

Our discussion above is premised on~\cref{eq:simplified}. In reality,  a typical vertex~$v$ will have~$\tr_v(\Gamma_p)$ completely disjoint from~$\tr_u(\Gamma_p)$ and so the  switching argument outlined above cannot possibly work. However, we can still compare the sizes of~$\Psi_{\hat u}^t(\Gamma_p)$ and~$\Psi_{\hat v}^t(\Gamma_p)$ by noting that a constant proportion of~$\tr_u(\Gamma_p)$ and~$\tr_v(\Gamma_p)$ are \emph{drawn from the same distribution}. By this we mean the following. For a typical~$v$, by using regularity properties, there will be~$\Omega(n^2)$ edges~$F\subset E(\Gamma)$ in the joint neighbourhood (with respect to~$\Gamma$)  of~$u$ and~$v$. Consider revealing all edges in~$\Gamma_p$ apart from those incident to~$u$ or~$v$.
After this,~$F_p:=F\cap E(\Gamma_p)$ is revealed and
whp has size~$\card{F_p}=\Omega(pn^2)$; each edge~$e\in F_p$ has the potential to land in both~$\tr_v(\Gamma_p)$ and~$\tr_u(\Gamma_p)$, depending on which random edges incident to~$u$ and~$v$ appear.

Moreover, without
having revealed the random edges incident to~$u$ or~$v$ yet, we can associate a \emph{weight} to the edges~$e$ in~$F_p$, which encodes the number of embeddings of~$D_{t-1}$ in~$\Gamma_p$, which avoid~$u$,~$v$ and the vertices of~$e$. Now, revealing the edges incident to~$v$, we have that for every~$e\in \tr_v(\Gamma_p)\cap F_p$, the probability that a uniformly random embedding~$\psi^*\in \Psi^t_{\hat u}(\Gamma_p)$ uses the triangle~$\{v\}\cup e$, is directly proportional to the weight of~$e$ in~$F_p$. The Entropy Lemma (\cref{lem:entropy lemma}) discussed above tells us that the random variable~$T_v\in \tr_v(\Gamma_p)$, encoding the triangle containing~$v$ in a uniformly random~$\psi^*\in \Psi^t_{\hat u}(\Gamma_p)$, has a roughly uniform distribution in~$\tr_v(\Gamma_p)$. From this, we can deduce that the weights of edges in~$F_p$ are `well-behaved' in that many of the edges in~$F_p$ have a sufficiently large weight. This in turn gives that~$\Psi^t_{\hat v}(\Gamma_p)$ will be large, as when we reveal the edges incident to~$u$, we can expect that~$\tr_u(\Gamma_p)$ contains many (i.e.~$\Omega(p^3n^2)$) edges  of large weight from~$F_p$. Each such edge~$e$ contributes many embeddings in~$\Psi^t_{\hat v}(\Gamma_p)$ which map~$u$ to a triangle with~$e$.

In order for all of this to work, we need our Entropy Lemma (\cref{lem:entropy lemma}) to be very strong,  due to  the fact that the edges in the~$F_p$ defined above contribute only a small fraction of edges in~$\tr_v(\Gamma_p)$.  Pushing the strength of the Entropy Lemma is one of the   main novelties of the current work, in comparison to previous arguments for triangle factors in random graphs~\cite{Allen2020+,Johansson2008}, and requires a delicate analysis. 


\section{Counting triangles in \texorpdfstring{$\Gamma_p$}{the random graph}}\label{sec:smallsubgraphs}
The purpose of this section is to prove that certain properties of~$\Gamma_p$ hold with high probability when~$\Gamma$ is a (super-)regular tripartite graph and~$p$ is sufficiently large. These properties regard triangle counts in~$\Gamma_p$ and their proofs use the properties of regular tuples given in \cref{sec:regularity} and the probabilistic tools outlined in \cref{sec:probtools}. Our first lemma gives an estimate on the number of triangles induced on vertex subsets.

\begin{lemma}\label{lem:triangle-count}
  For all~$0<\e'<d\le 1$ and~$L>0$ there exists~$\e>0$ and~$C>0$ such that  the following holds for all sufficiently large~$n \in \N$ and for any~$p \geq C(\log n)^{1/3} n^{-2/3}$.   If~$\,\Gamma$ is an~$(\e,d)$-regular tripartite graph with parts~$V^1,V^2,V^3$ of size~$n$,  then with  probability at least~$1-n^{-L}$ we have that
  \begin{equation}\label{eq:lem-triangle-count}
    \card{K_3(\Gamma_p[X_1 \cup X_2 \cup X_3])} = (p d)^3 |X_1||X_2||X_3| \pm \e' p ^3n^3,
  \end{equation}
  for all~$X_1 \subseteq V^1$,~$X_2 \subseteq V^2$ and~$X_3 \subseteq V^3$.
\end{lemma}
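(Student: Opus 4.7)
The plan is to first compute the expected triangle count via \Cref{lem:super-reg-triangle-count} ($\mathbb{E}|K_3(\Gamma_p[X_1 \cup X_2 \cup X_3])| = p^3 |K_3(\Gamma[X_1, X_2, X_3])| = (pd)^3 |X_1||X_2||X_3| \pm 10 \e p^3 n^3$ when all $|X_i| \geq \e n$), then establish uniform concentration by combining Janson for the lower tail (applied separately to each subset) with a \emph{one-shot} Kim--Vu bound on the total triangle count, glued together via a complementary-decomposition trick. Choosing $\e \ll \e'$ absorbs the expected-value error into the $\e' p^3 n^3$ slack.

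For the lower tail, I fix $(X_1, X_2, X_3)$ with $\min_i |X_i| \geq \e n$ and apply \Cref{lem:janson} to the family $\cF$ of triangles in $\Gamma[X_1, X_2, X_3]$. Since two distinct triangles share at most one edge, $\bar\Delta \leq \lambda + O(n^4 p^5)$, so $\lambda^2/\bar\Delta = \Omega(\e^6 d^6 p n^2)$. With $p n^2 \geq C (\log n)^{1/3} n^{4/3} \gg n$, Janson gives lower-tail probability $\exp(-\Omega(\e'^2 p n^2)) \ll 2^{-3n}$, so a union bound over choices of $X_1, X_2, X_3$ yields WHP a uniform lower bound $T(X_1, X_2, X_3) \geq (pd)^3 |X_1||X_2||X_3| - \tfrac{\e'}{100} p^3 n^3$ whenever all $|X_i| \geq \e n$.

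For the upper tail, the per-subset Kim--Vu bound is too weak to union bound over the $2^{3n}$ choices --- this is the main obstacle. I bypass it by applying \Cref{lem:Kim--Vu} \emph{only once}, to the total count $|K_3(\Gamma_p)|$: here $\lambda = \Theta(p^3 n^3)$ and $E' = O(\max(np^2, 1)) = O(1)$ in the regime $p \geq C(\log n)^{1/3} n^{-2/3}$, so taking $\mu = (\log n)^2$ gives concentration within $\tfrac{\e'}{100} p^3 n^3$ with probability at least $1 - n^{-L-1}$. Now, for any $(X_1, X_2, X_3)$, I decompose $V^i = X_i \cup \bar{X}_i$ where $\bar X_i := V^i \setminus X_i$; the $8$ complementary triples $(Y_1, Y_2, Y_3)$ with $Y_i \in \{X_i, \bar X_i\}$ partition the triangles, so their counts sum to $|K_3(\Gamma_p)|$ and the products $|Y_1||Y_2||Y_3|$ sum to $n^3$. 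Subtracting the Janson lower bounds on the seven other triples from the Kim--Vu upper bound on the total yields the matching upper bound on $T(X_1, X_2, X_3)$.

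A subtlety is that some $|Y_i|$ may be smaller than $\e n$, in which case Janson is not available. For such ``small'' triples I bound $T(Y_1, Y_2, Y_3) \leq |Y_{i^*}| \cdot M$ using the a priori fact that WHP every vertex $v$ lies in at most $M = O(\log n)$ triangles of $\Gamma_p$, which follows by first bounding $|N_{\Gamma_p}(v; V^j)| \leq 2dpn$ for every $v$ via Chernoff, then noting that conditional on the edges at $v$, the count $T_v = e_{\Gamma_p}(N_{\Gamma_p}(v; V^j), N_{\Gamma_p}(v; V^k))$ is a sum of independent Bernoulli$(p)$'s of expectation $\Theta(\log n)$, and applying Chernoff once more. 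With $\e$ sufficiently small relative to $\e'$ and $C$, each small-triple contribution $\e n \cdot O(\log n)$ is absorbed into $\e' p^3 n^3 = \Theta(\e' C^3 n \log n)$, handling both the small-subset case of the lemma directly and the small-triple terms in the decomposition.
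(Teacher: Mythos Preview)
Your approach is essentially identical to the paper's: Janson for the uniform lower bound (union-bounded over all $2^{3n}$ triples), a single Kim--Vu application to $|K_3(\Gamma_p)|$, and then the complementary decomposition into eight triples to deduce the upper bound for arbitrary $(X_1,X_2,X_3)$.

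The one place you do extra work is the ``subtlety'' about small complementary triples. This is unnecessary. The Janson lower bound you establish,
\[
  |K_3(\Gamma_p[Y_1\cup Y_2\cup Y_3])|\ge (pd)^3|Y_1||Y_2||Y_3|-\tfrac{\e'}{100}p^3n^3,
\]
is \emph{vacuously true} whenever $|Y_1||Y_2||Y_3|$ is small (the right-hand side becomes negative). So the lower bound holds for \emph{all} triples, and the decomposition goes through directly: subtracting the seven lower bounds from the Kim--Vu upper bound on the total gives the desired upper bound with total error at most $8\cdot\tfrac{\e'}{100}p^3n^3<\e'p^3n^3$. No per-vertex triangle bound is needed. (Incidentally, your claimed $M=O(\log n)$ is only correct at the threshold; for larger $p$ the right scale is $O(p^3n^2)$, though the comparison with $\e' p^3 n^3$ still works after this fix.)
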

%
\begin{proof}
  Choose~$0<\e, \tfrac{1}{C}\ll\e',d,\tfrac{1}{L}$ and fix~$\Gamma$ and~$p\geq C(\log n)^{1/3} n^{-2/3}$. 
  We first show (a stronger version of) the lower bound holds using Janson's inequality.
  \begin{claim}
    With probability at least~$1-e^{-n}$, we have
  \begin{equation}\label{eq:triangle-count-lb}
    \card{K_3(\Gamma_p[X_1 \cup X_2 \cup X_3])} \geq (p d)^3 |X_1||X_2||X_3| - \frac{\e'p^3n^3}{8},
  \end{equation}
  for all~$X_1 \subseteq V^1$,~$X_2 \subseteq V^2$ and~$X_3 \subseteq V^3$.
  \end{claim}
  \begin{claimproof}
  Fix~$X_1 \subseteq V^1$,~$X_2 \subseteq V^2$ and~$X_3 \subseteq V^3$ and let~$Y \coloneq K_3(\Gamma[X_1 \cup X_2 \cup X_3])$. We may assume that
  \begin{equation}\label{eq:triangle-count-lb1}
    |X_1||X_2||X_3| \geq \frac{\e' n^3}{8d^3}\geq \sqrt{\e} n^3,
  \end{equation}
  with the first inequality holding as otherwise~\cref{eq:triangle-count-lb} is trivially true and the second inequality holding by our choice of~$\e$.
  In particular, we have~$|X_i| \geq \e n$ for all~$i \in [3]$ and thus \cref{lem:super-reg-triangle-count} implies~$ \card Y \geq d^3 |X_1||X_2||X_3| - 10 \e n^3.$
  Consider now the random variable
  \[X \coloneq \card{K_3(\Gamma_p[X_1 \cup X_2 \cup X_3])} = \sum_{T\in Y} I_T,\]
  where 
  for each triangle~$T \in Y$, 
~$I_{T}$ is the indicator random variable for the event that~$T$ is present in~$\Gamma_p$.  
  Let
  \begin{equation}\label{eq:lambda-lb1}
  \lambda \coloneq \Exp{X} = p^3  |Y| \geq (p d)^3|X_1||X_2||X_3| - 10\e p^3n^3,
  \end{equation}
  which in combination with~\cref{eq:triangle-count-lb1} implies~$\lambda \geq \e p^3n^3$. 
  Furthermore, we have
  \begin{equation} \label{eq:Delta2} 
  \bar \Delta \coloneq \sum_{T,T' \in Y: \ T \cap T' \not = \emptyset} \Exp{I_{T}I_{T'}}
  \leq p^5 \cdot  |Y| \cdot 3n + p^3 \cdot |Y|  
  = \lambda (3np^2 + 1),
  \end{equation}
  where the inequality  follows from the fact that there are at most~$|Y| \cdot  3n$ pairs of triangles intersecting in exactly one edge, no pairs intersecting in exactly two edges and~$|Y|$ pairs intersecting in three edges.
  Hence Janson's inequality (\cref{lem:janson}) implies
  \begin{align*}
    \Pro{X \leq (1-\e)\lambda}  \leq \exp \left(- \frac{\e^2 \lambda^2}{2\bar\Delta} \right)
    &\leq \exp \left(- \frac{\e^3 p^3n^3  \lambda}{2\bar\Delta} \right)\\
    &\leq \exp \left(-\frac{\e^3 p^3n^3 }{12np^2} \right) + \exp \left(-\frac{\e^3p^3n^3}{4} \right)\\
    &\leq \exp \left(- 4 n \right)
  \end{align*}
  for all large enough~$n$. Here, we used that~$\lambda\geq \e p^3n^3$ (see~\cref{eq:lambda-lb1}) in the second inequality, and~\cref{eq:Delta2} in the third (more precisely, we used that~\cref{eq:Delta2} implies that~$\bar\Delta \leq 6\lambda np^2$ or~$\bar\Delta \leq 2\lambda$).

  By~\cref{eq:lambda-lb1}, we have~$(1-\e) \lambda \geq (p d)^3 |X_1||X_2||X_3| - 11\e p^3n^3\geq (p d)^3 |X_1||X_2||X_3| - \big(\tfrac{\e'}{8}\big) p^3n^3$. Hence, taking a union bound over all choices of~$X_1 \subseteq V^1, X_2\subseteq V^2, X_3\subseteq V^3$, we deduce that,~\cref{eq:triangle-count-lb} holds with probability at least~$1-2^{3n}\cdot e^{-4n}\geq 1-e^{-n}$ for all~$X_1 \subseteq V^1, X_2\subseteq V^2, X_3\subseteq V^3$.
  \end{claimproof}

  We now show that the upper bound holds  in the case when~$X_i = V^i$ for all~$i \in [3]$.
  \begin{claim} \label{clm:triangle-count-ub-Vi}
    With probability at least~$1-n^{-2L}$ we have
  \begin{equation*}
    \card{K_3(\Gamma_p)} \leq (p d)^3 n^3 + \frac{\e'p^3n^3}{8}.
  \end{equation*}
  \end{claim}
  \begin{claimproof}
    Let~$Y = K_3(\Gamma)$ and let~$X = \card{K_3(\Gamma_p)} = \sum_{T \in Y} I_T$ with~$I_T$ being the indicator random variable for the event that a triangle~$T$ appears in~$\Gamma_p$, as above. 
    By \cref{lem:super-reg-triangle-count}, we have~$|Y| = d^3n^3 \pm 10\e n^3.$ 
    It follows that
    \begin{equation}\label{eq:lambda-ub}
      \lambda \coloneq \Exp{X} = (p d)^3 n^3 \pm 10\e p^3n^3.
    \end{equation}
    Using notations from the Kim--Vu inequality (\cref{lem:Kim--Vu}), we have~$E_1 \leq np^2$,~$E_2 = p$ and~$E_3 = 1$. Hence~$E' = \max \{1, np^2\} \leq \lambda^{1/2}$ and~$E = \lambda$.
    Let~$\mu = \lambda^{1/16}$ and let~$c = c(3)$ be the constant from \cref{lem:Kim--Vu}. Then, for large enough~$n$,
    \[c  (EE')^{1/2} \mu^3 \leq c \lambda^{3/4} \cdot \lambda^{3/16} \leq \e \lambda.\]
    Hence, we have
    \begin{align*}
      \Pro{X \geq (1+\e) \lambda} \leq 10cn^4  e^{-\mu} \leq e^{-n^{1/16}}\leq n^{-2L}
    \end{align*}
    for all large enough~$n$. Here, the middle  inequality follows from~\cref{eq:lambda-ub} which implies~$\lambda \geq n \log n$, due to our choice of~$\e$ and~$C$. This finishes the proof of the claim as~$(1+\e)\lambda\leq (pd)^3n^3+\big(\tfrac{\e'}{8}\big)p^3n^3$ by~\cref{eq:lambda-ub} and our choice of~$\e$.
  \end{claimproof}

  We now conclude the proof of the lemma. With probability at least~$1-n^{-L}$ both claims above hold simultaneously. Suppose now both claims hold and fix~$X_1 \subseteq V^1, X_2 \subseteq V^2, X_3 \subseteq V^3$. Let~$\cU = \left(\{X_1, V^1 \setminus X_1\} \times \{X_2, V^2 \setminus X_2\} \times \{X_3, V^3 \setminus X_3\}\right) \setminus \{(X_1,X_2,X_3)\}$ and observe that
  \begin{align*}
    \card{K_3(\Gamma_p[X_1 \cup X_2 \cup X_3])} &= \card{K_3(\Gamma_p)} -  \sum_{(U_1, U_2, U_3) \in \cU} \card{K_3(\Gamma_p[U_1 \cup U_2 \cup U_3])} \\
    &\leq (p d)^3 |X_1||X_2||X_3| + \e' p^3n^3.
  \end{align*}
  Here we used \cref{clm:triangle-count-ub-Vi} to bound~$\card{K_3(\Gamma_p)}$ and~\cref{eq:triangle-count-lb} to bound each~$\card{K_3(\Gamma_p[U_1 \cup U_2 \cup U_3])}$.
  This completes the proof.
\end{proof}

As a corollary, we can conclude that we have the expected count of triangles at almost all vertices.

\begin{cor} \label{cor:fixed vertex triangles}
For all~$0<\e'<d\le 1$ and~$L>0$ there exists~$\e>0$ and~$C>0$ such that  the following holds for all sufficiently large~$n \in \N$ and for any~$p \geq C(\log n)^{1/3} n^{-2/3}$.   If~$\,\Gamma$ is an~$(\e,d)$-regular tripartite graph with parts of size~$n$,  then with  probability at least~$1-n^{-L}$ we have that
\[\card{\tr_v(\Gamma_p)}=(1 \pm \e')(p d)^3n^2,\]
for all but at most~$\e' n$ vertices~$v \in V(\Gamma)$.
\end{cor}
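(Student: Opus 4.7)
The plan is to deduce the corollary from \cref{lem:triangle-count} via a short double-counting argument. The key observation is that for any subset $B \subseteq V^1$, we have
\[
\sum_{v \in B} |\tr_v(\Gamma_p)| = |K_3(\Gamma_p[B \cup V^2 \cup V^3])|,
\]
since each triangle $T$ in $\Gamma_p[B \cup V^2 \cup V^3]$ contributes its $V^2V^3$-edge to $\tr_v(\Gamma_p)$ for its unique vertex $v = T \cap B$, and every edge of $\tr_v(\Gamma_p)$ for $v \in B$ arises this way. This identity lets us transfer control on triangle counts over arbitrary subsets (which \cref{lem:triangle-count} supplies) into control on how many vertices can have anomalous $|\tr_v(\Gamma_p)|$, by taking $B$ to be precisely the set of such bad vertices.

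Concretely, given $\e'$, $d$, $L$, I will set $\e'' \coloneq (\e')^2 d^3 / 6$ and invoke \cref{lem:triangle-count} with parameter $\e''$ in place of $\e'$ (and the same $L$); this furnishes the $\e > 0$ and $C > 0$ claimed in the corollary and gives, with probability at least $1 - n^{-L}$, the estimate
\[
|K_3(\Gamma_p[X_1 \cup X_2 \cup X_3])| = (pd)^3 |X_1||X_2||X_3| \pm \e'' p^3 n^3
\]
uniformly over all $X_i \subseteq V^i$. I will condition on this event. Letting $B_1^+ \coloneq \{v \in V^1 : |\tr_v(\Gamma_p)| > (1+\e')(pd)^3 n^2\}$, applying the identity with $B = B_1^+$ together with the upper bound gives
\[
|B_1^+|(1+\e')(pd)^3 n^2 < (pd)^3 |B_1^+| n^2 + \e'' p^3 n^3,
\]
which rearranges to $|B_1^+| < \e'' n / (\e' d^3) = \e' n/6$. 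The analogous argument for $B_1^- \coloneq \{v \in V^1 : |\tr_v(\Gamma_p)| < (1-\e')(pd)^3 n^2\}$, using the lower bound of \cref{lem:triangle-count}, yields $|B_1^-| < \e' n/6$. Repeating symmetrically for $V^2$ and $V^3$ in place of $V^1$ bounds the total number of exceptional vertices in $V(\Gamma)$ by $\e' n$, as required.

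I do not anticipate any real obstacle: this is essentially an immediate consequence of \cref{lem:triangle-count} and the above double-counting identity. The only point requiring minor care is choosing $\e''$ sufficiently small in terms of $\e'$ and $d$ so that the final rearrangement produces a bound of at most $\e' n/6$ in each of the six cases, and observing that the bound from \cref{lem:triangle-count} is uniform over all choices of $X_1, X_2, X_3$ so the same high-probability event handles all six applications simultaneously.
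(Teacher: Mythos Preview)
Your proposal is correct and follows essentially the same approach as the paper: both proofs deduce the corollary from \cref{lem:triangle-count} by applying it with $X_\ell$ taken to be the set of vertices in $V^\ell$ whose triangle count deviates in a given direction, and using the double-counting identity $\sum_{v\in B}|\tr_v(\Gamma_p)|=|K_3(\Gamma_p[B\cup V^{j_1}\cup V^{j_2}])|$ (explicitly in your write-up, implicitly in the paper's contradiction argument) to bound the size of each such exceptional set.
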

\begin{proof}
Choose~$0<\e,\tfrac{1}{C}\ll\tilde \e \ll \e',d,\tfrac{1}{L}$ and let~$G \subseteq \Gamma$ be any graph with
\begin{equation}\label{eq:cor:triangle-count}
    \card{K_3(G[X_1 \cup X_2 \cup X_3])} = (p d)^3 |X_1||X_2||X_3| \pm \tilde \e p^3n^3,
\end{equation}
for all~$X_1 \subseteq V^1$,~$X_2 \subseteq V^2$ and~$X_3 \subseteq V^3$.
Since (by \cref{lem:triangle-count} and our choice of constants) this is satisfied by~$\Gamma_p$ with probability~$1-n^{-L}$, it suffices to show that~$G$ satisfies the conclusion of \cref{cor:fixed vertex triangles}.
For~$i \in [3]$, let~$X_i$ be the set of vertices~$v \in V^i$ with~$\card{\tr_v(G)} \leq (1-\e') (p d)^3 n^2$, and  let~$Y_i$ be the set of vertices~$v \in V^i$ with~$\card{\tr_v(G)} \geq (1+\e') (p d)^3 n^2$.
We claim that~$|X_1| \leq \tfrac{\e' n}{10}$. Indeed, assuming the contrary, we have
\[ \card{K_3(G[X_1 \cup V^2 \cup V^3])} \leq (p d)^3|X_1||V^2||V^3| - \frac{\e'^2 (pd)^3n^3}{10}<(p d)^3|X_1||V^2||V^3| - \tilde\e p^3n^3,\]
by our choice of~$\tilde \e$. This contradicts~\cref{eq:cor:triangle-count}. Similarly, we can bound the sizes of~$X_2$ and~$X_3$, and~$Y_1$,~$Y_2$ and~$Y_3$, completing the proof.
\end{proof}
Sometimes, we will need an upper bound on~$\card{\tr_v(\Gamma_p)}$ which works for all~$v \in V(\Gamma)$. For this we simply upper bound this quantity by the number of triangles in~$G(3n,p)$ containing a specific vertex using a result of Spencer~\cite{Spencer1990} (see also~\cite{vsileikis2019counting}).
%
\begin{lemma}\label{lem:upper triangle count for all vxs}
For  all~$L>0$ there exists~$C>0$ such that  the following holds for all sufficiently large~$n \in \N$ and for any~$p \geq C(\log n)^{1/3} n^{-2/3}$.   If~$\,\Gamma$ is a  tripartite graph with parts of size~$n$,  then with  probability at least~$1-n^{-L}$ we have that
  \[\card{\tr_v(\Gamma_p)}\leq 10p^3n^2,\]
  for all vertices~$v \in V(\Gamma)$.
\end{lemma}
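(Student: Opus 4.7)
The plan is to couple $\Gamma_p$ with a binomial random graph on $3n$ vertices and control the triangle count through each vertex by a two-step Chernoff argument, as suggested by the comment preceding the lemma. Since $\Gamma \subseteq K_{3n}$, I would realise $\Gamma_p$ as a subgraph of an instance $H \sim G(3n,p)$, so that for every $v \in V(\Gamma)$ we have $|\tr_v(\Gamma_p)| \leq T_v$, where $T_v$ is the number of triangles of $H$ containing $v$. This reduces the lemma to showing that $T_v \leq 10 p^3 n^2$ holds for every vertex simultaneously with probability at least $1 - n^{-L}$.

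To bound $T_v$ for a fixed $v$, let $D_v := |N_H(v)|$, so $D_v \sim \mathrm{Bin}(3n-1, p)$ with mean at most $3np$. Since $np \geq C(\log n)^{1/3}n^{1/3}$ tends to infinity, Chernoff's inequality (\cref{thm:chernoff}) gives $\Pro{D_v \geq 4np} \leq \exp(-np/10) \leq n^{-L-2}$ for $C$ sufficiently large. Conditional on $N_H(v) = S$, the edges of $H$ with both endpoints in $S$ are independent of the edges incident to $v$, so the triangle count through $v$ is distributed as $\mathrm{Bin}(\binom{|S|}{2}, p)$. When $|S| \leq 4np$, this Binomial has mean at most $8 p^3 n^2$, and a second application of \cref{thm:chernoff} gives
\[ \Pro{T_v \geq 10 p^3 n^2 \mid D_v \leq 4np} \leq \exp\bigl(-\Theta(p^3 n^2)\bigr) \leq n^{-L-2}, \]
where the last inequality uses $p^3 n^2 \geq C^3 \log n$ and $C$ large enough.

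Summing these two tail probabilities gives $\Pro{T_v \geq 10 p^3 n^2} \leq 2n^{-L-2}$ for each fixed $v$, and a union bound over the $3n$ vertices yields the claim. The argument is essentially routine; the only point requiring care is to choose $C$ large enough (depending on $L$) so that both tail bounds can comfortably absorb the union bound factor of $3n$. I do not anticipate any substantial obstacle, which is consistent with the paper's remark that this lemma follows from standard black-box concentration (where the more delicate Spencer-type inequality is only needed if one insists on bypassing the conditioning-on-degrees step).
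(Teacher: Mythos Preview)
Your argument is correct. The coupling with $H\sim G(3n,p)$ is exactly what the paper suggests, and from that point the paper simply invokes Spencer's upper-tail inequality for subgraph counts as a black box, whereas you give an explicit two-step Chernoff argument (first bound $|N_H(v)|$, then bound the edges inside $N_H(v)$). Your route is more elementary and self-contained: it exploits the special structure of the vertex-triangle count, which factors as ``degree then edges among neighbours'', something Spencer's general polynomial-concentration inequality does not need but which makes the proof a two-line Chernoff computation here. The only minor point to watch is that the paper's stated Chernoff bound (\cref{thm:chernoff}) is formulated for $\delta<\tfrac32$, so when the conditional mean $\binom{|S|}{2}p$ is much smaller than $8p^3n^2$ you should either note that $\mathrm{Bin}\bigl(\binom{|S|}{2},p\bigr)$ is stochastically dominated by $\mathrm{Bin}\bigl(\binom{\lfloor 4np\rfloor}{2},p\bigr)$ and apply Chernoff to the latter, or cite a Chernoff variant valid for all $\delta>0$. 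Either fix is routine and does not affect the outcome.
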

In the remainder of this section we prove some more technical properties of~$\Gamma_p$ which will be  useful in the proofs of \cref{prop:full-step,lem:LDL}. The ultimate goal will be to lower bound the number of triangles at a fixed vertex but we will need  this lower bound to hold in a robust way, allowing us to apply the count with respect to various prescribed sets of edges and vertices which we either want to avoid or want to be included in  the triangles. 

Our next lemma follows simply from well-known concentration bounds but we wish to highlight the slightly subtle (in-)dependencies of the random variables involved. Recall that, given a vertex~$u$ of our graph~$\Gamma$, by saying that a random variable is \emph{determined by}~$(\Gamma_{\hat u})_p$, we mean that the random variable is completely determined by revealing~$(\Gamma_{\hat u})_p$.
In other words, the random variable is independent of the status of edges adjacent to~$u$ in~$\Gamma_p$. We will now use this concept with the random variable being a vertex set or an edge set.

\begin{lemma} \label{lem:two stage reveal}
   For any~$0<\alpha\le 1$ and~$L>0$, there exists a~$C>0$ such that  the following holds for all sufficiently large~$n \in \N$ and for any~$p \geq C(\log n)^{1/3} n^{-2/3}$. Suppose~$\Gamma$ is a tripartite graph with parts of size~$n$ and~$u\in V(\Gamma)$. Then  we have the following. 
\begin{enumerate}
    \item \label{lem:many neighbours in random set}
  Suppose~$X\subseteq N_\Gamma(u)$ is a random subset of vertices determined by~$(\Gamma_{\hat u})_p$. 
  Then with probability at least~$1-n^{-L}$ we have that the following statement holds in~$\Gamma_p$. 
  \begin{center}
      If~$\card{X}\geq \alpha n$ then~$\card{X\cap N_{\Gamma_p}(u)}\geq  \tfrac{\alpha pn}{2}$.   \end{center}
  \item \label{lem:triangles with F} 
  Suppose~$F\subseteq \tr_u(\Gamma)\cap E(\Gamma_p)$ is a random subset of edges determined by~$(\Gamma_{\hat u})_p$. Then with probability at least~$1-n^{-L}$ we have that the following statement holds in~$\Gamma_p$. 
  \begin{center}
  If~$|F|\geq \alpha pn^2$ then~$\card{F\cap \tr_u(\Gamma_p)}\geq \tfrac{\alpha p^3n^2}{2}$.   \end{center}

\end{enumerate}

\end{lemma}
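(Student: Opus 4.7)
Both parts rest on the same conditioning idea: reveal $(\Gamma_{\hat u})_p$, which fixes $X$ (respectively $F$) because by assumption these are $(\Gamma_{\hat u})_p$-measurable, and then the only remaining randomness is the \emph{independent} Bernoulli$(p)$ indicators $I_v := \mathbbm{1}[uv \in \Gamma_p]$ for $v \in N_\Gamma(u)$, which are independent of $(\Gamma_{\hat u})_p$.

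For part~(i), after this conditioning $X$ is deterministic. If $|X| < \alpha n$ the implication holds vacuously; otherwise $|X \cap N_{\Gamma_p}(u)| = \sum_{v\in X} I_v$ is a binomial sum with mean at least $\alpha pn$, and Chernoff (\cref{thm:chernoff}) bounds the probability that this is below $\tfrac{\alpha pn}{2}$ by $\exp(-\alpha pn/8)$. Since $pn \geq C n^{1/3}(\log n)^{1/3} \gg \log n$, this is at most $n^{-L}$ for $C$ large, and averaging over $(\Gamma_{\hat u})_p$ concludes the argument.

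For part~(ii), after conditioning $F$ is a deterministic bipartite graph between $V^2 \cap N_\Gamma(u)$ and $V^3 \cap N_\Gamma(u)$, and
\[
\card{F \cap \tr_u(\Gamma_p)} = \sum_{\{v_2,v_3\}\in F} I_{v_2} I_{v_3}
\]
has mean $p^2|F| \geq \alpha p^3 n^2$ in the relevant case $|F|\geq \alpha pn^2$ (here we implicitly use $F \subseteq E(\Gamma_p)$, the natural setting in which the lemma is applied). I would then apply Janson's inequality (\cref{lem:janson}): two edges of $F$ contribute to $\bar\Delta$ only when they share a vertex, giving
\[
\bar\Delta \leq p^2 |F| + 2 \Delta(F) \cdot |F| \cdot p^3.
\]
The crucial input is the degree bound $\Delta(F) = O(pn)$, which I would obtain from a preliminary Chernoff plus union-bound step establishing $\Delta(\Gamma_p) = O(pn)$ whp, combined with $F \subseteq E(\Gamma_p)$. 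Since $np^2 \to 0$, this makes the second term of $\bar\Delta$ lower-order, so $\bar\Delta = O(p^2|F|)$, and Janson yields
\[
\Pro{\card{F \cap \tr_u(\Gamma_p)} < \tfrac{\alpha p^3 n^2}{2}} \leq \exp\!\left(-\Omega\!\left(p^2|F|\right)\right) \leq \exp\!\left(-\Omega\!\left(\alpha C^3 \log n\right)\right),
\]
using $p^3 n^2 \geq C^3 \log n$; this is at most $n^{-L}$ for $C$ sufficiently large. Averaging over the conditioning concludes.

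The delicate point is part~(ii), specifically getting concentration at the threshold $p = \Theta((\log n)^{1/3} n^{-2/3})$ where $np^2 \to 0$ but $np$ grows only polynomially. The degree bound $\Delta(F) = O(pn)$ is essential: using only the trivial $\Delta(F) \leq n$ would leave the second term $\Theta(np^3|F|)$ dominant in $\bar\Delta$, giving a Janson exponent $\lambda^2/\bar\Delta = \Theta(p|F|/n) = \Theta(\alpha p^2 n)$ which tends to $0$ in this regime and so would be too small to produce polynomial decay.
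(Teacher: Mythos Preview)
Your proposal is correct and follows essentially the same approach as the paper: condition on $(\Gamma_{\hat u})_p$ to make $X$ or $F$ deterministic, then apply Chernoff for part~(i) and Janson for part~(ii), with the key preliminary step of establishing $\Delta\big((\Gamma_{\hat u})_p\big)\le 4pn$ whp (via Chernoff plus union bound) so that $\Delta(F)=O(pn)$ and $\bar\Delta$ stays comparable to $\lambda$. Your flagging of the implicit assumption $F\subseteq E(\Gamma_p)$ is also precisely what the paper's proof relies on (without stating it in the lemma), and your closing analysis of why the naive bound $\Delta(F)\le n$ fails at the threshold matches the paper's reasoning.
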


\begin{proof}
Choose~$\tfrac{1}{C}\ll \tfrac{1}{L},\alpha$. Let~$G_1\subset \Gamma_p$ be the graph on~$V(\Gamma)$ consisting of all edges adjacent to~$u$ and~$G_2=(\Gamma_{\hat u})_p=\Gamma_p\setminus G_1$. For all~$w\in N_\Gamma(u)$, let~$I_w$ be the indicator random variable for the event that the edge~$uw$ appears. By assumption, our random sets~$X$ and~$F$ depend only on~$G_2$ and clearly the random variables~$I_w$ depend only on~$G_1$.

Part \cref{lem:many neighbours in random set} now follows  from Chernoff's inequality (Theorem~\ref{thm:chernoff}). Indeed  we have that 
\[\Pro{\card{X\cap N_{\Gamma_p}(u)}< \frac{\alpha pn}{2} \mbox{ and } \card{X}\geq \alpha n} \leq \Pro{\card{X\cap N_{\Gamma_p}(u)}< \frac{\alpha pn}{2}\, \middle| \, \card{X}\geq \alpha n}\]
and it suffices to show that~$\Pro{\card{X\cap N_{\Gamma_p}(u)}< \tfrac{\alpha pn}{2}}\leq n^{-L}$ holds for any instance of~$G_2$ and~$X$ with~$|X|\geq \alpha n$. Fixing such an instance and  letting~$Y=\card{X\cap N_{\Gamma_p}(u)}=\sum_{w\in X}I_w$, we have that~$Y$ is a sum of independent random variables with expectation~$\lambda=\Exp{Y}=p\card{X}$  and so \[\Pro{Y<\frac{\alpha pn}{2}}\leq \Pro{Y<\frac{\lambda}{2}}\leq e^{-\lambda/8}\leq  e^{-\alpha pn/8}\leq  n^{-L}, \]
for sufficiently large~$n$, as required. 

For part \cref{lem:triangles with F}, we start by noting that~$\Delta(G_2) \leq 4pn$ with probability at least~$1-n^{-2L}$ by another simple application of Chernoff's bound (\cref{thm:chernoff}) and a union bound over all vertices. We have that
\begin{align*}
    &\Pro{\card{F\cap \tr_u(\Gamma_p)}< \frac{\alpha p^3n^2}{2} \mbox{ and } |F|\geq \alpha pn^2} \leq \\
     &  \Pro{\card{F\cap \tr_u(\Gamma_p)}< \frac{\alpha p^3n^2}{2},  |F|\geq \alpha pn^2 \mbox{ and } \Delta(G_2) \leq 4pn } +\Pro{\Delta(G_2)>4pn} \leq \\
     &   \Pro{\card{F\cap \tr_u(\Gamma_p)}< \frac{\alpha p^3n^2}{2} \, \middle| \, |F|\geq \alpha pn^2 \mbox{ and } \Delta(G_2) \leq 4pn} + n^{-2L}.
\end{align*}
Thus it suffices to prove that~$\Pro{\card{F\cap \tr_u(\Gamma_p)}< \tfrac{\alpha p^3n^2}{2} }\leq n^{-2L}$  
for any instance of~$G_2$ 
such  that
$\Delta(G_2) \leq 4pn$ and~$\card{F}\geq \alpha pn^2$. So let us fix such an instance of~$G_2$ and~$F\subseteq \tr_u(\Gamma)$. 
Let~$\cF = \{\{uw_1, uw_2\}: w_1w_2 \in F\}$ and for~$A=\{uw_1, uw_2\}\in \cF$, let~$I_A=I_{w_1}I_{w_2}$ be the indicator random variable for the event that both edges of~$A$ appear in~$G_1$. We will now use Janson's inequality to show that many pairs of edges in~$\cF$ are present in~$G_1$. 
Let
\[Z = \card{F \cap \tr_u(\Gamma_p)} = \sum_{A \in \cF} I_A\]
be the random variable counting the number of triangles containing~$u$ and an edge in~$F$. 
Then 
\begin{equation} \label{eq:triF-lambda} 
\lambda \coloneq \Exp{Z} = p^2 \card{\cF} 
    \geq \alpha p^3n^2 
        \geq C^2 \log n.
\end{equation}
Furthermore, we have that
\begin{equation} \label{eq:triF-Delta3}
\bar\Delta  \coloneq \sum_{(A,A') \in \cF^2: \ A \cap A' \not = \emptyset} \Exp{I_A I_{A'}}
            \leq 8p^4\card{\cF}n + p^2\card{\cF} 
            = \lambda (1 + 8p^2n).
\end{equation}
Here, the inequality follows from the fact that there are at most~$\card{\cF} \cdot 2\cdot  \Delta(G_2) = \card{\cF} \cdot 8pn$ pairs~$(A,A') \in \cF^2$ intersecting in exactly one edge, and~$\card{\cF}$ pairs intersecting in two edges.
Hence Janson's inequality (\cref{lem:janson}) implies
\begin{align*}
  \Pro{Z \leq  \frac{\lambda}{2}}
    \leq \exp \left(- \frac{\lambda^2}{8\bar\Delta} \right)
    &\leq \exp \left(- \frac{\lambda}{8(1+8p^2n)} \right) \\
    &\leq \exp \left(- \frac{\lambda}{16} \right) + \exp \left(-\frac{\lambda}{128p^2n} \right) \\
    &\leq n^{-C} + e^{-n^{1/3}}\leq n^{-2L},
\end{align*}
for all large enough~$n$.
Here, we used~\cref{eq:triF-Delta3} in the second inequality, the fact that~$1 + 8pn^2 \leq 2$ or~$1 + 8pn^2 \leq 16pn^2$ in the third,~\cref{eq:triF-lambda} in the fourth  and our choice of~$C$ in the final inequality. This completes the proof. 
\end{proof}

Finally, we  show that for most pairs of vertices~$u$ and~$v$ in the same part, there are many edges appearing in~$\Gamma_p$ that lie in their common neighbourhood (with respect to~$\Gamma$). We need this to hold even when we forbid certain vertices from being used. This leads to the following statement, for which  we direct the reader to \cref{sec:notation} for the relevant definitions of e.g.~$\pzc{V}$ and~$\tr_u(G)$. 
\begin{lemma} \label{lem:many common neighbours}
 For all~$0<d\le 1$ there exists~$\e>0$ and~$C>0$ such that  the following holds for all sufficiently large~$n \in \N$ and for any~$p \geq C(\log n)^{1/3} n^{-2/3}$.   If~$\,\Gamma$ is an~$(\e,d)$-super-regular tripartite graph with parts~$V^1,V^2,V^3$ of size~$n$,~$\ell\in[3]$,~$\uu=(u_1,\ldots,u_{\ell-1})\in \pzc{V}$ and~$u\in V^\ell$ then with  probability at least~$1-e^{-n}$ we have that
\[\card{\tr_u(\Gamma_{\hat \uu})\cap \tr_v(\Gamma_{\hat \uu})\cap E(\Gamma_p)} \geq \frac{d^5pn^2}{4},\]
for all but at most~$2\e n$ vertices~$v\in V^\ell$.
\end{lemma}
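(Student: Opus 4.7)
The plan is to split the argument into a deterministic step (lower bounding a suitable subset of $\tr_u(\Gamma_{\hat\uu})\cap\tr_v(\Gamma_{\hat\uu})$ that is visible already in $\Gamma$) and a probabilistic step (Chernoff with a union bound over $v$). Let $\{i,j\}=[3]\setminus\{\ell\}$. For every $v\in V^\ell$, define
\[
F(u,v) \;:=\; \{e\in E(\Gamma_{\hat{\uu}}) : e\subseteq N_{\Gamma_{\hat\uu}}(u)\cap N_{\Gamma_{\hat\uu}}(v)\} \;\subseteq\; \tr_u(\Gamma_{\hat{\uu}})\cap \tr_v(\Gamma_{\hat{\uu}}).
\]
The goal is to show $|F(u,v)\cap E(\Gamma_p)|\ge d^5pn^2/4$ for all but $2\eps n$ vertices $v$, whp.

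\textbf{Step 1: deterministic lower bound on $|F(u,v)|$.} By $(\eps,d)$-super-regularity, $|N_\Gamma(u)\cap V^k|\ge(d-\eps)n$ for each $k\in\{i,j\}$, and similarly for $v$. For fixed $u$ and $k\in\{i,j\}$, applying \cref{lem:reg-degree} to the $(\eps,d)$-regular pair $(V^\ell,V^k)$ with $Y:=N_\Gamma(u)\cap V^k$ (of size $\ge \eps n$) yields that all but at most $\eps n$ vertices $v\in V^\ell$ satisfy $|N_\Gamma(u,v)\cap V^k|\ge (d-\eps)^2 n$. Calling $v$ \emph{good} if this holds for both $k\in\{i,j\}$, there are at least $(1-2\eps)n$ good vertices. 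For such $v$, setting $A_k:=(N_\Gamma(u,v)\cap V^k)\setminus\uu$, we have $|A_k|\ge (d-\eps)^2 n-2\ge \eps n$, and using $(\eps,d)$-regularity of $(V^i,V^j)$ we get
\[
|F(u,v)| \;\ge\; e_\Gamma(A_i,A_j) \;\ge\; (d-\eps)|A_i||A_j| \;\ge\; (d-\eps)^5 n^2 \;\ge\; \tfrac{d^5}{2}n^2,
\]
provided $\eps$ is sufficiently small compared to $d$.

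\textbf{Step 2: Chernoff for each good $v$.} Since $F(u,v)\subseteq E(\Gamma_{\hat\uu})\subseteq E(\Gamma)$ is determined by $\Gamma$ alone, the random variable $X_v:=|F(u,v)\cap E(\Gamma_p)|$ is a sum of independent $\mathrm{Bern}(p)$ variables, one for each $e\in F(u,v)$. Its mean satisfies $\lambda_v=p|F(u,v)|\ge d^5 pn^2/2$. Since $p\ge C(\log n)^{1/3}n^{-2/3}$, we have $\lambda_v\ge\tfrac{d^5 C}{2}(\log n)^{1/3}n^{4/3}\ge 20n$ for $C=C(d)$ large. Chernoff's inequality (\cref{thm:chernoff}) with $\delta=1/2$ gives
\[
\Pro{X_v<\tfrac{d^5}{4}pn^2} \;\le\; \Pro{X_v<\tfrac{\lambda_v}{2}} \;\le\; e^{-\lambda_v/8} \;\le\; e^{-2n}.
\]

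\textbf{Step 3: union bound.} Taking a union bound over the at most $n$ good vertices $v\in V^\ell$ yields failure probability at most $n\cdot e^{-2n}\le e^{-n}$ for large $n$. On the complementary event, every good $v$ satisfies $|\tr_u(\Gamma_{\hat\uu})\cap\tr_v(\Gamma_{\hat\uu})\cap E(\Gamma_p)|\ge X_v\ge d^5pn^2/4$, and there are at most $2\eps n$ non-good vertices, which is exactly the conclusion.

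I do not anticipate a real obstacle: the condition $p\ge C(\log n)^{1/3}n^{-2/3}$ makes $\lambda_v$ comfortably super-linear in $n$, which is more than enough for a Chernoff tail beating the $n$-factor coming from the union bound. The only mild care needed is the handling of the at most two removed vertices in $\uu$, which only costs an additive $O(1)$ in each common neighborhood and is absorbed by the choice of $\eps$.
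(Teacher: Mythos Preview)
Your proof is correct and follows essentially the same approach as the paper: a deterministic step using super-regularity and \cref{lem:reg-degree} to show $|\tr_u(\Gamma_{\hat\uu})\cap\tr_v(\Gamma_{\hat\uu})|\ge \tfrac{d^5}{2}n^2$ for all but $2\eps n$ vertices $v$, followed by Chernoff and a union bound over~$v$. The only cosmetic difference is that the paper writes $(d-2\eps)$ in place of your $(d-\eps)$ to absorb the removal of the (at most one per part) vertex of~$\uu$ directly into the constant, whereas you track it as an additive~$O(1)$; both are fine.
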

\begin{proof}
  Choose~$0<\e,\tfrac{1}{C} \ll d$  and fix~$\Gamma$,~$\ell\in[3]$,~$\uu=(u_1,\ldots,u_{\ell-1})\in \pzc{V}$ and~$u\in V^\ell$ as in the statement of the lemma.  
  We first use regularity to show that there are many edges in the deterministic graph.
  \begin{clm*}
    We have
    \begin{equation*}
    \card{\tr_u(\Gamma_{\hat \uu})\cap \tr_v(\Gamma_{\hat \uu})} \geq \frac{d^5n^2}{2},
    \end{equation*}
    for all but at most~$2\e n$ vertices~$v \in V^\ell$.
  \end{clm*}
  \begin{claimproof}
      We will prove the claim in the case that~$\ell = 3$, the other cases are  identical. For~$i \in [2]$, let~$X_i = N_\Gamma\big(u; V_{\hat \uu}^i\big)$ and for~$v \in V^3 \setminus \{u\}$, let~$Y_i(v) = N_\Gamma\big(u,v; V_{\hat \uu}^i\big) \subseteq X_i$. Since~$\Gamma$ is~$(\e,d)$-super-regular, we have~$|X_i| \geq (d-2\e)n$ for both~$i \in [2]$ (we need the factor of~$2$ in front of the~$\e$ here to take account of the fact that we are potentially missing a vertex in~$\uu$).
      For~$i \in [2]$, let~$R_i \subset V^3$ be  the set of vertices~$v \in V^3$ for which~$|Y_i(v)| < (d-2\e)^2n$ and let~$R = R_1 \cup R_2$.
      It follows from the~$\e$-regularity of~$(V^i,V^3)$ and \cref{lem:reg-degree}, that~$|R_i| \leq \e n$ for both~$i \in [2]$ and hence~$|R| \leq 2 \e n$.
      Furthermore, for every~$v \in V^3 \setminus R$, it follows from the~$\e$-regularity of the pair~$(V^2,V^3)$ that
~$\card{E(\Gamma)\cap(Y_1(v) \cup Y_2(v))} \geq (d-2\e)^5 n^2$. This completes the proof by our choice of~$\e$.
  \end{claimproof}

  Observe now that each edge in~$E({\Gamma})\cap N_{\Gamma_{\hat{\uu}}}(u,v)=\tr_u(\Gamma_{\hat \uu})\cap \tr_v(\Gamma_{\hat \uu})$ is present independently in~$\Gamma_p$ and hence it follows from Chernoff's inequality (\cref{thm:chernoff}) that for all vertices~$v$ satisfying the conclusion of the claim, we have that 
  \[\Pro{\card{\tr_u(\Gamma_{\hat \uu})\cap \tr_v(\Gamma_{\hat \uu})\cap E(\Gamma_p)} < \frac{d^5pn^2}{4}}\leq \exp\left(-\frac{d^5pn^2}{16}\right)\leq e^{-2n},\]
  for sufficiently large $n$. This completes the proof after a union bound over choices of~$v\in V^\ell$.
  \end{proof}

\section{Embedding (partial) triangle factors}\label{sec:otherlemmas}

In this section we will prove \cref{prop:almost-factor} and reduce \cref{prop:full-step} to \cref{lem:LDL}. As we have already shown in \cref{sec:proofoverview} that \cref{thm:main-super-reg} follows from \cref{prop:almost-factor,prop:full-step}, after this section the only tool used in the proof of
\cref{thm:main-super-reg} that still needs to be established is \cref{lem:LDL}.

\subsection{Counting almost triangle factors} \label{sec:almostfactors}

Here we prove \cref{prop:almost-factor}.
\begin{proof}[Proof of \cref{prop:almost-factor}]
Choose~$\e,\tfrac{1}{C}\ll\e' \ll \eta,d$ and fix some~$\Gamma$ and~$p$ as in the statement of the proposition.
  By \cref{lem:triangle-count}, we have whp that 
  \begin{equation}\label{eq:triangle-count}
    \card{K_3(\Gamma_p[X_1 \cup X_2 \cup X_3])} = (p d)^3 |X_1||X_2||X_3| \pm \e' p^3n^3,
  \end{equation}
  for all~$X_1 \subseteq V^1$,~$X_2 \subseteq V^2$ and~$X_3 \subseteq V^3$. We will show by induction on~$t$ that if~$\Gamma_p$ satisfies~\cref{eq:triangle-count}, then it satisfies
  \begin{equation}\label{eq:factor-count}
  \card{\Psi^{t}(\Gamma_p)} \geq (1-\eta)^{t} (p d)^{3t} \left(n!_t\right)^3,
  \end{equation}
  for all integers~$t \leq (1-\eta)n$, as claimed.
Firstly, note that~\cref{eq:factor-count} is trivial for~$t=0$, recalling that by definition~$n!_0=1$. 
  Suppose now~\cref{eq:factor-count} holds for some integer~$0 \leq t \leq (1-\eta)n$. Fix some~$\psi \in \Psi^t(\Gamma_p)$ and let~$X_i \subseteq V^i$,~$i \in [3]$, be the sets of vertices which are not in~$\psi(D_t)$. Note that~$\card{X_i} = n-t$ for all~$i \in [3]$.
  Now the number of triangles which extend~$\psi$ to an embedding in~$\Psi^{t+1}(\Gamma_p)$ is precisely~$\card{K_3(\Gamma_p[X_1 \cup X_2 \cup X_3])}$ and by~\eqref{eq:triangle-count}, we have
  \begin{align*}
    \card{K_3(\Gamma_p[X_1 \cup X_2 \cup X_3])}
    &\geq (p d)^3 |X_1||X_2||X_3| - \e' p^3n^3\\
    &\geq (p d)^3 (n-t)^3 - \frac{\e'}{\eta^3d^3}  (p d)^3(n-t)^3\\
    &\geq (1 - \eta) (p d)^3(n-t)^3,
  \end{align*}
  by our choice of constants. 
  It follows from the induction hypothesis that
  \begin{align*}
    \card{\Psi^{t+1}(\Gamma_p)}
    &\geq \card{\Psi^{t}(\Gamma_p)}  (1 - \eta) (p d)^3(n-t)^3\\
    &\geq (1-\eta)^{t+1} (p d)^{3(t+1)} \left(n!_{t+1}\right)^3,
  \end{align*}
  finishing the proof.
\end{proof}
\subsection{Extending almost triangle factors}\label{sec:extendingfactors}
In this subsection, we will prove \cref{prop:full-step} using the Local Distribution Lemma (see \cref{lem:LDL}) as a black box for now. We first reduce \cref{prop:full-step} to the following lemma, which concentrates on adding a triangle at a fixed vertex. Recall that given~$G \subseteq \Gamma$, a vertex~$v \in V^1$ and some~$t \in \N$, we denote by~$\Psi_{v}^t(G) \subseteq \Psi^t(G)$ the set of embeddings~$\psi \in \Psi^t(G)$ for which~$\psi((1,1)) = v$.
\begin{lemma}[adding a triangle at a fixed vertex]\label{lem:embed}
	 For all~$0<d\le 1$ there exists~$\alpha,\eta,\e>0$ and~$C>0$ such that  for all sufficiently large~$n \in \N$ and for any~$p \geq C(\log n)^{1/3} n^{-2/3}$,  if~$\,\Gamma$ is an~$(\e,d)$-super-regular tripartite graph with parts of size~$n$, then whp the following holds in~$\Gamma_p$ for all~$t\in  \N$ with~$\left(1-\eta\right)n \leq t < n$ and   for all~$v \in V^1$.
 If
  \[\card{\Psi_{\hat{v}}^t(\Gamma_p)} \geq \left(1-\eta\right)^n (pd)^{3t}(n-1)!_t (n!_t)^2,\]
  then
   \[\card{\Psi_v^{t+1}(\Gamma_p)} \geq \alpha (pd)^3  (n-t)^2\card{\Psi_{\hat{v}}^t(\Gamma_p)}.\]
\end{lemma}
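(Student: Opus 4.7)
The plan is to apply the Local Distribution Lemma (\cref{lem:LDL}) twice sequentially to construct a large set $P \subseteq V^2 \times V^3$ of ``good'' pairs $(v_2, v_3)$ for which $\card{\Psi_{\hat{(v, v_2, v_3)}}^t(\Gamma_p)}$ is at least a fixed fraction of $(\tfrac{n-t}{n})^2\card{\Psi_{\hat v}^t(\Gamma_p)}$, and then use part \cref{lem:triangles with F} of \cref{lem:two stage reveal} to extract many triangles at $v$ in $\Gamma_p$ formed by these good pairs.

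First, I would apply \cref{lem:LDL} with $\ell = 2$ and $\uu = (v)$ -- whose hypothesis coincides with the hypothesis of \cref{lem:embed} -- to obtain a set $G_2 \subseteq V^2$ with $\card{V^2 \setminus G_2} \leq \alpha_2 n$ and
\[
\card{\Psi_{\hat{(v, v_2)}}^t(\Gamma_p)} \geq (d/10)^2 \tfrac{n-t}{n} \card{\Psi_{\hat v}^t(\Gamma_p)} \quad \text{for every } v_2 \in G_2.
\]
Then, for each $v_2 \in G_2$, I would apply \cref{lem:LDL} with $\ell = 3$ and $\uu = (v, v_2)$ to obtain $G_3(v_2) \subseteq V^3$ with $\card{V^3 \setminus G_3(v_2)} \leq \alpha_3 n$ and
\[
\card{\Psi_{\hat{(v, v_2, v_3)}}^t(\Gamma_p)} \geq (d/10)^4 (\tfrac{n-t}{n})^2 \card{\Psi_{\hat v}^t(\Gamma_p)} \quad \text{for every } v_3 \in G_3(v_2).
\]
Checking the hypothesis of the second application uses the identity $n!_t/(n-1)!_t = n/(n-t)$, which after cancellation reduces the required inequality to $(d/10)^2 (1 - \eta_2)^n \geq (1 - \eta_3)^n$, where $\eta_2, \eta_3$ are the LDL ``$\eta$'' parameters in the two applications. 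This is arranged by choosing the constants so that $\eta_3 > \eta_2$ by a fixed positive gap, so that the exponential factor $((1 - \eta_2)/(1 - \eta_3))^n$ dominates the constant loss $(10/d)^2$ for all large enough $n$. We then take $\eta \coloneq \eta_2$ as the output of \cref{lem:embed}.

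Set $P \coloneq \{(v_2, v_3) : v_2 \in G_2,\ v_3 \in G_3(v_2)\}$, so $\card{V^2 \times V^3 \setminus P} \leq (\alpha_2 + \alpha_3) n^2$, and define $F \coloneq \{v_2 v_3 \in \tr_v(\Gamma) : (v_2, v_3) \in P\}$. The crucial observation is that neither LDL application uses edges incident to $v$, so $P$ and $F$ are both determined by $(\Gamma_{\hat v})_p$. A standard application of super-regularity of $\Gamma$ together with \cref{lem:reg-slicing} to the pair $(N_\Gamma(v; V^2), N_\Gamma(v; V^3))$ yields $\card{\tr_v(\Gamma)} \geq c_0 n^2$ for a positive constant $c_0 = c_0(d)$. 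Choosing $\alpha_2, \alpha_3 \leq c_0/4$, we get the \emph{deterministic} lower bound $\card{F} \geq (c_0/2) n^2 \geq (c_0/2) p n^2$, and applying part \cref{lem:triangles with F} of \cref{lem:two stage reveal} to $F$ then yields $\card{F \cap \tr_v(\Gamma_p)} \geq (c_0/4) p^3 n^2$ whp.

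Each edge $v_2 v_3 \in F \cap \tr_v(\Gamma_p)$ corresponds to a triangle $\{v, v_2, v_3\} \in K_3(\Gamma_p)$ with $(v_2, v_3) \in P$, and pairing such a triangle (placed in the $(1,1)$-slot) with any element of $\Psi_{\hat{(v, v_2, v_3)}}^t(\Gamma_p)$ gives a distinct element of $\Psi_v^{t+1}(\Gamma_p)$. Hence
\[
\card{\Psi_v^{t+1}(\Gamma_p)} \geq \card{F \cap \tr_v(\Gamma_p)} \cdot (d/10)^4 (\tfrac{n-t}{n})^2 \card{\Psi_{\hat v}^t(\Gamma_p)} \geq \alpha (pd)^3 (n-t)^2 \card{\Psi_{\hat v}^t(\Gamma_p)},
\]
for a suitable constant $\alpha = \alpha(d) > 0$. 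A union bound over $v \in V^1$, the integers $t$ in range, and the $O(n^2)$ invocations of \cref{lem:LDL} (for which we take the parameter $K$ in \cref{lem:LDL} sufficiently large), together with the failure events of \cref{lem:two stage reveal}, gives the ``whp'' statement. The main obstacle is matching the $\eta$-parameters between the two LDL applications so that the constant loss from the first becomes negligible beside the exponential slack of the second; once this is in place, the key conceptual point is the measurability of $P$ and $F$ with respect to $(\Gamma_{\hat v})_p$, which lets the two-stage reveal sample the edges at $v$ independently and complete the desired triangles.
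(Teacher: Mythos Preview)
Your overall plan is natural and mirrors the paper's own sketch, but there is a genuine gap at the final step where you invoke \cref{lem:two stage reveal}\cref{lem:triangles with F}. Your set $F$ consists of edges of $\Gamma$, not of $\Gamma_p$. If you inspect the proof of \cref{lem:two stage reveal}\cref{lem:triangles with F}, the identity $Z=|F\cap\tr_u(\Gamma_p)|=\sum_{A\in\cF}I_A$ uses that every $e\in F$ is already known to lie in $E((\Gamma_{\hat u})_p)$; otherwise one is only counting pairs with both $u$-edges present, not triangles. (Indeed, a deterministic $F$ of size exactly $\alpha p n^2$ inside $\tr_u(\Gamma)$ gives $\Exp{|F\cap\tr_u(\Gamma_p)|}=\alpha p^4 n^2$, far below $\tfrac{\alpha}{2}p^3n^2$.) So what you really need is $|F\cap E(\Gamma_p)|\ge c\,p n^2$, and this you have not shown. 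It does not follow from $|F|\ge (c_0/2)n^2$: the complement $V^2\times V^3\setminus P$ has size $(\alpha_2+\alpha_3)n^2$, which dominates $|\tr_v(\Gamma)\cap E(\Gamma_p)|\approx c_0 p n^2$, and even the max-degree bound $\Delta(\Gamma_p)\le 4pn$ only gives $\le 4pn^2$ potentially bad edges, already larger than $c_0 p n^2$. The obstruction is precisely that your set $P$ is defined via a comparison to $\card{\Psi^t_{\hat v}(\Gamma_p)}$, which depends on every $V^2$--$V^3$ edge of $\Gamma_p$; hence $P$ and $E(\Gamma_p[V^2,V^3])$ are correlated in a way you cannot control.

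The paper anticipates exactly this issue (see the discussion just before the proof of \cref{lem:embed}) and repairs it in two steps. First, a dyadic \emph{grading} (\cref{clm:max-embed,clm:graded}) replaces the random benchmark $\card{\Psi^t_{\hat v}(\Gamma_p)}$ by a fixed level $2^s$, so that ``sound'' depends only on absolute counts $\card{\Psi^t_{\hat\uu}}$; in particular, soundness of $(v,v_2)$ and $(v,v_2,v_3)$ is then independent of all edges incident to $v_2$. Second, for each sound $v_2$ one applies \cref{lem:two stage reveal}\cref{lem:many neighbours in random set} at $v_2$ (not at $v$) to produce $\Omega(pn)$ sound $v_3$ with $v_2v_3\in E(\Gamma_p)$; summing over $v_2$ yields a set $Z'(v)\subseteq \tr_v(\Gamma)\cap E(\Gamma_p)$ of size $\Omega(pn^2)$ that is measurable with respect to $(\Gamma_{\hat v})_p$, and only then is \cref{lem:two stage reveal}\cref{lem:triangles with F} applied at $v$. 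Your proposal collapses both of these steps, and it is the absence of the grading that makes the collapse fail.
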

We first show how \cref{prop:full-step} follows  from this  and Lemma \ref{lem:LDL}.
\begin{proof}[Proof of \cref{prop:full-step}]
Choose~$0<\e,\tfrac{1}{C}\ll \eta \ll \eta'\ll \alpha \ll \alpha'\ll d$. Now by our choice of constants (also choosing~$K\geq 5$) and taking a union bound over all choices of~$t$ with~$\left(1-\eta\right)n \leq t < n$,~$\ell\in [3]$ and~$\uu=(u_1,\ldots,u_{\ell-1})\in \pzc{V}$  we have whp that the conclusion of \cref{lem:LDL} holds in~$\Gamma_p$ for all such choices and also the conclusion of \cref{lem:embed} holds with~$\eta'$ and~$\alpha'$ replacing~$\eta$ and~$\alpha$.  We will now show that given these conclusions hold in~$\Gamma_p$, we have the desired  statement of \cref{prop:full-step}. So fix some~$t\in \N$ with~$\left(1-\eta\right)n \leq t < n$ and suppose that 
 \[\card{\Psi^t(\Gamma_p)} \geq \left( 1-\eta\right)^n (pd)^{3t}(n!_t)^3.\]
  Let~$U_1 \subseteq V^1$ be the set of vertices~$u_1 \in V^1$ for which
  \begin{equation} \label{eq:proof full step 1}
      \card{\Psi_{\hat{u}_1}^t(\Gamma_p)} \geq \left(\frac{d}{10}\right)^2  \left(\frac{n-t}{n}\right)\card{\Psi^t(\Gamma_p)}.  \end{equation}
  It follows from (the assumed conclusion of)   \cref{lem:LDL} (with~$\ell = 1$) that we have~$|U_1| \geq \tfrac{n}{2}$.
  Now as
  \[\left(\frac{d}{10}\right)^2  \left(\frac{n-t}{n}\right)\card{\Psi^t(\Gamma_p)}\geq \left(\frac{d}{10}\right)^2  \left( 1-\eta\right)^n (pd)^{3t}(n-1)!_{t}(n!_t)^2, \]
  and~$\left(\frac{d}{10}\right)^2  \left( 1-\eta\right)^n\geq (1-\eta')^n$, 
 we have that  \[\card{\Psi_{u_1}^{t+1}(\Gamma_p)} \geq
   \alpha' \left(\frac{d}{10}\right)^2 (pd)^3  \frac{(n-t)^3}{n}\card{\Psi^t(\Gamma_p)},\]
  for every~$u_1 \in U_1$, from (the assumed conclusion of) \cref{lem:embed} and~\cref{eq:proof full step 1}.
  Therefore, we have that 
  \begin{align*}
    \card{\Psi^{t+1}(G)}
    &\geq \sum_{u_1 \in U_1} \card{\Psi_{u_1}^{t+1}(\Gamma_p)}\\
    &\geq   \frac{\alpha'}{2} \left(\frac{d}{10}\right)^2 (pd)^3  (n-t)^3\card{\Psi^t(\Gamma_p)} \\
    &\geq \alpha (pd)^3  (n-t)^3\card{\Psi^t(\Gamma_p)}, 
  \end{align*}
  by our choice of constants. This finishes the proof.
\end{proof}
It remains to prove \cref{lem:embed}. Before embarking on this, we sketch some of the key ideas involved. For this discussion, we fix some~$t\in [n]$ and~$v \in V^1$ that we think of as  satisfying the conditions of \cref{lem:embed} (including the ``if'' statement). We will say that a pair~$(w_2,w_3)\in V^2\times V^3$ is \emph{good} if 
	\[\card{ \Psi_{\hat{\uw}}^{t}(\Gamma_p)} =\Omega\left( \left(\frac{n-t}{n}\right)^2\card{\Psi_{\hat{v}}^t(\Gamma_p)}\right),\]
 where~$\uw=(v,w_2,w_3)$.  Note that we can appeal to the Local Distribution Lemma (\cref{lem:LDL}) twice (once with~$\ell=2$ and once with~$\ell=3$) to conclude that almost all pairs~$(w_2,w_3)\in V^2\times V^3$ are good. That is, for almost all choices of~$(w_2,w_3)\in V^2\times V^3$, we have that there are roughly the `correct' number of embeddings of~$D_t$ that avoid~$\uw=(v,w_2,w_3)$. Moreover, due to~$\Gamma$ being super-regular, there will be some proportion of these~$(w_2,w_3)$  (say, at least~$\tfrac{1}{2}d^3n^2$) that form triangles with~$v$ in~$\Gamma$. So we have some set~$W\subset V^2\times V^3$ of size at least~$\tfrac{1}{2}d^3n^2$ such that  all~$(w_2,w_3)\in W$ are good and have that~$\{v,w_2,w_3\}\in K_3(\Gamma)$. The conclusion of \cref{lem:embed} will then follow if we can prove that at least, say,~$\tfrac{p^3}{2}|W|$  triangles~$\{v,w_2,w_3\}$ with~$(w_2,w_3)\in W$, appear in~$\Gamma_p$. Of course, every triangle in~$\Gamma$ appears in~$\Gamma_p$ with probability~$p^3$ and so this is something we can expect to be true but we cannot appeal to standard tools to prove this. 
	
	The issue here is that~$W$ itself is a random set as the property of \emph{being good} depends on the random edges that appear in~$\Gamma_p$.  Indeed, in order to determine whether an edge $(w_2,w_3)\in V^2\times V^3$ is good or not, we need to count the number of embeddings of $D_t$ in $\Gamma_p$ that avoid $\uw=(v,w_2,w_3)$ and so certainly need to know the random status of edges in $\Gamma_p$ to carry out this count. However, crucially,~$W$ does not depend on \emph{all} the random edges. Indeed, for \emph{any}~$(w_2,w_3)\in V^2\times V^3$, we can determine whether~$(w_2,w_3)$ is in our set~$W$ without knowing the random status of edges adjacent to~$v$. Indeed, as the property of being good only depends on counting embeddings that \emph{avoid} $v$, the random status of edges adjacent to $v$ has no bearing on whether an edge $(w_2,w_3)\in V^2\times V^3$ is good or not. Therefore, by appealing to a two-stage revealing process (see~\cref{lem:two stage reveal}\cref{lem:triangles with F}), we will be able to prove \cref{lem:embed} if we know that at least, say,~$\tfrac{p}{2}|W|$ of the pairs~$(w_2,w_3)\in W$ host edges in~$\Gamma_p$, as then we will be able to conclude that roughly a~${p^2}$ proportion of these edges in $W\cap E(\Gamma_p)$ extend to triangles with $v$ in $\Gamma_p$. 
	
	Again, requiring that~$\tfrac{p}{2}|W|$ edges in $W$ appear in $\Gamma_p$ is certainly a natural thing to expect as each edge appears with probability $p$,  but again the set~$W$ containing good edges, depends heavily on the random status of edges in~$\Gamma[V^2,V^3]$.  Our aim is to use a two-stage revealing process, manipulating independence, as above. Again here, it is crucial that we are counting embeddings that avoid vertices. That is, if~$e=\{w_2,w_3\}\in E(\Gamma[V^2,V^3])$, then in order to determine the number of embeddings that avoid~$(v,w_2,w_3)$, we do not need to know the random status of $e$ and in fact more is true. The number of embeddings of $D_t$ avoiding~$(v,w_2,w_3)$ is independent of the random status of \emph{all}~$(w_2,u_3)$ with~$u_3\in N_\Gamma(w_2;V^3)$. Therefore our approach is to lower bound the number of edges in $|W\cap \Gamma_p|$  by grouping together edges in $W$ according to their~$V^2$-endpoint. This gives hope to use a two-stage random revealing argument (appealing to  \cref{lem:two stage reveal}\cref{lem:many neighbours in random set}) to conclude that  roughly the expected number of good edges appear in~$\Gamma_p$.
	
	However, there is an oversight in the discussion above. The point is that our definition of whether an edge $(w_2,w_3)\in V^2\times V^3$ is good does not only rely on counting embeddings avoiding $\uw=(v,w_2,w_3)$, we \emph{also} need to know the size of~$\card{\Psi_{\hat{v}}^t(\Gamma_p)}$. Therefore, if~$e=\{w_2,w_3\}\in E(\Gamma[V^2,V^3])$, then in order to determine if~$(w_2,w_3)$ is \emph{good}, we actually need to reveal the random status of~$e$ itself as well as all the random edges between $V^2$ and $V^3$ (to determine~$\card{\Psi_{\hat{v}}^t(\Gamma_p)}$). To remedy this, we adjust our definition of good to be independent of~$\card{\Psi_{\hat{v}}^t(\Gamma_p)}$. We will therefore give a \emph{grading} of the possible range of~$\card{\Psi_{\hat{v}}^t(\Gamma_p)}$ and show that the desired conclusion holds with respect to each \emph{grade} (see \cref{clm:graded} in the proof). In order to be able to perform a union bound over all of the possible grades, we need an upper bound on how large~$\card{\Psi_{\hat{v}}^t(\Gamma_p)}$ can be (whp) and this is provided by \cref{clm:max-embed}. This idea allows us to remove~$\card{\Psi_{\hat{v}}^t(\Gamma_p)}$ from the definition of being good, leading to the definition of being \emph{sound} in the proof. Hence we have that for~$e=\{w_2,w_3\}\in \Gamma[V^2,V^3]$, whether~$(w_2,w_3)$ is sound or not relies only on counting embeddings avoiding $\uw=(v,w_2,w_3)$ and so is independent of whether~$e$ appears in~$\Gamma_p$ and  in fact, as sketched above, the `soundness' of~$(w_2,w_3)$ is independent of the random status of \emph{all}~$(w_2,u_3)$ with~$u_3\in N_\Gamma(w_2;V^3)$. We therefore consider potential triangles one vertex at a time and we refine our definition of sound to handle this, leading to the definition of \emph{sound tuples}  in the proof. We now give the full details of the proof of \cref{lem:embed}. 

\begin{proof}[Proof of \cref{lem:embed}]
Choose $K=L=10$ and $0\ll\e,\tfrac{1}{C}\ll   \eta \ll \eta' \ll \alpha \ll d,\frac1K,\frac1L$ and fix~$p$ and~$\Gamma$ as in the statement of the lemma. We begin by showing the following simple claim which gives a weak upper bound on the number of embeddings that avoid a fixed vertex~$v_1$.
\begin{claim} \label{clm:max-embed}
We have that the following statement holds whp in~$\Gamma_p$. For any~$t\in \N$ such that~$\left(1-\eta\right)n \leq t < n$ and~$v_1\in V^1$,  we have that 
\begin{equation} \label{eq:max-embed-upper}
    \card{\Psi_{\hat{v}_1}^t(\Gamma_p)} \leq n^3 p^{3t}(n-1)!_t (n!_t)^2. \end{equation}
\end{claim}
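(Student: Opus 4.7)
The plan is to prove Claim~\ref{clm:max-embed} by a straightforward first-moment calculation combined with Markov's inequality, exploiting the fact that the claimed bound is very generous, being larger than the trivial expectation by a factor of $n^3$.

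First, I would fix $t$ with $(1-\eta)n \le t < n$ and $v_1 \in V^1$ and count the number of labelled \emph{candidate} embeddings of $D_t$ into the complete tripartite graph on $V^1 \cup V^2 \cup V^3$ that avoid $v_1$ in the first part. Each such embedding is specified by choosing an ordered $t$-tuple of distinct vertices of $V^1 \setminus \{v_1\}$ for the images of $\{(s,1):s\in[t]\}$, together with an ordered $t$-tuple of distinct vertices of $V^2$ for $\{(s,2):s\in[t]\}$ and similarly for $V^3$. This gives exactly $(n-1)!_t \cdot (n!_t)^2$ candidates.

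Second, each such candidate $\psi$ is realised in $\Gamma_p$ only if a specific set of $3t$ edges is present; independence of edges yields a probability of at most $p^{3t}$ (and exactly zero whenever some edge is missing from $\Gamma$). Linearity of expectation then gives
\[
\Exp{\card{\Psi_{\hat{v}_1}^t(\Gamma_p)}} \le p^{3t}(n-1)!_t(n!_t)^2.
\]
Applying Markov's inequality yields
\[
\Pro{\card{\Psi_{\hat{v}_1}^t(\Gamma_p)} \ge n^3 \cdot p^{3t}(n-1)!_t(n!_t)^2} \le n^{-3}.
\]

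Third, I would union bound this estimate over the $n$ choices of $v_1 \in V^1$ and the at most $\eta n \le n$ integer values of $t$ in the range $[(1-\eta)n, n)$. The total failure probability is at most $n^2 \cdot n^{-3} = n^{-1} = o(1)$, so the stated upper bound holds simultaneously for all $v_1$ and all relevant $t$ with high probability, as required.

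There is no real obstacle: the claim is a loose tail bound designed only to cap the possible range of $\card{\Psi_{\hat v_1}^t(\Gamma_p)}$ so that the authors can later grade this quantity and union bound over grades, so Markov's inequality with its cubic-factor slack is amply sufficient and sharper concentration inequalities (e.g.\ Janson or Kim--Vu) would be unnecessarily heavy machinery here.
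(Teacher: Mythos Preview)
Your proposal is correct and matches the paper's proof essentially line for line: bound the number of candidate embeddings in $K_{n,n,n}$ avoiding $v_1$ by $(n-1)!_t(n!_t)^2$, use linearity of expectation to get $\Exp{\card{\Psi_{\hat v_1}^t(\Gamma_p)}}\le p^{3t}(n-1)!_t(n!_t)^2$, apply Markov's inequality for a failure probability of $n^{-3}$, and union bound over $v_1$ and $t$. Your closing remark about the purpose of the claim (capping the range for the subsequent grading argument) is also exactly how the paper uses it.
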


\begin{claimproof}
Fix some~$t\in \N$ and~$v_1\in V^1$ as in the statement of the claim. Then 
\[ \card{\Psi_{\hat{v}_1}^t(\Gamma)} \leq  \card{\Psi_{\hat{v}_1}^t(K_{n,n,n})} \leq  (n-1)!_t (n!_t)^2,\]
and so, as each embedding of~$D_t$ in~$\Gamma$ appears in~$\Gamma_p$ with probability~$p^{3t}$, we have that~$\lambda:=\Exp{ \card{\Psi_{\hat{v}_1}^t(\Gamma_p)}}\leq p^{3t}(n-1)!_t (n!_t)^2$. Therefore, appealing to Markov's inequality gives that 
\begin{align*}
    \Pro{ \card{\Psi_{\hat{v}_1}^t(\Gamma_p)} > n^3 p^{3t}(n-1)!_t (n!_t)^2} \leq \Pro{ \card{\Psi_{\hat{v}_1}^t(\Gamma_p)}>n^3\lambda} \leq \frac{1}{n^3}. 
\end{align*}
Taking a union bound over the choices of~$v\in V^1$ and~$t\in \N$ with~$(1-\eta) n\leq t\leq n$ completes the proof of the claim.
\end{claimproof}

\cref{clm:max-embed} gives us an upper bound on the size of~$\Psi_{\hat{v}_1}^t(\Gamma_p)$ that holds whp, whilst the statement of the lemma gives a lower bound. Our next claim replaces the lower bound in the statement of the lemma, with  lower bounds independent of~$\card{\Psi_{\hat{v}_1}^t(\Gamma_p)}$. These lower bounds will depend on a parameter~$s\in \Z$ and we make the following definitions which will define the range of~$s$ we are interested in. Firstly let~$s_0$ be the largest (negative) $s\in \Z$ such that~$2^s\leq (1-\eta)^n$.  Further, let~$s_1$ be the minimum  integer~$s\in \N$ such that~$2^sd^{3t}\geq n^3$. So we have that  
\[s_0\geq n\frac{\log\left(1-\eta\right)}{\log 2} -1 \geq -n  \qquad  \mbox{ and } \qquad s_1\leq\frac{3\log n-3t\log(d)}{\log 2} +1\leq \frac{n}{\alpha}.\]
Finally, let~$\mathds{S}:=\{s\in \Z:s_0\leq s\leq s_1\}$. We now state our second claim.

\begin{claim}  \label{clm:graded}
For any~$t\in \N$ with~$\left(1-\eta\right)n \leq t < n$,~$s\in \mathds{S}$ and~$v_1\in V^1$, with probability at least~$1-n^{-4}$, the following statement holds in~$\Gamma_p$. 
If
\begin{equation} \label{eq:s_dependent_lower_bd_1}
    \card{\Psi_{\hat{v}_1}^t(\Gamma_p)} \geq 2^s (pd)^{3t}(n-1)!_t (n!_t)^2, \end{equation}
then 
\begin{equation*}
\card{\Psi_{v_1}^t(\Gamma_p)} \geq 2^{s+1}\alpha (pd)^{3(t+1)}(n-1)!_t (n!_{t+1})^2.\end{equation*}
\end{claim}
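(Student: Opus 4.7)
My plan is to introduce a notion of \emph{$s$-sound} pairs $(w_2,w_3)\in V^2\times V^3$ whose definition is, crucially, independent of the status in $\Gamma_p$ of all edges incident to any of $\{v_1,w_2,w_3\}$. Call $(w_2,w_3)$ \emph{$s$-sound} if $\{v_1,w_2,w_3\}\in K_3(\Gamma)$ and
\[\card{\Psi_{\hat{\uw}}^t(\Gamma_p)}\geq c_1\cdot 2^s(pd)^{3t}((n-1)!_t)^3,\]
where $\uw\coloneq(v_1,w_2,w_3)$ and $c_1\coloneq (d/10)^4/2$. Since $\Psi_{\hat{\uw}}^t(\Gamma_p)$ only counts embeddings of $D_t$ avoiding $\uw$, it is determined by $(\Gamma_{\hat{\uw}})_p$, so the soundness of any fixed pair is indeed independent of every edge of $\Gamma_p$ meeting $\{v_1,w_2,w_3\}$ --- this is precisely the independence that the two-stage reveals below will exploit.

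The first step is to show that most pairs are $s$-sound, via two applications of \cref{lem:LDL}. By the constant hierarchy I will take claim's $\eta$ so small compared to the $\eta$-parameter $\eta^\star$ output by \cref{lem:LDL} (with appropriate inputs) that $2^{s_0}\geq (10/d)^2(1-\eta^\star)^n$ holds for all large $n$; this provides enough slack for the LDL hypothesis to remain valid after the $(d/10)^2$ loss incurred by each application, for every $s\in\mathds{S}$. Applying \cref{lem:LDL} with $\ell=2,\uu=(v_1)$ and then with $\ell=3,\uu=(v_1,w_2)$ for each $w_2\in V^2$, and union-bounding at cost at most $(n+1)n^{-K}$, I conclude that for all but $O(\alpha n)$ \emph{good} vertices $w_2\in N_\Gamma(v_1;V^2)$ the random set $X_{w_2}\coloneq\{w_3\in V^3:(w_2,w_3)\text{ is }s\text{-sound}\}$ satisfies $X_{w_2}\subseteq N_\Gamma(v_1,w_2;V^3)$ and $|X_{w_2}|\geq d^2n/4$; the lower bound uses super-regularity to give $|N_\Gamma(v_1,w_2;V^3)|\geq d^2n/2$ for most $w_2$.

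Next I convert soundness into $\Gamma_p$-triangles at $v_1$ via two applications of \cref{lem:two stage reveal}. Since $X_{w_2}$ is determined by $(\Gamma_{\hat{w_2}})_p$, \cref{lem:two stage reveal}\cref{lem:many neighbours in random set} with $u=w_2$ yields whp that $|X_{w_2}\cap N_{\Gamma_p}(w_2;V^3)|\geq d^2pn/8$ for every good $w_2$. Letting $F\coloneq\bigcup_{w_2\text{ good}}\{w_2w_3:w_3\in X_{w_2}\cap N_{\Gamma_p}(w_2)\}$, super-regularity gives $|F|\geq (dn/2)\cdot(d^2pn/8)=d^3pn^2/16$. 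Crucially $F\subseteq\tr_{v_1}(\Gamma)$ (by soundness) and $F$ is determined by $(\Gamma_{\hat{v_1}})_p$ (soundness is independent of $v_1$-edges, and the $w_2w_3$-edges lie inside $V^2\cup V^3$), so \cref{lem:two stage reveal}\cref{lem:triangles with F} with $u=v_1$ gives $|F\cap\tr_{v_1}(\Gamma_p)|\geq d^3p^3n^2/32$ whp. Each element of this intersection is a distinct triangle $\{v_1,w_2,w_3\}\in K_3(\Gamma_p)$ whose partner pair is sound, and thus contributes at least $c_1\cdot 2^s(pd)^{3t}((n-1)!_t)^3$ embeddings to $\Psi_{v_1}^{t+1}(\Gamma_p)$; summing and using the identity $n!_t/(n-1)!_t=n/(n-t)$ gives the desired bound $\card{\Psi_{v_1}^{t+1}(\Gamma_p)}\geq 2^{s+1}\alpha(pd)^{3(t+1)}(n-1)!_t(n!_{t+1})^2$ provided $\alpha\leq c_1 d^3/128$.

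The hardest part of the plan is engineering the definition of soundness so that both independence-based reveals go through simultaneously \emph{and} the hypotheses of \cref{lem:LDL} remain valid across the entire range $\mathds{S}$, right down to $s=s_0$. Using the absolute threshold $2^s$ (rather than the relative threshold $\card{\Psi_{\hat{v_1}}^t(\Gamma_p)}$) is what makes soundness independent of $v_1$-edges, and the $\eta$-hierarchy chosen above ensures LDL applies across all of $\mathds{S}$. The total failure probability is bounded by $(n+1)n^{-K}+(n+1)n^{-L}$, which is comfortably below $n^{-4}$ for $K=L=10$.
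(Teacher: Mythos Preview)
Your proposal is correct and follows essentially the same approach as the paper. Both arguments hinge on exactly the insight you highlight: defining soundness via the absolute threshold $2^s$ (rather than relative to $\card{\Psi_{\hat v_1}^t(\Gamma_p)}$) so that soundness of $(w_2,w_3)$ is determined by $(\Gamma_{\hat v_1,\hat w_2,\hat w_3})_p$, enabling the two applications of \cref{lem:two stage reveal}. The paper organises this into four explicit Steps with soundness defined at all three levels $(v_1),(v_1,u_2),(v_1,u_2,u_3)$ with constants $(8\sqrt\alpha)^{\ell-1}$, whereas you compress to a single pair-level notion with constant $c_1=(d/10)^4/2$; but the sequence of ideas --- LDL with $\ell=2$, then $\ell=3$, then \cref{lem:two stage reveal}\cref{lem:many neighbours in random set} at $w_2$, then \cref{lem:two stage reveal}\cref{lem:triangles with F} at $v_1$ --- is identical, as is the verification that $F$ is determined by $(\Gamma_{\hat v_1})_p$ and $X_{w_2}$ by $(\Gamma_{\hat w_2})_p$. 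Your final arithmetic is slightly conservative (one actually only needs $\alpha\le c_1/64$, since $d^3p^3(pd)^{3t}=(pd)^{3(t+1)}$ and $n(n-1)!_t=n!_{t+1}$), but this is harmless given the constant hierarchy.
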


Before proving \cref{clm:graded}, we show how the lemma follows from the two claims.  Taking a union bound, we can conclude that whp the conclusion of \cref{clm:graded} holds for all choices of~$t,s$ and~$v_1$ (noting that~$|\mathds{S}|\leq (1+\alpha^{-1})n$), as well as the conclusion of \cref{clm:max-embed}.   Now suppose  that this is the case  and let~$t\in \N$ with~$\left(1-\eta\right)n \leq t < n$ and~$v\in V^1$. If, as in the assumption of \cref{lem:embed}, we have
\begin{equation} \label{eq:min-grade}
  \card{\Psi_{\hat{v}}^t(\Gamma_p)} \geq \left(1-\eta\right)^n (pd)^{3t}(n-1)!_t (n!_t)^2,
\end{equation}
then, letting~$s^*\in \Z$ be the maximum integer~$s\in \Z$ such that \[\card{\Psi_{\hat{v}}^t(\Gamma_p)} \geq 2^s (pd)^{3t}(n-1)!_t (n!_t)^2,\]
we conclude
from~\cref{eq:min-grade} that $s^*\ge s_0$ and
from (the assumed conclusion of) \cref{clm:max-embed} that $s^*\le s_1$ and hence $s^*\in \mathds{S}$. Therefore, from (the assumed conclusion of) \cref{clm:graded}, we obtain that 
\begin{align*}\card{\Psi_{v}^t(\Gamma_p)} \geq 2^{s^*+1}\alpha (pd)^{3(t+1)}(n-1)!_t (n!_{t+1})^2 
\geq \alpha (pd)^{3} (n-t)^2\card{\Psi_{\hat{v}}^t(\Gamma_p)}, 
\end{align*}
as required for the conclusion of \cref{lem:embed}, where we used that
\[\card{\Psi_{\hat{v}}^t(\Gamma_p)}\leq 2^{s^*+1} (pd)^{3t}(n-1)!_t (n!_t)^2\,,\]
by the definition of~$s^*$. Thus it remains to prove \cref{clm:graded}.

\begin{claimproof}[Proof of \cref{clm:graded}]
Let us fix~$t\in \N$ with $\left(1-\eta\right)n \leq t < n$, $s\in \mathds{S}$ and~$v_1\in V^1$.  
  Given some~$\ell \in [3]$, we call a sequence of vertices~$\uu = (u_1, \ldots, u_\ell) \in \pzc{V}$ \emph{sound} if
  \[\card{\Psi_{\hat \uu}^{t}(\Gamma_p)} \geq  \left(8\sqrt{\alpha}\right)^{\ell-1}  2^s (pd)^{3t}((n-1)!_t)^\ell (n!_t)^{3-\ell}. \] 
  Note that~\eqref{eq:s_dependent_lower_bd_1} holds if and only if~$(v_1)$ is sound.
Also note that for any~$\uu=(u_1,\ldots,u_\ell)$ and~$i\in[\ell]$, we can determine whether~$\uu$ is sound  or not without knowing the random status of edges adjacent to~$u_i$ in~$\Gamma$, as determining whether~$\uu$ is sound relies on counting embeddings that \emph{avoid}~$u_i$. 

  We now formulate a sequence of steps, that we will prove later, claiming that certain properties hold. 
  Let~$X_2(v_1) \subseteq  N_\Gamma(v_1;V^2)$ be the set of vertices~$u_2 \in N_\Gamma(v_1;V^2)$ such that~$(v_1,u_2)$ is sound and~$\deg_\Gamma(v_1,u_2;V^3) \geq \tfrac{d^2 n}{2}$.
  \begin{step}\label{step:1}
    With probability at least~$1 - n^{-6}$, the following statement holds in~$\Gamma_p$. 
    \begin{center}
             If~$(v_1)$ is sound,   then~$|X_2(v_1)| \geq \frac{dn}{2}$.
         \end{center}
  \end{step}
  Given~$v_2 \in V^2$, let~$X_3(v_1,v_2) \subseteq N_\Gamma(v_1,v_2;V^3)$ be the set of vertices~$u_3 \in N_\Gamma(v_1,v_2;V^3)$ such that~$(v_1,v_2,u_3)$ is sound. Furthermore, let~$Y_3(v_1,v_2) \subseteq X_3(v_1,v_2)$ be the set of those~$u_3$ such that~$v_2u_3 \in E(\Gamma_p)$.
  \begin{step}\label{step:2}
    With probability at least~$1 - n^{-6}$ the following statement holds in~$\Gamma_p$ for every~$v_2 \in V^2$. 
    \begin{center}
    If~$(v_1)$ is sound and~$v_2 \in X_2(v_1)$, then we have~$\card{Y_3(v_1,v_2)} \geq \frac{pd^2n}{8}$.
            
    \end{center}
  \end{step}
  Let now
~$ Z'(v_1) = \left\{ (u_2,u_3) \in V^2 \times V^3: u_2 \in X_2(v_1), u_3 \in Y_3(v_1,u_2)\right\}$
  and \[ Z(v_1)= \{ (u_2,u_3) \in Z'(v_1): \{v_1,u_2,u_3\} \text{ is a triangle in } \Gamma_p\}=\tr_{v_1}(\Gamma_p)\cap Z'(v_1).\] We will use \cref{step:1,step:2} to deduce the following.
  \begin{step}\label{step:3}
    With probability at least~$1-n^{-5}$, the following statement holds in~$\Gamma_p$. 
    \begin{center}
    If~$(v_1)$ is sound, then~$|Z'(v_1)| \geq \frac{pd^3n^2}{16}$.
    \end{center}
  \end{step}
  The claim in the following last step will be a consequence of \cref{lem:two stage reveal}.
  
  \begin{step}\label{step:4}
    With probability at least~$1 - n^{-5}$, the following statement holds in~$\Gamma_p$. 
    
    \begin{center}
    If~$|Z'(v_1)| \geq \frac{pd^3n^2}{16}$,   then we have~$|Z(v_1)| \geq \frac{(pd)^3 n^2}{32}$.
    \end{center}
  \end{step}
  Before we prove the claims in \cref{step:1,step:2,step:3,step:4}, let us use them to deduce \cref{clm:graded}. Note that assuming the statements in \cref{step:3,step:4} hold in~$\Gamma_p$ we have with probability at least $1-2n^{-5}$ that if~$(v_1)$ is sound then~$|Z(v_1)| \geq \tfrac{(pd)^3 n^2}{32}$. Furthermore,
  by the definition of $Z'_1(v_1)\supseteq Z(v_1)$ and of $X_3(v_1,u_2)\supseteq Y_3(v_1,u_2)$ we have that
  for all~$(u_2,u_3) \in Z(v_1)$, the vector~$(v_1,u_2,u_3)$ is sound, that is,
  \[\card{\Psi_{\hat v_1,\hat u_2,\hat u_3}^{t}(\Gamma_p)} \geq 64\alpha 2^s (pd)^{3t}((n-1)!_t)^3\,.\]
  Therefore, with probability at least $1-2n^{-5}$,
  \begin{align*}
    \card{\Psi_{v_1}^{t+1}(\Gamma_p)}
    &\geq \sum_{(u_2,u_3) \in Z(v_1)} \card{\Psi_{\hat v_1,\hat u_2,\hat u_3}^{t}(\Gamma_p)}\\
     &\geq 
    \frac{(pd)^3 n^2}{32} \cdot 64\alpha 2^s (pd)^{3t}((n-1)!_t)^3 \\
    &\geq  2^{s+1} \alpha    (pd)^{3(t+1)}((n-1)!_t) (n!_{t+1})^2 , 
  \end{align*}
  as required for the claim.  
  It remains to prove \cref{step:1,step:2,step:3,step:4}.

  \smallskip
  
  \textit{Proof of \cref{step:1}:} 
  For~$i =2,3$, let~$A_i \coloneq N_\Gamma(v_1;V^i)$. Furthermore, let~$A_2'
  \subseteq V^2$ be the set of vertices~$u_2 \in V^2$ for which~$(v_1,u_2)$ is
  sound and let~$A_2'' \subseteq V^2$ be the set of vertices~$u_2 \in V^2$ for
  which~$\deg(v_1,u_2;V^3) \geq \tfrac{d^2n}{2}$.  Note that~$X_2(v_1) = A_2
  \cap A_2' \cap A_2''$. Since~$(V^1, V^i)$ is~$(\e,d)$-super-regular, we
  have~$|A_i| \geq (d-\e)n$ for~$i =2,3$.  Since~$(V^2, V^3)$ is~$\e$-regular,
  we have~$|A_2''| \geq (1-\e)n$ by \cref{lem:reg-degree}.  Finally,
  observe that~$(v_1)$ being sound implies that 
  \begin{equation*}\begin{split}
      \card{\Psi_{\hat{v}_1}^t(\Gamma_p)}
      &\geq  2^{s} (pd)^{3t}(n-1)!_t (n!_t)^2
      \geq  2^{s_0} (pd)^{3t}(n-1)!_t (n!_t)^2 \\
      &\geq (1-\eta)^n (pd)^{3t}(n-1)!_t (n!_t)^2\,.
  \end{split}\end{equation*}
  Hence, it follows
  from \cref{lem:LDL} with~$\ell=2$ that
  with probability at least~$1-n^{-6}$, if~$(v_1)$ is sound
  then for all but at most~$\alpha n$ vertices $u_2\in V^2$ we have
  \begin{equation*}\begin{split}
  \card{\Psi_{\hat{v}_1,\hat{u}_2}^t(\Gamma_p)}
  & \geq \left(\frac{d}{10}\right)^2 \frac{n-t}{n} \card{\Psi_{\hat{v}_1}^t(\Gamma_p)}
  \geq \left(\frac{d}{10}\right)^2 \frac{1}{n} \cdot 2^{s} (pd)^{3t}(n-1)!_t (n!_t)^2 \\
  & \geq  8\sqrt{\alpha} 2^s(pd)^{3t}((n-1)!_t)^2 (n!_t)\,,
  \end{split}\end{equation*}
  showing that $(v_1,u_2)$ is sound.
  Thus, we get~$|A_2'| \geq (1-\alpha)n$ with probability at least~$1-n^{-6}$ and
  hence
  \[\card{X_2(v_1)} = \card{A_2 \cap A_2' \cap A_2''} \geq \frac{dn}{2},\]
  as claimed. 
    
  \smallskip
  
  \textit{Proof of \cref{step:2}:} 
    Fix some~$v_2 \in V^2$. 
    Let~$X_3 = X_3(v_1,v_2)$ and~$Y_3 = Y_3(v_1,v_2) \subseteq X_3$.
    It follows from an application  of \cref{lem:LDL} with~$\ell=3$ and~$\eta'$ replacing~$\eta$, that the following statement holds in~$\Gamma_p$ with probability at least~$1-n^{-8}$.
     \begin{center}
    If~$(v_1)$ is sound and~$v_2\in X_2(v_1)$, then~$\card{X_3} \geq \frac{d^2n}{4}$.
    \end{center}
    Here we used here that~$v_2\in X_2(v_1)$ implies that~$\deg_\Gamma(v_1,v_2;V^3) \geq \tfrac{d^2n}{2}$ as well as the fact that~$(v_1,v_2)$ being sound implies that  
    \[\card{\Psi_{\hat{v}_1, \hat{v}_2}^{t}(\Gamma_p)} \geq  8\sqrt{\alpha}  2^{s_0} (pd)^{3t}((n-1)!_t)^2 (n!_t)\geq  (1-\eta')^n (pd)^{3t}((n-1)!_t)^2 (n!_t),\]
    in order to appeal to \cref{lem:LDL}.

    Now note that, in order to determine~$X_3$, we do not need to reveal edges adjacent to~$v_2$. That is, the random set of vertices~$X_3$ is determined by~$(\Gamma_{\hat{v}_2})_p$. Therefore, by \cref{lem:two stage reveal}\cref{lem:many neighbours in random set} we have that with probability at least~$1-n^{-8}$ the following statement holds in~$\Gamma_p$. 
    \begin{center}
        If~$|X_3|\geq \frac{d^2n}{4}$,  then~$\card{Y_3} \geq \frac{pd^2n}{8}$.
    \end{center}
   Therefore with probability at least~$1-n^{-7}$, both the above statements hold in~$\Gamma_p$ and so by combining them we have the desired statement of this step for~$v_2\in V^2$. 
    Taking a union bound over all~$v_2 \in V^2$ then completes the proof.

  \smallskip
  
  \textit{Proof of \cref{step:3}:} 
    This is a simple case of combining \cref{step:1,step:2}. Indeed with probability at least~$1-n^{-5}$ both the statements of   \cref{step:1,step:2} hold in~$\Gamma_p$. Taking this to be the case,  if~$(v_1)$ is sound, we then have that 
    \[|Z'(v_1)| = \sum_{u_2 \in X_2(v_1)} \card{Y_3(v_1,u_2)} \geq \frac{dn}{2} \cdot \frac{pd^2n}{8} = \frac{pd^3n^2}{16},\]
    as required.

  \smallskip
  
  \textit{Proof of \cref{step:4}:} 
  This is a direct application of \cref{lem:two stage reveal}\cref{lem:triangles with F}. Indeed, note that~$Z'(v_1)\subseteq  \tr_{v_1}(\Gamma)$ is a random subset of edges determined by~$(\Gamma_{\hat v_1})_p$. The conclusion of \cref{step:4} then follows immediately from
\cref{lem:two stage reveal}\cref{lem:triangles with F}.

  \smallskip

  This concludes the proof of \cref{clm:graded} and hence the proof of the lemma. 
  \end{claimproof}
  \end{proof}

%
\section{Proof of the Local Distribution Lemma}\label{sec:LDL}
The purpose of this section is to prove the Local Distribution Lemma,  \cref{lem:LDL}. We will begin by reducing \cref{lem:LDL} to another lemma, \cref{lem:compare} below, using a simple averaging  argument. Before proving \cref{lem:compare}, we will then take a detour, establishing an Entropy Lemma (\cref{lem:entropy lemma}) which will be crucial for the proof of Lemma~\ref{lem:compare}, which is finally given in \cref{sec:counting via comparison}.  

\subsection{A simplification} \label{sec:simp}

Given  some~$t, \ell$ and~$\uu=(u_1, \ldots, u_{\ell-1})$ as in the statement of Lemma~\ref{lem:LDL}, we aim to prove a lower bound on the size of~$\Psi^t_{\hat \uu,\hat u_{\ell}}$ for almost all of the~$u_{\ell}\in V^{\ell}$. The key step for this is given in the following lemma, which we now motivate.
Given that~$\Psi^t_{\hat{\uu}}$ is large, a simple averaging argument shows that~\cref{eq:LDL} is true `on average' (i.e.\ if we take the average of~$|\Psi_{\hat \uu,\hat u_\ell}^t(\Gamma_p)|$ over all~$u_\ell \in V^\ell$). That is, there is a vertex~$u$ such that the assumption on $|\Psi_{\hat \uu,\hat u}^t(\Gamma_p)|$ in Lemma~\ref{lem:compare} below holds. Lemma~\ref{lem:compare} then states that this implies that~\cref{eq:LDL} holds indeed for almost all choices of~$u_{\ell}$, which is the challenging part in the proof of Lemma~\ref{lem:LDL}. In order to prove Lemma~\ref{lem:compare} in \cref{sec:counting via comparison}, we compare the difference in the sizes of~$\Psi^t_{\hat \uu,\hat u_{\ell}}$ for different choices of~$u_{\ell}\in V^{\ell}$ using the Entropy Lemma (\cref{lem:entropy lemma}).
\begin{lemma}\label{lem:compare}
For all~$0<\alpha,d\le 1$ and~$K>0$ there exists~$\eta,\e>0$ and~$C>0$ such that  for all sufficiently large~$n \in \N$ and for any~$p \geq C(\log n)^{1/3} n^{-2/3}$,  if~$\,\Gamma$ is an~$(\e,d)$-super-regular tripartite graph with parts of size~$n$,~$t\in \N$  such that~$\left(1-\eta\right)n \leq t < n$,~$\ell\in[3]$,~$\uu=(u_1,\ldots,u_{\ell-1})\in \pzc{V}$ and~$u\in V^\ell$ then the following holds in~$\Gamma_p$  with probability at least~$1-n^{-K}$.
If
\[\card{\Psi_{\hat \uu,\hat u}^t(\Gamma_p)} \geq (1-\eta)^n (pd)^{3t}  ((n-1)!_{t})^{\ell} (n!_t)^{3-\ell},\]
then
\[\card{\Psi_{\hat\uu,\hat v}^t(\Gamma_p)} \geq \left(\frac{d}{10}\right)^2\cdot \card{\Psi_{\hat \uu,\hat u}^t(\Gamma_p)}\]
for at least~$(1-\alpha)n$ vertices~$v\in V^{\ell}$.
\end{lemma}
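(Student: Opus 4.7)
The plan is to compare $|\Psi^t_{\hat\uu,\hat v}(\Gamma_p)|$ with $|\Psi^t_{\hat\uu,\hat u}(\Gamma_p)|$ by a \emph{switching}. Setting $\{j,k\} = [3]\setminus\{\ell\}$, for any $\psi \in \Psi^t_{\hat\uu,\hat u}(\Gamma_p)$ whose triangle through $v$ is $\{v, w_j, w_k\}$, if additionally $\{u, w_j, w_k\} \in K_3(\Gamma_p)$ then swapping $v$ for $u$ in this triangle produces an element of $\Psi^t_{\hat\uu,\hat v}(\Gamma_p)$. The resulting map is injective, so the task reduces to showing that for all but $\alpha n$ vertices $v \in V^\ell$ the fraction of switchable $\psi \in \Psi^t_{\hat\uu,\hat u}$ is at least $(d/10)^2$. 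Since typically $|\tr_u(\Gamma_p)\cap\tr_v(\Gamma_p)|$ is only a $p^2$-fraction of $|\tr_v(\Gamma_p)|$, a naive counting of switchable edges falls short by a factor of $p^2$; the Entropy Lemma~\ref{lem:entropy lemma} will be used to overcome this by forcing the triangle through $v$ of a uniformly random $\psi^* \in \Psi^t_{\hat\uu,\hat u}(\Gamma_p)$ to be \emph{near-uniform} on $\tr_v(\Gamma_p)$, and a common reservoir $F$ of edges will let us harness this uniformity.

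Concretely, I will expose $\Gamma_p$ in three stages. Let $F \subseteq E(\Gamma[V^j, V^k])$ consist of the edges with both endpoints in $N_\Gamma(u) \cap N_\Gamma(v)$; by regularity (as in the proof of \cref{lem:many common neighbours}), $|F| \geq d^5 n^2/2$ for all but $O(\e n)$ vertices $v$. \emph{Stage~1} reveals every edge of $\Gamma_p$ not incident to $u$ or $v$, so $F_p := F \cap E(\Gamma_p)$ is known and has size $\geq d^5 p n^2 / 4$ whp by Chernoff, and for each $e = \{w_j, w_k\} \in F_p$ the weight
$$W(e) := \bigl|\Psi^{t-1}_{\hat\uu,\hat u,\hat v,\hat w_j,\hat w_k}(\Gamma_p)\bigr|$$
is already determined. \emph{Stage~2} reveals edges incident to $v$, determining $\tr_v(\Gamma_p)$. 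The key observation is that, conditional on $\Gamma_p$, for a uniformly random $\psi^* \in \Psi^t_{\hat\uu,\hat u}(\Gamma_p)$ the probability that the triangle through $v$ is $\{v\}\cup e$ equals $t W(e)/|\Psi^t_{\hat\uu,\hat u}|$ for each $e \in \tr_v(\Gamma_p)$. Invoking the Entropy Lemma on the assumed lower bound for $|\Psi^t_{\hat\uu,\hat u}|$ will yield, for all but $\alpha n/3$ vertices $v$, that this distribution is near-uniform in the sharp sense of \cref{lem:large-entropy}; with the error parameter $\beta$ chosen much smaller than $\alpha d^2$, at least $(1-\beta)|\tr_v(\Gamma_p)|$ edges $e \in \tr_v(\Gamma_p)$ satisfy $W(e) \geq (1-\beta)\bar W$, where $\bar W := |\Psi^t_{\hat\uu,\hat u}|/(t|\tr_v(\Gamma_p)|)$.

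The decoupling step then transfers this bound from $\tr_v(\Gamma_p)$ to $F_p$. The set $\mathcal G := \{e \in F_p : W(e) \geq (1-\beta)\bar W\}$ is determined already at Stage~1, whereas each $e \in F_p$ lies in $\tr_v(\Gamma_p)$ independently with probability $p^2$ at Stage~2. Chasing the deterministic upper bound $|(F_p\setminus\mathcal G)\cap\tr_v(\Gamma_p)| \leq \beta|\tr_v(\Gamma_p)|$ through a Chernoff lower bound on $|(F_p\setminus\mathcal G)\cap\tr_v(\Gamma_p)|$, and using the triangle count $|\tr_v(\Gamma_p)| \leq 2(pd)^3 n^2$ from \cref{cor:fixed vertex triangles}, forces $|\mathcal G| \geq |F_p|/2$ whp. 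Finally, \emph{Stage~3} reveals edges incident to $u$. Since $\mathcal G \subseteq \tr_u(\Gamma)$ is a random subset determined by $(\Gamma_{\hat u})_p$ of size $\gtrsim d^5 p n^2$, \cref{lem:two stage reveal}\cref{lem:triangles with F} yields $|\mathcal G \cap \tr_u(\Gamma_p)| \gtrsim d^5 p^3 n^2$ whp, and each such $e$ contributes $t W(e) \geq t(1-\beta)\bar W$ embeddings to $\Psi^t_{\hat\uu,\hat v}$ via the switching. Summing and simplifying with the definitions of $\bar W$ and $|\tr_v(\Gamma_p)|$ delivers a factor of at least $(d/10)^2$.

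The main obstacle is precisely this decoupling: edges with undesirable weight $W$ in $\tr_v(\Gamma_p)$ could, a priori, correspond to a $1/p^2$-inflated fraction of $F_p$, so the $\beta$ produced by the Entropy Lemma must be much smaller than $d^2$, i.e.\ the Entropy Lemma must be delivered in a quantitatively very strong form. Arranging this (via a careful entropy decomposition using the chain rule together with the triangle-count upper bounds from \cref{cor:fixed vertex triangles} and \cref{lem:upper triangle count for all vxs}, and an application of \cref{lem:large-entropy}) is the main novelty of the argument, and essentially the entire remaining content of the Local Distribution Lemma's proof.
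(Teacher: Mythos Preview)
Your overall strategy matches the paper's: use the Entropy Lemma to force near-uniformity of $\psi^*_v$ on $\tr_v(\Gamma_p)$, transfer this to the common reservoir $F_p$, then reveal edges at $u$ to collect many switchable triangles. The decoupling step, however, has a genuine gap. You assert that $\mathcal G := \{e \in F_p : W(e) \geq (1-\beta)\bar W\}$ is determined at Stage~1, but your threshold $\bar W = |\Psi^t_{\hat\uu,\hat u}|/(t\,|\tr_v(\Gamma_p)|)$ depends on edges incident to $v$: obviously through $|\tr_v(\Gamma_p)|$, and also through $|\Psi^t_{\hat\uu,\hat u}|$, since embeddings avoiding $u$ may still use $v$. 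Hence $\mathcal G$ is a Stage-2 object, and the Chernoff/Janson lower bound you want on $|(F_p\setminus\mathcal G)\cap\tr_v(\Gamma_p)|$ cannot be applied as written.

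The paper sidesteps this with a \emph{median split}. It partitions $F_p = F_S(v)\cup F_L(v)$ into two halves according to the \emph{order} of the weights $\zeta_v(e)=tW(e)$, with $F_L$ the upper half. Since the weights themselves are Stage-1 quantities, so is the split; \cref{lem:two stage reveal}\cref{lem:triangles with F} then yields $|F_S(v)\cap\tr_v(\Gamma_p)|\ge d^5p^3n^2/20$ and $|F_L(v)\cap\tr_u(\Gamma_p)|\ge d^5p^3n^2/20$ with no circularity. The entropy information is used only \emph{after} all randomness is revealed: since $|\tr_v(\Gamma_p)\setminus W^*|\le 2\beta(pd)^3n^2 < |F_S(v)\cap\tr_v(\Gamma_p)|$ for $\beta\ll d^2$, the near-uniform set $W^*$ from \cref{lem:large-entropy} must meet $F_S(v)$, and therefore every edge in $F_L(v)$ automatically has weight at least $(1-\beta)\bar\zeta$ --- no advance knowledge of $\bar\zeta$ is needed. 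Your version could be rescued by a union bound over the at most $|F_p|$ distinct threshold values, but this is not said, and as written the step fails.
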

Indeed, with \cref{lem:compare} in hand, \cref{lem:LDL} follows easily.
\begin{proof}[Proof of \cref{lem:LDL}]
Fix~$\e,\tfrac{1}{C}\ll \eta \ll d,\alpha$.  Fix~$\Gamma$,~$t\in \N$ with~$\left(1-\eta\right) n\leq t < n$,~$\ell\in[3]$ and~$\uu=(u_1\ldots,u_{\ell-1})\in \pzc{V}$.
By applying \cref{lem:compare} with~$K+1$ replacing~$K$ and taking a union bound, we have that with probability at least~$1-n^{-K}$, the conclusion of \cref{lem:compare}  holds in~$G=\Gamma_p$ for all~$u\in V^\ell$. So suppose that this is the case and  further 
  suppose that
\[\card{\Psi_{\hat\uu}^t(G)} \geq (1-\eta)^n  (pd)^{3t}((n-1)!_{t})^{\ell-1} (n!_t)^{4-\ell}.\]
Now, for each~$\psi\in\Psi_{\hat{\uu}}^t(G)$, we have~$\psi \in \Psi_{\hat \uu,\hat u_{\ell}}^t(G)$ for exactly~$n-t$ choices of~$u_{\ell}\in V^{\ell}$. Therefore, we have that
\begin{align*}
  \sum_{u\in V^{\ell}} \card{\Psi_{\hat\uu,\hat u}^t(G)} & = (n-t) \card{\Psi_{\hat\uu}^t(G)}. 
\end{align*}
By averaging, there must be some~$u^*\in V^{\ell}$ such that
\begin{align*}
  \card{\Psi_{\hat\uu,\hat u^*}^t(G)} &\geq  \left(\frac{n-t}{n}\right) \card{\Psi_{\hat\uu}^t(G)}
  \\&\geq \left(\frac{n-t}{n}\right)(1-\eta)^n  (pd)^{3t}((n-1)!_{t})^{\ell-1} (n!_t)^{4-\ell} \\
  &= (1-\eta)^n (pd)^{3t} ((n-1)!_{t})^{\ell} (n!_t)^{3-\ell}.
\end{align*}
The result now follows from applying the assumed conclusion of \cref{lem:compare} with~$u^*$ playing the r\^ole of~$u$.
\end{proof}
\subsection{The Entropy Lemma}\label{sec:entropylem}
In this section, we will prove a key lemma, \cref{lem:entropy lemma}, which we call the Entropy Lemma.  We start with some definitions.
Given some tripartite~$\Gamma$ with parts of size~$n$, some~$\ell \in [3]$,~$t \in [n]$ and some~$\psi \in \Psi^t(\Gamma)$, we define~$I^\ell(\psi) \subset V^\ell$ to be the vertices in~$V^\ell$ which are isolated in the embedded subgraph~$\psi(D_t)$. If~$\ell$ is clear from context, we will drop the superscript.
If we are further given some~$v \in V^{\ell}$,  we define
\[\psi_v=%
\begin{cases}%
  \hfil \emptyset &\text{if } v \in I(\psi),\\
  \left(N_{\psi(D_t)}\left(v;V^j\right):j\in J\right) &\text{if } v \not \in I(\psi),
\end{cases}\]
where~$J=[3]\setminus\{\ell\}$.
So~$\psi_v$ either returns an empty set, indicating that the vertex~$v$ is isolated in~$\psi(D_t)$, or it returns the pair of vertices which are contained in the triangle containing~$v$ in~$\psi(D_t)$.
We also define the function
\[Y_v(\psi)=\1[\{\psi_v\neq \emptyset\}]=
\begin{cases}
  1 &\text{if } \psi_v \neq \emptyset,\\
  0 &\text{if } \psi_v=\emptyset,
\end{cases}\]
which returns~$1$ if~$v \not\in I(\psi)$ and~$0$ otherwise.
 Note that for any~$\ell\in [3]$ the set~$\{\psi_v:v\in V^{\ell}\}$ completely determines the (unordered) subgraph~$\psi(D_t)$.

For a fixed~$u\in V^\ell$ and~$v\in V^{\ell}\setminus \{u\}$, we will be interested in the distribution of~$\psi^*_v$ if~$\psi^*$ is chosen randomly among a set of embeddings we wish to extend.
In order to analyse this, we use entropy. See \cref{sec:entropybasics} for the definition and basic properties.
We remark that there will be two independent stages of randomness in the argument.
First, there is the random subgraph~$\Gamma_p \subseteq \Gamma$, and second, there will be a randomly chosen~$\psi^* \in \Psi^t(\Gamma_p)$.
In particular, the values of the entropy function~$h(\psi^*),h(\psi^*_v)$ are random variables themselves. However, once we fix a particular instance~$G = \Gamma_p$, these values are deterministic.
We proceed with the following definition which will be convenient to ease notation in what follows.
\begin{definition}\label{eq:hhat}
For~$n\in \N$,~$p=p(n)\in(0,1)$ and~$0<d\le 1$, we define \[H=H(n,p,d) \coloneq \log \left((pd)^3 \cdot n^2\right).\]
\end{definition} 
To see the relevance of this function, note that in a random sparsification of the complete tripartite graph~$K_{n,n,n}$  with probability~$pd$, we would expect a given vertex to lie in~$(pd)^3n^2$ triangles. Therefore if we fix a vertex~$v$ and take a uniformly random triangle containing~$v$, we expect the entropy of the random variable which chooses this triangle, to be roughly~$H(n,p,d)$. The function~$H$ can thus be seen as benchmark for the maximum entropy (recalling \cref{lem:maximality of the uniform}) of a randomly chosen triangle containing a fixed vertex. Our aim will be to show that, for most choices of fixed vertex~$v$,~$H$ is a good approximation for the entropy of the random variable~$\psi^*_v$ discussed above.  


We begin with observing that the function~$H$ provides an appropriate upper bound on the entropy we will be interested in. 

\begin{obs}\label{obs:entropy lemma}
   For all~$0<\e'<d\le 1$ and~$L>0$ there exists~$\e>0$ and~$C>0$ such that  for all sufficiently large~$n \in \N$ and for any~$p \geq C(\log n)^{1/3} n^{-2/3}$,  if~$\Gamma$ is an~$(\e,d)$-super-regular tripartite graph with parts of size~$n$,~$t\in [n]$,~$\ell\in[3]$,~$\uu=(u_1,\ldots,u_{\ell-1})\in \pzc{V}$ and~$u\in V^\ell$ then the following holds in~$\Gamma_p$  with probability at least~$1-n^{-L}$.
   
   For~$\psi^*$  chosen uniformly from~$\Psi_{\hat \uu,\hat u}^t(\Gamma_p)$, we have that 
~$h(\psi^*_v|Y_v(\psi^*)=1) \leq H(n,p,d) + \e'$ for all but at most~$\e'n$ vertices~$v \in V^{\ell}$.
\end{obs}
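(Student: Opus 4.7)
The plan is to combine the maximal (conditional) entropy bound with the sharp triangle count at a fixed vertex established in \cref{cor:fixed vertex triangles}; there is no real difficulty here and the observation is essentially a one-line consequence of these two facts.

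First I would observe that by \cref{lem:cond maximality of the uniform} we have
\[
h(\psi^*_v \mid Y_v(\psi^*)=1) \le \log \bigl|\rg(\psi^*_v \mid Y_v(\psi^*)=1)\bigr|.
\]
When $Y_v(\psi^*)=1$, by definition $\psi^*_v$ records the pair of vertices forming the triangle of $\psi^*(D_t)$ that contains $v$. In particular $\{v\}\cup\psi^*_v$ is a triangle of $\Gamma_p$, so identifying $\psi^*_v$ with the corresponding edge, the conditional range of $\psi^*_v$ is naturally a subset of $\tr_v(\Gamma_p)$. Hence $|\rg(\psi^*_v \mid Y_v(\psi^*)=1)| \le |\tr_v(\Gamma_p)|$.

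Next I would choose $\e''>0$ small enough so that $\log(1+\e'') \le \e'$ and $\e''\le \e'$, and then apply \cref{cor:fixed vertex triangles} with error parameter $\e''$ (taking $\e$ and $C$ accordingly, and taking $L$ as given). This yields, with probability at least $1-n^{-L}$, the deterministic conclusion that $|\tr_v(\Gamma_p)| \le (1+\e'')(pd)^3 n^2$ for all but at most $\e'' n \le \e' n$ vertices $v \in V(\Gamma)$, hence for all but at most $\e' n$ vertices $v \in V^\ell$. Note that the set $\uu \cup \{u\}$ contains at most one vertex of $V^\ell$ (namely $u$), so excluding these does not affect the count.

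Combining the two bounds, for every such vertex $v$ we obtain
\[
h(\psi^*_v \mid Y_v(\psi^*)=1) \le \log\bigl((1+\e'')(pd)^3 n^2\bigr) = H(n,p,d) + \log(1+\e'') \le H(n,p,d) + \e',
\]
which is the desired inequality. (If $\Pr[Y_v(\psi^*)=1]=0$ for some $v$, the conditional entropy is taken to be $0$ by convention and the bound is trivial.) The main point of the proof is simply that entropy is controlled by the log of the support size, and \cref{cor:fixed vertex triangles} already gives a sharp bound on the size of this support for a typical vertex; no further machinery is needed.
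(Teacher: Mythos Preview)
Your proof is correct and follows essentially the same approach as the paper: bound the conditional entropy by the log of the range using \cref{lem:cond maximality of the uniform} (the paper cites \cref{lem:maximality of the uniform}, which amounts to the same thing after conditioning), and control the range size via \cref{cor:fixed vertex triangles}. Your use of an auxiliary $\e''$ is slightly more explicit than the paper's, which simply relies on $\log(1+\e')\le\e'$.
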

\begin{proof}
Choose~$0<\e,\tfrac{1}{C}\ll \e',d,\tfrac{1}{L}$.   By \cref{cor:fixed vertex triangles}, we have that with probability at least~$1-n^{-L}$, 
\[\card{\tr_v(\Gamma_p)}=(1 \pm \e')(pd)^3n^2,\]
for all but at most~$\e'n$ vertices~$v \in V^{\ell}$.
 In particular, for each such~$v$, we have~$\log \card{\tr_v((\Gamma_p)_{\hat \uu,\hat u})} \leq H(n,p,d)  + \e'$. 
  Therefore, by \cref{lem:maximality of the uniform}, we have~$h(\psi_v^*|Y_v(\psi^*) = 1) \leq H(n,p,d) + \e'$ for all~$v$ as above and for~$\psi^*\in\Psi_{\hat \uu,\hat u}^t(\Gamma_p)$ chosen uniformly at random.
\end{proof}
The main purpose of this section is to  provide a partial converse to the above observation, showing that for almost all vertices~$v \in V^\ell$,~$H$ is a good approximation for the entropy~$h(\psi^*_v| Y_v(\psi^*) = 1)$. The full statement is as follows.

\begin{lemma}[Entropy Lemma]\label{lem:entropy lemma}
For all~$0<\beta,d\le 1$ and~$L>0$ there exists~$\eta,\e>0$ and~$C>0$ such that  for all sufficiently large~$n \in \N$ and for any~$p \geq C(\log n)^{1/3} n^{-2/3}$,  if~$\,\Gamma$ is an~$(\e,d)$-super-regular tripartite graph with parts of size~$n$,~$t\in \N$  such that~$\left(1-\eta\right)n \leq t < n$,~$\ell\in[3]$,~$\uu=(u_1,\ldots,u_{\ell-1})\in \pzc{V}$ and~$u\in V^\ell$ then the following holds in~$\Gamma_p$  with probability at least~$1-n^{-L}$. If
\[\card{\Psi_{\hat \uu,\hat u}^t(\Gamma_p)} \geq (1-\eta)^n (pd)^{3t}  ((n-1)!_{t})^{\ell} (n!_t)^{3-\ell},\]
and~$\psi^*$ is chosen uniformly from~$\Psi_{\hat \uu,\hat u}^t(\Gamma_p)$, then we have that~$h(\psi^*_v| Y_v(\psi^*) = 1) \geq H(n,p,d) - \beta$ for all but at most~$\beta n$ vertices~$v \in V^\ell$.

\end{lemma}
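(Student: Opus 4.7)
The plan is to study the entropy of a uniformly random embedding $\psi^*\in\Psi_{\hat\uu,\hat u}^t(\Gamma_p)$, which by \cref{lem:maximality of the uniform} satisfies $h(\psi^*)=\log\card{\Psi_{\hat\uu,\hat u}^t(\Gamma_p)}$, so the hypothesis directly yields a strong lower bound on $h(\psi^*)$. The structural starting point is the identity
\[h(\psi^*)=h\bigl((\psi^*_v)_{v\in V^\ell}\bigr)+\log(t!),\]
which follows from \cref{lem:chain rule} after observing that the tuple $(\psi^*_v)_{v\in V^\ell}$ determines the unordered set of $t$ triangles in $\psi^*(D_t)$ (since each such triangle meets $V^\ell$ in a unique vertex) and that each such set admits exactly $t!$ labellings as an embedding in $\Psi_{\hat\uu,\hat u}^t(\Gamma_p)$.

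Combining subadditivity (\cref{lem:subadditivity}) applied to $(\psi^*_v)_{v\in V^\ell}$ with the chain-rule expansion $h(\psi^*_v)=h(Y_v)+\Pro{Y_v=1}\cdot h(\psi^*_v\mid Y_v=1)$ yields
\[\sum_{v\in V^\ell}\Pro{Y_v=1}\cdot h(\psi^*_v\mid Y_v=1)\ge h(\psi^*)-\log(t!)-\sum_{v\in V^\ell}h(Y_v).\]
The right-hand side is controlled by plugging in the hypothesis on $\card{\Psi_{\hat\uu,\hat u}^t(\Gamma_p)}$, applying Stirling's formula to the factorial terms $((n-1)!_t)^\ell(n!_t)^{3-\ell}$ and $\log(t!)$, and using concavity of the binary entropy function (so that $\sum_v\Pro{Y_v=1}=t\ge(1-\eta)n$ forces $\sum_v h(Y_v)\le nH_2(\eta)$, which is negligible for $\eta$ small).

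For the complementary upper bound, \cref{obs:entropy lemma} gives $h(\psi^*_v\mid Y_v=1)\le H+\e'$ for all but at most $\e'n$ vertices $v\in V^\ell$, while \cref{lem:upper triangle count for all vxs} handles the remaining vertices by $h(\psi^*_v\mid Y_v=1)\le H+O(1)$. Squeezing the weighted sum between its lower bound and this essentially uniform upper bound, and invoking \cref{lem:large-entropy} to upgrade the resulting inequality into a concentration statement, forces all but at most $\beta n$ values of $v$ to satisfy $h(\psi^*_v\mid Y_v=1)\ge H-\beta$.

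The main obstacle is that a naive application of subadditivity above is lossy by a term of order $n$, corresponding to the total correlation of the tuple $(\psi^*_v)_v$ arising from the sampling-without-replacement constraint inherent in a partial triangle factor. Without controlling this slack, the weighted average is only shown to be within $O(1)$ of $H$, which is insufficient to conclude the per-vertex statement when $\beta$ is small. Closing this gap is the delicate step of the argument and is the key strengthening over the entropy lemmas in~\cite{Allen2020+,Johansson2008}: it requires a more refined decomposition in which the conditional entropies $h(\psi^*_v\mid \psi^*_{v'<v})$ under a carefully chosen order are tracked, leveraging both the tightness of the hypothesis (so that the total correlation deficit is almost entirely attributable to the isolation constraint) and structural features of $\Gamma_p$ provided by \cref{lem:two stage reveal} and \cref{lem:many common neighbours} to show that, for most vertices $v$, conditioning on already-processed vertices leaks essentially no additional information beyond what \cref{obs:entropy lemma} already accounts for.
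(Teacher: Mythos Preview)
Your overall architecture matches the paper's: lower-bound $h(\psi^*)$ from the hypothesis, decompose via the $\psi^*_v$, upper-bound the pieces, and argue by squeezing that few pieces can fall short. You also correctly identify the obstacle, namely that plain subadditivity loses a term of order $n$. However, your final paragraph misidentifies both the tool that closes this gap and the mechanism by which it does so.

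First, \cref{lem:two stage reveal} and \cref{lem:many common neighbours} play no role in this proof; they are used later, in the proof of \cref{lem:compare}. Likewise, \cref{lem:large-entropy} is not invoked here at all. The relevant structural input is \cref{lem:exp-emb}: for any ordering $\sigma$ and any realisation $\psi$, all but $\tau n$ vertices $v$ satisfy
\[
\card{\tr_v(\Gamma_p)\cap E(\Gamma_p[A_v(\psi,\sigma,\uu)])}\le (pd)^3\prod_{j\in J}\card{A_v^j(\psi,\sigma,\uu)}+\tau(pd)^3n^2,
\]
together with the crude bound $\card{\tr_v(\Gamma_p)}\le 10p^3n^2$ from \cref{lem:upper triangle count for all vxs} for the remaining $\tau n$ vertices. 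This bounds the conditional range of $\psi^*_v$ given its predecessors, and hence $h(\psi^*_v\mid \{\psi^*_w\}_{w<_\sigma v},I(\psi^*))\le \log\bigl((pd)^3(n-t_v(\psi))^2\bigr)+\delta$ for most $v$ and most $\psi$, where $t_v(\psi)$ counts predecessors of $v$ already placed in triangles. The point is not that conditioning leaks ``essentially no additional information beyond \cref{obs:entropy lemma}''; rather, conditioning leaks exactly the right amount so that the chain-rule sum telescopes:
\[
\sum_{v\notin I(\psi)}\log\bigl((pd)^3(n-t_v(\psi))^2\bigr)=3t\log(pd)+2\log(n!_t),
\]
which matches the main term in $h(\psi^*)-\log(n!_t)$ with no $O(n)$ slack.

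Second, you are missing the key combinatorial idea that makes the contradiction work: one assumes a set $U$ of $\gamma n$ bad vertices (with $h(\psi^*_v\mid Y_v=1)<H-\beta$) and places $U$ \emph{first} in the ordering $\sigma$. For $v\in U$ one drops the conditioning on predecessors entirely and uses the defining inequality $h(\psi^*_v\mid Y_v=1)\le H-\beta$; since $t_v(\psi)\le\gamma n$ for $v\in U$, the ``budget'' $\log\bigl((pd)^3(n-t_v)^2\bigr)$ allotted to $v$ in the telescoping sum is essentially $H$, so each bad vertex contributes a genuine deficit of $\beta/2$. Summed over $U$ this produces a shortfall of order $\gamma\beta n$ in $h(\psi^*)$, contradicting the lower bound. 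Without placing $U$ first, the budget at a late vertex can be much smaller than $H$, and the inequality $h(\psi^*_v\mid Y_v=1)\le H-\beta$ yields no contradiction.
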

In the remainder of this section, we will prove \cref{lem:entropy lemma}. Recall that we have~$V(\Gamma) = V(\Gamma_p) = V^1 \cup V^2 \cup V^3$ with each~$V^i$ of size~$n$. 
As above, for~$t\in [n]$, an embedding~$\psi\in \Psi^t(\Gamma)$ and some~$\ell \in[3]$, we denote by~$I(\psi) = I^\ell(\psi)$ the vertices in~$V^\ell$ which are not contained in the subgraph~$\psi(D_t)$.
In the proof, we will describe~$\psi$ by revealing the status of~$\psi_v$ one by one for each~$v \in V^\ell$ according to some linear order~$\sigma$ of~$V^\ell$. 
In order to do so, we need to make some further definitions. Firstly we denote by~$w<_\sigma v$ that~$w$ occurs before~$v$ in the ordering~$\sigma$. 
Now given some fixed~$t$,~$\psi$ and~$\ell$ as above and an ordering~$\sigma$ of~$V^\ell$,  we will be interested in revealing~$\psi\in  \Psi^t(\Gamma)$ according to the ordering $\sigma$ as follows. We imagine processing the vertices~$v\in V^\ell$ in order and as we process each vertex $v$ we reveal its status in~$\psi$ by revealing~$\psi_v$. Either~$v$ is not in a triangle in $\psi(D_t)$ or $v$ is in a triangle, in which case, we are given the other vertices of the triangle containing $v$ in $\psi(D_t)$. Now consider the moment before processing some vertex $v\in V^\ell$. At this point, we know all the triangles in $\psi(D_t)$ that contain vertices~$w\in V^\ell$ such that $w<_\sigma v$. We are interested in which vertices are candidates to feature in $\psi_v$ at this point and the following definition captures this.

 For some  fixed~$t$,~$\psi$ and~$\ell$ as above, an ordering~$\sigma$ of~$V^\ell$, some~$\uu \in \cV$, some~$j \in [3] \setminus \{\ell\}$ and some $v\in V^\ell$ we define
\[A_v^j(\psi,\sigma,\uu) \coloneq \left\{a\in V^j_{\hat{\uu}}: a\not\in \bigcup_{w \in V^\ell: \ w<_\sigma v} \psi_w\right\}\]
and~$A_v(\psi,\sigma,\uu) \coloneq \bigcup_{j \in J} A_v^j(\psi,\sigma,\uu)$, where~$J \coloneq [3] \setminus \{\ell\}$.
We think of these vertices as being \emph{`alive'} at the point just before processing $v$ (when we are about to reveal~$\psi_v$).
By `alive', we mean that it is still possible that~$\psi_v$ reveals that~$a\in A_v^j(\psi,\sigma,\uu)$ is in a triangle with~$v$. All other vertices~$a \in V^j \setminus A^j_v(\psi,\sigma,\uu)$ are already embedded in triangles with vertices~$w \in V^\ell$ which come before~$v$ in the ordering~$\sigma$ (or lie in $\uu$ in which case we are forbidden from including them in a triangle in $\psi$).
\subsubsection*{Triangles with alive vertices}
In this subsection, we will prove that most vertices~$v \in V^\ell$ are in the expected number of triangles with the other two vertices still being `alive'.
This will be useful in the proof of the Entropy Lemma, \cref{lem:entropy lemma}.
\begin{lemma}\label{lem:exp-emb}
For all~$0<\tau<d\le 1$ and~$L>0$ there exists~$\e>0$ and~$C>0$ such that  for all sufficiently large~$n \in \N$ and for any~$p \geq C(\log n)^{1/3} n^{-2/3}$,  if~$\,\Gamma$ is an~$(\e,d)$-regular tripartite graph with parts of size~$n$ then the following holds in~$\Gamma_p$  with probability at least~$1-n^{-L}$. If~$t\in [n-1]$,~$\ell\in[3]$,~$\uu=(u_1,\ldots,u_{\ell-1})\in \pzc{V}$,~$u\in V^\ell$,  
$\psi \in \Psi_{{\hat\uu,\hat u}}^t(\Gamma_p)$ and~$\sigma$ is an ordering of~$V^\ell$, then there are at most~$\tau n$ vertices~$v \in V^\ell$ for which
\begin{equation}\label{eq:bad3}
  \card{\tr_v(\Gamma_p)\cap E(\Gamma[A_{v}(\psi,\sigma,\uu)])} > (pd)^3 \prod_{j\in J}\card{A^j_v(\psi,\sigma,\uu)} + \tau (pd)^3 n^2,
\end{equation}
where, as above,~$J= [3] \setminus \{\ell\}$. 
\end{lemma}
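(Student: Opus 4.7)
The plan is to derive \cref{lem:exp-emb} from the counting lemma \cref{lem:triangle-count} by a double-counting argument, exploiting the fact that the ``alive'' sets $A^{j}_v$ change only slowly as $v$ advances along the order $\sigma$. I would first apply \cref{lem:triangle-count} with parameter $\e' := d^3\tau^4/10$ (and the given $L$) to obtain, with probability at least $1-n^{-L}$, the upper bound
\[ |K_3(\Gamma_p[X_1\cup X_2\cup X_3])| \leq (pd)^3 |X_1||X_2||X_3| + \e' p^3 n^3 \]
for all $X_i\subseteq V^i$. Since this event is independent of $(\psi,\sigma,\uu,u)$, after conditioning on it I may establish the conclusion for \emph{every} valid choice of those parameters deterministically. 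This sidesteps what I expect to be the main obstacle: the collection of tuples $(\psi,\sigma,\uu,u)$ is exponentially large, so a direct union bound over them is hopeless.

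Now fix $\psi,\sigma,\uu,u$ and write $X_v := |\tr_v(\Gamma_p)\cap E(\Gamma[A_v(\psi,\sigma,\uu)])|$, which by tripartiteness equals $|K_3(\Gamma_p[\{v\}\cup A^{j_1}_v\cup A^{j_2}_v])|$. I would partition $V^\ell$, ordered by $\sigma$, into $r\leq \tau^{-2}+1$ consecutive intervals $I_1,\ldots,I_r$, each of size at most $\tau^2 n$, and define $A^j_s := A^j_{v_s}(\psi,\sigma,\uu)$ for $j\in J$, where $v_s$ is the first vertex of $I_s$. Since processing each vertex $w\in V^\ell$ removes at most one vertex from each $A^j$ (only when $w$ is non-isolated in $\psi$), we have $A^j_v\subseteq A^j_s$ and $|A^j_s\setminus A^j_v|\leq \tau^2 n$ for every $v\in I_s$ and $j\in J$.

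Let $B_s$ denote the set of bad vertices in $I_s$. Since each triangle in $\Gamma_p[B_s\cup A^{j_1}_s\cup A^{j_2}_s]$ contains exactly one vertex of $B_s$, and $A^{j_i}_v\subseteq A^{j_i}_s$, our conditioned event gives
\[ \sum_{v\in B_s} X_v \;\leq\; |K_3(\Gamma_p[B_s\cup A^{j_1}_s\cup A^{j_2}_s])| \;\leq\; (pd)^3 |B_s||A^{j_1}_s||A^{j_2}_s| + \e' p^3 n^3. \]
Conversely, each $v\in B_s$ satisfies \cref{eq:bad3} and obeys $|A^{j_1}_v||A^{j_2}_v|\geq |A^{j_1}_s||A^{j_2}_s| - 2\tau^2 n^2$, so that
\[ \sum_{v\in B_s} X_v \;>\; |B_s|\bigl((pd)^3(|A^{j_1}_s||A^{j_2}_s| - 2\tau^2 n^2) + \tau(pd)^3 n^2\bigr). \]
Combining the two displays and cancelling $(pd)^3|B_s||A^{j_1}_s||A^{j_2}_s|$ gives $|B_s|(pd)^3\tau n^2(1-2\tau) < \e'p^3 n^3$, hence $|B_s|\leq \e'n/(d^3\tau(1-2\tau))$. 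Summing over the at most $\tau^{-2}+1$ intervals and invoking $\e'=d^3\tau^4/10$ yields $|B|\leq \tau n$, as required.
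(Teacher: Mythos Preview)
Your proposal is correct and follows essentially the same approach as the paper: reduce to the deterministic upper bound of \cref{lem:triangle-count}, partition $V^\ell$ into short $\sigma$-intervals so that the alive sets change by at most the interval length, and double-count triangles over each interval. The only differences are cosmetic---you take intervals of length $\tau^2 n$ where the paper takes $(\tau/4)n$, and your explicit choice $\e'=d^3\tau^4/10$ tacitly needs $\tau<\tfrac12$ (so that $1-2\tau>0$); this is harmless since the bad set is monotone decreasing in $\tau$, so the case $\tau\ge\tfrac12$ reduces to $\tau=\tfrac14$.
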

\begin{proof}
Choose~$0<\e,\tfrac{1}{C}\ll \e'\ll \tau,d, \tfrac{1}{L}$. Let~$G \subseteq \Gamma$ be any subgraph satisfying
\begin{equation}\label{eq:alive-tri-count}
  \card{K_3(G[X_1 \cup X_2 \cup X_3])} \leq (pd)^3 |X_1||X_2||X_3| + \e' p^3n^3,
\end{equation}
for all~$X_1 \subseteq V^1$,~$X_2 \subseteq V^2$,~$X_3 \subseteq V^3$ and note that~$\Gamma_p$ is such a subgraph with probability at least~$1-n^{-L}$ by \cref{lem:triangle-count}.
We will show that~$G$ already satisfies the conclusion of \cref{lem:exp-emb}.
Let~$\ell\in[3]$,~$t\in [n-1]$,~$\uu=(u_1\ldots,u_{\ell-1})\in \pzc{V}$,~$u_\ell \in V^\ell$,~$\psi \in \Psi_{\hat \uu,\hat u}^t(G)$ and let~$\sigma$ be an ordering of~$V^\ell$.
Enumerate~$V^\ell = \{v_1^\ell, \ldots, v_n^\ell\}$ according to the ordering~$\sigma$, that is,  in such a way that~$v_1^\ell <_\sigma \cdots <_\sigma v_n^\ell$.
Define~$U \subseteq V^\ell$ to be the set of vertices satisfying~\cref{eq:bad3}. We will show that~$|U| < \tau n$.
We split~$V^\ell$ into intervals as follows. Let~$\tau':=\tfrac{\tau}{4}$,~$K:=\ceil{\tfrac{1}{\tau'}}$ and for~$k = 1, \ldots, K$, let
\[W_k = \{v^\ell_i: 1 + (k-1) \cdot \tau' n \leq i < 1 +  k \cdot \tau' n\}\] and~$U_k \coloneq U \cap W_{k}$. 
Fix some~$k \in [K]$ and let~$i_k \coloneq 1 + \lceil(k-1) \cdot \tau' n\rceil$ and~$w_k \coloneq v^\ell_{i_k}$ (that is,~$w_k$ is the first vertex in~$W_k$). Let~$X_\ell = U_k$ and~$X_j = A_{w_k}^{j}(\psi,\sigma,\uu)$  for~$j\in J=[3]\setminus \{\ell\}$. 
It follows that, for any~$z\in U_k$,
\begin{align*}
  \card{\tr_z(G[\cup_{i\in[3]}X_i])} &\geq \card{\tr_z(G[X_\ell\cup A_z(\psi,\sigma,\uu)])} \\
  &\geq (pd)^3 \prod_{j\in J}\card{A^j_z(\psi,\sigma,\uu)} + \tau (pd)^3 n^2 \\
  &\geq (pd)^3 \prod_{j\in J}\left(\card{X_j} - \tau' n \right) + \tau (pd)^3 n^2 \\
  &\geq (pd)^3 \prod_{j\in J}\card{X_j} + \frac{\tau}{2} (pd)^3 n^2.
\end{align*}
Here, the first inequality follows from the fact that~$z >_\sigma w_k$ and thus~$A_{z}(\psi,\sigma,\uu) \subseteq A_{w_k}(\psi,\sigma,\uu)$ for every~$z \in U_k$.
The second inequality follows from the fact that~$z \in U$ and the third from the fact that~$\card{A_{z}^j(\psi,\sigma,\uu)} \geq \card{A_{w_k}^j(\psi,\sigma,\uu)} - \tau' n$ for all~$z \in U_k$ since~$z$ and~$w_k$ are close in the ordering~$\sigma$.
By summing over all~$z \in U_k$, it follows that
\[\card{K_3(G[X_1 \cup X_2 \cup X_3])} \geq (pd)^3 \card{X_1} \card{X_2}\card{X_3} + \frac{\tau}{2} (pd)^3 |X_\ell|n^2.\]
Combining this with~\cref{eq:alive-tri-count} gives~$|U_k| = |X_\ell| \leq \frac{2\e'}{\tau d^3} n < \frac{\tau^2}{8}n$, by our choice of constants.
It follows that~$|U|=\sum_{k=1}^{K}\card{U_k} < \tau n$, as claimed.
\end{proof}
\subsubsection*{Proof of the Entropy Lemma}
Here, we will prove \cref{lem:entropy lemma}. The proof is quite  long and so we will break it up into smaller claims along the way.   Our proof works by contradiction. As  $\card{\Psi_{\hat \uu,\hat u}^t(\Gamma_p)}$ is large, we know that~$h(\psi^*)$ is large as $\psi^*$ is chosen uniformly at random from $\Psi_{\hat \uu,\hat u}^t(\Gamma_p)$. Moreover, using the chain rule (Lemma~\ref{lem:chain rule}), we can decompose $h(\psi^*)$ as the sum of local entropy values depending on the $\psi_v^*$. Now we assume that there are a significant number of bad vertices $v$ for which the local entropy value $h(\psi_v^*|Y_v(\psi^*)=1)$ is too small. We will then apply the chain rule (Lemma~\ref{lem:chain rule}) using an ordering on the vertices which places these bad vertices at the beginning of the ordering. This has the effect that the shortcoming of their contribution to the overall entropy $h(\psi^*)$ is felt the most. We then upper bound the contribution of the entropy values at other (good) vertices, and hence conclude that the overall entropy $h(\psi^*)$ is too small, giving a contradiction. In order to achieve this upper bound, we rely on random properties of $\Gamma_p$ and  we have to split the entropy values further, delving into the average that outputs the entropy values and looking at individual embeddings. 
\begin{proof}[Proof of \cref{lem:entropy lemma}]
Choose~$0<\e,\tfrac{1}{C}\ll \tau \ll \eta\ll \delta \ll \gamma \ll \beta, d,\tfrac{1}{L}$. Fix~$\Gamma$,~$t\in \N$,~$\ell\in [3]$,~$\uu=(u_1\ldots,u_{\ell-1})\in \pzc{V}$, and~$u\in V^\ell$ as in the statement of \cref{lem:entropy lemma}. 
Assume~$G \subseteq \Gamma$ is a subgraph of~$\Gamma$ with~$V(G) = V(\Gamma)$ which satisfies the following properties 
 for all~$\psi \in \Psi_{\hat \uu,\hat u}^t(G)$ and every ordering~$\sigma$ of~$V^\ell$.
\begin{enumerate}[label=\textbf{(P.\arabic*})]
  \item\label{en:P-tri-count-all-vxs}
  For all vertices~$v \in V(G)$, we have
  \[\card{\tr_v(G)} \leq 10p^3n^2.\]
  \item\label{en:P-tri-count-most-vxs}
    There are at most~$\tau n$ vertices~$v \in V^\ell$ for which
    \begin{equation*}
      \card{\tr_v(G)\cap E(G[A_{v}(\psi,\sigma,\uu)])} > (pd)^3 \prod_{j \in [3] \setminus \{\ell\}}\card{A^j_v(\psi,\sigma,\uu)} + \tau (pd)^3 n^2.
    \end{equation*}
\end{enumerate}
By Lemmas~\ref{lem:exp-emb},~\ref{lem:upper triangle count for all vxs} and a union bound,~$\Gamma_p$ satisfies these properties with probability at least~$1-n^{-L}$ and therefore it suffices to show that any~$G$ satisfying the above properties, satisfies the conclusion of \cref{lem:entropy lemma}.

To ease notation, let~$\Psi \coloneq \Psi_{\hat \uu,\hat u}^t(G)$. Furthermore, let~$\psi^*$ be chosen uniformly from~$\Psi$.
We may assume that
\begin{equation*}
  \card{\Psi} \geq (1-\eta)^n (pd)^{3t} ((n-1)!_{t})^\ell (n!_{t})^{3-\ell},
\end{equation*}
as otherwise there is nothing to prove. In particular, by \cref{lem:maximality of the uniform}, we have
\begin{align}
\nonumber h(\psi^*) &\geq n\log(1-\eta)+ 3t\log(pd) + 3 \log (n!_t) - 3\log(n)\\
          &\geq 3t\log(pd) + 3 \log (n!_t) - \delta n,\label{eq:lower bound on h}
\end{align}
where we used~$\eta \ll \delta$ and that~$n$ is large enough in the last step.

Assume for a contradiction that there are at least~$\beta n$ vertices~$v \in V^\ell$ such that~$h(\psi^*_v| Y_v(\psi^*) = 1)< H(n,p,d) - \beta$ and let~$U \subset V^\ell$ be a set of these exceptional vertices of size~$|U|=\gamma n$. We will derive an upper bound on~$h(\psi^*)$ which contradicts~\cref{eq:lower bound on h}.
Recall that~$I(\psi) = I^\ell(\psi) \subset V^\ell$ is the set of vertices which are isolated in~$\psi(D_t)$. We begin as follows
\begin{align}
    h(\psi^*)
    &= h\left(\psi^*,\{\psi^*_v\}_{v\in V^\ell},I(\psi^*)\right) \label{eq:entropy-ub-1}\\
    &= h\left( \{\psi^*_v\}_{v\in V^\ell}, I(\psi^*)\right)+ h\left(\psi^*|\{\psi^*_v\}_{v\in V^\ell}, I(\psi^*)\right) \label{eq:entropy-ub-2}\\
    &\leq h\left( \{\psi^*_v\}_{v\in V^\ell}, I(\psi^*)\right)+ \log (t!) \label{eq:entropy-ub-3}\\
    &= h\left( \{\psi^*_v\}_{v\in V^\ell}| I(\psi^*)\right) +  h\left( I(\psi^*)\right) +\log (t!) \label{eq:entropy-ub-4}\\
    &\leq  h\left( \{\psi^*_v\}_{v\in V^\ell}| I(\psi^*)\right) +\log(t!) +\log\left( \binom{n}{t}\right) \label{eq:entropy-ub-5}\\
    &= h\left( \{\psi^*_v\}_{v\in V^\ell}| I(\psi^*)\right) +\log(n!_t).\label{eq:entropy-ub-6}
\end{align}
Here, we used \cref{lem:redundancy} in~\cref{eq:entropy-ub-1} and the chain rule (\cref{lem:chain rule}) in~\cref{eq:entropy-ub-2} and~\cref{eq:entropy-ub-4}.
In~\cref{eq:entropy-ub-3}, we used \cref{lem:cond maximality of the uniform} coupled with the fact that the set~$\{\psi_v\}_{v\in V^\ell}$ completely determines the (unordered) subgraph~$\psi(D_t)$.  Indeed, note that there are~$t!$ embeddings~$\psi \in \Psi$ which map to the same subgraph~$\psi(D_t)$, namely one for each choice of ordering of the triangles.
Finally, in~\cref{eq:entropy-ub-5} we used \cref{lem:maximality of the uniform}.

Now, in order to estimate this sum further, we fix some ordering~$\sigma$ of~$V^\ell$ in which the vertices in~$U$ come first, that is~$w <_\sigma w'$ for all~$w \in U$ and~$w' \in V^\ell \setminus U$. We then reveal vertices in that order and apply the conditional chain rule (\cref{lem:cond chain rule}). That is,
\begin{align}
\nonumber    h\left( \{\psi^*_v\}_{v\in V^\ell}| I(\psi^*)\right) &= \sum_{v\in V^\ell} h\left( \psi^*_v| \{\psi^*_w:w<_\sigma v \},I(\psi^*)\right) \\
   &\leq  \sum_{v\in U} h\left( \psi^*_v| I(\psi^*)\right)+ \sum_{v\in V^\ell\setminus U} h\left( \psi^*_v| \{\psi^*_w:w<_\sigma v \},I(\psi^*)\right),\label{eq:entropy-calc-split-U}
\end{align}
where we applied \cref{lem:dropping conditioning} in the second step. We treat the vertices in~$U$ separately to those in~$V^\ell\setminus U$. To ease notation, we make the following definition. For~$\psi \in \Psi$ and~$v\in V^\ell$, we let~$t_v(\psi)$ denote the number of vertices~$w\in V^\ell$ such that~$w<_\sigma v$ and~$w \notin I(\psi)$. Let us first address the vertices in~$U$.
\begin{claim} \label{clm:U vxs}
For all~$v \in U$, we have that
\begin{equation*} 
    h(\psi^*_v|I(\psi^*)) \leq \frac{1}{\card{\Psi}} \sum_{\psi\in \Psi} Y_v(\psi) \left(\log\left((pd)^3(n-t_v(\psi))^2\right) - \frac{\beta}{2}\right).
\end{equation*}
\end{claim}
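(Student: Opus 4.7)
My plan is to combine \cref{lem:dropping conditioning} with the definition of $U$, and then exploit the fact that the ordering $\sigma$ places all vertices of $U$ before any vertex of $V^\ell \setminus U$. First, since $I(\psi^*)$ determines $Y_v(\psi^*) = \mathbbm{1}[v \notin I(\psi^*)]$, \cref{lem:dropping conditioning} yields
\[
h(\psi_v^* \mid I(\psi^*)) \leq h(\psi_v^* \mid Y_v(\psi^*)).
\]
Expanding the right-hand side via \eqref{eq:conditionalentropydef} and using that $\psi_v^* = \emptyset$ is deterministic when $Y_v(\psi^*)=0$, the $Y_v=0$ term contributes zero, so
\[
h(\psi_v^* \mid Y_v(\psi^*)) = p_v \cdot h(\psi_v^* \mid Y_v(\psi^*) = 1),
\]
where $p_v := \Pr[Y_v(\psi^*)=1] = \tfrac{1}{|\Psi|}\sum_{\psi \in \Psi} Y_v(\psi)$.

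Next, since $v \in U$, we invoke the defining inequality $h(\psi_v^* \mid Y_v(\psi^*)=1) < H(n,p,d) - \beta = \log((pd)^3 n^2) - \beta$. Combining this with the display above gives the preliminary bound
\[
h(\psi_v^* \mid I(\psi^*)) \leq \frac{1}{|\Psi|} \sum_{\psi \in \Psi} Y_v(\psi) \left( \log((pd)^3 n^2) - \beta \right).
\]
It remains to trade $n^2$ for $(n - t_v(\psi))^2$ inside the logarithm at the cost of weakening the deficit from $\beta$ to $\beta/2$.

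This is where the choice of ordering enters. Because $\sigma$ places every vertex of $U$ before every vertex of $V^\ell \setminus U$, for our fixed $v \in U$ any $w \in V^\ell$ with $w <_\sigma v$ lies in $U \setminus \{v\}$; hence $t_v(\psi) \leq |U| - 1 < \gamma n$ for every $\psi \in \Psi$, regardless of which vertices are isolated. Consequently, $n/(n - t_v(\psi)) \leq 1/(1-\gamma)$ for all $\psi$, and for $\gamma$ sufficiently small one has $2\log(1/(1-\gamma)) \leq 4\gamma$. Since the hierarchy chosen at the start of the proof satisfies $\gamma \ll \beta$, we may assume $4\gamma \leq \beta/2$, and therefore for each $\psi$ with $Y_v(\psi)=1$ we obtain
\[
\log((pd)^3 n^2) - \beta = \log\bigl((pd)^3(n-t_v(\psi))^2\bigr) + 2\log\!\left(\tfrac{n}{n-t_v(\psi)}\right) - \beta \leq \log\bigl((pd)^3(n-t_v(\psi))^2\bigr) - \tfrac{\beta}{2}.
\]
Plugging this into the preliminary bound above, term by term in the sum, yields the claim.

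The derivation is essentially a careful unpacking of entropy identities combined with the definition of $U$; the only genuinely substantive input is the ordering property, which bounds $t_v(\psi)$ uniformly in $\psi$ for $v \in U$. Without arranging $U$ to come first in $\sigma$, the values $t_v(\psi)$ could be as large as $t \geq (1-\eta)n$ and the required inequality $\log(n/(n-t_v(\psi))) \leq \beta/4$ would fail; hence the choice of ordering made just before invoking \cref{lem:cond chain rule} is precisely what makes this per-vertex estimate possible.
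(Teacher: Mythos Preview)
Your proof is correct and follows essentially the same approach as the paper: drop conditioning from $I(\psi^*)$ to $Y_v(\psi^*)$, use the definition of $U$ to bound $h(\psi^*_v\mid Y_v(\psi^*)=1)$ by $H(n,p,d)-\beta$, and then exploit the ordering to get $t_v(\psi)<\gamma n$ so that $2\log\bigl(n/(n-t_v(\psi))\bigr)\le 4\gamma\le\beta/2$. The paper presents the last step by lower-bounding $\log\bigl((pd)^3(n-t_v(\psi))^2\bigr)$ directly, but the arithmetic is the same.
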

\begin{claimproof}
 Now, for each~$v \in U$, we have
\begin{align*}
 h(\psi^*_v|I(\psi^*))
    &\leq h(\psi_v^*|Y_v(\psi^*))\\
    &= \Pro{Y_v(\psi^*)=1}h(\psi^*_v| Y_v(\psi^*) = 1) +\Pro{Y_v(\psi^*)=0}h(\psi^*_v|Y_v(\psi^*)=0)\\
    &\leq \Pro{Y_v(\psi^*)=1}\left(H(n,p,d)- \beta\right)\\
    &= \frac{1}{\card{\Psi}} \sum_{\psi\in \Psi}Y_v(\psi)\left(H(n,p,d)- \beta\right).
\end{align*}
Here we used \cref{lem:dropping conditioning} and the fact that~$I(\psi^*)$ determines~$Y_v(\psi^*)$,  the definition of conditional entropy~\cref{eq:conditionalentropydef}, and the definition of~$U$.
Furthermore, we have~$t_v(\psi) \leq \gamma n$ for all~$v \in U$ and~$\psi\in \Psi$ since~$U$ comes at the beginning of the ordering~$\sigma$. Therefore,
\begin{align*}
    \log\left((pd)^3(n-t_v(\psi))^2\right) &\geq \log\left((pd)^3(1-\gamma)^2n^2\right) \\
    &= H(n,p,d) + 2\log(1-\gamma)\\
    &\geq H(n,p,d)  - 4\gamma\\
    &\geq H(n,p,d)  - \frac{\beta}{2}.
\end{align*}
Combining this with our upper bound on~$h(\psi^*_v|I(\psi^*))$ above completes the proof of the claim.
\end{claimproof}

We will now deal with the vertices outside~$U$. Given~$v \in V^\ell$ and~$\psi \in \Psi$, we write
\[h'(v,\psi) \coloneq h\left(\psi_v^*|I(\psi^*)=I(\psi),\{\psi^*_w=\psi_w\}_{w<_\sigma v}\right).\]
\begin{claim} \label{clm:other vertices}
  The following is true for all~$\psi \in \Psi$.
\begin{enumerate}[(i)]
    \item For all~$v\in V^\ell$, we have
    \[h'(v,\psi) \leq \logb{(pd)^3 (n - t_v(\psi))^2} + \logb{\frac{10}{d^3}} + \logb{\frac{n^2}{(n-t_v(\psi))^2}}.\]
    \item There exists a set~$B(\psi) \subset V^\ell$ with~$|B(\psi)| \leq \delta n$, such that for all~$v\in V^\ell \setminus B(\psi)$, we have
    \[h'(v,\psi) \leq \log\left((pd)^3(n-t_v(\psi))^2\right) +  \delta.\]
\end{enumerate}
\end{claim}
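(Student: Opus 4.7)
The key observation is that under the conditioning defining $h'(v,\psi)$, the random variable $\psi_v^*$ takes values in a small set, so \cref{lem:cond maximality of the uniform} will give the desired entropy bounds. If $v \in I(\psi)$ then the conditioning forces $\psi_v^* = \emptyset$, so $h'(v,\psi) = 0$. If $v \notin I(\psi)$ then $\psi_v^*$ records the pair of partners of $v$ in its triangle, and since the prior values $\{\psi_w\}_{w<_\sigma v}$ determine the alive sets $A_v^j(\psi,\sigma,\uu)$ for $j \in [3] \setminus \{\ell\}$, the range of $\psi_v^*$ is contained in $\tr_v(G) \cap E(G[A_v(\psi,\sigma,\uu)])$.

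For part (i), I will combine this with property~\ref{en:P-tri-count-all-vxs}, which gives $|\tr_v(G)| \leq 10 p^3 n^2$; then \cref{lem:cond maximality of the uniform} yields $h'(v,\psi) \leq \log(10 p^3 n^2)$, which matches the right-hand side after separating out the factor $(pd)^3(n-t_v(\psi))^2$. The case $v \in I(\psi)$ is trivial since the right-hand side is non-negative for $n$ sufficiently large (as $10 p^3 n^2 \geq 10 C^3 \log n \geq 1$).

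For part (ii), I will instead invoke the sharper property~\ref{en:P-tri-count-most-vxs}, which controls $|\tr_v(G) \cap E(G[A_v(\psi,\sigma,\uu)])|$ for all but at most $\tau n$ vertices $v$, combined with the bound $|A_v^j(\psi,\sigma,\uu)| \leq n - t_v(\psi)$. Taking logs converts the additive error $\tau (pd)^3 n^2$ into a multiplicative error of the form $1 + \tau n^2 / (n - t_v(\psi))^2$, which is harmless provided $n - t_v(\psi)$ is not too small relative to $n$. The main obstacle will be that a vertex $v$ appearing late in the ordering $\sigma$ may have $n - t_v(\psi)$ tiny; I will handle this by taking $B(\psi)$ to be the union of the exceptional set from~\ref{en:P-tri-count-most-vxs} together with $\{v \in V^\ell : n - t_v(\psi) < \delta^2 n\}$. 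Enumerating $V^\ell$ as $v_1, \ldots, v_n$ in the order $\sigma$, the trivial bound $n - t_{v_i}(\psi) \geq n - i + 1$ shows the latter set has size at most $\delta^2 n$, whence $|B(\psi)| \leq (\tau + \delta^2) n \leq \delta n$ by our choice of constants. For $v \in V^\ell \setminus B(\psi)$ with $v \notin I(\psi)$, the estimate $\log(1 + \tau/\delta^4) \leq \delta$ (using $\tau \ll \delta$) closes the proof; the remaining case $v \in I(\psi) \setminus B(\psi)$ reduces to the observation that $(pd)^3 (n - t_v(\psi))^2 \geq d^3 \delta^4 C^3 \log n \geq 1$ for $n$ sufficiently large, using $p \geq C(\log n)^{1/3} n^{-2/3}$.
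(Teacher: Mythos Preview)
Your proposal is correct and follows essentially the same route as the paper: both use \cref{lem:cond maximality of the uniform} together with \cref{en:P-tri-count-all-vxs} for part~(i) and \cref{en:P-tri-count-most-vxs} for part~(ii), and both define~$B(\psi)$ as the union of the at most~$\tau n$ exceptional vertices from \cref{en:P-tri-count-most-vxs} with the vertices~$v$ for which~$n-t_v(\psi)$ is too small (you use threshold~$\delta^2 n$, the paper uses~$\tfrac{\delta}{2}n$, which is immaterial). You are in fact slightly more careful than the paper in explicitly treating the case~$v\in I(\psi)$.
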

\begin{claimproof}
The first inequality follows from \cref{en:P-tri-count-all-vxs} and \cref{lem:cond maximality of the uniform}. Indeed, for all~$v\in V^\ell$, we have
\begin{align*}
h'(v,\psi)
    &\leq \log\left(\card{\tr_v(G)}\right)\\
    &\leq \log(10p^3n^2)\\
    &= \logb{(pd)^3 (n - t_v(\psi))^2} + \logb{\frac{10}{d^3}} + \logb{\frac{n^2}{(n-t_v(\psi))^2}}.
\end{align*}
For the second inequality, we will use \cref{en:P-tri-count-most-vxs} in combination with \cref{lem:cond maximality of the uniform}. We have that for all but at most~$\tau n$ vertices,
\begin{align}
 \nonumber  h'(v,\psi)&\leq \log\left(\card{\tr_v(G)\cap E(G[A_{v}(\psi,\sigma,\uu)])}\right) \\ \nonumber
   &\leq \log\left((pd)^3 \prod_{j\in J}\card{A^j_v(\psi,\sigma,\uu)} + \tau (pd)^3 n^2\right) \\
   &\leq \log\left((pd)^3 (n-t_v(\psi))^2 + \tau (pd)^3 n^2\right).\label{eq:h'-first-step}
\end{align}
Observe that~$t_v(\psi) \leq \big(1-\tfrac{\delta}{2}\big)n$ for all but at most~$\tfrac{\delta n}{2}$ vertices~$v \in V^\ell$. In particular, we have
\[(n-t_v(\psi))^2 \geq \frac{\delta^2 n^2}{4} \geq \frac{\delta^2}{4\tau} \cdot \tau n^2 \geq \frac{1}{\delta} \cdot \tau n^2,\]
for all but at most~$\tfrac{\delta n}{2}$ vertices~$v \in V^\ell$ (we used that~$\tau \ll \delta$ here).
Plugging this back into~\cref{eq:h'-first-step}, we get
\begin{align*}
 h'(v,\psi)
   &\leq \log\left((1+\delta) \cdot (pd)^3 (n-t_v(\psi))^2\right) \leq \delta + \log\left((pd)^3 (n-t_v(\psi))^2\right)
\end{align*}
for all but at most~$\big(\tau +\tfrac{\delta}{2}\big)n \leq \delta n$ vertices~$v \in V^\ell$.
\end{claimproof}

We will now use \cref{clm:U vxs,clm:other vertices} to finish the proof.
Indeed, it follows from \cref{clm:U vxs} that
\begin{equation}
\sum_{v \in U} h(\psi^*_v|I(\psi^*)) \leq \frac{1}{\card{\Psi}} \sum_{\psi\in \Psi} \sum_{v \in U}Y_v(\psi) \left(\logb{(pd)^3(n-t_v(\psi))^2} - \frac{\beta}{2}\right).\label{eq:entropy-calc-U}
\end{equation}
Furthermore, using \cref{clm:other vertices},  the definition of conditional entropy~\cref{eq:conditionalentropydef-2} (and \cref{lem:cond maximality of the uniform} to conclude that~$h'(v,\psi)=0$ if~$Y_v(\psi)=0$), we have
\begin{align}
\nonumber \sum_{v \in V^\ell \setminus U} &h\left( \psi^*_v| \{\psi^*_w:w<_\sigma v \},I(\psi^*)\right)
    = \sum_{v \in V^\ell \setminus U} \frac{1}{\card{\Psi}}\sum_{\psi\in \Psi} Y_v(\psi)h'(v,\psi)\\
      &\leq \frac{1}{\card{\Psi}}\sum_{\psi\in \Psi}
      \left(\delta n + N_1(\psi) + \sum_{v \in V^\ell \setminus U} Y_v(\psi) \logb{(pd)^3(n-t_v(\psi))^2}\right),\label{eq:entropy-calc-other}
\end{align}
where
\[N_1(\psi) = \sum_{v \in B(\psi)} Y_v(\psi) \left(\logb{\frac{10}{d^3}} + 2\logb{\frac{n}{n-t_v(\psi)}}\right).\]
Let now
\[
M(\psi) \coloneq \sum_{v \in V^\ell}Y_v(\psi) \logb{(pd)^3(n-t_v(\psi))^2}, \qquad \text{ and } \qquad
N_2(\psi) \coloneq \sum_{v \in U}Y_v(\psi)\cdot \frac{\beta}{2}.\]
Then, combining~\cref{eq:entropy-calc-U,eq:entropy-calc-other,eq:entropy-calc-split-U}, we get
\begin{equation}\label{eq:entropy-main-calc}
    h\left( \{\psi^*_v\}_{v\in V^\ell}| I(\psi^*)\right)
    \leq \frac{1}{\card{\Psi}}\sum_{\psi \in \Psi} \left( M(\psi) + N_1(\psi) + \delta n - N_2(\psi) \right).
\end{equation}
We will bound each of these terms one by one.
\begin{claim}\label{cl:entropy-error-terms}
For all~$\psi \in \Psi$, we have that \[M(\psi) = 3t\log(pd)+ 2\log(n!_t), \qquad N_1(\psi) \leq \sqrt{\delta }n \qquad \mbox{ and } \qquad N_2(\psi) \geq \gamma^2  n.\]
\end{claim}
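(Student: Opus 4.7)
\textbf{Plan for proving \cref{cl:entropy-error-terms}.}
All three bounds are essentially direct consequences of the structural facts about $\psi$, namely that (i) $\psi$ embeds $D_t$, so exactly $t$ vertices of $V^\ell$ are non-isolated (i.e.\ $\sum_{v\in V^\ell} Y_v(\psi)=t$), (ii) the map $v \mapsto t_v(\psi)$ restricted to non-isolated vertices is a bijection from those vertices to $\{0,1,\ldots,t-1\}$ (it records the rank in $\sigma$ among non-isolated vertices), and (iii) the constant hierarchy $\tau\ll\eta\ll\delta\ll\gamma\ll\beta,d$. The identity for $M(\psi)$ is an algebraic rewriting, the bound on $N_2(\psi)$ is a pigeonhole-type count, and the bound on $N_1(\psi)$ requires a mildly careful logarithmic estimate --- this last piece will be the main (though still routine) point.

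For $M(\psi)$, I will enumerate the non-isolated vertices of $V^\ell$ in the order $\sigma$ as $w_1<_\sigma\cdots<_\sigma w_t$; by (ii), $t_{w_i}(\psi)=i-1$, hence
\[ M(\psi) = \sum_{i=1}^t\bigl(3\log(pd)+2\log(n-i+1)\bigr) = 3t\log(pd) + 2\log(n!_t). \]

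For $N_1(\psi)$, I will split the summand into its two additive pieces. The $\log(10/d^3)$ piece contributes at most $|B(\psi)|\log(10/d^3)\leq\delta n\log(10/d^3)\leq \tfrac12\sqrt{\delta}n$ for $\delta$ small enough in terms of $d$ (possible since $\delta\ll d$). For the other piece, the key point is that by (ii) the values $\{t_v(\psi):v\in V^\ell,\ Y_v(\psi)=1\}$ are distinct integers in $\{0,\ldots,t-1\}$, so the sub-sum over $v\in B(\psi)\cap\{Y_v=1\}$ (of size at most $\delta n$) is maximised by taking the largest such values, giving an upper bound of at most $\sum_{j=1}^{\delta n}2\log(n/j)$. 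A crude Stirling estimate yields $\sum_{j=1}^{\delta n}\log(n/j)\leq\delta n(1+\log(1/\delta))$, and since $\sqrt{\delta}\log(1/\delta)\to 0$ as $\delta\to 0$, this is at most $\tfrac14\sqrt{\delta}n$ for small $\delta$. Combining yields $N_1(\psi)\leq\sqrt{\delta}n$.

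For $N_2(\psi)$, note that the total number of isolated $V^\ell$-vertices is $n-t\leq\eta n$, so at most $\eta n$ vertices of $U$ are isolated; hence at least $|U|-\eta n = (\gamma-\eta)n\geq \gamma n/2$ vertices of $U$ are non-isolated, using $\eta\ll\gamma$. Therefore
\[ N_2(\psi) \geq \frac{\beta}{2}\cdot\frac{\gamma n}{2} = \frac{\beta\gamma}{4}n \geq \gamma^2 n, \]
by $\gamma\ll\beta$ (so that $\beta\geq 4\gamma$). This completes the plan.
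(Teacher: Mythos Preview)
Your proposal is correct and takes essentially the same approach as the paper's proof: the identity for $M(\psi)$ via the bijection $v\mapsto t_v(\psi)$ on non-isolated vertices, the bound on $N_1(\psi)$ by splitting off the $\log(10/d^3)$ term and bounding the remainder via distinctness of the $t_v(\psi)$ values together with a Stirling-type estimate on $\sum_{j=1}^{\delta n}\log(n/j)$, and the bound on $N_2(\psi)$ via $|U\setminus I(\psi)|\ge(\gamma-\eta)n\ge\gamma n/2$ and $\gamma\ll\beta$. The paper's write-up makes the $N_1$ step slightly more explicit (observing that if the distinct values $k_1>\cdots>k_{|B'|}$ are sorted decreasingly then $k_i\le n-i$, hence $n-k_i\ge i$), but this is exactly the content of your ``maximised by taking the largest values'' remark.
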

Before we prove \cref{cl:entropy-error-terms}, let us finish the main proof. Combining \cref{cl:entropy-error-terms} with~\cref{eq:entropy-main-calc}, we get (using~$\delta \ll \gamma$) that 
\begin{align*}
h\left( \{\psi^*_v\}_{v\in V^\ell}| I(\psi^*)\right)
  &\leq 3t \log(pd) + 2 \log(n!_t) + (\delta + \sqrt{\delta} - \gamma^2)n\\
  &\leq 3t \log(pd) + 2 \log(n!_t) - 2\delta n.
\end{align*}
Plugging this back into~\cref{eq:entropy-ub-6}, we get
that~$h(\psi^*) \leq 3t \log(pd) + 3 \log(n!_t) - 2\delta n,$ 
contradicting~\cref{eq:lower bound on h}.
Hence it remains to prove \cref{cl:entropy-error-terms}.
\begin{claimproof}
Let~$\psi \in \Psi$ and observe that~$\{t_v(\psi): v\in V^\ell\setminus I(\psi)\} = [t-1]_0$. Thus
\begin{align*}
M(\psi)
    &=\sum_{v\in V^\ell\setminus I(\psi)}  \logb{(pd)^3(n-t_v(\psi))^2}\\
    &=\sum_{k=0}^{t-1}  \logb{(pd)^3(n-k)^2} =3t\log(pd)+ 2\log(n!_t).
\end{align*}
We now turn to bounding~$N_1(\psi)$. We define~$B' =: B(\psi) \setminus I(\psi)$ and observe that~$\card{B'} \leq \card{B(\psi)} \leq \delta n$.
Further, let~$\mathds{K}= \{t_v(\psi): v\in B'\}$. Enumerate~$\mathds{K} = \{k_1, \ldots, k_{|B'|}\}$ so that~$k_1 \ge \ldots \ge k_{|B'|}$ and observe that~$k_i \leq n-i$ for all~$i \in [|B'|]$, by virtue of the fact that~$t_v(\psi)\leq t \leq n-1$ for all~$v\in B'$ and, as~$B'\cap I(\psi)=\emptyset$, we cannot have that~$t_v(\psi)=t_{v'}(\psi)$ for~$v\neq v'\in B'$. Hence,
\begin{align*}
N_1(\psi)
    &=\sum_{v\in B'} Y_v(\psi) \left(\logb{\frac{10}{d^3}} + 2\logb{\frac{n}{(n-t_v(\psi))}}\right)\\
    &\leq \delta n \logb{\frac{10}{d^3}} + \sum_{\ell=1}^{\delta n} 2\logb{\frac{n}{\ell}}\\
    &\leq \delta n \logb{\frac{10}{d^3}} + 2\delta n \log (n) - 2\log((\delta n)!)\\
    &\leq \delta n \logb{\frac{10}{d^3}} + 2\delta n \left(\log (n) -\logb{\frac{\delta n}{e}} \right)  \\
    &\leq \sqrt{\delta}  n,
\end{align*}
where we used~$(\delta n)! \geq \big(\tfrac{\delta n}{e}\big)^{\delta n}$ in the second to last line.
Finally, let~$U' = U \setminus I(\psi)$ and observe that, since~$\eta \ll \gamma$, we have~$\card{U'} \geq \tfrac{\gamma n}{2}$. Therefore,
\begin{align*}
N_2(\psi)
    &=\sum_{v\in U'} \frac{\beta}{2} \geq \gamma^2 n,
\end{align*}
as claimed.
\end{claimproof}
\end{proof}
\subsection{Counting via comparison}\label{sec:counting via comparison}
In this subsection, we will prove \cref{lem:compare} which we used in \cref{sec:simp} to prove the Local Distribution Lemma  (\cref{lem:LDL}).
Elements of the proof of \cref{lem:compare}  were already sketched in \cref{sec:proofoverview} but before embarking on the details, we outline and reiterate some of the key ideas, ignoring the technicalities in order to  elucidate the general proof scheme. For this discussion, we fix some~$(\e,d)$-super-regular tripartite graph~$\Gamma$, fix~$\ell=1$ and some~$t\in[n]$ close to~$n$. We also fix a vertex~$u\in V^\ell$ which we think of as satisfying  the ``if'' statement in \cref{lem:compare} and some \emph{typical}~$v\in V^\ell$ which we aim to show satisfies the conclusion of \cref{lem:compare}. By typical, we mean that~$v\in V^\ell$ satisfies certain conditions that we have shown whp almost all vertices in~$V^\ell$ satisfy. For example, we can assume that~$\psi_v^*$ has large entropy, when~$\psi^*$ is a uniformly random embedding in~$\Psi_{\hat u}^t(\Gamma_p)$, from the Entropy Lemma (\cref{lem:entropy lemma}). 

Now our aim is to lower bound the number of embeddings~$\psi$ of~$D_t$ that leave~$v$ isolated and we concentrate on the subset  of embeddings that place~$u$ in some triangle (as~$t$ is large we can expect that almost all embeddings do place~$u$ in a triangle). 
Refining further, we will only count  embeddings that place~$u$ in a triangle with an edge that lies in some special set~$F\subset E(\Gamma[V^2, V^3])$. To define~$F$, we begin by concentrating on edges in~$\tr_{u}(\Gamma)\cap\tr_v(\Gamma)$. That is, any edge in~$F$ will form a triangle with \emph{both}~$u$ and~$v$. We then take~$F$ to be the edges in~$\tr_{u}(\Gamma)\cap\tr_v(\Gamma)$ which appear in~$\Gamma_p$. Note that we do \emph{not} require that for an edge~$w_2w_3\in F$, any of the edges~$vw_i$ or~$uw_i$ with~$i=2,3$, lie in~$\Gamma_p$, just that they lie in~$\Gamma$.  

To motivate this definition, we consider a multi-stage revealing process. First, we reveal all edges of~$\Gamma_p$ that are \emph{not} adjacent to~$u$ or~$v$. The  definition of~$F$ comes from the fact that   at this point in the process, any edge in~$F$ has the \emph{potential} to lie in~$\tr_u(\Gamma_p)$ and also~$\tr_v(\Gamma_p)$, depending on which random edges are adjacent to the vertices~$u$ and~$v$. Now note that, in particular, if an  edge~$e=w_2w_3\in F$ \emph{does} end up in~$\tr_u(\Gamma_p)$, then it will contribute to  embeddings that avoid~$v$ and place~$u$ in a triangle. We introduce a weight function~$\zeta$ on~$F$ (we will in fact define it more generally on~$E(\Gamma[V^2, V^3])$)  which precisely counts the contribution to our desired lower bound,  from embeddings which use the triangle~$u\cup e=\{u,w_2,w_3\}$. That is, for all~$w_2w_3\in F$, we have that~$\zeta(w_2w_3)$ encodes the number of embeddings of~$D_{t-1}$ (with~$t-1$  triangles) in~$\Gamma$, that avoid~$v$ \emph{and} the vertices~$u,w_2,w_3$. Therefore, as we can assume~$F$ is large (as~$v$ is typical, using~\cref{lem:many common neighbours}), our desired conclusion will follow if we can lower bound the~$\zeta$ values in (some subset of)~$F$. 

The central idea of the proof is that we \emph{can} lower bound~$\zeta$ values in~$F$ by reasoning about embeddings that place~$v$ in a triangle (and avoid~$u$). Indeed, if we consider a uniformly random embedding~$\psi^*\in\Psi^t_{\hat u}(\Gamma_p)$, as~$v$ is typical, we know from \cref{lem:entropy lemma}, that the random variable~$\psi_v^*$, which encodes the triangle  containing~$v$  in~$\psi(D_t)$, has high entropy. Appealing to \cref{lem:large-entropy} then implies that the distribution of~$\psi^*_v$ in~$\tr_v(\Gamma_p)$ is close to uniform and hence for almost all edges~$f\in \tr_v{(\Gamma_p)}$, we have that~$\Pro{\psi^*_v=f}$ is large (in that it is close to the 
average). Moreover, we have that~$\Pro{\psi^*_v=f}$ is directly proportional to~$\zeta(f)$ by the definition of~$\zeta$. Therefore, using~\cref{lem:two stage reveal}\cref{lem:triangles with F} (and observing that the~$\zeta$ values do  not depend on random edges adjacent to~$u$ or~$v$), we can see that we must have a significant proportion of the edges in~$F$ having large~$\zeta$ values. Indeed, if this were not the case, then it would be very unlikely that almost all edges in~$\tr_v(\Gamma_p)$ have large~$\zeta$ values.

We  can therefore conclude that there is some subset~$F_L\subset F$ of half the edges in~$F$ such that~$\zeta(f)$ is large for all $f\in F_L$.   Finally, through another application of~\cref{lem:two stage reveal}\cref{lem:triangles with F}, we can show that many edges in~$F_L$ end up in~$\tr_u(\Gamma_p)$ and therefore contribute to  the lower bound on the number of embeddings that leave~$v$ isolated.  We now give the full details of the proof.

\begin{proof}[Proof of \cref{lem:compare}]
Choose~$0<\e,\tfrac{1}{C} \ll \e' \ll \eta \ll \beta' \ll \beta \ll \tfrac{1}{L} \ll \alpha, d, \tfrac{1}{K}$. Fix~$\Gamma$,~$p=p(n)$,~$\ell\in[3]$,~$(1-\eta)n\leq t < n$,~$\uu=(u_1\ldots,u_{\ell-1})\in \pzc{V}$ and~$u\in V^\ell$ as in the statement of \cref{lem:compare}. We define~$J \coloneq [3]\setminus \{\ell\}$ and label the indices of~$J$ by~$j_1,j_2 \in [3]$ so that~$J = \{j_1,j_2\}$.

Now for a subgraph~$G$ of~$\Gamma$, we will make some  definitions relative to~$G$ and posit certain properties of~$G$. Our proof will then proceed by first proving that any~$G$  satisfying  all the properties,  satisfies the desired conclusion of the lemma. After this we will show that whp we can take that~$\Gamma_p$ satisfies all the defined properties, which will complete the proof. Herein, we fix some subgraph~$G$ of~$\Gamma$ for the discussion. Our first property comes from the statement of the lemma. 
\begin{enumerate}[label=\textbf{(Q.\arabic*})]
  \item\label{en:Q-granted} We have 
  \[\card{\Psi_{\hat \uu,\hat u}^t(G)} \geq (1-\eta)^n (pd)^{3t}  ((n-1)!_{t})^{\ell} (n!_t)^{3-\ell}.\]
\end{enumerate}

 For~$v\in V^\ell$, we now define the set of edges  which lie in~$G$ and in the common neighbourhood (with respect to~$\Gamma$) of both~$u$ and~$v$. In symbols, 
\begin{equation}\label{eq:F(v)def} F(v)\coloneq\tr_u(\Gamma_{\hat \uu})\cap \tr_v(\Gamma_{\hat \uu})\cap E(G)\subseteq V_{\hat\uu}^{j_1} \times V_{\hat\uu}^{j_2}.\end{equation}
Note that here (and throughout this proof), for convenience, we will think of edges in~$e=\{y_1,y_2\}\in E(\Gamma[V^{j_1} \cup V^{j_2}])$ as \emph{ordered} pairs~$(y_1,y_2)\in V^{j_1}\times V^{j_2}$.

Now let~$\psi^*$ be chosen uniformly from~$\Psi_{\hat\uu,\hat u}^t(G)$.
We define the following subsets of~$V^\ell$, recalling the definition of~$H(n,p,d)$ from~\cref{eq:hhat}. 
\begin{align*}Z_1 &\coloneq \{v\in V^\ell: h(\psi^*_v| Y_v(\psi^*) = 1) \geq H(n,p,d)-\beta'\}, \\
 Z_2 &\coloneq \left\{v\in V^\ell: \card{\tr_v(G)}=(1 \pm \e')(pd)^3n^2\right\}, \\
    Z_3 &\coloneq \left\{v\in V^\ell:\card{F(v)} \geq \frac{d^5pn^2}{4}\right\}, \\
    Z &\coloneq Z_1 \cap Z_2 \cap Z_3.
\end{align*}
Our second property of~$G$ posits that~$Z$ is large. 

\begin{enumerate}[label=\textbf{(Q.\arabic*})] \addtocounter{enumi}{1}
  \item\label{en:Q-largeZ} If \cref{en:Q-granted} holds in~$G$ then
  \[|Z| \geq (1-\alpha)n.\]
\end{enumerate}

We now define the weight functions we will be interested in. For~$v\in V^\ell\setminus \{u\}$ and~$(w_1,w_2) \in V_{\hat\uu}^{j_1} \times V_{\hat\uu}^{j_2}$, define~$\zeta_v(w_1,w_2)$ to be~$t$ times the number of labelled embeddings of~$D_{t-1}$ into~$G_{\hat\uu,\hat u,\hat v}$ in which both~$w_1$ and~$w_2$ are isolated vertices. That is,
\begin{equation} \label{eq: zeta def}
    \zeta_v(w_1,w_2) \coloneq t \cdot \card{\Psi^{(t-1)}_{\hat w_1,\hat w_2} \left(G_{\hat \uu,\hat u,\hat v}\right) }.
\end{equation}
For our last property of~$G$,  we need a further definition. 
For~$v\in V^\ell$, consider~$F(v)$ as in~\cref{eq:F(v)def}. We split~$F(v)$ in half according to the values of the weight function~$\zeta_v$. That is we partition~$F(v)$ into~$F_S(v)$ and~$F_L(v)$  so that~$\zeta(y_1,y_2) \leq \zeta(z_1,z_2)$ for all~$(y_1,y_2) \in F_S(v)$ and~$(z_1,z_2) \in F_L(v)$, and~$\card{F_S(v)}=\card{F_L(v)}\pm 1$. Our final property gives that~$G$ has many triangles containing~$u$ (resp.~$v$) and the edges of~$F_L(v)$ (resp.~$F_S(v)$).

\begin{enumerate}[label=\textbf{(Q.\arabic*})] \addtocounter{enumi}{2}
  \item\label{en:Q-manytriangles} If~$v\in Z$,  then
  \[\card{F'} \geq \frac{d^5p^3n^2}{20}, \]
  for both~$F'=F_L(v)\cap \tr_u(G)$ and~$F'=F_S(v)\cap \tr_v(G)$. 
\end{enumerate}

We now proceed by taking that~$G$ satisfies  \cref{en:Q-largeZ} and \cref{en:Q-manytriangles} and showing that  it then satisfies the desired conclusion of the lemma. We will do this by proving that if~$G$ satisfies \cref{en:Q-granted} then  every~$v\in Z$ satisfies
\[\card{\Psi_{\hat\uu,\hat v}^t(G)} \geq \left(\frac{d}{10}\right)^2\cdot \card{\Psi_{\hat \uu,\hat u}^t(G)},\]
which in combination with the fact that~$G$ satisfies \cref{en:Q-largeZ}, gives what is needed. So let us fix some~$v\in Z$.  We define the following sets of embeddings.  
\begin{align*}
    \Psi_{\hat u\hat v}&\coloneq \Psi_{\hat \uu,\hat u}^t(G) \cap \Psi_{\hat\uu,\hat v}^t(G), \\
    \Psi_{v\hat u}&\coloneq \Psi_{\hat \uu,\hat u}^t(G) \setminus  \Psi_{\hat u\hat v} \text{ and } \\
     \Psi_{u\hat v} &\coloneq \Psi_{\hat \uu,\hat v}^t(G) \setminus  \Psi_{\hat u\hat v}.
\end{align*}
In words,~$\Psi_{\hat u\hat v}$ consists of those embeddings which leave both~$u$ and~$v$ isolated whilst embeddings in~$\Psi_{v\hat u}$ leave~$u$ isolated but have~$v$ contained in a triangle, and vice versa for~$\Psi_{u\hat v}$. Clearly, we have%
\begin{align*}
    \card{\Psi_{\hat \uu,\hat u}^t(G)}&=\card{ \Psi_{\hat u \hat v}} +\card{\Psi_{v\hat u}}, \text{ and}\\
    \card{\Psi_{\hat \uu,\hat v}^t(G)}&=\card{ \Psi_{\hat u\hat v}} +\card{\Psi_{u\hat v}}.
\end{align*}
If~$\card{\Psi_{\hat u \hat v}}\geq \big(\tfrac{d}{10}\big)^2 \card{\Psi_{\hat \uu,\hat u}^t(G)}$, we are done and so we may assume that
\begin{equation}\label{eq:lb-Phi-v}
\card{\Psi_{v\hat u}}
  \geq \left(1- \left(\frac{d}{10}\right)^2\right) \card{\Psi_{\hat \uu,\hat u}^t(G)}
  \geq  \frac{1}{2}\card{\Psi_{\hat \uu,\hat u}^t(G)}.
\end{equation}
In what remains, we will compare the sizes of~$\Psi_{v\hat u}$ and~$\Psi_{u\hat v}$.
Let~$\zeta=\zeta_v$ be the weight function as  defined in~\cref{eq: zeta def}. 
Observe that
\begin{align*}
\card{\Psi_{v\hat u}} &= \sum_{(y_1,y_2) \in \tr_{v}\left(G_{\hat\uu}\right)} \zeta (y_1,y_2), \text{ and }\\
\card{\Psi_{ u\hat v}} &= \sum_{(y_1,y_2) \in \tr_{u}\left(G_{\hat \uu}\right)} \zeta (y_1,y_2).
\end{align*}
Recall that we took~$\psi^*$ to be a uniformly random embedding in~$\Psi_{\hat\uu,\hat u}^t(G)$.  Note that~$\psi^*_v|Y_v(\psi^*) = 1$ is a random variable taking values in~$S:=\tr_{v}\left(G_{\hat\uu}\right)$ and the distribution of~$\psi^*_v|Y_v(\psi^*) = 1$ is determined by~$\zeta$. That is, for all $(z_1,z_2)\in S$,
\begin{equation} \label{eq:followzeta}
    \Pro{\psi^*_v=(z_1,z_2)|Y_v(\psi^*)=1} = \frac{\zeta(z_1,z_2)}{\sum_{(y_1,y_2)\in S } \zeta(y_1,y_2)}=\frac{\zeta(z_1,z_2)}{\card{\Psi_{v\hat{u}}}}. \end{equation}
Moreover, as~$v \in Z\subseteq Z_2$, we have that~$\log(\card{S})\leq \log(1+\e')+H(n,p,d)$ and therefore, using also that~$v\in Z\subseteq Z_1$, we can apply \cref{lem:large-entropy} (with~$2\beta'$ replacing~$\beta'$)  to obtain some set~$W^* \subseteq  S=\tr_{v}\left(G_{\hat\uu}\right)$ with the following properties (using \eqref{eq:followzeta} to unpack the conclusions here):
\begin{enumerate}[(i)]
    \item\label{cond: most of weight}
~$\mathlarger{\sum}_{(w_1,w_2)\in W^*}\zeta(w_1,w_2)\geq (1-\beta)\card{\Psi_{v\hat{u}}}$;
    \item\label{cond: almost uniform}
      There exists some value~$\bar{\zeta}$ such that for each~$(w_1,w_2)\in W^*$, we have that \[\zeta(w_1,w_2)=(1\pm \beta)\bar{\zeta};\]
    \item\label{cond:big set}
      We have~$(1-\beta)|S| \leq |W^*| \leq |S|$.
\end{enumerate}
Therefore we can estimate the size of~$\Psi_{v\hat{u}}$ using \cref{cond:big set,cond: most of weight,cond: almost uniform} in that order, as follows:
\begin{align}
 \nonumber \card{\Psi_{v\hat{u}}}
      &\leq\left(\frac{1}{1-\beta}\right) \sum_{(w_1,w_2)\in W^*}\zeta(w_1,w_2)\\
   \nonumber    &\leq \left(\frac{1+\beta}{1-\beta}\right) |W^*|\bar{\zeta} \\ 
       &\leq \left(\frac{1+\beta}{1-\beta}\right) |S|\bar{\zeta}  
      \leq 2 \bar{\zeta} (pd)^3n^2.\label{eq:ub-Phi-v}
\end{align}
In the last inequality, we used that~$|S| =\card{\tr_{v}\left(G_{\hat\uu}\right)}\leq (1 +\e') (pd)^3n^2$ since~$v \in Z\subseteq Z_2$.

We are now going to lower bound~$\card{\Psi_{u\hat v}}$ in a similar way. However,   the entropy argument above only shows that~$\zeta$ is `well-behaved' on~$S=\tr_{v}\left(G_{\hat\uu}\right)$ but nothing about~$\tr_{u}\left(G_{\hat\uu}\right)$. Using \cref{en:Q-manytriangles} though, we can infer though that~$\zeta$ is `well-behaved' on a large part of~$F(v)$, as defined in~\cref{eq:F(v)def}. Recall also our definitions of~$F_L(v)$ and~$F_S(v)$. 
\begin{claim}\label{claim:zeta-FL}
 We have~$\zeta(y_1,y_2) \geq (1-\beta)\bar\zeta$ for all~$(y_1,y_2) \in F_L(v)$.
\end{claim}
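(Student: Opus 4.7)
The plan is to argue by contradiction. Suppose some $(y_1,y_2)\in F_L(v)$ satisfies $\zeta(y_1,y_2)<(1-\beta)\bar\zeta$. By the way $F_L(v)$ and $F_S(v)$ were constructed---namely, every $\zeta$-value on $F_L(v)$ dominates every $\zeta$-value on $F_S(v)$---this single failure in $F_L(v)$ forces every $(z_1,z_2)\in F_S(v)$ to satisfy $\zeta(z_1,z_2)\le \zeta(y_1,y_2)<(1-\beta)\bar\zeta$. This propagation from one bad edge in $F_L(v)$ to \emph{all} of $F_S(v)$ is the structural heart of the argument, and is what allows us to convert a single counterexample into a large set of low-$\zeta$ edges.

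I would then invoke property~\cref{en:Q-manytriangles} with $F'=F_S(v)\cap \tr_v(G)$ (using $v\in Z$) to obtain $|F_S(v)\cap \tr_v(G)|\ge \tfrac{d^5 p^3 n^2}{20}$. The key observation now is that $F_S(v)\cap \tr_v(G)\subseteq S=\tr_v(G_{\hat\uu})$: indeed, $F(v)$ was defined using $\Gamma_{\hat\uu}$, so every edge in $F(v)$ automatically avoids the vertices in $\uu$, and an edge in $\tr_v(G)$ that avoids $\uu$ lies in $\tr_v(G_{\hat\uu})$. By the assumption, each of these at least $\tfrac{d^5 p^3 n^2}{20}$ edges has $\zeta<(1-\beta)\bar\zeta$ and hence lies outside $W^*$, so $|S\setminus W^*|\ge \tfrac{d^5 p^3 n^2}{20}$.

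Finally, property~(iii) from the application of \cref{lem:large-entropy} yields $|S\setminus W^*|\le \beta|S|$, and since $v\in Z\subseteq Z_2$ we have $|S|=|\tr_v(G)|\le (1+\e')(pd)^3n^2$. Combining these bounds gives $\tfrac{d^5 p^3 n^2}{20}\le \beta(1+\e')(pd)^3 n^2$, i.e.\ $\tfrac{d^2}{20}\le \beta(1+\e')$. This contradicts the hierarchy $\beta\ll d$ fixed at the beginning of the proof of \cref{lem:compare}, and completes the argument. The only mildly subtle point is the containment $F_S(v)\cap \tr_v(G)\subseteq S$, but as explained this is immediate from the fact that $F(v)$ was built using $\Gamma_{\hat\uu}$; there is no real obstacle, and once the propagation observation is spotted the rest is essentially bookkeeping against the chosen constant hierarchy.
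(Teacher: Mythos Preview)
Your proof is correct and follows essentially the same approach as the paper. The paper argues directly rather than by contradiction: it shows $W^*\cap F_S(v)\neq\emptyset$ via the same size comparison (using \cref{en:Q-manytriangles} to lower bound $|\tr_v(G_{\hat\uu})\cap F_S(v)|$ and property~\cref{cond:big set} with $v\in Z_2$ to upper bound $|\tr_v(G_{\hat\uu})\setminus W^*|$), and then concludes via the chain $(1-\beta)\bar\zeta\le \min_{W^*}\zeta\le \max_{F_S(v)}\zeta\le \min_{F_L(v)}\zeta$; your contradiction argument is the contrapositive of the same computation.
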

\begin{claimproof}

By \cref{en:Q-manytriangles}, we have that
\[\card{\tr_v{\left(G_{\hat\uu}\right)} \cap F_S(v)} \geq  \frac{d^5p^3n^2}{20},\]
noting that~$\tr_v{\left(G_{\hat\uu}\right)} \cap F_S(v)=\tr_v{\left(G\right)} \cap F_S(v)$ due to the fact that~$F_S(v)\subset E(\Gamma_{\hat \uu})$. 
Furthermore, it follows from \cref{cond:big set} and the fact that~$v\in Z\subseteq  Z_2$, that
\[\card{\tr_v{\left(G_{\hat\uu}\right)} \setminus W^*} \leq \beta \card{\tr_v{\left(G_{\hat\uu}\right)}} \leq 2\beta (pd)^3n^2.\]
Hence, as~$\beta \ll d$, we can conclude that~$W^*\cap F_S(v)\neq \emptyset$ and so
\[(1-\beta)\bar\zeta\leq \min_{(y_1,y_2) \in W^*} \zeta(y_1,y_2)
  \leq \max_{(y_1,y_2) \in F_S(v)} \zeta(y_1,y_2)
  \leq \min_{ (y_1,y_2) \in F_L(v)} \zeta(y_1,y_2),\]
using \cref{cond: almost uniform} in the first inequality.
\end{claimproof}

%
We now appeal to \cref{en:Q-manytriangles} to lower bound the size of~$\card{\Psi_{u\hat{v}}}$ as follows:
\begin{align}
\nonumber  \card{\Psi_{u\hat{v}}}
    &= \sum_{(y_1,y_2) \in\tr_{u}\left(G_{\hat \uu}\right)} \zeta(y_1,y_2)\\
\nonumber    &\geq \sum_{(y_1,y_2) \in \tr_{u}\left(G_{\hat \uu}\right) \cap F_L(v)} \zeta(y_1,y_2)\\ 
\nonumber    &\geq (1-\beta) \bar\zeta\card{\tr_{u}\left(G_{\hat \uu}\right) \cap F_L(v)}\\ \label{eq:lb-Phi-u}
    &\geq \frac{\bar\zeta d^5p^3 n^2}{25},
\end{align}
where we used  \cref{claim:zeta-FL}. 
Putting~\cref{eq:lb-Phi-v},~\cref{eq:ub-Phi-v} and~\cref{eq:lb-Phi-u} together, we get that
\[
 \card{\Psi_{\hat \uu,\hat v}^t(G)}
    \geq  \card{\Psi_{u\hat{v}}}
    \geq \frac{\bar \zeta d^5p^3 n^2}{25}
    \geq \frac{d^2}{50}\card{\Psi_{v\hat{u}}}
    \geq \frac{d^2}{100}  \card{\Psi_{\hat \uu,\hat u}^t(G)},\]
as required.

It remains to verify that for~$G=\Gamma_p$ the statements in \cref{en:Q-largeZ} and \cref{en:Q-manytriangles} hold with probability at least~$1-n^{-K}$. We start with \cref{en:Q-largeZ}, which follows simply from \cref{cor:fixed vertex triangles} and  \cref{lem:entropy lemma,lem:many common neighbours}. Indeed, from those results (using that~$\tfrac{1}{L}\ll \tfrac{1}{K}$) and a union bound, with probability at least~$1-n^{-2K}$, we have  that~$|Z_2|\geq (1-\e')n$,~$|Z_3|\geq (1-2\e)n$ and if \cref{en:Q-granted} holds in~$G=\Gamma_p$ then~$|Z_1|\geq (1-\beta')n$. It then follows easily by our choice of constants that the statement of \cref{en:Q-largeZ} holds in~$G=\Gamma_p$ with probability at least~$1-n^{-2K}$. 

For \cref{en:Q-manytriangles}, we will appeal to \cref{lem:two stage reveal}\cref{lem:triangles with F}. Note that  for a fixed~$v\in V^\ell\setminus \{u\}$ the value of~$\zeta_v(w_1,w_2)$ for~$(w_1,w_2) \in V_{\hat\uu}^{j_1} \times V_{\hat\uu}^{j_2}$ does not depend on the random status of any of the edges containing~$u$ or~$v$. 
Indeed, our definition of~$\zeta_v$ counts only embeddings that leave both~$u$ and~$v$ isolated. We also have that the random set of edges~$F(v)$, as defined in~\cref{eq:F(v)def}, is independent of the random status of any edges adjacent to~$u$ or~$v$. Consequently, in the language of \cref{lem:two stage reveal}, we have that the random sets of edges~$F_L(v)$ and~$F_S(v)$ are \emph{determined} by~$(\Gamma_{\hat u})_p$ (resp.~$(\Gamma_{\hat v})_p$). Therefore, for a fixed~$v\in V^\ell$, two applications of \cref{lem:two stage reveal}\cref{lem:triangles with F} (once for~$u$ and~$F_L(v)$ and once for~$v$ and~$F_S(v)$) give that with probability at least~$1-n^{-(2K+1)}$, we have that \cref{en:Q-manytriangles} holds for~$v$. Here we  used that~$v\in Z\subseteq Z_3$ implies that~$\card{F_L(v)},\card{F_S(v)}\geq \tfrac{d^5pn^2}{10}$. Taking a union bound over all~$v\in V^\ell$, we have that \cref{en:Q-manytriangles} holds in~$G=\Gamma_p$ for all~$v\in V^\ell$, with probability at least~$1-n^{-2K}$. A final union bound gives that with probability at least~$1-n^{-K}$, both \cref{en:Q-largeZ} and \cref{en:Q-manytriangles} hold in~$G=\Gamma_p$ which completes the proof. 
\end{proof}


\section{Stability for a fractional version of the Hajnal--Szemer\'edi theorem}
\label{sec:HSzFrac}


In this section we discuss some fractional variants of the Hajnal--Szemer\'edi theorem for clique factors  (\cref{thm:HajSze-factor} with $x=0$). We will use the results here in our proof reducing \cref{thm:main} to \cref{thm:main-super-reg} in \cref{sec:reduction}. The starting point  is to relax the notion of a~$K_k$-factor to that of a \emph{fractional~$K_k$-factor}. That is, for a graph~$G$, a fractional~$K_k$-factor in~$G$ is a  weighting~$\omega:K_k(G)\rightarrow \mathbb{R}_{\ge 0}$ such that~$\sum_{K \in K_k(G,u)}\omega(K) = 1$ for all~$u \in V(G)$.  If all cliques~$K\in K_k(G)$ are assigned weights in~$\{0,1\}$, we  recover the notion of a~$K_k$-factor and so the definition of a fractional~$K_k$-factor is more general. However, from an extremal point of view, the same minimum degree condition is needed to force both objects. Indeed, focusing on the case when~$n\in k\NN$, the Hajnal--Szemer\'edi theorem (\cref{thm:HajSze-factor} with $x=0$) gives that graphs~$G$ with~$n$ vertices and minimum degree at least~$\big(\tfrac{k-1}{k}\big)n$  have~$K_k$-factors and hence fractional~$K_k$-factors whilst the same construction proving tightness for $K_k$-factors can be used to show tightness for fractional factors, as we now show. Take a graph~$G$ to be a complete graph with~$n\in k \NN$ vertices with a clique of size~$\tfrac{n}{k}+1$ removed to leave an independent set of vertices~$I$. Therefore~$G$ has minimum degree~$\delta(G)=\big(\tfrac{k-1}{k}\big)n-1$ and suppose for a contradiction that~$G$ has a fractional~$K_k$-factor given by a weight function~$\omega:K_k(G)\rightarrow \mathbb{R}_{\ge 0}$. Then we have that~$\sum_{K \in K_k(G,u)}\omega(K) = 1$ for all~$u\in V(G)$ and note that for~$w\neq w'\in I$, we have that~$K_k(G,w)\cap K_k(G,w')=\emptyset$  as~$I$ is an independent set. Therefore
\[\sum_{K \in K_k(G)}\omega(K) \geq \sum_{w\in I}\sum_{K \in K_k(G,w)}\omega(K)\ge |I|=\frac{n}{k}+1\]
but we also have that 
\[\sum_{K \in K_k(G)}\omega(K)=\frac{1}{k}\sum_{u\in V(G)}\sum_{K \in K_k(G,u)}\omega(K)=\frac{n}{k},\]
a contradiction.  The results of this section, which may be of independent interest, will give stability for this phenomenon, showing that if we avoid the construction detailed above (and other similar constructions), by imposing that~$\alpha(G)\le \big( \tfrac{1}{k}-\eta\big) n$ for some~$\eta>0$, then a weaker minimum degree condition of~$\delta(G)\ge \big(\tfrac{k-1}{k}-\gamma\big)n$ for some~$\gamma=\gamma(\eta)>0$,  suffices to force a fractional~$K_k$-factor.  

We will  use that the existence of a fractional~$K_k$-factor can be encoded by a linear program whose dual is a covering linear program which assigns weights to vertices such that every clique is sufficiently `covered'. The duality theorem from linear programming will then be used to transfer between the two settings.

\begin{thm}[stability for fractional Hajnal--Szemer\'edi]\label{thm:HSzFrac0}
For  every~$\eta >0$ and~$2\le k\in \NN$, there is some~$\gamma > 0$ such that the following is true for all~$n \in \N$.
Let~$G$ be an~$n$-vertex graph with~$\delta(G) \geq \big(\tfrac{k-1}{k}-\gamma\big)n$ and~$\alpha(G) < \big(\tfrac{1}{k}-\eta\big)n$.
Then~$G$ contains a fractional~$K_k$-factor.
\end{thm}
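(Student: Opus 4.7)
The approach is via LP duality. The graph $G$ admits a fractional $K_k$-factor if and only if the linear program
\[
\min\Big\{\sum_{K\in K_k(G)}\omega(K) : \omega\ge 0,\ \sum_{K\ni v}\omega(K)\ge 1\text{ for all }v\in V(G)\Big\}
\]
has optimum value exactly $n/k$, since any feasible $\omega$ with objective $\le n/k$ forces $\sum_{K\ni v}\omega(K)=1$ for every $v$ upon summing the $n$ constraints. By LP duality it is therefore equivalent to show that the dual optimum
\[
\max\Big\{\sum_{v\in V(G)}\tau(v) : \tau\ge 0,\ \sum_{v\in K}\tau(v)\le 1\text{ for all }K\in K_k(G)\Big\}
\]
equals $n/k$. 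Since $\tau\equiv 1/k$ is feasible and achieves $n/k$, the task reduces to ruling out any feasible $\tau$ with $\sum_v\tau(v)>n/k$. The minimum-degree condition $\delta(G)\ge(\tfrac{k-1}{k}-\gamma)n$ combined with Tur\'an's theorem places every vertex of $G$ in a $K_k$, so we may further normalise to $\tau(v)\in[0,1]$.

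To motivate the argument, I would first consider the model case of a two-valued $\tau$: set $\tau=\tfrac{1}{k}+\varepsilon$ on some $A\subseteq V(G)$ and $\tau=\tfrac{1}{k}-\delta$ on $V(G)\setminus A$, with $\varepsilon,\delta>0$. Feasibility on any $K_k$ meeting $A$ in $j$ vertices rearranges to $j\varepsilon\le(k-j)\delta$. Iterating the minimum-degree bound shows every edge of $G$ extends to a $K_k$, so a $K_k$ with $j\ge 2$ vertices in $A$ exists as soon as $G[A]$ has an edge; hence the only way to push $\varepsilon$ up to the boundary $\varepsilon=(k-1)\delta$ is to take $A$ to be an independent set. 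In that boundary regime the surplus is $\sum_v\tau(v)-\tfrac{n}{k}=\delta(k|A|-n)$, which is positive precisely when $|A|>n/k$, directly contradicting the hypothesis $\alpha(G)<(\tfrac{1}{k}-\eta)n$.

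The general case is handled by reducing to (a small convex combination of) the above two-level model. Concretely, the plan is: pass to an extreme point $\tau^\ast$ of the dual polytope (which maximises $\sum\tau$ among feasible $\tau$); use the density of $K_k$'s in $G$, guaranteed by the minimum-degree hypothesis, to show that the system of tight constraints at $\tau^\ast$ is very rigid, forcing $\tau^\ast$ to be essentially two-level (and in particular $\{0,1\}$-valued up to a small deviation on its ``active'' part), in analogy with the half-integrality of the matching polytope for $k=2$; then apply the model-case analysis to extract the forbidden independent set. The main obstacle I anticipate is the rigidity step: the extreme-point condition $n$ linearly independent tight constraints, a mixture of $\tau(v)=0$ and $\sum_{v\in K}\tau(v)=1$ equations, could in principle accommodate many qualitatively different structures, and narrowing them down to the two-level form requires carefully exploiting that every vertex of $G$ sits in many $K_k$'s. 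A perturbation argument, pushing $\tau$ towards the uniform $1/k$-vector along feasibility-preserving directions and terminating only when such a push is blocked by an independent set of size exceeding $(\tfrac{1}{k}-\eta)n$, appears to be the most direct route to finishing.
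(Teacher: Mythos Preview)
Your LP-duality setup is correct and matches the paper's starting point exactly: one must show that any feasible dual weighting has total weight $n/k$. Your two-level heuristic is also sound intuition. However, the proposal does not actually contain a proof of the general case. You yourself identify the rigidity of extreme points as ``the main obstacle'' and then only gesture at a ``perturbation argument'' without carrying it out. The analogy with half-integrality of the matching polytope is misleading: for $k\ge 3$ the extreme points of the polytope $\{\tau\ge 0:\sum_{v\in K}\tau(v)\le 1\ \forall K\in K_k(G)\}$ need not be nearly two-valued, and there is no structural theorem to fall back on. So as it stands there is a genuine gap: the reduction from an arbitrary feasible $\tau$ to your model case is missing.

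The paper's proof avoids this obstacle entirely by an \emph{induction on $k$} rather than an extreme-point analysis. Working with the equivalent covering formulation (minimise $\sum c(v)$ over $c\ge 0$ with $\sum_{v\in Q}c(v)\ge 1$ for every $Q\in K_k(G)$), one orders the vertices $v_1,\dots,v_n$ by decreasing weight and, via an affine rescaling $c\mapsto \tfrac{1}{k}+\mu(c-\tfrac{1}{k})$, reduces to $c(v_n)=0$. Then one greedily builds a $K_k$ through $v_n$ by repeatedly passing to common neighbourhoods (using $\delta(G)\ge(\tfrac{k-1}{k}-\gamma)n$), choosing at each step the available vertex of smallest weight, and at the final step using $\alpha(G)<(\tfrac1k-\eta)n$ to find an edge. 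This forces $c(v_i)\ge\tfrac{1}{k-1}$ for roughly the first $n/k$ vertices. One then applies the induction hypothesis for $k-1$ to the graph induced on the remaining vertices, with the weight function suitably rescaled. The base case $k=2$ is a short direct argument. This route never needs to understand the structure of extreme points.
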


\begin{proof}  We will prove the theorem for $\gamma = \tfrac{\eta}{8^k(k!)^2}$.
Observe that the existence of a fractional $K_k$-factor is the same as saying that the value of the following packing linear program is $\tfrac{n}{k}$. We ask for non-negative real weights on the elements of $K_k(G)$ with maximum sum, subject to the condition that the total weight on copies of $K_k$ at any given vertex is at most $1$. The dual of this is the covering linear program in which we place nonnegative weights on the vertices of $G$, and are aiming at minimising their sum, subject to the constraint that the total weight on the vertices of each element of $K_k(G)$ is at least $1$. The strong duality theorem for linear programs implies that these two linear programs have the same optimal objective function value. So it is enough to show that the latter linear program has optimal objective function value at least $\tfrac{n}{k}$ (and thus exactly $\tfrac{n}{k}$), which we do inductively. More precisely, we want to prove the following claim by induction on $k$. We define $z_2=3$ and inductively $z_k=8k^2z_{k-1}$ for $k\ge 3$.

\begin{claim}\label{cl:HSz} Given any $k\ge2$ and $\gamma>0$, suppose that $G$ is an $n$-vertex graph with minimum degree at least $\big(\tfrac{k-1}{k}-\gamma\big)n$ and no independent set of size $\big(\tfrac{1}{k}-z_k\gamma\big)n$. Suppose $c:V(G)\to \R_{\geq 0}$ is any weight function such that for each $Q\in K_k(G)$ we have $\sum_{v\in Q}c(v)\ge1$. Then $\sum_{v\in V(G)}c(v)\ge\tfrac{n}{k}$.
\end{claim}

\begin{claimproof}
It is convenient to let the vertices of $G$ be $v_1,\dots,v_n$ in order of decreasing weight, i.e.\ $c(v_i)\ge c(v_j)$ if $i\le j$. If $\sum_{i\in[n]}c(v_i)\ge\tfrac{n}{k}$ there is nothing to prove, so we can assume the sum is less than $\tfrac{n}{k}$, and hence in particular that $c(v_n)<\frac1k$. We next argue that we can assume $c(v_n)=0$. Indeed, if $c(v_n)>0$, then we can define a new weight function by $c'(v_i) \coloneq \tfrac{1}{k}+\mu\big(c(v_i)-\tfrac{1}{k}\big)$ for all $i \in [n]$, where~$\mu$ is chosen so that $c'(v_n)=0$. Here, $\mu>1$ because $c(v_n)<\frac1k$. Observe that the~$v_i$ remain ordered by weight with this new weight function.
We have
\begin{align*}
\sum_{i\in[n]}c'(v_i)
	&= 
	\tfrac{n}{k} + \mu \sum_{i\in[n]} \left(c(v_i) - \tfrac{1}{k}\right)\\
	&= \sum_{i \in [n]} c(v_i) + (\mu - 1) \left( \sum\nolimits_{i\in[n]} c(v_i) - \tfrac{n}{k} \right) < \sum_{i \in [n]} c(v_i).
\end{align*}
However, for every $Q \in K_k(G)$,
\[\sum_{v\in Q}c'(v) = \sum_{v\in Q} \left( \tfrac{1}{k}+\mu\big(c(v)-\tfrac{1}{k}\big) \right) = 1 + \mu \left(\sum\nolimits_{v\in Q}c(v) - 1 \right) \geq 1.\]
Therefore, $c'$ also satisfies the condition of \cref{cl:HSz} and we thus can assume $c(v_n)=0$.

We are now in a position to prove the base case $k=2$. Since $v_n$ has at least $\big(\tfrac12-\gamma\big)n$ neighbours, and $c(v_n)=0$, we see that for each $i$ such that $v_iv_n\in E(G)$, we have $c(v_i)=1$. In particular, $c(v_i)=1$ for each $i\le\big(\tfrac12-\gamma\big)n$. Furthermore, the vertices $\{v_i:i\ge\tfrac{n}{2}+2\gamma n\}$ do not form an independent set, so there is an edge within this set. At least one endpoint of this edge has weight at least $\tfrac12$. As vertices are ordered by weight, this implies that each vertex $v_i$ with $\tfrac{n}{2}-\gamma n<i<\tfrac{n}{2}+2\gamma n$ has weight at least $\tfrac12$. Summing, we obtain weight at least $\tfrac{n}{2}$ as desired.

Next, we prove the induction step; let $k\ge 3$. We build a copy of $K_k$ containing $v_n$ as follows: we take $u_1=v_n$, and then for each $2\le i\le k-2$ in succession, we take $u_i$ to be the common neighbour of $u_1,\dots,u_{i-1}$ with smallest weight. From the minimum degree condition, when we choose $u_i$ there are at least $n\left(1-(i-1)\big(\tfrac{1}{k}+\gamma\big)\right)$ common neighbours to choose from; in particular, the common neighbourhood of all $k-2$ vertices we choose has size at least $\tfrac{2n}{k}-(k-2)\gamma n$. Now consider the last $\big(\tfrac{1}{k}-k(k-1)\gamma\big)n$ of these common neighbours. Since $z_k\ge k(k-1)$, they do not form an independent set, so contain an edge $u_{k-1}u_k$. Since $\sum_{i=1}^kc(u_i)\ge 1$, and $c(u_1)=0$, one of these vertices has weight at least $\tfrac{1}{k-1}$. In particular, $c(v_i)\ge\tfrac{1}{k-1}$ whenever $i\le\big(\tfrac{1}{k}+(k-1)^2\gamma\big)n$.

Now let $c^* \coloneq c(v_{n/k-(k-1)\gamma n})$, and let $G'$ denote the subgraph of $G$ induced by vertices $v_i$ with $i\ge\big(\tfrac{1}{k}+(k-1)^2\gamma\big)n$. If $c^*\geq 1$ then we have
\[\sum_{i\in[n]}c(v_i)\ge \tfrac{n}{k}-(k-1)\gamma n+\tfrac{1}{k-1}\cdot k(k-1)\gamma n>\tfrac{n}{k}\]
and we are done; so we can assume $c^*<1$.
If $Q$ is any copy of $K_{k-1}$ in $G'$, then $Q$ has a common neighbourhood in $G$ of size at least $\tfrac{n}{k}-(k-1)\gamma n$, and so in particular $Q$ extends to a copy of $K_k$ in $G$ by adding a vertex whose weight is at most $c^*$. Thus the function $c'(u) \coloneq \tfrac{1}{1-c^*}c(u)$ on $V(G')$ is a weight function on $V(G')$ taking values in $\R_{\geq 0}$ and such that $\sum_{u\in Q}c'(u)\ge 1$ for each $Q\in K_{k-1}(G')$. Furthermore every vertex in $G'$ has at most $\tfrac{n}{k}+\gamma n$ non-neighbours in $G$, at most all of which are in $G'$, so the minimum degree of $G'$ is at least $\tfrac{(k-2)n}{k}-((k-1)^2+1)\gamma n$. Since $v(G')=\tfrac{(k-1)n}{k}-(k-1)^2\gamma n$, we have $\delta(G')\ge\tfrac{k-2}{k-1}v(G')-\gamma'v(G')$ where $\gamma' \coloneq 2k^2\gamma$. Furthermore $G'$ has no independent set of size
\[\tfrac{1}{k}n-z_k\gamma n=\tfrac{1}{k}n-4z_{k-1}\gamma'  n\le\tfrac{1}{k-1}v(G')-z_{k-1}\gamma'v(G')\,.\]
We are therefore in a position to apply the induction hypothesis (that is, \cref{cl:HSz} for $k-1$) to $G'$, with $\gamma'$ replacing $\gamma$. We conclude that
\[\sum_{u\in V(G')}c'(u)\ge\tfrac{1}{k-1}v(G') \geq \frac{\big(1-\tfrac{1}{k} - (k-1)^2 \gamma\big)n}{k-1} = \big(\tfrac{1}{k} - (k-1)\gamma\big)n\]
and so
\begin{align*}
\sum_{i\in[n]}c(v_i)
	&\ge c^*\big(\tfrac{1}{k}-(k-1)\gamma\big)n+\tfrac{1}{k-1}\cdot k(k-1)\gamma n+(1-c^*) \cdot \big(\tfrac{1}{k} - (k-1)\gamma \big)n\\
	&=\big(\tfrac{1}{k} -(k-1)\gamma\big)n + k\gamma n >\tfrac{n}{k},
\end{align*}

as desired.
\end{claimproof}
This completes the proof by strong LP-duality.
\end{proof}

Note that we obtain from this proof a little more: the unique optimal cover is the uniform cover (since after assuming $c(v_n)<\tfrac{1}{k}$ we eventually conclude the total weight is strictly bigger than $\tfrac{n}{k}$). However we will not need this fact. We will also need only the $k=2$ and $k=3$ cases, but for future use give the general result.

Next, we need some modifications of \cref{thm:HSzFrac0}.  First we want to be able to set (potentially different but close to uniform) weights~$\lambda(u)$ for each~$u\in V$ and obtain a weighting~$\omega:K_k(G) \to \R_{\geq 0}$ such that~$\sum_{K \in K_k(G,u)}\omega(K) = \lambda(u)$ for all~$u \in V(G)$. The case of fractional~$K_k$-factors  corresponds to setting~$\lambda(u)=1$ for all~$u\in V(G)$.

\begin{cor} \label{thm:HSzFrac}
For every integer $k \geq 2$ and every $\eta >0$, there is some $\gamma > 0$ such that the following is true for all $n \in \N$.
Let $G$ be an $n$-vertex graph with $\delta(G) \geq \big(\tfrac{k-1}{k}-\gamma\big)n$ and $\alpha(G) < \big(\tfrac{1}{k}-\eta\big)n$.
Let $\lambda: V(G) \to \N$ be a weight function with $\lambda(u) = (1 \pm \gamma) \tfrac{1}{n}\sum_{v \in V(G)} \lambda(v)$ for all $u \in V(G)$.
Then there is a weight function $\omega:K_k(G) \to \R_{\geq 0}$ such that $\sum_{K \in K_k(G,u)}\omega(K) = \lambda(u)$ for all $u \in V(G)$. 
\end{cor}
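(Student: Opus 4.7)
The plan is to reduce \cref{thm:HSzFrac} to \cref{thm:HSzFrac0} via a blow-up construction, crucially using that $\lambda$ is integer-valued. Let $N\coloneq\sum_{v\in V(G)}\lambda(v)$ and $\bar\lambda\coloneq N/n$, so $\lambda(u)=(1\pm\gamma)\bar\lambda$ for every $u$. Form the $\lambda$-blow-up $G'$ of $G$ by replacing each $v\in V(G)$ with an independent set $C_v$ of $\lambda(v)$ copies, and placing an edge between a copy of $u$ and a copy of $v$ (for $u\neq v$) exactly when $uv\in E(G)$. Then $v(G')=N$, and any clique $K'\in K_k(G')$ uses at most one vertex from each $C_v$ (since copies of the same vertex are non-adjacent), so $K'$ projects naturally to a clique $\pi(K')\in K_k(G)$.

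The first main step is to verify that $G'$ satisfies the hypotheses of \cref{thm:HSzFrac0} with parameter $\eta/2$ in place of $\eta$, provided the $\gamma$ in \cref{thm:HSzFrac} is chosen sufficiently small in terms of $\eta$, $k$, and the constant $\gamma_0=\gamma_0(k,\eta/2)$ provided by \cref{thm:HSzFrac0}. For every copy $v'\in C_v$, using $n\bar\lambda=N$,
\[
\deg_{G'}(v') = \sum_{u\in N_G(v)}\lambda(u) \geq (1-\gamma)\bar\lambda\cdot\big(\tfrac{k-1}{k}-\gamma\big)n \geq \big(\tfrac{k-1}{k}-2\gamma\big)N \geq \big(\tfrac{k-1}{k}-\gamma_0\big)N.
\]
For the independence number, any independent set $I'\subseteq V(G')$ satisfies $I'\subseteq \bigcup_{v\in I}C_v$ where $I\coloneq\{v\in V(G): I'\cap C_v\neq\emptyset\}$ is independent in $G$ (distinct $G$-adjacent vertices have fully connected copies in $G'$), so
\[
\alpha(G')\leq \sum_{v\in I}\lambda(v)\leq (1+\gamma)\bar\lambda\cdot\alpha(G) < (1+\gamma)\big(\tfrac{1}{k}-\eta\big)N \leq \big(\tfrac{1}{k}-\tfrac{\eta}{2}\big)N,
\]
for $\gamma$ small enough in terms of $\eta$ and $k$.

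Next, apply \cref{thm:HSzFrac0} to $G'$ to obtain a fractional $K_k$-factor $\omega':K_k(G')\to\R_{\geq 0}$ with $\sum_{K'\ni u'}\omega'(K')=1$ for every $u'\in V(G')$, and define $\omega:K_k(G)\to\R_{\geq 0}$ by $\omega(K)\coloneq\sum_{K'\in\pi^{-1}(K)}\omega'(K')$. For each $u\in V(G)$, writing $C_u=\{u_1,\ldots,u_{\lambda(u)}\}$, the desired marginal condition falls out by partitioning the $K_k$-cliques of $G'$ meeting $C_u$ according to which copy of $u$ they contain:
\[
\sum_{K\in K_k(G,u)}\omega(K) = \sum_{i=1}^{\lambda(u)}\sum_{K'\in K_k(G',u_i)}\omega'(K') = \sum_{i=1}^{\lambda(u)}1 = \lambda(u).
\]
The entire argument is essentially bookkeeping; the only delicate point is choosing $\gamma$ small enough (in terms of $\eta$, $k$, and $\gamma_0(k,\eta/2)$) so that the slack in both the minimum-degree and independence-number conditions survives the near-uniform blow-up, but the computations above show that a linear loss in $\gamma$ is all that is incurred.
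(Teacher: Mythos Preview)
Your proof is correct and is essentially identical to the paper's own argument: both form the $\lambda$-blow-up of $G$, verify that the minimum-degree and independence-number hypotheses of \cref{thm:HSzFrac0} transfer (with a small loss in the constants), apply \cref{thm:HSzFrac0} to obtain a fractional $K_k$-factor in the blow-up, and then push the weights down via the projection $\pi$. If anything, you spell out the degree and independence-number computations more carefully than the paper does.
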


\begin{proof}
Fix some~$2\le k\in \NN$ and~$\eta>0$. Choose~$0\ll\gamma\ll \gamma'\ll \eta$.  Now let~$G$ and~$\lambda$ be as in the statement of the corollary.  
 We define an auxiliary graph~$H$ by blowing-up every~$v \in V(G)$ to an independent set of size~$\lambda(v)$ (that is, every edge is replaced by a complete bipartite graph). Then, with~$N \coloneq v(H) = \sum_{v \in V(G)} \lambda(v)$, we have~$\delta(H) \geq \big(\tfrac{k}{k-1} -  \gamma'\big)N$ and~$\alpha(H) \leq \big(\tfrac{1}{k} - \tfrac{\eta}{2}\big)N$. Hence, we can apply \cref{thm:HSzFrac0} to~$H$ and obtain a weight function~$\omega_H: K_k(H) \to \R_{\geq 0}$ such that~$\sum_{K' \in K_k(G,x)}\omega_H(K') = 1$ for all~$x\in V(H)$. We define~$\omega: K_k(G) \to \R_{\geq 0}$ by~$\omega(K) = \sum_{K' \in K_k(H[K])} \omega_H(K')$, where~$H[K]$ is the subgraph of~$H$ induced by the blown-up vertices of~$K$.  This weight function~$\omega$  satisfies the desired conditions. 
\end{proof}

 We now extend yet further to  guarantee an integer-valued weight-function~$\omega:K_k(G)\rightarrow \NN$. In order for this to work, we need that  our function~$\lambda$ assigns each vertex a sufficiently large weight. In applications this will be guaranteed as our weights~$\lambda$ will be proportional to the number of vertices~$n$ of a host graph but \cref{thm:HSzFrac-integer} will actually be applied to the reduced graph~$R$ after applying the regularity lemma to the host graph and hence the number of vertices of~$R$ (the parameter~$n$ in \cref{thm:HSzFrac-integer}) will be bounded by some constant. 

\begin{thm}[stability for fractional Hajnal--Szemer\'edi with integer weights]\label{thm:HSzFrac-integer}
For every integer $k \geq 2$ and every $\eta >0$, there is some $\gamma > 0$ such that the following is true for all $n \in \N$.
Let $G$ be a connected $n$-vertex graph with $\delta(G) \geq \big(\tfrac{k-1}{k}-\gamma\big)n$ and $\alpha(G) < \big(\tfrac{1}{k}-\eta\big)n$.
Let $\lambda: V(G) \to \N$ be a weight function such that $\lambda(u) = \big(1 \pm \frac{\gamma}{2}\big) \tfrac{1}{n}\sum_{v \in V(G)} \lambda(v)$, $\lambda(u) \geq n^{2k}$ for all $u \in V(G)$ and $k$ divides $\sum_{v \in V(G)} \lambda(v)$.
Then there is a weight function $\omega:K_k(G) \to \N_0$ such that $\sum_{K \in K_k(G,u)}\omega(K) = \lambda(u)$ for all $u \in V(G)$.
\end{thm}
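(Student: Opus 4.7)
The plan is to reduce the integer version to an application of a stability-type Hajnal--Szemer\'edi theorem via a blow-up of $G$. Set $N := \sum_{u \in V(G)} \lambda(u)$, which is divisible by $k$ by hypothesis. Construct the auxiliary graph $H$ on vertex set $\bigsqcup_{u \in V(G)} C_u$ with $|C_u| = \lambda(u)$, where we put $xy \in E(H)$ if and only if $x \in C_u$, $y \in C_v$ with $u \neq v$ and $uv \in E(G)$ (so $C_u$ is always an independent set, and between blown-up classes corresponding to an edge of $G$ we have a complete bipartite graph). Because between-class edges are complete bipartite and within-class there are none, every $K_k$ in $H$ consists of one vertex from each of $k$ different classes $C_{u_1}, \ldots, C_{u_k}$ where $\{u_1, \ldots, u_k\}$ spans a $K_k$ in $G$. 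Consequently, a $K_k$-factor in $H$ corresponds precisely to an integer weighting $\omega : K_k(G) \to \N_0$ with $\sum_{K \in K_k(G,u)} \omega(K) = \lambda(u)$ for every $u$ (just count how many cliques of the factor project onto each $K_k$ of $G$). So it suffices to show that $H$ has a $K_k$-factor.

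Next I would verify that $H$ satisfies the appropriate degree and independence-number bounds. From the near-uniformity $\lambda(u) = (1 \pm \gamma/2)N/n$ it follows that $\min_u \lambda(u) \geq (1-\gamma/2)N/n$ and $\max_u \lambda(u) \leq (1+\gamma/2)N/n$. Any vertex $x \in C_u$ has degree exactly $\sum_{v : uv \in E(G)} \lambda(v) \geq \deg_G(u)\cdot(1-\gamma/2)N/n \geq \big(\tfrac{k-1}{k} - 2\gamma\big)N$ in $H$, and since any independent set of $H$ is the union of $C_v$ over some independent set $I \subseteq V(G)$, we have $\alpha(H) \leq \alpha(G)\cdot(1+\gamma/2)N/n \leq \big(\tfrac{1}{k} - \tfrac{\eta}{2}\big)N$. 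The hypothesis $\lambda(u) \geq n^{2k}$ ensures $N \geq n^{2k+1}$ is large (in particular large relative to any constants depending on $k$, $\gamma$, $\eta$), which is what lets us invoke an asymptotic result on $H$.

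The main obstacle is then to verify the stability version of Hajnal--Szemer\'edi itself: if $H$ is an $N$-vertex graph with $k \mid N$, $\delta(H) \geq \big(\tfrac{k-1}{k} - \gamma'\big)N$, and $\alpha(H) < \big(\tfrac{1}{k} - \eta'\big)N$ with $\gamma' \ll \eta'$ and $N$ sufficiently large, then $H$ has a $K_k$-factor. This is what makes \cref{thm:HSzFrac-integer} genuinely stronger than the fractional version, since at the minimum-degree level Corollary~\ref{thm:HSzFrac} alone will not give integrality. To establish it, I would use the absorption method: first find, inside $H$, a small (polylogarithmic or polynomial size) ``absorbing'' family $\mathcal{A}$ of vertex-disjoint $K_k$'s whose union $A$ has the property that for every small set $S \subseteq V(H) \setminus A$ with $k \mid |S|$, $H[A \cup S]$ has a $K_k$-factor; the existence of such absorbers comes from the low independence number, which makes every small collection of vertices jointly extendable to many $K_k$'s. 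Then apply \cref{thm:HajSze-factor} (with $x = 2\gamma$) to $H \setminus A$ to obtain a near-$K_k$-factor covering all but at most $O(\gamma' N)$ vertices, and finally use the absorber to swallow the uncovered vertices. The connectedness hypothesis on $G$ transfers to $H$ and is used (only) to guarantee the absorbing family can be built using cliques whose $V(G)$-projection is flexible enough.

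The key technical point to handle carefully in the proof of the stability Hajnal--Szemer\'edi step is the construction of the absorbers: one shows that for any fixed $k$-tuple $T$ of vertices of $H$ spanning a $K_k$, the number of ``$T$-absorbing'' $K_{2k}$-structures (pairs of $K_k$'s that can be swapped to absorb $T$) is $\Omega(N^{2k-1})$, and then takes a random collection of cliques and trims using a standard probabilistic argument (Janson plus deletion). Since all other ingredients---\cref{thm:HSzFrac}, \cref{thm:HajSze-factor}, and the near-uniformity of $\lambda$---are already available, once the stability Hajnal--Szemer\'edi theorem is in hand the rest of the proof is bookkeeping, and the result follows.
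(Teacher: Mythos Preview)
Your blow-up reduction is sound, and in fact the paper uses the very same blow-up in the proof of \cref{thm:HSzFrac} to pass from uniform to near-uniform $\lambda$. The divergence is in what you then ask of $H$: you want an \emph{integer} $K_k$-factor in $H$, which forces you to establish a stability Hajnal--Szemer\'edi theorem (minimum degree $(\tfrac{k-1}{k}-\gamma')N$ plus $\alpha(H)<(\tfrac1k-\eta')N$ implies a $K_k$-factor). That is a genuine theorem, and your absorption outline is plausible but underspecified---in particular, your description of absorbers speaks of $k$-tuples $T$ spanning a $K_k$, whereas what you actually must absorb is an \emph{arbitrary} small residual set; also, the statement of \cref{thm:HSzFrac-integer} is for all $n\in\N$, so you would need a separate argument when $N$ is below the threshold your absorption needs.

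The paper sidesteps all of this with a much lighter device. It applies the already-proved \emph{fractional} result (\cref{thm:HSzFrac}) to the shifted function $\lambda'(u)=\lambda(u)-k|K_k(G,u)|n^k$, obtaining real weights $\omega'$; adds back $kn^k$ per clique and rounds each value to the nearest integer, choosing the rounding so that the global sum is exactly $\tfrac1k\sum_v\lambda(v)$. Since $|K_k(G,u)|\le n^{k-1}$, each vertex sum is off by at most $n^{k-1}$. One then repairs these discrepancies by at most $n^k$ local moves: pick $u,v$ with over/under-count, find a $(k-1)$-clique in $N(u)\cap N(v)$ (or, for $k=2$, an even $u$--$v$ walk, which is where connectedness and $\alpha(G)<n/2$ are used), and shift weight $\pm1$ between the two resulting $K_k$'s. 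The $kn^k$ cushion and the bound $\lambda(u)\ge n^{2k}$ guarantee nonnegativity throughout. This argument is elementary, works for every $n$, and uses only \cref{thm:HSzFrac}; your route would work but at the cost of proving a theorem considerably stronger than what is needed here.
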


Note that for $k \geq 3$ the requirement that~$G$ is connected is readily implied by the minimum degree condition in this theorem.

\begin{proof}[Proof of \cref{thm:HSzFrac-integer}]
Suppose that $k, \eta, G$ and $\lambda$ are given as in the statement and suppose that $\gamma$ is small enough to apply \cref{thm:HSzFrac} and $\gamma \ll \eta/k$.
We will construct $\omega$ in three steps.
Define $\lambda': V(G) \to \N$ by $\lambda'(u) = \lambda(u) - k|K_k(G,u)|n^k \geq 0$. By \cref{thm:HSzFrac}, there is some weight function $\omega':K_k(G) \to \R_{\geq 0}$ such that $\sum_{K \in K_k(G,u)}\omega'(K) = \lambda'(u)$ for all $u \in V(G)$.
We define $\omega'': V(G) \to \N_0$ such that, for each $K \in K_k(G)$,
\begin{enumerate}
\item $\omega''(K) \in \left\{\lfloor \omega'(K) + kn^k \rfloor, \lceil \omega'(K) + kn^k \rceil \right\}$, and
\item $k \sum_{K \in K_k(G)} \omega''(K) = \sum_{v \in V(G)} \lambda(v).$
\end{enumerate}
Note that this is possible since by construction the unrounded sum satisfies $(ii)$ and since $k$ divides $\sum_{v \in V(G)} \lambda(v)$.
Furthermore, for each $u \in V(G)$, we have $\sum_{K \in K_k(G,u)}\omega''(K)=\lambda(u) \pm n^{k-1}$ (since the unrounded sum would be exactly correct and $|K_k(G,u)| \leq n^{k-1}$).

Finally, we obtain $\omega$ from $\omega''$ via the following iterative process. As long as possible, we identify pairs $u,v \in V(G)$ such that $\sum_{K \in K_k(G,u)} \omega''(K) > \lambda(u)$ and $\sum_{K \in K_k(G,v)} \omega''(K) < \lambda(v)$.
If $k \geq 3$, we claim that there is a clique of size $k-1$ in the common neighbourhood of $u$ and $v$. Indeed, since $\delta(G) \geq \big(\tfrac{k-1}{k}-\gamma\big)n$, we can iteratively find a clique with vertices $u_2, \ldots, u_{k-2}$ in the common neighbourhood of $u$ and $v$ and the common neighbourhood of $u,v,u_2, \ldots,u_{k-2}$ has size at least $(\tfrac{1}{k}-(k-1)\gamma))n > (\tfrac{1}{k} - \eta) n$. In particular, there is an edge $u_{k-1}u_k$ in there, completing the clique.
Let $K_u = \{u,u_2, \ldots, u_k\}$ and $K_v = \{v,u_2, \ldots, u_k\}$, and decrease the weight of $K_u$ by $1$ and increase the weight of $K_v$ by $1$.
If $k=2$, we do the following: Since $\alpha(G) < n/2$, $G$ is not bipartite and hence contains an odd cycle. Since $G$ is connected, this implies that there is a walk 
from $u$ to $v$ of even length (even number of edges). We  take a shortest such walk (in terms of edges) and note that every edge is traversed at most twice by this walk. We  decrease the weight of the edge at $u$ and then alternate increasing and decreasing the weight of the edges along the walk. Note that in both cases the total weight at $u$ decreases by $1$ and the total weight at $v$ increases by $1$, and the total weight at any other vertex remains unchanged.

Note that $\sum_{v \in V(G)} \card{ \lambda(v) - \sum_{K \in K_k(v,G)} \omega(K)}$ decreases by $2$ in every step. So this process finishes after at most $n^{k}$ steps.
Clearly, at this time, we have $\sum_{K \in K_k(v,G)} \omega(K) = \lambda(v)$ for all $v \in V(G)$ and $\omega(K) \geq \omega''(K) - 2n^k \geq 0$ for all $K \in K_k(G)$, completing the proof.
\end{proof}

\section{Triangle matchings} \label{sec:triangle matchings}
In this section,  we detail some probabilistic lemmas which allow us to find a \emph{triangle matching}, that is,  a collection of vertex-disjoint triangles, in various settings. These will be useful in proving \cref{thm:main} in \cref{sec:reduction}. Recall that the \emph{size} of a triangle matching is the number of triangles it contains and    we write~$V(\cT)$ for the set of vertices covered by a triangle matching~$\cT$.
The first lemma allows us to find a triangle matching in~$G_p$ if~$G$ contains many triangles. We refer the reader to the \cref{sec:notation} for any notational conventions (for example, the definition of~$G[X_1,X_2,X_3]$).

\begin{lemma}\label{lem:greedy-triangles}
For all~$\mu>0$ there exists~$C>0$ such that the following holds. 
Let~$k,n\in \N$,~$p \geq Cn^{-2/3}$ and let~$G$ be an~$n$-vertex graph.
\begin{enumerate}
\item[(i)]  Assume that for every set~$X \subseteq V(G)$ with~$|X| \geq 3k$,~$G[X]$ contains at least~$\mu n^3$ triangles. Then, whp,~$G_p$ contains a triangle matching of size at least~$\tfrac n3 - k$.
\item[(ii)] Assume that~$n_0\geq k$ and~$V(G) = V_1 \cup V_2 \cup V_3$ is a partition into sets of size at least~$n_0$ so that for every~$X_i \subseteq V_i$ with~$|X_i| \geq k$ for all~$i \in [3]$,~$G[X_1, X_2, X_3]$ contains at least~$\mu n^3$ triangles.
Then, whp,~$G_p$ contains triangle matching of size at least~$n_0 - k$.
\end{enumerate}
\end{lemma}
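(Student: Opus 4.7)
The plan is a greedy algorithm driven by Janson's inequality. For part (i), define the good event $\cE$ that for every $X \subseteq V(G)$ with $|X| \geq 3k$, the induced subgraph $G_p[X]$ contains at least one triangle. Granted $\cE$, I will build the triangle matching $\cT$ one triangle at a time: as long as $|V(G) \setminus V(\cT)| \geq 3k$, the hypothesis together with $\cE$ supplies a triangle in $G_p[V(G)\setminus V(\cT)]$, which we append to $\cT$. Since each step removes exactly three uncovered vertices, the process terminates with $|\cT| \geq \tfrac{n}{3} - k$, as required.

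To verify that $\cE$ holds whp, I would fix $X$ with $|X| \geq 3k$ and apply Janson's inequality (\cref{lem:janson}) to $Z = \card{K_3(G_p[X])} = \sum_{T \in K_3(G[X])} I_T$. By hypothesis $|K_3(G[X])| \geq \mu n^3$, so $\lambda = \Exp{Z} \geq \mu p^3 n^3$, and since each triangle shares an edge with at most $3n$ others, $\bar\Delta \leq \lambda(1 + 3np^2)$, exactly as in the calculation from \cref{lem:triangle-count}. Janson then yields $\Pro{Z=0} \leq \exp(-\lambda/(6np^2)) + \exp(-\lambda/4)$. For $p \geq Cn^{-2/3}$ with $C = C(\mu)$ sufficiently large, both exponents are $\Omega(C n^{4/3})$ (using $\lambda/(np^2) = \Omega(pn^2) = \Omega(Cn^{4/3})$), which beats the $2^n$ union bound over choices of $X$ with room to spare.

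Part (ii) is essentially the same with a minor bookkeeping change. Define $\cE'$ to be the event that for every triple $(X_1, X_2, X_3)$ with $X_i \subseteq V_i$ and $|X_i| \geq k$, $G_p[X_1, X_2, X_3]$ contains a triangle. The Janson calculation and the $e^{-\Omega(n^{4/3})}$ bound go through verbatim (with $K_3(G[X_1, X_2, X_3])$ in place of $K_3(G[X])$), and survive the union bound over $2^{3n}$ choices. Now run the greedy procedure in the tripartite setting: each added triangle uses exactly one vertex from each $V_i$, so after $t$ steps $|V_i \setminus V(\cT)| = |V_i| - t \geq n_0 - t$, and the process can continue whenever $t \leq n_0 - k$, producing a matching of size at least $n_0 - k$.

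The only non-routine point is beating the exponential-in-$n$ union bound; this is precisely what the assumption $p \geq Cn^{-2/3}$ is designed for, since it makes $\lambda/(np^2)$ grow like $Cn^{4/3}$ and thus far exceed~$n$ once $C = C(\mu)$ is chosen large enough. The rest is essentially a sequential application of the good event.
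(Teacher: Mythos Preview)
Your approach is correct and essentially identical to the paper's: Janson's inequality applied uniformly over all admissible subsets, followed by a union bound over~$2^n$ (or~$2^{3n}$) choices, and then the obvious greedy/maximal-matching argument. The paper phrases the claim slightly more strongly (at least~$\tfrac12 p^3 t$ triangles survive rather than just one) but this extra strength is not used.

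One small imprecision: your assertion that ``both exponents are $\Omega(Cn^{4/3})$'' is not quite right for the term~$\lambda/4$. At $p = Cn^{-2/3}$ one only has $\lambda/4 \ge \mu C^3 n/4$, which is linear in~$n$, not~$n^{4/3}$. This does not matter, since for $C$ chosen large in terms of~$\mu$ the constant $\mu C^3/4$ exceeds~$\log 2$ and the union bound still succeeds; but the sentence as written overstates the growth rate.
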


\begin{proof}
Let~$\mu>0$ and set~$C=50\mu^{-2}$. Let~$p,k,n,G$ be given as in the statement.
We will deduce the lemma from the following claim.
\begin{claim}
The following holds whp for all~$X \subseteq V(G)$. If~$G[X]$ contains at least~$t\ge\mu n^3$ copies of~$K_3$, then the number of triangles in~$G_p[X]$ is at least~$\tfrac12p^3t$.
\end{claim}

\begin{claimproof}
 This is a straightforward application of Janson's inequality (\Cref{lem:janson}) and the union bound. Note that the total number of choices of~$X$ is at most~$2^{n}$. Fix one such choice. The expected number of triangles in~$G_p[X]$ is~$p^3t\ge\mu p^3n^3$, and we have~$\bar{\Delta}\le 2  \max(p^5n^4,p^3n^3)$. Hence Janson's inequality tells us that the probability of having less than~$\tfrac12p^3t$ triangles is at most
 \[\exp\Big(-\tfrac{\mu^2p^6n^6}{16\max(p^5n^4,p^3n^3)}\Big) \le \exp\Big(-\tfrac{\mu^2}{16}\min(pn^2,p^3n^3)\Big)\le\exp\big(-\tfrac{C\mu^2}{16}n\big)\]
 and by our choice of~$C$ and the union bound, the claim follows.
\end{claimproof} 

We only prove~$(i)$ as~$(ii)$ is similar. Suppose that~$\cT$ is a maximal collection of vertex-disjoint triangles with~$\card \cT < \tfrac n3 - k$. Then~$X \coloneq V(G) \setminus V(\cT)$ has size at least~$3k$ but~$G_p[X]$ does not contain a triangle. Thus, the claimed result follows from the above claim.
\end{proof}

The next lemma allows us to find triangles which cover a given small set of vertices, using edges in specified places.

\begin{lemma}\label{lem:vtxcover}
For any~$0 < \mu < \tfrac{1}{100}$, 
there exists~$C>0$ such that the following holds for every~$n \in \N$ and~$p\ge Cn^{-2/3}(\log n)^{1/3}$.
Let~$G$ be an~$n$-vertex graph, and let~$v_1,\dots,v_\ell \in V(G)$ be distinct vertices with~$\ell\le\mu^2 n$. For each~$i \in [\ell]$, let~$E_i \subseteq \tr_{v_i}(G)$ be a set of edges that form a triangle with~$v_i$ such that~$|E_i|\ge\mu n^2$. 
Moreover, suppose~$A_1, \ldots, A_t \subset V(G) \setminus \{v_1, \ldots, v_\ell\}$ are disjoint sets for some~$t \in \N$.
Then, whp, there is a triangle matching~$\cT= \{T_1,\dots,T_\ell\}$ in~$G_p$ such that for each~$i\in[\ell]$ the triangle~$T_i$ consists of~$v_i$ joined to an edge of~$E_i$ and~$\card{A_k \cap V(\cT)} \leq 12\mu |A_k| + 1$ for all~$k \in [t]$.
\end{lemma}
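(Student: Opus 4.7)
The plan is to reveal $G_p$, use Janson's inequality to establish concentration of triangle counts at each $v_i$, and then build $\mathcal{T}$ sequentially with randomised choices that respect both the disjointness requirement and the $A_k$-budgets.

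First, I would use Janson (Lemma~\ref{lem:janson}) to show that whp, for each $i\in[\ell]$, the number $N_i$ of triangles $\{v_i\}\cup e$ in $G_p$ with $e\in E_i$ satisfies $N_i\geq\tfrac12 p^3|E_i|\geq\tfrac{\mu C^3}{2}\log n$; and moreover, for each $i$ and each \emph{fixed} set $S\subseteq V(G)\setminus\{v_i\}$ of size at most $\mu n/4$, the count of such triangles having at least one non-$v_i$ vertex in $S$ is at most $N_i/2$. The expectation of the latter count is at most $2p^3|S|n\leq p^3\mu n^2/2\leq N_i$, and the Janson quadratic parameter is controlled since two distinct triangles through $v_i$ share at most one edge.

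Second, I would construct $\mathcal{T}$ iteratively. At step $i$, call an $A_k$ \emph{tight} when $|A_k\cap V(\mathcal{T}_{i-1})|\geq 10\mu|A_k|$; since $\sum_k|A_k\cap V(\mathcal{T}_{i-1})|\leq 2\ell\leq 2\mu^2 n$, one gets $\sum_{k\text{ tight}}|A_k|\leq\mu n/5$. Set $U$ to be the union of (i) the non-$v_i$ vertices of $T_1,\dots,T_{i-1}$, (ii) all $v_j$ for $j\neq i$, and (iii) all tight $A_k$; then $|U|\leq\mu n/4$. Pick $T_i=\{v_i\}\cup e_i$ uniformly at random among triangles of $G_p$ with $e_i\in E_i$, $e_i\cap U=\emptyset$, and additionally $e_i\not\subseteq A_k$ for any $A_k$ with $|A_k|<1/(2\mu)$ (this rules out only $O(n/\mu)$ edges of $E_i$, a negligible fraction). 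By the first step at least $N_i/3$ such triangles remain, so $T_i$ is always found; once $A_k$ becomes tight no further vertex is added to it, and together with the ``single-hit at the tight step'' condition this yields $|A_k\cap V(\mathcal{T})|\leq 12\mu|A_k|+1$ for every $k$, covering both the large-$|A_k|$ regime (where $10\mu|A_k|+2\leq 12\mu|A_k|+1$) and the small-$|A_k|$ regime (where the bound $|A_k\cap V(\mathcal{T})|\leq|A_k|$ combined with the ``$+1$'' suffices).

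The main obstacle is that the Janson bound on ``triangles at $v_i$ avoiding $S$'' must be applied to the random sets $U=U_{i-1}$ arising during the greedy construction, and a naive union bound over all subsets of size up to $\mu n/4$ is too expensive to be absorbed by the Janson exponent $\Omega(\log n)$. I would resolve this via a deferred-decision argument: reveal the $G_p$-edges incident to $v_i$ only at step $i$, so that $U_{i-1}$ is determined solely by the earlier-revealed portion of $G_p$ and is independent of the random quantity to which Janson is applied at step $i$; a union bound is then only needed over the $\ell\leq\mu^2 n$ steps, which is easily absorbed.
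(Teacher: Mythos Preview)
Your approach—iteratively covering each $v_i$, declaring an $A_k$ ``tight/full'' once enough of it is hit and thereafter avoiding it, and deferring the revelation of the $v_i$-incident edges so that the forbidden set $U_{i-1}$ is independent of the randomness invoked at step $i$—is precisely what the paper does.

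There is one genuine technicality you skip over. A triangle $\{v_i\}\cup e$ with $e=w_1w_2\in E_i$ uses \emph{three} random edges: $v_iw_1$, $v_iw_2$, and the base edge $w_1w_2$. Your deferred-decision sentence only addresses the two $v_i$-incident edges; you never say when the base edges are revealed, and this matters. If all base edges are exposed upfront, then $U_{i-1}$ already depends on them, and you must separately show that $|\{e\in E_i\cap E(G_p): e\cap U_{i-1}=\emptyset\}|$ is large despite this correlation (this is salvageable via the whp bound $\Delta(G_p)\le 2pn$, which caps the number of $G_p$-edges meeting $U_{i-1}$ at $2pn\cdot|U_{i-1}|\le\tfrac12\mu pn^2$, comfortably below $|E_i\cap E(G_p)|\approx p\mu n^2$). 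The paper instead reveals base edges lazily: at step $i$ it first exposes the $v_i$-incident edges to allowed vertices, and only then reveals the still-alive edges of $E_i$ both of whose endpoints landed in $N_{G_p}(v_i)$; since at most $(2pn)^2$ base edges get exposed per step, the total over $\ell\le\mu^2 n$ steps is $o(n^2)$ and almost all of $E_i$ remains alive. Either fix works, but one of them must be made explicit for the Janson application at step $i$ to be legitimate.

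Two minor remarks. The uniformly-random choice of $T_i$ buys you nothing: the $A_k$-budgets are controlled deterministically by the tight mechanism, so any valid $T_i$ suffices. And your opening ``Janson for every fixed $S$'' is, as you yourself note, a dead end; the write-up would be cleaner starting directly from the deferred-decision argument.
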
 

\begin{proof}
Given~$0 < \mu < \tfrac{1}{100}$, we set~$C=1000\mu^{-1}$. We can assume~$p=Cn^{-2/3}(\log n)^{1/3}$, since the probability of any given collection of triangles of~$G$ appearing in~$G_p$ is monotone increasing in~$p$.

We use a careful step-by-step revealing argument and choose~$T_1, \ldots, T_\ell$ one at a time. We will call an edge~$e\in E(G)$ \emph{alive} if its random status is yet  to be revealed. Given~$k \in [t]$ and~$i \in [\ell]$, say that~$A_k$ is \emph{full} at time~$i$ if~$\card{A_k \cap V(\{T_1, \ldots, T_{i-1}\})} \geq 12\mu |A_k|$. Let~$X_i$ be the union of the sets~$A_k$ that are  
full at time~$i$.
For each step~$i\in[\ell]$ in succession, we will reveal certain edges of~$G_p$ and then choose a triangle~$T_i$ among the edges revealed. Specifically, we first reveal the random status of all edges in~$G$ adjacent to~$v_i$, which do not go to~$v_1,\dots,v_\ell$,~$X_i$ or a vertex of~$T_1,\dots,T_{i-1}$. Let the edges amongst these that appear in~$G_p$ be denoted by~$S_i$. We then reveal all \emph{alive} edges of~$E_i$  which form a triangle with~$v_i$ using two edges of~$S_i$. 
From these edges we pick any that appears, fixing the resulting triangle~$T_i$, and move on to the next~$i$.

Observe that by definition we do not reveal any edge of~$G_p$ twice; and if we successfully choose a triangle at each step we indeed obtain the desired triangle matching. To begin with, we argue that when we come to~$v_i$, most edges of~$E_i$ are potential candidates to be in~$T_i$. 
Note that any edge of~$E_i$ which is adjacent to any~$v_j$ or~$T_j$ will not be a candidate; there are at most~$3\mu^2n$ such vertices, which are adjacent to at most~$3\mu^2n^2$ edges of~$E_i$.
Any edge adjacent to~$X_i$ is also not a candidate; we have~$\card{X_i} \leq \tfrac{3\ell}{12\mu} \leq \tfrac{\mu}{4} n$ and hence there are at most~$\tfrac{\mu}{4} n^2$ edges adjacent to~$X_i$.
We also have that any candidate edge of~$E_i$ must be alive. When we reveal edges at some~$v_j$, with probability at least~$1-n^{-2}$  by Chernoff's inequality (\Cref{thm:chernoff}),  we reveal at most~$2pn=2Cn^{1/3}(\log n)^{1/3}$ edges, and hence we reveal at most~$4C^2n^{2/3}\log^{2/3}n$ edges of~$E_i$ in this step. Since there are at most~$\mu^2n$ steps, in total we will have revealed less than~$n^{7/4}$ edges of~$E_i$ whp.
Note that any edge in~$E_i$ which has not been ruled out for reasons outlined above,  is a candidate at the beginning of step~$i$,  for forming~$T_i$ with~$v_i$. Putting this together then, we have that whp, for each~$i$ there remains  at least~$\tfrac12\mu n^2$ candidate edges of~$E_i$ at the beginning of step~$i$. We denote this set of candidate edges by~$F_i$. 

When we reveal edges at~$v_i$, for each edge of~$F_i$ we keep the edges from~$v_i$ to the endpoints of~$F_i$ with probability~$p^2$, and so the expected number of edges of~$F_i$ whose ends are both adjacent to~$v_i$ in~$G_p$ is~$p^2|F_i|\ge\tfrac12p^2\mu n^2$.
Now we want to apply Janson's inequality (\Cref{lem:janson}):
We have~$\bar{\Delta}\le p^3n^3$, which is tiny compared to the square of the expectation, so by Janson's inequality with probability at least~$1-n^{-2}$, at least~$\tfrac14p^2\mu n^2$  edges of~$F_i$ are revealed to lie in~$N_{G_p}(v_i)$. We now reveal which of these edges survive in~$G_p$; by Chernoff's inequality (\cref{thm:chernoff}) and by our choice of~$C$, with probability at least~$1-n^{-2}$, at least~$\tfrac18p^3\mu n^2$ of these edges survive in~$G_p$, and in particular~$T_i$ exists.

Taking a union bound, the probability of failure at any step is~$o(1)$.
\end{proof}

The next lemma allows us to find a reasonably large triangle matching using a possibly sparse set of edges,  each of which extends to many triangles; we will use this to deal with nearly independent sets which have size larger than~$\tfrac13n$.
Recall that we denote by~$\deg_G(e;X)$ the size of the common neighbourhood of the endpoints of an edge~$e$ inside a set~$X$. Recall also that given a set of edges~$E$, we will sometimes think of~$E$ as the graph~$H_E\coloneq(V(E), E)$ where~$V(E)$ denotes the set of vertices contained in edges in~$E$. We use notation like 
$\delta(E) \coloneq \delta(H_E)$ and~$\deg_E(v) \coloneq \deg_{H_E}(v)$. Furthermore, given a set of vertices~$A\subseteq V(G)$,~$E[A]$ is used to denote the set of edges in~$E$ that are contained in~$A$, that is,~$E[A] := \{e \in E: e \subset A\}$.

\begin{lemma}\label{lem:matchcover}
For any~$0 < \mu < \tfrac{1}{1000}$ there exists~$C>0$ such that the following holds for all~$n,\delta,\delta_1,\delta_2 \in \N$, every~$n$-vertex graph~$G$ and every~$p \geq  Cn^{-2/3}(\log n)^{1/3}$.
\begin{enumerate}[(i)]
\item Let~$X_1,X_2,X_3 \subset V(G)$ be disjoint sets of size at least~$\tfrac{n}{10}$, and let~$E \subseteq  E(G[X_1])$ be a set of edges such that~$\deg_E(v) \geq \delta$ for all~$v \in X_1$ and~$\deg_G(e;X_i) \geq \mu n$ for all~$e\in E$ and~$i=2,3$. Let~$n_2,n_3\in \NN$  with~$n_2 + n_3 \leq \min (\delta, \mu^5 n)$.
Then, whp, there is a triangle matching~$\cT=\{T_1,\dots,T_{n_2+n_3}\}$  in~$G_p$ with~$n_i$ triangles  consisting of an edge~$e\in E$ together with a vertex of~$X_i$ for each~$i=2,3$.
\item Let~$X_1,X_2 \subset V(G)$ be disjoint sets of size at least~$\tfrac{n}{10}$. Let~$E_i \subseteq E(G[X_i])$ be sets of edges such that~$\deg_{E_i}(v) \geq \delta_i$ for all~$v \in X_i$ and~$\deg(e;X_{3-i}) \geq \mu n$ for all~$e\in E_i$ and~$i \in [2]$. Let~$n_i\in \NN$ with~$n_i \leq \min(\delta_i, \mu^5 n)$  for each~$i \in [2]$.
Then, whp, there is a triangle matching~$\cT=\{T_1,\dots,T_{n_1+n_2}\}$  in~$G_p$ with~$n_i$  triangles consisting of an edge~$e\in E_i$ together with a vertex of~$X_{3-i}$ for each~$i \in [2]$.
\end{enumerate}
\end{lemma}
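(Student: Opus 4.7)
The plan is to build the triangle matching $\cT = \{T_1, \dots, T_s\}$, where $s \coloneq n_2 + n_3$, sequentially, in the spirit of the proof of \cref{lem:vtxcover}. For part (i), pre-assign the first $n_2$ triangles to target $X_2$ and the remaining $n_3$ to target $X_3$, and write $t_i \in \{2,3\}$ for the target at step $i$. At step $i$, let $U_i$ denote the vertex set covered by $T_1, \dots, T_{i-1}$; then $|U_i| \le 3(i-1) \le 3\mu^5 n$. I find $T_i$ in two substeps: first locate an edge $e_i \in E \cap G_p$ with $e_i \cap U_i = \emptyset$, then find a vertex $v_i \in X_{t_i} \setminus U_i$ with $\{v_i\} \cup e_i \in K_3(G_p)$.

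For the vertex substep: given $e_i = \{a_i, b_i\}$, the hypothesis $\deg_G(e_i; X_{t_i}) \ge \mu n$ together with $|U_i| \le 3\mu^5 n$ gives a candidate set $C_i \coloneq (N_G(a_i, b_i) \cap X_{t_i}) \setminus U_i$ of size at least $\mu n/2$. Each $v \in C_i$ extends $e_i$ to a triangle in $G_p$ independently with probability $p^2$, yielding an expected $\Omega(p^2 \mu n) \gg 1$ successes (as $p \gg n^{-2/3}$ gives $p^2 n \gg n^{1/3}$). Chernoff's inequality (\cref{thm:chernoff}) then guarantees at least one success except with probability $\exp(-\Omega(p^2 n))$. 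We arrange the revealing order so that the $p^2$-trials at step $i$ use only edges whose statuses have not been revealed before.

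For the edge substep, the minimum-degree hypothesis $\delta(H_E) \ge \delta \ge s$ on the auxiliary graph $H_E \coloneq (X_1, E)$, combined with $|X_1| \ge n/10$, yields via a Berge-type augmenting path argument a matching in $H_E$ of size $\min(\delta, \lfloor |X_1|/2\rfloor) \ge s$; extract such a matching $M = \{f_1, \dots, f_s\} \subseteq E$ deterministically. In the ``dense regime'' $\delta \gg s/p$, enlarge $M$ to a matching $M'$ of size $\lceil 4s/p\rceil$, reveal the statuses of $M'$ in $G_p$ up front, and use Chernoff to conclude that at least $s$ edges of $M'$ survive whp. In the ``sparse regime'' $\delta \lesssim s/p$, so few edges of $M$ survive on average, and one instead supplements $M$ at each step with alternative $E$-edges at $f_i$'s endpoints (available in abundance by $\deg_E \ge \delta$) and applies Janson's inequality (\cref{lem:janson}) to show that the full triangle $\{v_i\} \cup e_i$ exists in $G_p$ for at least one alive candidate. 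A union bound over the $s \le \mu^5 n$ steps then finishes. The main obstacle is the sparse regime of the edge substep, where the pre-selected matching $M$ alone may not supply enough survivors and one must argue more carefully about the joint distribution of triangles extending $M$'s edges; once that is in hand the rest is routine bookkeeping. Part (ii) follows by the same scheme: run two parallel sub-processes, one for triangles whose edge belongs to $E_1$ (third vertex in $X_2$) and one for $E_2$ (third vertex in $X_1$); the pre-assignment of targets keeps the sub-processes from interfering at the vertex level.
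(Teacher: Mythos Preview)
Your two-substep plan breaks already at the vertex substep. You assert that, once an edge $e_i\in E\cap G_p$ has been fixed, some $v\in C_i$ with $\{v\}\cup e_i\in K_3(G_p)$ exists whp because the expected count is $\Omega(p^2\mu n)\gg 1$. But with $p=Cn^{-2/3}(\log n)^{1/3}$ one has $p^2 n = C^2 n^{-1/3}(\log n)^{2/3}=o(1)$; your claim ``$p\gg n^{-2/3}$ gives $p^2n\gg n^{1/3}$'' is a miscalculation (it gives $p^2n\gg n^{-1/3}$). So a fixed edge of $G_p$ almost surely lies in no triangle of $G_p$, and you cannot commit to $e_i$ first and then search for the third vertex. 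This is not a technicality: it is precisely why the lemma is delicate (the paper remarks that one cannot even guarantee more vertex-disjoint triangles in $G$ itself).

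Once you accept that the $E$-edge and the third vertex must be found simultaneously over a large pool of candidate edges, both of your regimes collapse. In your ``dense regime'' you ask for a matching of size $\lceil 4s/p\rceil$ in $H_E$, but $s$ may be as large as $\mu^5 n$ while $p\approx n^{-2/3}$, so $4s/p\gg n$. In your ``sparse regime'' the alternative $E$-edges at $f_i$'s endpoints share vertices with other $f_j$, so using one can kill the availability of $f_j$ at a later step; you give no mechanism to control this interaction, nor the reuse of revealed edges across steps.

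The paper resolves this by splitting $X_1$ according to $E$-degree rather than by $\delta$ versus $s/p$. Vertices $v$ with $\deg_E(v)\ge\mu n$ each lie in $\Omega(\mu^2 n^2)$ candidate triangles and are handled by \cref{lem:vtxcover}. On the remaining set $S$ one has $\Delta(E[S])\le\mu n$, and a dedicated auxiliary lemma (\cref{lem:matchcover-help}) shows that in this bounded-max-degree situation one can greedily peel off triangles: each removal kills only $O(\mu n)$ edges, and since $|E[S]|=\Omega(n\cdot(n_2+n_3-|B|))$ enough edges remain to supply $\Omega(|E[S]|/(\mu n))$ disjoint triangles via Janson plus a union bound over the (few) removed vertex-sets. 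Three independent copies of $G_{p/3}$ decouple the stages. If you wish to rescue a sequential scheme, you need an analogue of this bounded-degree step: a guarantee that after removing $i-1$ triangles there remain $\Omega(n)$ usable $E$-edges, together with a Janson-type bound whose union over the $i\le\mu^5 n$ steps is summable.
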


Observe that, unlike other lemmas in this section, both cases of this lemma are very tight and we cannot even guarantee more vertex-disjoint triangles in the underlying graph~$G$. Indeed, this is the case when we have complete unbalanced bipartite graphs. If the edges~$E$ have small maximum degree however, the situation is somewhat easier as the following lemma shows and we will make use of this in the proof of \cref{lem:matchcover}.

\begin{lemma}\label{lem:matchcover-help}
For all~$\mu > 0$ there exists~$C>0$ such that the following holds for all~$n \in \N$, every~$n$-vertex graph~$G$ and every~$p \geq  Cn^{-2/3}(\log n)^{1/3}$. Suppose that~$E$ is a subset of~$E(G)$ with~$\Delta(E) \leq \mu n$ and~$\mu n \le |E|\le \mu^2n^2$. Suppose in addition that for each edge~$e\in E$ there is a given set~$X_e$ of size~$|X_e|\ge\mu n$ consisting of vertices~$v \in V(G) \setminus V(E)$ such that~$e\in \tr_v(G)$. 
Then, whp, there is a triangle matching~$T_1,\dots,T_\ell$  in~$G_p$, where each~$T_i$ consists of an edge~$e\in E$ together with a vertex of~$X_e$, such that~$\ell\ge \tfrac{|E|}{10\mu n}$.
\end{lemma}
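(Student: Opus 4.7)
My plan is to prove Lemma~\ref{lem:matchcover-help} via a two-stage exposure of $G_p$ followed by a hypergraph-matching argument. In the first stage I reveal the status in $G_p$ of the edges of $E$, obtaining $E_p := E \cap E(G_p)$. Since the hypothesis $p \ge C n^{-2/3}(\log n)^{1/3}$ gives $p\mu n \gg \log n$, Chernoff's inequality (applied to $|E_p|$ and to each vertex-degree $\deg_{E_p}(v)$) yields whp $|E_p| \ge p|E|/2$ and $\Delta(E_p) \le 2p\mu n$. Greedily extracting an edge-matching in $E_p$ then produces $M \subseteq E_p$ with $|M| \ge |E|/(8\mu n)$.

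In the second stage, I reveal the remaining random edges of $G_p$. For each $e=\{e_1,e_2\}\in M$, let $T_e := \{v\in X_e : ve_1, ve_2\in E(G_p)\}$. Because $M$ is a matching in $E$ and each $X_e$ lies in $V(G)\setminus V(E)$, the random sets $(T_e)_{e\in M}$ depend on disjoint sets of potential edges and are mutually independent. The goal is now to select, for at least $\ell \ge |E|/(10\mu n)$ choices of $e \in M$, an apex $v_e \in T_e$ such that the chosen apices are pairwise distinct.

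For this I will work with the $3$-uniform hypergraph $H$ on $V(G)$ whose hyperedges are all triangles $\{v\}\cup e$ in $G_p$ with $e\in E$ and $v\in X_e$. Applying Janson's inequality to $e(H)$, and using that the relevant $\bar\Delta$ is $O(\mathbb E[e(H)](1+p^2n)) = O(\mathbb E[e(H)])$ since $p^2n \to 0$ in our regime, one obtains whp $e(H) \ge \tfrac12 \mu n|E|p^3$. A general-graph analogue of Lemma~\ref{lem:upper triangle count for all vxs} (which follows from Spencer's bound on triangle counts through a vertex) gives whp $\Delta(H) \le 10 p^3 n^2 = 10 C^3 \log n$. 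A greedy matching in $H$ then produces a triangle matching of size at least $e(H)/(3\Delta(H))$.

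The main obstacle is that this naive greedy bound falls short of the target $|E|/(10\mu n)$ by a factor of $\Theta(\mu^{-1})$; bridging this gap will be the technical heart of the proof. My plan is to exploit that the maximum \emph{codegree} of $H$ is only $O(np^2) = o(\log n)$, a factor roughly $1/C^3$ below $\Delta(H)$, and invoke a nibble-style (Pippenger--Spencer) near-perfect matching result, which strengthens the greedy bound to essentially $e(H)/\Delta(H)$ and thereby recovers the missing factor of $\mu^{-1}$. The careful part will be verifying the hypotheses of such a hypergraph matching theorem in our random setting, in particular controlling the uniformity of vertex degrees in $H$ after removing a small set of ``heavy'' vertices via a Markov-type argument on vertex degrees in $G_p$.
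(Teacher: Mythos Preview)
Your proposal has a genuine gap that neither of your two suggested routes bridges.

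The matching-first approach fails outright at the threshold. For each $e \in M$ the set $T_e$ is nonempty with probability at most $|X_e|p^2 \le np^2$, and you yourself observe that $np^2 \to 0$. You need at least $|E|/(10\mu n)$ of the edges in $M$ to admit an apex, even before worrying about choosing distinct apices, and since your lower bound on $|M|$ is only $|E|/(8\mu n)$ this means a constant fraction of $M$ must succeed. But the expected number of $e\in M$ with $T_e\ne\emptyset$ is at most $|M|\cdot np^2 = o(|M|)$. Committing to a matching in $E_p$ before revealing the extension edges throws away almost everything; the mutual independence of the $T_e$ is correct but useless.

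The hypergraph route is also not viable as stated. First, the greedy bound $e(H)/(3\Delta(H)) \ge \mu|E|/(60n)$ misses the target $|E|/(10\mu n)$ by a factor of order $\mu^{-2}$, not $\mu^{-1}$. More importantly, Pippenger--Spencer-type nibble theorems require near-regularity (all vertex degrees $(1\pm o(1))D$), not merely small codegree. Here $H$ is highly irregular: a vertex $u \in V(E)$ with $\deg_E(u)=1$ satisfies $\deg_H(u)>0$ with probability at most $np^3 \to 0$, while other vertices may have degree $\Theta(p^3n^2)$. Trimming heavy vertices does nothing about the many vertices of degree $0$, so there is no hypergraph here to which the nibble applies.

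The paper's argument is much simpler: a maximality argument combined with Janson and a union bound. One shows that whp, simultaneously for every $X \subset V(G)$ with $|X| \le |E|/(\mu n)$, if $|E[V(G)\setminus X]| \ge |E|/2$ and $|X_e \setminus X| \ge \mu n/2$ for all $e$, then $G_p[V(G)\setminus X]$ still contains a suitable triangle. For fixed $X$, Janson gives failure probability at most $\exp(-\Omega(\lambda)) \le n^{-\Omega(C|E|/n)}$ (here $\bar\Delta = O(\lambda)$ precisely because $p^2n \to 0$), and there are only $n^{|E|/(\mu n)}$ choices of $X$; taking $C \gg 1/\mu$ beats the union bound. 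Then if a maximal suitable triangle matching had fewer than $|E|/(10\mu n)$ triangles, its vertex set $X$ would satisfy both hypotheses---the bound $\Delta(E)\le\mu n$ is exactly what ensures $|E[V(G)\setminus X]|\ge|E|/2$---and the claim furnishes one more disjoint triangle, a contradiction.
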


\begin{proof}
Let~$0 < \tfrac{1}{C} \ll \mu$. We may assume that~$p=Cn^{-2/3}(\log n)^{1/3}$ and that~$n$ is large enough for the following arguments. We will deduce the lemma from the following claim.

\begin{claim}
Whp the following is true for all~$X \subset V(G)$ with~$|X| \leq \tfrac{|E|}{\mu n}$. If~$|E [V(G)\setminus X]| \geq \tfrac{|E|}{2}$ and~$|X_e \setminus X| \geq \tfrac{\mu n}{2}$ for all~$e \in E$, then there is a triangle in~$G_p[V(G) \setminus X]$ consisting of an edge~$e\in E$ together with a vertex of~$X_e$.
\end{claim}

\begin{claimproof}
This is a straightforward application of Janson's inequality and the union bound. Note that the total number of choices of~$X$ is at most~$n^{|E|/(\mu n)}$. Fix one such choice. Let~$Y$ denote the number of suitable triangles in~$G_p[V(G) \setminus X]$ and note that~$\lambda \coloneq \Exp{Y} \geq \tfrac{p^3 \mu |E|n}{4} \geq C^2 \log(n) \tfrac{|E|}{n}$. Furthermore, we have~$\bar{\Delta}\le 2 \max(p^5|E|n^2,\lambda) \leq 2 \max( \tfrac{4}{\mu}p^2n\lambda, \lambda) \leq 2 \lambda$. Hence, by Janson's inequality (see \cref{lem:janson}), the probability of having less than~$\tfrac{\lambda}{2}$ triangles is at most
\[\exp\Big(-\frac{\lambda^2}{8\bar\Delta}\Big) \leq n^{-C|E|/n}.\]
The claim now follows by taking a union bound and noting~$C \gg \tfrac{1}{\mu}$.
\end{claimproof}

Assume now the high probability event in the claim occurs and let~$T_1, \ldots, T_\ell$ be a maximal triangle matching as in  the statement of the lemma. Suppose for contradiction that~$\ell < \tfrac{|E|}{10\mu n}$ and let~$X$ be the set of vertices covered by~$T_1, \ldots, T_\ell$. We have~$|E [V(G)\setminus X]| \geq |E| - |X|\mu n \geq \tfrac{|E|}{2}$ and~$|X_e \setminus X| \geq \mu n - \tfrac{3|E|}{10\mu n} \geq \tfrac{\mu n}{2}$ for all~$e \in E$, and hence there is a suitable triangle in~$G_p[V(G) \setminus X]$ which extends the triangle matching, a contradiction.
\end{proof}

We are now ready to prove \cref{lem:matchcover}. 
\begin{proof}[Proof of \cref{lem:matchcover}]
Let~$0 < \tfrac{1}{C} \ll \mu$. We begin by proving~$(i)$. We may assume that~$\delta \leq \mu^5n$ and that~$n$ is large enough for the following arguments.

 Let~$G_1, G_2, G_3$ be independent copies of~$G_{p/3}$. Observe that~$G_1 \cup G_2 \cup G_3$ is distributed like~$G_{p'}$ for some~$p' \leq p$ and therefore it suffices to show that~$G_1 \cup G_2 \cup G_3$ contains our desired triangle matching~$\cT$  whp. In what follows we will find~$\cT$ as the disjoint union of three triangle matchings~$\cT_1,\cT_2$ and~$\cT_3$. For~$i\in[3]$, the edges of~$G_i$ will be used to find the triangles in~$\cT_i$ and we will reveal~$G_1, G_2$ and~$G_3$ at different stages of our process, making use of their independence. 

Let~$B\coloneq \{v \in X_1: \deg_E(v;X_1) \geq \mu n\}$, and let~$S \coloneq X_1 \setminus B$.  
If~$|B| \geq n_2+n_3$, let~$\cT_1 = \cT_2 = \emptyset$,~$n_2'=n_3'=0$, and move to the last  stage of the process, in which we find~$\cT_3$.
Otherwise, fix~$n_2':=\min(n_2+n_3 - |B|, n_{2})$ and~$n_3':= n_2+n_3-|B|-n'_2=\max(0,n_3-|B|)$.  In a first 
round of probability 
we find a triangle matching~$\cT_1$ of size~$n_2'$  in~$G_1$, each triangle containing an edge in~$E[S]$ and a vertex in~$X_2$. 
 This triangle matching exists whp
 due to \cref{lem:matchcover-help}. Indeed we have that~$\Delta(E[S]) \leq \mu n$ (by the definition of~$S$) and~$\deg(e;X_2) \geq \mu n$ for all~$e \in E[S]$. It remains to estimate~$|E[S]|$. For this, note that 
 \begin{align}
\nonumber |E[S]|&\geq \tfrac{1}{2}|S|(\delta-|B|)
 \\  \nonumber
 &\geq \tfrac{1}{2}\big(\tfrac{n}{10}-\delta\big)(n_2+n_3-|B|) 
 \\ \label{eq:lowerboundE[S]} &\geq \tfrac{n}{40}(n_2+n_3-|B|) \geq \mu n. \end{align}
 Furthermore, if~$|E[S]|>\mu^2n^2$ then we can shrink~$E[S]$ to some subset having size exactly~$\mu^2n^2$.  Applying \cref{lem:matchcover-help}  then gives a triangle matching of size at least~$t\geq\frac{|E[S]|}{10 \mu n}$. If~$E[S]$ was shrunk to have size~$\mu^2n^2$, then~$t\geq \tfrac{\mu}{10}n\geq n_2'$ and if not, then 
\[
 t \geq \frac{ n(n_2+n_3-|B|)}{400 \mu n} \geq n_2+n_3 - |B|\geq n_2',
\]
using~\cref{eq:lowerboundE[S]}. 
In either case we can pick a sub-triangle matching~$\cT_1$ of the desired size~$n_2'$.

We now fix~$S'=S\setminus V(\cT_1)$. 
Similarly to the previous stage, we will use~$G_2$ to find a triangle matching~$\cT_2$  of size~$n_3'$ such that each triangle contains an edge in~$E[S']$ and a vertex in~$X_3$. We still clearly have that~$\Delta(E[S']) \leq \mu n$  and~$\deg(e;X_3) \geq \mu n$ for all~$e \in E[S']$. Moreover, we have that 
\[|E[S']|\geq |E[S]|-\mu n\cdot 2n_2'\geq \big(\tfrac{n}{40}-2\mu n\big)(n_2+n_3-|B|) \geq \mu n,\]
where we used~\cref{eq:lowerboundE[S]} and the fact that~$2n_2'$ vertices of~$S$ were used in~$\cT_1$, each of which has degree at most~$\mu n$ in~$E[S]$. Therefore, as in the previous phase, \cref{lem:matchcover-help} gives the existence of at least~$n_3'$ vertex-disjoint triangles in~$G_3$, each of which contain an edge of~$E[S']$ and a vertex in~$X_3$. From this, we choose our triangle matching~$\cT_2$ of size~$n_3'$.

In our final phase we find a triangle matching~$\cT_3$ in~$G_3$ to complete~$\cT=\cT_1\cup \cT_2\cup \cT_3$ as desired. Let~$X_i''=X_i\setminus (V(\cT_1\cup \cT_2))$ for~$i\in [3]$ and  
note that~$B\subset X_1''$. Further, for~$i\in [2]$, let~$n_i'' = n_i - n_i'$ and note that each~$n_i''\geq 0$ and~$n_2''+n_3''=\min(|B|,n_2+n_3)$. Pick disjoint subsets~$B_i \subset B$ of size~$n_i''$ for each~$i=2,3$. Since~$n_2'' + n_3'' \leq n_2+n_3\leq \delta \leq \mu^5n$, it follows from \cref{lem:vtxcover}, that whp there is a triangle matching~$\cT_3$ of size~$n_2'' + n_3''$  in~$G_3[X_1''\cup X_2''\cup X_3'']$ consisting of~$n_i''$ triangles which contain an edge in~$E[X''_1]$ and one vertex in~$X''_i$, for each~$i =2,3$. Indeed, in applying \cref{lem:vtxcover}, we can fix~$t=0$ (we do not need to use the full extent of the lemma here) and for~$i=2,3$ and~$v\in B_i$, we choose a collection of at least~$\tfrac{\mu^2n^2}{4}$ edges~$f$ in~$\tr_v(G)$ such that~$|f\cap X_1''|=|f\cap X_i''|=1$.  These edges exist as \[\deg_E(v;X''_1) \geq \deg_E(v;X_1)-|V(\cT_1\cup \cT_2)|\geq \mu n-4\delta\geq \tfrac{\mu n}{2}\]
and for each edge~$e\in E[X''_1]$,~$\deg_G(e,X''_i) \geq \mu n-|V(\cT_1\cup \cT_2)|\geq \tfrac{\mu n}{2}$ for~$i=1,2$. To conclude, we have that whp all three stages of the process above succeed and we have a triangle matching~$\cT=\cT_1\cup \cT_2 \cup \cT_3$  in~$G_p$ as in~$(i)$.

Part~$(ii)$ is similar to part~$(i)$. We begin again by noting that we can assume~$\delta_i\leq \mu^5n$ for~$i=1,2$. We will again find three  triangle matchings~$\cT_1,\cT_2,\cT_3$   whose union will give us our desired triangle matching~$\cT$ and we again use three independent copies~$G_1,G_2,G_3$ of~$G_{p/3}$, finding the triangles in~$\cT_i$ using the edges of~$G_i$ for~$i\in[3]$. For convenience, let us also fix~$\mu'=\tfrac{\mu}{2}$. Now for~$i=1,2$, let~$B_i:= \{v \in X_i: \deg_{E_i}(v;X_i) \geq \mu' n\}$ and if~$|B_i|\geq n_i$, then shrink~$B_i$ to have size~$n_i$ (that is, take~$B_i$ to be a subset of~$\{v \in X_i: \deg_{E_i}(v;X_i) \geq \mu' n\}$ of size~$n_i$). Further, for~$i\in [2]$, let~$S_i:=X_i\setminus B_i$ and define~$n_i'=n_i-|B_i|$. Let us assume for now that~$n_1'\leq n_2'$.

In~$G_1$, we now find~$\cT_1$, a triangle matching of size~$n_1'$ with each  triangle containing an edge in~$E[S_1]$ and a vertex of~$S_2$. If~$n_1'=0$ there is nothing to prove here and in the case that~$n_1'\geq 1$ (and so~$|B_1|<n_1$), such a triangle matching exists whp due to \cref{lem:matchcover-help} (applied with~$\mu'$ replacing~$\mu$). Indeed, the verification of the  conditions of \cref{lem:matchcover-help} is almost identical to our proof of the existence of~$\cT_1$ in part~$(i)$, noting that $\Delta(E_1[S_1])\leq \mu n$ as we have  removed all vertices in $B_1$. One slight difference is that, for an edge~$e\in E_1[S_1]$ we cannot use all of~$N(e;X_2)$ to give the set~$X_e$ needed in \cref{lem:matchcover-help}. Indeed, we need to discount vertices in~$B_2$ but as~$|B_2|\leq n_2\leq \mu^5n$ and~$|N(e;X_2)|\geq \mu n$, we can certainly have at least~$\mu'n$ vertices in~$N(e;S_2)$.

Given that we succeed in finding~$\cT_1$, we now turn to finding~$\cT_2$ in~$G_2$. For this we define~$S_i'=S_i\setminus V(\cT_1)$ for~$i=1,2$ and we aim to find~$n_2'$ vertex-disjoint triangles, each containing an edge in~$E_2[S_2']$ and a vertex of~$S_1'$. If~$n_2'=0$, then the existence of~$\cT_2$ is immediate. For the case when~$n_2'\geq 1$, we again  appeal to \cref{lem:matchcover-help} (with~$\mu'$ replacing~$\mu$). Note that due to the fact that~$n_2'\geq 1$, we have that~$B_2$ contains all high degree vertices and so, in particular,~$\Delta(E_2[S'_2])\leq \mu' n$.  Also using this, we have that   \begin{align*}
|E[S_2']|&\geq |E[S_2]|-|V(\cT_1)\cap S_2|\mu'n \\ &\geq \tfrac{1}{2}(|X_2|-|B_2|)(\delta_2-|B_2|)-n_1'\mu' n \\ &\geq \tfrac{n}{40}n_2'-n_1'\mu' n 
  \\  &\geq n\big(\tfrac{1}{40}-\mu'\big)n_2'
  \geq \mu'n,
\end{align*}
where in the last two inequalities, we used that~$n_1'\leq n_2'$ and that we are in the case that~$n_2'\geq 1$. Finally, it is not hard to see that~$|N(e;S_1')|\geq \mu'n$ for all~$e\in E_2[S_2']$ and so the conditions of \cref{lem:matchcover-help} are indeed satisfied and whp we get our desired triangle matching~$\cT_2$. For the above, we needed that~$n_1'\leq n_2'$. In the case that~$n_2'
>n_1'$, we can run exactly the same proof except that we first find~$\cT_2$ and then find~$\cT_1$ after.

Finally, we find~$\cT_3$ in~$G_3$ by applying \cref{lem:vtxcover}. Indeed, similarly to our proof for part~$(i)$, we fix~$S_i''=S'_i\setminus V(\cT_2)$ for~$i\in [2]$ and we know that for each~$i\in [2]$ and~$v\in B_i$, we have at least~$\tfrac{\mu'^2n^2}{8}$ edges~$f\in \tr_v(G)$ such that~$|f\cap S_i''|=|f\cap S_{3-i}''|=1$.  Therefore, as~$|B_1|+|B_2|=n_1+n_2-n_1'-n_2'\leq 2\mu^5n$, \cref{lem:vtxcover} gives that whp, there exists a triangle matching~$\cT_3$ in~$G_3$,  of size~$|B_1|+|B_2|$, such that for each~$i\in [2]$ and~$v\in B_i$, there is a triangle in~$\cT_3$ containing~$v$, some vertex in~$S_i''$ and a vertex in~$S_{3-i}''$. Altogether, we have that whp, we can find all the triangle matchings~$\cT_i$ and~$\cT=\cT_1\cup \cT_2\cup \cT_3$ provides the desired triangle matching, completing the lemma. 
\end{proof}
\section{Reduction} \label{sec:reduction}
We are now in a position to prove \cref{thm:main}, assuming \cref{thm:main-super-reg}.  Our proof relies on the use of the Regularity Lemma (\cref{lem:reglem}), we refer the reader to \cref{sec:regularity} for the relevant definitions. 
Before giving the details, let us briefly sketch the approach. Given~$G$ with~$n \in 3\N$ vertices and minimum degree at least~$\tfrac23n$, we separate three cases.

Our first case is that
there is no set~$S$ of about~$\tfrac{n}{3}$ vertices such that~$G[S]$ has small maximum degree. In this case, we apply the Regularity Lemma (\cref{lem:reglem}) and observe that the~$(\eps,d)$-reduced graph~$R$ has no large independent set. By the Hajnal--Szemer\'edi Theorem  for~$K_3$-matchings (\cref{thm:HajSze-factor}), we find a large triangle matching~$\cT^*$ in~$R$, and make the corresponding pairs of clusters super-regular by removing a few vertices to obtain a subgraph~$T$ of~$G$. If~$T$ were spanning in~$G$, and the clusters were balanced, we would be done by \cref{thm:main-super-reg}.  To arrive at this scenario we need to remove a few more triangles covering the vertices outside~$T$ (which we do using \cref{lem:vtxcover}) and then further triangles to balance the clusters of~$T$ (using \cref{lem:greedy-triangles}).
For the latter we use the~$k=3$ case of \cref{thm:HSzFrac-integer} to find a fractional triangle factor which tells us where to remove triangles. This is the point where we use the fact that~$G$ has no large sparse set. We obtain the following lemma, whose proof we defer to \cref{sec:nosparse}. Note that this lemma shows that in the case that there is no large sparse set, we can reduce the minimum degree necessary slightly. 

\begin{lemma}[No large sparse set]\label{lem:nosparse}
 For every sufficiently small~$\mu>0$ there exist~$C>0$ and~$0<d\le \mu$ such that the following holds. Let~$n \in 3\N$,~$p\ge Cn^{-2/3}(\log n)^{1/3}$ and suppose~$G$ is an~$n$-vertex graph with~$\delta(G)\ge \big(\tfrac23 - \tfrac d2\big) n$ such that there is no~$S\subseteq V(G)$ of size at least~$\big(\tfrac13-2\mu\big)n$ with~$\Delta\big(G[S]\big)\le 2dn$. Then whp~$G_p$ contains a triangle factor.
\end{lemma}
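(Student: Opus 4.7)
The plan is to combine the Regularity Lemma with previously developed tools to reduce the problem to independent applications of \cref{thm:main-super-reg} inside a family of super-regular triples. Choose constants $0<\tfrac{1}{C}\ll\eps\ll d\ll\mu$ and apply \cref{lem:reglem} to~$G$, obtaining an $\eps$-regular partition with common cluster size~$L$ and $(\eps,d)$-reduced graph~$R$ on $m$ vertices, satisfying $\delta(R)\geq(\tfrac{2}{3}-2d)m$. I will reveal $G_p$ as a union $G_{p_1}\cup G_{p_2}\cup G_{p_3}$ of three independent copies of $G_{p'}$ with $p'=p/3$, and assemble the triangle factor in three stages.

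Structural setup. First I show $\alpha(R)\leq(\tfrac{1}{3}-\mu)m$: if $I\subseteq V(R)$ is independent then for all distinct $i,j\in I$ the pair $(V_i,V_j)$ is either $\eps$-irregular or has density at most~$d$, so discarding the at most $\eps m$ clusters involved in many irregular pairs yields a subset $S'\subseteq\bigcup_{i\in I}V_i$ of size $\geq(|I|-\eps m)L$ with $\Delta(G[S'])\leq(d+\eps)|I|L\leq 2dn$; if $|I|>(\tfrac{1}{3}-\mu)m$ this sparse $S'$ has size $\geq(\tfrac{1}{3}-2\mu)n$, contradicting the hypothesis. By \cref{thm:HajSze-factor} applied with $k=3$ and $x=2d$, $R$ therefore contains a $K_3$-matching $\cT^*$ of size at least $(1-12d)\lfloor m/3\rfloor$. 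Turning each triangle of $\cT^*$ into a $(2\eps,d^+)$-super-regular triple via \cref{lem:super-reg} (renaming the resulting clusters to $V_i'$) yields a set $T\subseteq V(G)$ with $|U|\leq 20dn\ll\mu n$, where $U:=V(G)\setminus T$.

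Covering $U$ and balancing. Using $G_{p_1}$, I cover $U$ by vertex-disjoint triangles: for each $v\in U$, the minimum degree condition together with $|U|\leq\mu n$ and super-regularity guarantee (as in \cref{lem:many common neighbours}) that $v$ has at least $\tfrac{d^5n^2}{4}$ edges of $\tr_v(G)$ lying in clusters $V_i'$, so \cref{lem:vtxcover}, applied with the $A_k$ equal to the clusters $V_i'$, yields a triangle matching $\cT_U$ covering $U$ and using at most $12\mu L+1$ vertices of any cluster. Next I balance the remaining cluster sizes with $G_{p_2}$. After removing $V(\cT_U)$, the three clusters of each super-regular triple still differ in size by at most $O(\mu L)$. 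Let $\lambda(i)$ be the number of vertices that must be deleted from $V_i'$ so that every triple of $\cT^*$ becomes balanced, inflated by an $O(m^{6})$ additive constant and corrected so that $3\mid\sum_i\lambda(i)$ while remaining near-uniform. The stability theorem \cref{thm:HSzFrac-integer} with $k=3$, applied to $R$ with weights~$\lambda$, then produces integer weights $\omega$ on $K_3(R)$ with $\sum_{K\ni i}\omega(K)=\lambda(i)$. Successive applications of \cref{lem:greedy-triangles}(ii), whose hypothesis holds thanks to \cref{lem:super-reg-triangle-count}, realise $\omega$ as a vertex-disjoint triangle matching $\cT_B$ in $G_{p_2}$, whose removal leaves every triple of $T$ balanced and, by \cref{lem:reg-slicing}, still $(3\eps,d^+)$-super-regular.

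Conclusion. Finally, I apply \cref{thm:main-super-reg} to each balanced super-regular triple, using the remaining random edges $G_{p_3}$, to obtain a triangle factor inside each; the union with $\cT_U\cup\cT_B$ is the desired triangle factor of $G_p$. The main obstacle is the balancing stage: one must arrange the divisibility and size hypotheses of \cref{thm:HSzFrac-integer} simultaneously while ensuring that the prescribed integer weights $\omega$ can in fact be realised as disjoint triangles in $G_{p_2}$. The former is a careful bookkeeping step (inflating deficits by a bounded amount and absorbing the surplus via a few additional triangle removals); the latter holds because in total only $O(\mu n)$ vertices per cluster are ever consumed, so by \cref{lem:super-reg-triangle-count} every reduced triple of $R$ continues to induce $\Omega(n^3)$ triangles of $G$ throughout.
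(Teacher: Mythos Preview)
Your overall strategy matches the paper's: apply regularity, bound $\alpha(R)$, take a Hajnal--Szemer\'edi triangle matching in~$R$, super-regularise the triples, cover the leftover vertices, balance clusters via \cref{thm:HSzFrac-integer}, then finish with \cref{thm:main-super-reg}. The real gap is in the last transition. You assert that after covering and balancing the triples remain $(3\eps,d^+)$-super-regular ``by \cref{lem:reg-slicing}'', but the Slicing Lemma preserves only regularity, not the minimum-degree half of super-regularity, and that half genuinely fails here. The super-regular pairs start with minimum degree about $d\cdot\tfrac{n}{m}$; covering $|U|\approx 20dn$ vertices via \cref{lem:vtxcover} forces the parameter there to be $\Theta(\sqrt d)$ (since that lemma requires $\ell\le\mu'^2n$), whence up to $\Theta(\sqrt d)\cdot\tfrac{n}{m}\gg d\cdot\tfrac{n}{m}$ vertices may be removed from a single cluster, and balancing removes a further constant fraction of each cluster. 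Either step can delete all the neighbours some vertex has in an opposite cluster, so \cref{thm:main-super-reg} is no longer applicable.

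The paper repairs this with a reservoir trick: before covering and balancing it selects a random set $W$ containing roughly half of each cluster, arranged so that every vertex has about half of its cluster-neighbours inside $W$. All covering and balancing triangles are then drawn \emph{only from} $W$, so $V_i'\setminus W$ is never touched; for any surviving $v$ one then has $\deg(v;V_j''')\ge\deg(v;V_j'\setminus W)\ge\tfrac14\deg(v;V_j')\ge\tfrac{d}{8}\cdot\tfrac{n}{m}$, restoring super-regularity. Two smaller slips: your claim $\Delta(G[S'])\le(d+\eps)|I|\cdot\tfrac{n}{m}$ does not follow, since low density of a regular pair only controls the degrees of \emph{most} vertices and says nothing about degrees inside a single cluster (the paper instead argues that too many high-degree vertices in $S'$ would force a dense sub-pair contradicting regularity); and your $\lambda(i)$, described as the balancing deficit plus $O(m^6)$, is not near-uniform as \cref{thm:HSzFrac-integer} requires---the paper instead targets the common size $\lfloor\tfrac9{10}\tfrac{n}{m}\rfloor$ so that every $\lambda(i)\approx\tfrac1{10}\cdot\tfrac{n}{m}$.
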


Our second case is that there is a set~$S$ of about~$\tfrac{n}{3}$ vertices such that~$G[S]$ has maximum degree at most~$2dn$, but there is no second such set in~$G-S$. The idea here is that we will remove a few triangles from~$G$ in order to obtain a subgraph of~$G$ which can be partitioned into sets~$X_1, X_2$ of sizes~$|X_2| = 2 |X_1| \approx \tfrac{2n}{3}$, such that all vertices of~$X_1$ are adjacent to almost all vertices of~$X_2$ and vice versa (here \cref{lem:matchcover} will be very useful).
Note that, with this degree condition,~$X_2$ can be very close to the union of two cliques of size about~$\tfrac{n}{3}$; this leads to a `parity case' in which we have to be very careful, which is something of a complication. If we can arrange for the correct parities however, it will be easy to split~$X_1$ into two sets, each of which induces a super-regular triple with one of the `near-cliques' and apply our \cref{thm:main-super-reg}.
If we are not in the parity case, we will apply the Regularity Lemma to~$X_2$ and find an almost-spanning matching~$\cM^*$ in the reduced graph~$R$. We proceed similarly as in the previous case, making these pairs super-regular, removing `atypical' vertices and then balancing the pairs. Here, we make sure that every triangle we remove has two vertices in~$X_2$ and one in~$X_1$ to keep the right balance between the two parts. Finally we can partition~$X_1$ into smaller sets and form balanced super-regular triples with the edges of~$\cM^*$ in order to apply our \cref{thm:main-super-reg}. We obtain the following lemma, whose proof we defer to \cref{sec:onesparse}.

\begin{lemma}[One large sparse set]\label{lem:onesparse}
 For every sufficiently small~$\mu>0$, there exist~$C>0$ and~$0<\tau,d\le \mu$ such that the following holds for all~$n \in 3\N$ and~$p\ge Cn^{-2/3}(\log n)^{1/3}$.
 Suppose~$G$ is an~$n$-vertex graph with~$\delta(G)\ge\tfrac23n$, and suppose~$S$ is a subset of~$V(G)$ with~$|S|\ge\big(\tfrac13-\tau)n$ and~$\Delta(G[S])\le\tau n$. Suppose further that there is no~$S'\subseteq V(G)\setminus S$ of size at least~$\big(\tfrac13-2\mu\big)n$ with~$\Delta\big(G[S']\big)\le 2dn$. Then whp~$G_p$ contains a triangle factor.
\end{lemma}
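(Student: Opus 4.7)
My plan is to first exploit the sparsity of $S$ and the minimum degree condition to extract a partition $V(G)=X_1\cup X_2$ with $|X_2|=2|X_1|=\tfrac{2n}{3}$ such that every vertex in one part is adjacent to almost every vertex in the other. From $|S|\ge(\tfrac13-\tau)n$, $\Delta(G[S])\le\tau n$ and $\delta(G)\ge\tfrac23n$, every $v\in S$ sends all but $O(\tau n)$ of its neighbours into $V(G)\setminus S$, while any $v\in V(G)\setminus S$ has at most $O(\tau n)$ neighbours inside $S$. Letting $X_1^0\supseteq S$ also contain the $O(\tau n)$ atypical vertices of $V(G)\setminus S$ (those with many neighbours in $S$ or few in $V(G)\setminus S$), and $X_2^0=V(G)\setminus X_1^0$, I obtain sets of sizes $\left(\tfrac13\pm O(\tau)\right)n$ and $\left(\tfrac23\mp O(\tau)\right)n$ with $\deg(v;X_{3-i}^0)\ge(1-O(\tau))|X_{3-i}^0|$ for every $v\in X_i^0$. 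A preliminary application of \cref{lem:vtxcover} then absorbs the $O(\tau n)$ atypical vertices into a short triangle matching $\cT_{\mathrm{fix}}$ of $G_p$, each of whose triangles uses exactly two vertices of $X_2^0$ and one of $X_1^0$, chosen so that the residual sets $X_1,X_2$ satisfy $|X_2|=2|X_1|$ exactly.

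Next I split on whether $G[X_2]$ is close to a disjoint union of two cliques on $\approx\tfrac n3$ vertices. The hypothesis that there is no second sparse set in $V(G)\setminus S$ forces either this structural outcome (the \emph{parity case}, with $G[X_2]$ splitting, after deleting $O(dn)$ vertices, into two near-cliques $A_1,A_2$ of sizes $\tfrac n3\pm O(dn)$) or the existence of $\Omega(dn^3)$ triangles inside $G[X_2]$ crossing every attempted bipartition. In the parity case, I remove a bounded number of extra triangles via \cref{lem:matchcover}, some using an edge inside $A_i$ with a vertex of $X_1$ and some crossing $A_1,A_2,X_1$, to arrange that the residuals $\tilde A_1,\tilde A_2$ both have even size and that the partition $X_1=\tilde X_1^{(1)}\cup\tilde X_1^{(2)}$ with $|\tilde X_1^{(i)}|=|\tilde A_i|/2$ is consistent. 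Splitting each $\tilde A_i$ into halves $B_i^1,B_i^2$, I invoke \cref{thm:main-super-reg} on each triple $(\tilde X_1^{(i)},B_i^1,B_i^2)$, which is $(\eps,d^+)$-super-regular by \cref{lem:very-dense-implies-regular}. In the non-parity case, I apply the Regularity Lemma to $G[X_2]$; the reduced graph $R$ has minimum degree $\ge(\tfrac12-\gamma)v(R)$ and, by the dichotomy, independence number $\le(\tfrac12-\eta)v(R)$, so \cref{thm:HSzFrac-integer} with $k=2$ yields an integer fractional matching that I promote to an almost-spanning matching $\cM^*$ of $R$. Making each matched pair super-regular via \cref{lem:super-reg,lem:reg-exact-density}, covering discarded cluster vertices and any excess through \cref{lem:vtxcover,lem:matchcover} using only triangles of type $X_2X_2X_1$, balancing cluster sizes as dictated by the fractional solution via \cref{lem:greedy-triangles}, and finally partitioning $X_1$ into blocks (one per edge of $\cM^*$) to form balanced super-regular tripartite triples, I apply \cref{thm:main-super-reg} to each triple and combine the resulting triangle factors with $\cT_{\mathrm{fix}}$ and all other removed triangles.

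The main obstacle is the parity case, and specifically the bookkeeping required to make all the divisibility conditions click: I need the sizes $|\tilde A_1|,|\tilde A_2|$ after removing the bounded correction matching to both be even, to sum to $2|X_1|$, and to permit halvings whose part-sizes are compatible with applying \cref{thm:main-super-reg}. This forces a case analysis modulo $6$ of the original sizes $|A_1|,|A_2|$ and requires verifying that enough triangles of each combinatorial type (edge inside $A_i$ plus a vertex of $X_1$, edge across $A_1A_2$ plus a vertex of $X_1$, and so on) are available in $G_p$ whp; this is precisely where the two cases of \cref{lem:matchcover} are used side by side. Everything else is relatively routine bookkeeping built on \cref{thm:main-super-reg} together with the probabilistic triangle-matching lemmas from \cref{sec:triangle matchings}.
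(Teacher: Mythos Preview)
Your high-level architecture matches the paper's: partition into $X_1\approx S$ and $X_2\approx V(G)\setminus S$, split on whether $X_2$ is close to two near-cliques or not, handle the near-clique case directly via \cref{thm:main-super-reg}, and in the other case apply regularity to $G[X_2]$ together with \cref{thm:HSzFrac-integer} for balancing. But several structural claims are wrong and one step has a genuine gap. The degree picture is reversed: you assert that ``any $v\in V(G)\setminus S$ has at most $O(\tau n)$ neighbours inside $S$'', whereas in fact a typical such vertex is adjacent to almost all of $S$ (double-counting non-edges between $S$ and its complement shows there are at most $\tau n^2$ of them). The vertices one moves into $X_1^0$ are those $v\notin S$ with \emph{few} neighbours in $S$, not many. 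Even after that move you do \emph{not} get $\deg(v;X_1^0)\ge(1-O(\tau))|X_1^0|$ for all $v\in X_2^0$; only a lower bound of order $\rho n$ holds, and a second cleanup round is required (the paper removes the $O(\rho^5 n)$ vertices of $X_2$ with too many non-neighbours in $X_1$, covering each by a triangle with two $X_2$-vertices and one $X_1$-vertex) before the symmetric near-complete-bipartite condition you rely on becomes true.

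More seriously, your balancing cannot work as written. You stipulate that every triangle of $\cT_{\mathrm{fix}}$ uses two vertices of $X_2^0$ and one of $X_1^0$, but removing such a triangle leaves $|X_2^0|-2|X_1^0|$ invariant, so no number of them can correct an initial imbalance. When $|X_1^0|>\tfrac{n}{3}$ the paper instead uses edges \emph{inside} $X_1^0$ (each $v\in X_1^0$ has at least $|X_1^0|-\tfrac{n}{3}$ neighbours there by the minimum degree condition) together with one vertex of $X_2^0$, via \cref{lem:matchcover}$(i)$; when $|X_2^0|>\tfrac{2n}{3}$ it removes triangles lying entirely inside $X_2^0$, and here the ``no second sparse set'' hypothesis is what guarantees enough such triangles. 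In the disconnected subcase there is also the special moment $|X_1^0|=\tfrac{n}{3}$ with both near-cliques of odd size, which requires a single transversal triangle and a separate Janson argument. Finally, note that the dichotomy is not forced by the no-second-sparse-set hypothesis; that hypothesis is used only inside the connected case, to bound $\alpha(R)$ and to supply triangles in $X_2$, while the connectedness of the reduced graph $R'$ (which \cref{thm:HSzFrac-integer} with $k=2$ genuinely needs) comes separately from an $8d$-strong-connectedness assumption on $X_2$ that you have not formulated.
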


Our third and final case is that there are two vertex-disjoint sets~$S_1,S_2$ each of which has  size about~$\tfrac{n}{3}$ in~$G$ and small maximum degree. In this case~$G$ must be very close to a balanced complete tripartite graph. We start by partitioning~$V(G)$ into sets~$X_1$,~$X_2$ and~$X_3$ of size around~$\tfrac{n}{3}$,  so that~$(X_1,X_2,X_3)$ is an~$(\e,d^+,\delta)$-super-regular triple, where~$d$ is close to~$1$, but~$\delta$ can be quite small (we need~$\delta \gg \e$ in order to apply \cref{thm:main-super-reg}). We remove some carefully chosen vertex-disjoint triangles in order to balance the~$X_i$ and to remove some `atypical' vertices. This leaves us with a balanced~$(\e,d^+)$-super-regular triple for some~$d$ close to~$1$, and \cref{thm:main-super-reg} finds the required triangle factor, giving the following lemma, which is proved in \cref{sec:twosparse}.

\begin{lemma}[Two large sparse sets]\label{lem:twosparse}
There exist~$C,\tau>0$ such that the following holds for all~$n \in 3\N$ and~$p\ge Cn^{-2/3}(\log n)^{1/3}$.
Suppose~$G$ is an~$n$-vertex graph with~$\delta(G)\ge\tfrac23n$, and suppose~$S_1$ and~$S_2$ are disjoint subsets of~$V(G)$ with~$|S_i|\ge\big(\tfrac13-\tau)n$ and~$\Delta(G[S_i]) \leq \tau n$ for~$i=1,2$. Then whp~$G_p$ contains a triangle factor.
\end{lemma}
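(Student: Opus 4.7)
I plan to produce a partition $V(G)=X_1\cup X_2\cup X_3$ with $|X_i|\approx n/3$ so that the tripartite graph between the parts has density close to $1$, then remove a small vertex-disjoint triangle matching $\cT^\ast$ in $G_p$ that simultaneously covers all ``atypical'' vertices and equalises the three part sizes, and finally apply \cref{thm:main-super-reg} to the remainder.

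I set $X_1=S_1$, $X_2=S_2$, and $X_3=V(G)\setminus(S_1\cup S_2)$. A short calculation using $\delta(G)\geq 2n/3$ and $\Delta(G[S_i])\leq\tau n$ for $i=1,2$ yields $|X_i|=(1\pm O(\tau))\tfrac{n}{3}$ for each $i$, and gives $\deg_{X_j}(v)\geq |X_j|-O(\tau n)$ whenever $v\in X_i$ with $i\in\{1,2\}$ and $j\neq i$ (since $v$ has $\geq 2n/3-\tau n$ neighbours outside $S_i$, and $|X_j|+|X_k|\leq 2n/3+O(\tau n)$). Double-counting non-edges between $X_1,X_3$ and between $X_2,X_3$ bounds the ``bad'' set
\[ B\;=\;\bigl\{v\in X_3:\deg_{X_1}(v)<|X_1|-\sqrt{\tau}\,n\ \text{or}\ \deg_{X_2}(v)<|X_2|-\sqrt{\tau}\,n\bigr\} \]
by $|B|=O(\sqrt{\tau}\,n)$. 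Moreover, every $v\in V(G)$ satisfies $|\tr_v(G)|\geq n^2/9$, since each $u\in N(v)$ has $\deg_{N(v)}(u)\geq\deg(u)+\deg(v)-n\geq n/3$.

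I then construct $\cT^\ast$ in two phases on independent sparsifications of $G_p$. In the first, \cref{lem:vtxcover} (with the vertices of $B$ as the $v_i$ and $A_k=X_k$, for a sufficiently small constant $\mu$) yields vertex-disjoint triangles $\{T_v:v\in B\}$ in $G_p$ covering $B$, while using at most $O(\mu n)$ vertices from each $X_k$; the required $\mu n^2$ triangle-edges at each $v\in B$ come from a short case analysis, using that the minimum degree condition forces at least two of $\deg_{X_1}(v),\deg_{X_2}(v),\deg_{X_3}(v)$ to be $\Omega(n)$ and that the relevant bipartite densities are close to $1$. In the second phase, \cref{lem:vtxcover} and \cref{lem:matchcover} applied to a fresh sparsification supply $O(\sqrt{\tau}\,n)$ further vertex-disjoint triangles of appropriate ``type'' to bring all three part sizes to a common value: tripartite $(1,1,1)$ triangles are abundant, and any ``skew'' triangle needed to correct direction-specific imbalance is obtained either from the internal edges of $X_3$ or, if the imbalance forces an edge inside the sparse $X_1$ or $X_2$, from \cref{lem:matchcover}.

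Let $X_i^\ast=X_i\setminus V(\cT^\ast)$; by construction $|X_1^\ast|=|X_2^\ast|=|X_3^\ast|$. Since only $O(\sqrt{\tau}\,n)+O(\mu n)$ vertices were removed and no vertex of $X_i^\ast$ is bad, provided $\mu$ and $\tau$ are chosen small enough relative to the parameters supplied by \cref{thm:main-super-reg}, every $v\in X_i^\ast$ satisfies $\deg_{X_j^\ast}(v)\geq(1-\tau^{1/4})|X_j^\ast|$ for each $j\neq i$. By \cref{lem:very-dense-implies-regular}, each pair $(X_i^\ast,X_j^\ast)$ is then $(\tau^{1/8},(1-\tau^{1/4})^+)$-super-regular, and so the tripartite graph $\Gamma^\ast$ on $X_1^\ast\cup X_2^\ast\cup X_3^\ast$ satisfies the hypothesis of \cref{thm:main-super-reg} with $d$ close to $1$. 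Applying that theorem on a third independent sparsification produces whp a triangle factor of $\Gamma^\ast_p$, which together with $\cT^\ast$ is a triangle factor of $G_p$. The main obstacle will be the balancing step: guaranteeing enough vertex-disjoint triangles of the correct ``type'' in $G_p$, especially when correcting the imbalance forces using an edge inside one of the sparse parts, which is precisely the situation that \cref{lem:matchcover} is designed to handle.
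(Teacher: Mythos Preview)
Your overall plan---partition, clean up, apply \cref{thm:main-super-reg}---matches the paper's, but you reverse the order of the two cleanup steps (you cover $B$ first and balance afterwards), and this creates a gap you have not closed. The constraint in \cref{lem:matchcover}(i) is that it supplies at most $\delta$ skew triangles, where $\delta=\min_{v\in X_1}\deg_E(v)$. On the untouched graph this is exactly enough: $\delta(G)\ge 2n/3$ gives $\delta(E(G[X_i]))\ge |X_i|-n/3$, which equals the number of $(2,1,0)$-triangles needed. But after your phase~1 has removed $3|B|$ vertices, the target size drops to $n/3-|B|$, so the excess of $X_1'$ becomes $(m_1-n/3)+|B|$ while the minimum internal degree of $X_1'$ is still only $\ge m_1-n/3$; you are short by $|B|$. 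Concretely, take $|S_1|=|S_2|=(\tfrac13-\tau)n$ with $G[S_1]$ and $G[S_2]$ edgeless, and put $\tau n$ vertices in $X_3$ with no neighbours in $S_1$. For these $v$ no triangle at $v$ can touch $X_1$, and since \cref{lem:vtxcover} gives no control over its output beyond the $|A_k\cap V(\cT)|$ upper bound, it may return triangles lying entirely in $X_3$, leaving $m_1=m_2>m_3$ with $E(G[X_1'])=E(G[X_2'])=\emptyset$; phase~2 is then stuck.

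The paper handles this by first \emph{refining} the partition: vertices $v\notin S_1\cup S_2$ with $\deg_{S_i}(v)\le\rho n$ are moved into $X_i$, so that afterwards \emph{every} vertex of every part has at least $\rho n$ neighbours in each of the other two parts. It then balances \emph{before} any removal, so \cref{lem:matchcover} can be invoked with the tight $\delta=|X_i|-n/3=n_2+n_3$, and only afterwards covers the (few) remaining atypical vertices---now with tripartite $(1,1,1)$ triangles, which exist precisely because of the refined partition and preserve the balance exactly. Your cruder partition $X_1=S_1$, $X_2=S_2$ cannot support this order either, since bad vertices in $X_3$ would violate the $\deg(e;X_j)\ge\mu n$ hypothesis of \cref{lem:matchcover} when $X_3$ is the large part; that is presumably why you clean first, but that is what breaks the degree count.
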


Before we give proofs of these three lemmas, we show how they imply \cref{thm:main}.

\begin{proof}[Proof of \cref{thm:main}]
Choose~$0<\mu_2\ll \tau_3\ll 1$ where~$\tau_3$ is chosen small enough to apply \cref{lem:twosparse}. Let~$\tau_2,d_2\le \mu_2$ be the constants returned by \cref{lem:onesparse} with input~$\mu_2$ and choose~$0<\mu_1\ll \tau_2,d_2$. Finally, let~$d_1\le \mu_1$ be the constant returned by \cref{lem:nosparse} with input~$\mu_1$ and choose~$0<\tfrac{1}{C}\ll d_1$. 
Let~$n \in 3\N$ and let~$p\ge Cn^{-2/3}(\log n)^{1/3}$  and suppose that~$G$ is an~$n$-vertex graph with~$\delta(G) \geq \tfrac{2n}{3}$.

If~$G$ contains no subset of size at least~$\big(\tfrac13-2\mu_1\big)n$ vertices with maximum (induced) degree at most~$2d_1n$, then by \cref{lem:nosparse},~$G_p$ contains a triangle factor whp.
We may therefore suppose~$G$ contains a subset~$S_1$ of vertices of size at least~$\big(\tfrac13-2\mu_1\big)n \geq \big(\tfrac13- \tau_2 \big)n$ with maximum degree~$\Delta(G[S_1])$ at most~$2d_1n \leq \tau_2n$.
If there is no~$S_2 \subseteq V(G)\setminus S_1$ of size at least~$\big(\tfrac13-2\mu_2\big)n$ with maximum degree~$\Delta(G[S_2])$ at most~$2d_2n$, then by \cref{lem:onesparse},~$G_p$ contains a triangle factor whp.
We can therefore suppose that~$G$ contains a subset~$S_2$ disjoint from~$S_1$ of size at least~$\big(\tfrac13-2\mu_2\big)n \geq \big(\tfrac13-\tau_3\big)n$ with maximum (induced) degree at most~$2d_2n \leq \tau_3 n$. So by \cref{lem:twosparse},~$G_p$ contains a triangle factor whp.
\end{proof}

The remainder of the section is devoted to proving the three lemmas.


\subsection{Case: No large sparse set}
\label{sec:nosparse}

In this section we prove \cref{lem:nosparse}.

\begin{proof}[Proof of \cref{lem:nosparse}]
Fix some~$0<\mu\ll 1$ and choose~$0 <  \tfrac{1}{m_0} \ll \e \ll d \ll \mu$.
Let~$M_0 \geq m_0$ be returned by \cref{lem:reglem} with input~$m_0,\eps$ and fix~$\gamma=\tfrac23-\tfrac d2$ and~$0<\tfrac{1}{C}\ll \tfrac{1}{M_0}$.  Assume also that~$n \gg M_0$.
Let~$p$ and~$G$ be as in the statement
 and let~$G_1, G_2, G_3$ be independent copies of~$G_{p/3}$; we will show that~$G_1 \cup G_2 \cup G_3$ satisfies the desired properties whp.

We apply \cref{lem:reglem} to~$G$, and obtain an~$(\eps,d)$-reduced graph~$R$ on~$m$ vertices with~$m_0 \leq m \leq M_0$ and minimum degree at least~$\big(\tfrac23- \tfrac d2 - d -2\eps\big)m\ge\big(\tfrac23-2d\big)m$.  Recall that we identify the vertex set of~$R$ as~$[m]$ with each~$i\in [m]$ corresponding to a cluster~$V_i$ in the~$\eps$-regular partition of~$V(G)$. 

\begin{claim}
We have~$\alpha(R) < \big(\tfrac13-\mu\big)m$.
\end{claim}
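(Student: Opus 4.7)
The plan is to argue by contradiction: assume $\alpha(R) \geq (\tfrac{1}{3} - \mu)m$ and derive a subset $S \subseteq V(G)$ of size at least $(\tfrac{1}{3} - 2\mu)n$ with $\Delta(G[S]) \leq 2dn$, contradicting the hypothesis of the lemma. Fix an independent set $I \subseteq [m]$ in $R$ with $|I| \geq (\tfrac{1}{3} - \mu)m$, and note that for any distinct $i,j \in I$ the pair $(V_i, V_j)$ is either $\eps$-irregular or $\eps$-regular with density less than $d$. Let $\ell$ denote the common cluster size, so $\ell \leq n/m_0 \ll dn$ by our hierarchy of constants.

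I would prune $S_0 := \bigcup_{i \in I} V_i$ in two stages. First, let $I' \subseteq I$ consist of those $i$ incident to at most $\sqrt{\eps}\,m$ irregular pairs $(V_i, V_j)$ with $j \in I$; since there are at most $\eps m^2$ irregular pairs in total, averaging gives $|I'| \geq |I| - 2\sqrt{\eps}\,m$. Second, for each $i \in I'$ let $B_i \subseteq V_i$ be the set of vertices $v$ that are \emph{degree-bad} for more than $\sqrt{\eps}\,m$ of the regular low-density pairs $(V_i, V_j)$ with $j \in I'$, in the sense that $\deg_G(v; V_j) > (d+\eps)\ell$. Applying \cref{lem:reg-degree} to each such pair yields at most $\eps \ell$ such vertices, so a double count over all pairs yields $|B_i| \leq \sqrt{\eps}\, \ell$.

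Set $S := \bigcup_{i \in I'} (V_i \setminus B_i)$. A short estimate using $\eps \ll \mu$ gives $|S| \geq (\tfrac{1}{3} - 2\mu)n$. For any $v \in V_i \setminus B_i$ with $i \in I'$, I would decompose its $S$-neighbourhood according to the cluster it lands in: the contribution from $V_i \setminus \{v\}$ is at most $\ell$; at most $2\sqrt{\eps}\,m$ clusters (the irregular ones plus those for which $v$ is degree-bad) contribute at most $\ell$ neighbours each; and the remaining at most $m$ regular low-density pairs contribute at most $(d+\eps)\ell$ each. Summing,
\[
\deg_G(v;S) \leq \ell + 2\sqrt{\eps}\,m\ell + m(d+\eps)\ell \leq \tfrac{n}{m_0} + 2\sqrt{\eps}\,n + (d+\eps)n \leq 2dn,
\]
where the final bound uses $1/m_0, \eps \ll d$. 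Thus $S$ violates the hypothesis, a contradiction. The whole argument is essentially bookkeeping around the regularity partition; the only mildly delicate point is to pick the threshold $\sqrt{\eps}\,m$ at both levels of pruning so that neither level loses too many vertices while the remaining vertices simultaneously have all their ``bad'' contributions bounded.
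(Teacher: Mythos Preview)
Your proof is correct and takes a genuinely different route from the paper's. Both arguments start the same way: assume an independent set $I$ of size $(\tfrac{1}{3}-\mu)m$ and prune to $I'$ by discarding indices incident to more than $\sqrt{\eps}\,m$ irregular pairs. From there the arguments diverge. You perform a second, vertex-level pruning: for each $i \in I'$ you discard the few vertices that are degree-bad for more than $\sqrt{\eps}\,m$ of the regular low-density pairs, and then verify directly that the resulting set $S$ is large and has maximum degree at most $2dn$. The paper instead argues by a density contradiction: it posits that $\bigcup_{i\in I'} V_i$ must contain at least $\tfrac{\mu}{4}n$ vertices of degree exceeding $2dn$ (else their removal yields the forbidden $S$), averages to locate a cluster $V_{i^*}$ with a $\tfrac{\mu}{4}$-fraction of such vertices $U_{i^*}$, and then averages again to find some regular pair $(V_{i^*},V_j)$ with $j\in I$ whose density from $U_{i^*}$ exceeds $\tfrac{3d}{2}$, contradicting $\eps$-regularity with density below $d$. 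Your approach is more constructive and transparent; the paper's avoids the second pruning but trades it for a contradiction-within-a-contradiction and two averaging steps. Both rely on the same constant hierarchy and the same appeal to \cref{lem:reg-degree}, so neither has a real technical advantage.
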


\begin{claimproof}
Suppose for  a contradiction that~$R$ contains an independent set~$I$ of size~$\big(\tfrac13-\mu\big)m$. Now call an index~$i\in I$ \emph{bad} if there are more than~$\sqrt{\e}m$ indices~$j\in[m]\setminus \{i\}$ such that~$(V_{i},V_j)$ is \emph{not}~$\e$-regular. Due to the fact that the~$V_i$ form an~$\e$-regular partition, we have  that there are at most~$2\sqrt{\e}m$ bad indices. Let~$I'$ be the set obtained from~$I$ after removing bad indices and so~$|I'|\ge \big(\tfrac13-\tfrac{3\mu}{2}\big)m$.   Now in~$\bigcup_{i\in I'}V_i$ there must exist at least~$\tfrac\mu4 n$ vertices, each of whose degree into~$\bigcup_{i\in I'}V_i$ exceeds~$2dn$, otherwise removing all such vertices would leave a set~$S$ whose existence is forbidden in the lemma statement. By averaging, there is some~$i^*\in I'$ such that~$\tfrac\mu4 |V_{i^*}|$ of these vertices  are in~$V_{i^*}$. Let~$U_{i^*}\subseteq V_{i^*}$ be this subset of high degree vertices.
Now vertices of~$V_{i^*}$ can have at most~$|V_{i^*}|$ neighbours in~$V_{i^*}$, and at most~$\sqrt{\eps} m \cdot\tfrac{n}{m}\leq \sqrt{\e} n$ neighbours in sets~$V_j$ such that~$j\in I$ and~$(V_{i^*},V_j)$ is not~$\eps$-regular (as~$i^*\in I'$). So the vertices of~$U_{i^*}$ all have at least~$\tfrac{3d}{2}n$ neighbours in total in sets~$V_j$ such that~$j\in I$,~$j\neq i^*$ and~$(V_{i^*},V_j)$ is~$\eps$-regular. By averaging, there is one of these sets~$V_j$ such that the density between~$U_{i^*}$ and~$V_j$ exceeds~$\tfrac32d$.
But since~$I$ is independent, the fact that $(V_{i^*},V_j)$ is $\eps$-regular implies that it has density less than~$d$. This is a contradiction.
\end{claimproof}

We apply the Hajnal--Szemer\'edi Theorem for~$K_3$-matchings (\cref{thm:HajSze-factor}) to~$R$, which gives us a triangle matching~$\cT^*$  in~$R$ covering at least~$(1-13d)m$ vertices. We denote by~$T^* \coloneq V(\cT^*)$ the set of indices in triangles of~$\cT^*$.
By \cref{lem:super-reg}, there are~$V'_i \subset V_i$ for each~$i \in T^*$ such that~$|V_i'| = \lceil (1- 3\eps)|V_i| \rceil$ and, for every triangle~$ijk \in \cT^*$, the triple~$(V_i',V_j',V_k')$ is~$(2\eps,(d-\e)^+,d-3\e)$-super-regular.
Let~$T = \bigcup_{i \in T^*} V_i'$ be the set of vertices in~$G$ which are in a cluster~$V_i'$ corresponding to a triangle of~$\cT^*$. Let~$X = V(G) \setminus T$. Observe that~$|X| \leq \eps n+ 13 dn+3\eps n\le 14 dn$.
Let~$W \subset T$ be a set such that
\begin{enumerate}[(i)]
\item~$\card{W \cap V_i'} =  \big(\tfrac12 \pm \tfrac1{20}\big) \tfrac{n}{m}$ for each~$i \in T^*$,
\item~$\deg_G(v;W) \geq \tfrac35 |W|$ for each~$v \in V(G)$, and
\item we have that~$\deg_G(v;V_i'\cap W) = \big(\tfrac12 \pm \tfrac14\big) \deg_G(v;V_i')$ for each~$i \in T^*$ and~$v \in V(G)$ with $\deg_G(v;V_i') \geq \e |V_i'|$.
\end{enumerate}
Such a set~$W$ can be found by choosing each vertex of~$T$ independently with probability~$\tfrac12$ and applying Chernoff's inequality (\cref{thm:chernoff}) and a union bound.

We now start building our triangle factor by covering $X$. For this, we will not use vertices that belong to~$T\setminus W$ in order to maintain super-regularity properties.

\begin{claim}\label{claim:rtt-red-nosparse-cover-X}
Whp in~$G_1$, there is a triangle matching~$\cT_1 \subset K_3(G_1[W \cup X])$ so that~$X \subset V(\cT_1)$ and~$\card{V(\cT_1) \cap V_i'} \leq 50\sqrt{d}|V_i'|$ for all~$i \in T^*$.
\end{claim}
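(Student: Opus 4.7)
My plan is to apply Lemma~\ref{lem:vtxcover} to $G$ (with the random subgraph there playing the role of $G_1$), using the vertices of $X$ as the target vertices to cover. For each $v \in X$, I will take
\[
  E_v \coloneq \{\, uu' \in E(G) : u, u' \in N_G(v) \cap W \,\} \;\subseteq\; \tr_v(G),
\]
so that any triangle consisting of $v$ together with an edge of $E_v$ automatically has its two non-$X$ vertices inside $W$. The sets $A_k$ required by Lemma~\ref{lem:vtxcover} will be $A_k \coloneq V_k' \cap W$ for $k \in T^*$, and I will invoke the lemma with parameter $\mu_0 \coloneq 4\sqrt{d}$.

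The only real content is verifying that $|E_v| \ge \mu_0 n^2$ for every $v \in X$. Here I only need property~(ii): for each $v \in X$ and each $u \in N_G(v) \cap W$, both $|N_G(v) \cap W|$ and $|N_G(u) \cap W|$ are at least $\tfrac{3}{5}|W|$, and since both are subsets of $W$, inclusion–exclusion gives $|N_G(u) \cap N_G(v) \cap W| \ge \tfrac{1}{5}|W|$. Summing over all $u \in N_G(v) \cap W$ and dividing by two,
\[
  |E_v| \;\ge\; \tfrac{1}{2} \cdot \tfrac{3}{5}|W| \cdot \tfrac{1}{5}|W| \;=\; \tfrac{3}{50}|W|^2 \;\ge\; \tfrac{3}{50}\bigl(\tfrac{9}{20}\bigr)^2 (1-13d)^2 n^2,
\]
using $|W| \ge \tfrac{9}{20}(1-13d)n$ from property~(i). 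For $d$ small (which we may assume), the right-hand side exceeds $4\sqrt{d}\,n^2 = \mu_0 n^2$. Since $|X| \le 14 dn \le \mu_0^2 n$, the hypotheses of Lemma~\ref{lem:vtxcover} are met.

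Lemma~\ref{lem:vtxcover} then yields whp a triangle matching $\cT_1 = \{T_1, \ldots, T_{|X|}\}$ in $G_1$ with each $T_i$ of the form $\{v_i\} \cup e$ for some $e \in E_{v_i}$, and with $|V(\cT_1) \cap A_k| \le 12 \mu_0 |A_k| + 1$ for every $k \in T^*$. By construction $X \subseteq V(\cT_1)$ and $\cT_1 \subseteq K_3(G_1[W \cup X])$. For the cluster bound, observe that the two non-$X$ vertices of each $T_i$ lie in $W$, so $V(\cT_1) \cap V_k' = V(\cT_1) \cap A_k$. Since $|A_k| \le \tfrac{11}{20}(n/m) \le \tfrac{3}{5}|V_k'|$,
\[
  |V(\cT_1) \cap V_k'| \;\le\; 12 \mu_0 \cdot \tfrac{3}{5}|V_k'| + 1 \;\le\; 50\sqrt{d}\,|V_k'|,
\]
using $\mu_0 = 4\sqrt{d}$ and the fact that $|V_k'| \ge n/(2M_0)$ is large. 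The only ``obstacle'' is the degree calculation in the second paragraph, but thanks to property~(ii) it is really immediate: all the heavy lifting is done by Lemma~\ref{lem:vtxcover}.
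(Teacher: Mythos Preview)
Your proof is correct and follows essentially the same route as the paper: both apply Lemma~\ref{lem:vtxcover} with parameter $4\sqrt{d}$ and edge sets $E_v = E(G[W]) \cap \tr_v(G)$, using property~(ii) to lower-bound $|E_v|$. The only cosmetic difference is that the paper takes $A_k = V_k'$ directly (giving $|V(\cT_1)\cap V_k'|\le 12\cdot 4\sqrt d\,|V_k'|+1\le 50\sqrt d\,|V_k'|$ immediately), whereas you take $A_k = V_k' \cap W$ and then argue the extra step $V(\cT_1)\cap V_k' = V(\cT_1)\cap A_k$; both choices work.
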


\begin{claimproof}
Let~$\tilde \mu \coloneq 4\sqrt d$ and enumerate~$X = \{v_1, \ldots, v_\ell\}$, noting that~$\ell \leq \tilde \mu^2 n$. For each~$i \in [\ell]$, let~$E_i \coloneq E(G[W])\cap \tr_{v_i}(G)$. 
Note that, since~$\deg(v;W) \geq \tfrac35 |W|$ for all~$v \in V(G)$, we have~$|E_i| \geq \tilde \mu n^2$ for all~$i \in [\ell]$. Finally, let~$A_i = V_i'$ for each~$i \in T^*$.
The claim now follows readily from \cref{lem:vtxcover}.
\end{claimproof}

Let now~$V_i'' = V_i' \setminus V(\cT_1)$ for each~$i \in T^*$. We would like to apply \cref{thm:main-super-reg} to the super-regular triples~$(V_i'', V_j'', V_k'')$ for each~$ijk \in \cT^*$. However, these triples are not necessarily balanced. The next claim corrects this. 

\begin{claim}
Whp in~$G_2$, there is a triangle matching~$\cT_2 \subset K_3(G_2[W \setminus V(\cT_1)])$ so that~$|V_i'' \setminus V(\cT_2)| =  \lfloor \tfrac{9}{10} \tfrac n m \rfloor$ for all~$i \in T^*$.
\end{claim}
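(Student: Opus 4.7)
The plan is to balance the clusters by removing exactly $\lambda(i) := |V_i''| - \lfloor \tfrac{9}{10}\tfrac{n}{m}\rfloor$ vertices from each $V_i''$ ($i\in T^*$) via vertex-disjoint triangles of $G_2[W\setminus V(\cT_1)]$; the choice of how many triangles use each triple of clusters will be guided by a fractional triangle factor of $R[T^*]$ provided by \cref{thm:HSzFrac-integer}. Implicitly I strengthen the hierarchy at the start of the proof so that $d\ll\gamma\ll\mu$, where $\gamma=\gamma(\mu/2)$ is the constant returned by \cref{thm:HSzFrac-integer} for $k=3$ and $\eta=\mu/2$.

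I begin by verifying that the weights $\lambda(i)$ satisfy the hypotheses of \cref{thm:HSzFrac-integer}. Since $|V_i'| = \lceil(1-3\eps)\tfrac{n}{m}\rceil$ and $|V(\cT_1)\cap V_i'| \le 50\sqrt{d}\,|V_i'|$ by the previous claim, each $\lambda(i)$ lies in $\big[\big(\tfrac{1}{10}-O(\sqrt{d})\big)\tfrac{n}{m},\,\tfrac{n}{10m}+1\big]$. With our choice of constants the weights lie within a $(1\pm\tfrac{\gamma}{2})$ factor of their average and each exceeds $|T^*|^{6}$ for $n$ large enough; if needed I perturb the target by $\pm 1$ or $\pm 2$ at one or two clusters to arrange that $3 \mid \sum_i \lambda(i)$. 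The induced subgraph $R[T^*]$ is connected (its minimum degree is at least $\big(\tfrac{2}{3}-O(d)\big)|T^*|$) and has $\alpha(R[T^*]) < \big(\tfrac{1}{3}-\tfrac{\mu}{2}\big)|T^*|$ (inherited from the earlier claim on $\alpha(R)$). Applying \cref{thm:HSzFrac-integer} therefore yields an integer weight function $\omega:K_3(R[T^*]) \to \N_0$ with $\sum_{K\in K_3(R[T^*],i)}\omega(K) = \lambda(i)$ for every $i\in T^*$.

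Next, for each triangle $K=ijk \in K_3(R[T^*])$ with $\omega(K)>0$ I reserve pairwise disjoint subsets $W_i(K) \subseteq V_i''\cap W$ of size slightly larger than $\omega(K)$ (and analogously for $j,k$), choosing the reserved subsets for different $K$'s to be disjoint. This is feasible because $\sum_{K\ni i}\omega(K) = \lambda(i) \le \tfrac{n}{10m}+1$, while $|V_i''\cap W| \ge \big(\tfrac{1}{2}-O(\sqrt{d})\big)\tfrac{n}{m}$ by property~(i) of $W$ together with the previous claim. By \cref{lem:reg-slicing}, each triple $(W_i(K),W_j(K),W_k(K))$ is a regular triple in $G$ with all three pair densities at least $d/2$, and so by \cref{lem:super-reg-triangle-count} it contains $\Omega(\omega(K)^3)$ triangles in any three-way subsets of minimum part size $\ge \eps''\omega(K)$. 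Applying \cref{lem:greedy-triangles}(ii) to $G_2$ restricted to this triple (where the $(\log n)^{1/3}$ slack in $p$ absorbs the loss from working at the local scale rather than the global one) produces whp a triangle matching of at least $\omega(K)$ triangles, from which I retain exactly $\omega(K)$. A union bound over the $O(m^3)=O(1)$ triangles $K$ gives whp success throughout, and $\cT_2$ is taken to be the union of these retained families.

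By construction $\cT_2$ is vertex-disjoint and removes exactly $\lambda(i)$ vertices from each $V_i''$, giving $|V_i''\setminus V(\cT_2)| = \lfloor\tfrac{9}{10}\tfrac{n}{m}\rfloor$ as required. The main subtlety is arranging for $\lambda(i)$ to satisfy the near-uniformity hypothesis of \cref{thm:HSzFrac-integer}, which forces $d\ll\gamma(\mu)$ in the hierarchy; divisibility-by-$3$, handling the (bounded number of) $K$ for which $\omega(K)$ may be too small for \cref{lem:greedy-triangles}(ii) to apply directly (one can instead enlarge $W_i(K)$ or proceed greedily), and the exact accounting of retained triangles are routine technicalities.
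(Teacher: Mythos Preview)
Your overall strategy matches the paper's: apply \cref{thm:HSzFrac-integer} to $R[T^*]$ with weights $\lambda(i)$, then realise the resulting $\omega(K)$ via \cref{lem:greedy-triangles}(ii). Two issues, one cosmetic and one structural.

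First, the divisibility $3\mid\sum_i\lambda(i)$ is automatic: since $\cT_1$ covers $X$ exactly and consists of triangles, $\sum_{i\in T^*}|V_i''| = n - 3|\cT_1|$; also $|T^*|=3|\cT^*|$ and $n\in 3\N$, so $\sum_i\lambda(i) = n - 3|\cT_1| - 3|\cT^*|\lfloor\tfrac{9}{10}\tfrac{n}{m}\rfloor \in 3\N$. Perturbing the targets would actually break the exact conclusion of the claim, so that step should simply be dropped.

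Second, and more seriously, reserving pairwise disjoint $W_i(K)$ of size $\approx\omega(K)$ does not work in the constant hierarchy $\tfrac{1}{m_0}\ll\eps$ fixed at the start of the proof of \cref{lem:nosparse}. For the Slicing Lemma to transfer useful regularity you need $|W_i(K)| \gtrsim \tfrac{\eps}{d}|V_i|$; but there are $\Theta(m^2)$ triangles of $R[T^*]$ through each $i$ (and the construction in \cref{thm:HSzFrac-integer} in fact makes $\omega(K)>0$ for \emph{every} $K$), so the total reserved space in $V_i''\cap W$ would need to be at least of order $m^2\cdot\tfrac{\eps}{d}\cdot\tfrac{n}{m} = \tfrac{m\eps}{d}n$, which exceeds $|V_i''\cap W|\le\tfrac{n}{m}$ because $m\eps\ge m_0\eps\gg 1$. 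Your suggested fix of ``enlarging $W_i(K)$'' runs into exactly the same wall, and for $K$ with $\omega(K)=O(m^3)$ (a constant independent of $n$) neither regularity nor \cref{lem:greedy-triangles}(ii) can be invoked at that scale; this is not a ``bounded number'' of exceptional $K$ but potentially all of them. The paper sidesteps this entirely by \emph{not} partitioning the clusters: for each $ijk\in K_3(R')$ it applies \cref{lem:greedy-triangles}(ii) once to the full triple $(V_i''\cap W,V_j''\cap W,V_k''\cap W)$ to obtain whp a pool of $\tfrac{7}{20}\tfrac{n}{m}>3\lceil\tfrac{n}{10m}\rceil$ vertex-disjoint triangles, and then selects the required $\omega(ijk)$ triangles greedily across all $K$, using that at most $3\max_h\lambda(h)\le 3\lceil\tfrac{n}{10m}\rceil$ triangles from any one pool are ever blocked by earlier selections.
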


\begin{claimproof}
The key idea in this proof is to use fractional factors to dictate how we remove triangles in order to balance the parts. More specifically, we will apply our stability theorem for the fractional Hajnal--Szemer\'edi theorem with integer weights (\cref{thm:HSzFrac-integer}), using that the reduced graph has large minimum degree and no large independent sets. In detail, 
let~$R' = R[T^*]$ and let~$\lambda: T^* \to \N$ be given by~$\lambda(i) = |V_i''| - \lfloor \tfrac{9}{10} \tfrac n m \rfloor$. Note that
$\big(\tfrac{1}{10} - 60\sqrt d\big) \tfrac n m \leq \lambda(i) \leq \lceil \tfrac{1}{10} \tfrac n m \rceil$,
and that~$\sum_{i \in T^*} \lambda(i) = n - 3 \card{\cT_1} - 3 \card{\cT^*} \lfloor \tfrac{9}{10} \tfrac n m \rfloor$ is divisible by~$3$. Also, we have that~$\delta(R')\ge \delta(R)-13dm\ge \big(\tfrac{2}{3}-15d\big)|R'|$ and~$\alpha(R')\le \big(\tfrac{1}{3}-\mu\big)m\le \big(\tfrac{1}{3}-\tfrac{\mu}{2}\big)|R'|$. 
Hence, by \cref{thm:HSzFrac-integer} (and the fact that~$d\ll \mu$), there is a weight function~$\omega: K_3(R') \to \N$ such that for each~$i\in T^*$ we have~$\sum_{K \in K_3(R',i)} \omega(K)=\lambda(i)$.
We claim that we can remove~$\omega(ijk)$ triangles from~$G_2[V''_i\cap W,V''_j\cap W,V''_k\cap W]$ for each triangle~$ijk$ of~$R'$, making sure that all our choices are vertex-disjoint. Indeed, observe that for any choice of~$X_h\subset V''_h\cap W$  such that~$|X_h|\geq d \tfrac nm$ for~$h\in \{i,j,k\}$, we have~$|K_3(G[X_i,X_j,X_k])|\geq \tfrac{d^6}{10m^3}n^3$ due to \cref{lem:super-reg-triangle-count} and the~$(\eps,d^+)$ regularity of~$G[V_i,V_j,V_k]$.  
Furthermore, observe that~$|V''_i\cap W| \geq \tfrac25 \cdot \tfrac{n}{m}$ for each~$i\in T^*$. Hence, \cref{lem:greedy-triangles}~$(ii)$ implies that whp there are~$\tfrac7{20} \cdot \tfrac{n}{m}>3\cdot \lceil \tfrac{1}{10} \tfrac n m \rceil$ vertex-disjoint triangles in~$G_2[V''_i\cap W,V''_j\cap W,V''_k\cap W]$ for each~$ijk \in K_3(R')$, so we can select the desired number of triangles for each~$K \in K_3(R')$ one at a time.
\end{claimproof}

Let now~$V_i''' = V_i'' \setminus V(\cT_2)$ for all~$i \in T^*$ and observe that we have covered all vertices except for those in~$\bigcup_{i \in T^*} V_i'''$. We claim that~$(V_i''',V_j''',V_k''')$ is~$(5\e, (d/2)^+,d/8)$-super-regular for all~$ijk \in \cT^*$. Indeed, this follows from the Slicing Lemma (\cref{lem:reg-slicing}), and from~$\deg(v;V_j''') \geq \deg(v;V_j' \setminus W) \geq \frac14 \deg_G(v;V_j') \geq \frac{d}{8} |V_j'|$ for all~$v\in V_i$ and the analogous inequalities for other pairs.
Finally, we apply \cref{thm:main-super-reg} to each of these triples individually in~$G_3$ to obtain (whp)\ a triangle matching~$\cT_3$  covering exactly~$\bigcup_{i \in T^*} V_i'''$.
\end{proof}

\subsection{Case: Two large sparse sets}
\label{sec:twosparse}

Next, we deal with the case when~$G$ has two large sparse sets; i.e.\ it looks similar to the extremal complete tripartite graph. This is the easiest case; we will not need the regularity lemma.

\begin{proof}[Proof of Lemma~\ref{lem:twosparse}]
Choose~$0 < \tfrac{1}{C}\ll\tau \ll \rho \ll \tfrac{1}{1000}$. Let~$n \in 3\N$ be large enough for the following arguments and let~$p\ge Cn^{-2/3}(\log n)^{1/3}$. Let~$G$ and sparse sets~$S_1$ and~$S_2$ be given as in the statement. Let~$G_1, G_2, G_3$ be independent copies of~$G_{p/3}$. We will find a triangle factor in~$G_1 \cup G_2 \cup G_3$.

\begin{claim}\label{claim:rtt-structure-2sparse}
There is a partition~$V(G) = X_1 \cup X_2 \cup X_3$ such that
\begin{enumerate}[(i)]
\item~$|X_i| = \big(\tfrac13 \pm \rho^6\big)n$ for all~$i \in [3]$,
\item~$\deg(v;X_j) \geq \rho n$ for all~$i \not = j \in [3]$ and~$v \in X_i$,
\item~$d(X_i,X_j) \geq 1 - \rho^6$ for all~$1 \leq i < j \leq 3$,
\item For each~$i \in [3]$, if~$|X_i| \geq \tfrac{n}{3}$, then~$\deg(v;X_j) \geq |X_j| - 4\rho n$ for all~$v \in X_i$ and~$j \in [3] \setminus \{i\}$.
\end{enumerate}
\end{claim}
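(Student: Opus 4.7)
My approach is to start from the natural partition $X_1 := S_1$, $X_2 := S_2$, $X_3 := V(G)\setminus(S_1\cup S_2)$ and then modify a small set of vertices. The key preliminary observation comes from combining $\deg(v;S_i)\leq\tau n$ for $v\in S_i$ with $\delta(G)\geq\tfrac{2n}{3}$: any $v\in S_i$ has at least $\tfrac{2n}{3}-\tau n$ neighbours in $V(G)\setminus S_i$, so $|S_i|\leq(\tfrac13+\tau)n$ and $v$ has at most $2\tau n$ non-neighbours in $V(G)\setminus S_i$. Combined with the hypothesis $|S_i|\geq(\tfrac13-\tau)n$ this gives $|S_1|,|S_2|=(\tfrac13\pm\tau)n$ and $|X_3|=(\tfrac13\pm 2\tau)n$, establishing (i). Summing the non-neighbour bound yields at most $2\tau n|S_i|$ non-edges between $S_i$ and $V(G)\setminus S_i$, which immediately implies (iii) when $\tau\ll\rho^6$.

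Writing $a_i(v):=|S_i|-\deg(v;S_i)$, the averaging bound $\sum_{v\in S_3}a_i(v)\leq 2\tau n|S_i|$ shows that the ``bad'' set $B_i := \{v\in S_3 : a_i(v)>3\rho n\}$ has $|B_i|\leq\rho^6 n$ (provided $\tau\ll\rho^7$); applying the same averaging to $\sum_{v\in B_1}a_2(v)$ yields $|B_1\cap B_2|\leq\rho^6 n$. The crucial \emph{structural property} comes from the fact that every $v\in S_3$ has at most $\tfrac n3$ non-neighbours in $V(G)$, so $a_1(v)+a_2(v)+a_3(v)\leq\tfrac n3$; in particular, whenever $a_i(v)$ is close to $\tfrac n3$ (as for any $v$ with $\deg(v;S_i)<\rho n$) we automatically have $a_j(v)\leq 2\rho n$ for $j\ne i$, so $v$ is almost fully adjacent to $S_j$ and to $S_3\setminus\{v\}$.

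I would then set $X_1 := S_1\cup(B_1\setminus B_2)$, $X_2 := S_2\cup(B_2\setminus B_1)$, and distribute $B_1\cap B_2$ between $X_1$ and $X_2$ (assigning each $v$ to the part with currently smaller size), with $X_3 := S_3\setminus(B_1\cup B_2)$. Conditions (i) and (iii) carry over since $|B_1\cup B_2|\leq 2\rho^6 n$. For (ii): vertices of $S_i\subseteq X_i$ satisfy $\deg(v;X_j)\geq|S_j|-2\tau n\geq\rho n$; vertices of $X_3$ satisfy $\deg(v;X_i)\geq|S_i|-3\rho n-\rho^6 n\geq\rho n$ by the definition of $B_i$ together with the structural property for $v\in B_1\cap B_2$; and vertices $v\in B_i$ placed in $X_i$ satisfy $\deg(v;X_j)\geq\rho n$ by a short case analysis using the structural property when $a_i(v)$ is large and using $a_j(v)\leq\tfrac n3-a_i(v)$ otherwise.

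The main obstacle is (iv), which requires the strong bound $\deg(v;X_j)\geq|X_j|-4\rho n$ for every $v\in X_i$ whenever $|X_i|\geq\tfrac n3$. For $v\in S_i$ the bound $a_j(v)\leq 2\tau n$ combined with $|X_j\setminus S_j|\leq 2\rho^6 n$ directly yields this. For $v\in X_3$ the definition $a_i(v)\leq 3\rho n$ and the same bookkeeping work. For $v\in B_i\setminus B_{3-i}$ placed in $X_i$, the definition gives $a_{3-i}(v)\leq 3\rho n$, yielding the required estimate for $\deg(v;X_{3-i})$; the structural property (invoked when $a_i(v)$ is close to $\tfrac n3$) gives $a_3(v)\leq 2\rho n$, handling $\deg(v;X_3)$. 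The most delicate case is $v\in B_1\cap B_2$, whose $a_1,a_2$ may both be only moderately large without either being close to $\tfrac n3$, so the structural property does not apply directly. Here I would use that $|B_1\cap B_2|\leq\rho^6 n$ is tiny and that the balance $|X_1|+|X_2|+|X_3|=n$ forces at least one of $|X_1|,|X_2|$ to have size $<\tfrac n3$ whenever $|X_3|\geq\tfrac n3$, so these problematic vertices can be placed in a part where (iv) is vacuous; in the extreme case where all three parts have size exactly $\tfrac n3$, a swap-based fine-tuning (moving a suitable ``nice'' vertex from $X_1$ or $X_2$ into $X_3$ to break the tie) breaks the balance. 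This last step is the main technical work, but is routine given the smallness of the bad sets and the structural identity.
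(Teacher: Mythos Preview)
There is a genuine gap in your handling of (iv) for vertices $v\in B_i\setminus B_{3-i}$. You write that ``the structural property (invoked when $a_i(v)$ is close to $\tfrac n3$) gives $a_3(v)\le 2\rho n$'', but membership in $B_i$ only guarantees $a_i(v)>3\rho n$, not that $a_i(v)$ is close to $\tfrac n3$. Concretely, a vertex $v\in S_3$ with $a_1(v)=4\rho n$, $a_2(v)=0$, and $a_3(v)=\tfrac n3-4\rho n$ satisfies the constraint $a_1+a_2+a_3\le\tfrac n3$, lies in $B_1\setminus B_2$, and is therefore placed in $X_1$ by your rule. But $\deg(v;X_3)\le\deg(v;S_3)\approx 4\rho n$, which is far below $|X_3|-4\rho n$. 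Since $|X_1|=|S_1|+|B_1\setminus B_2|+\cdots$ can easily exceed $\tfrac n3$ (it suffices that $|S_1|\ge\tfrac n3-|B_1\setminus B_2|$), condition (iv) fails for this $v$. You correctly identify exactly this difficulty for $B_1\cap B_2$ and propose to route those vertices to a part where (iv) is vacuous, but the same issue arises for $B_i\setminus B_{3-i}$ and your partition gives no flexibility there: every such vertex is forced into $X_i$.

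The paper's proof sidesteps this by using a much stricter threshold to decide which $S_3$-vertices get attached to $S_i$: it sets $Z_i=\{v\in S_3:\deg(v;S_i)\le\rho n\}$, i.e.\ $a_i(v)\ge|S_i|-\rho n$, so the structural identity genuinely forces $a_j(v)\le 2\rho n$ for $j\ne i$ and (iv) holds automatically for these vertices. All remaining ``medium'' vertices of $S_3$ --- those with $\rho n<\deg(v;S_i)<(\tfrac13-2\rho)n$ for some $i$, which is precisely the regime where your argument breaks --- are collected into a leftover set $Z'$ and assigned only to parts $X_i$ that currently have size below $\tfrac n3$, so that (iv) is vacuous for them. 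Your approach can be repaired along these lines (tighten the threshold defining the sets you attach to $S_1,S_2$, and treat the intermediate vertices as a separate pool to be distributed to small parts), but as written the argument for (iv) is incomplete.
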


\begin{claimproof}
For~$i \in [2]$,
let~$Z_i = \{v \in V(G) \setminus (S_1 \cup S_2): \deg(v;S_i) \leq \rho n \}$.
Let~$U_i = S_i \cup Z_i$ for~$i \in [2]$ and~$U_3 = \big\{v \in V(G): \deg(v;S_i) \geq \big(\tfrac13 - 2 \rho\big)n \text{ for each } i \in [2]\big\}$.
Note that, since~$\delta(G) \geq \tfrac23 n$,~$Z_1$ and~$Z_2$ are disjoint and hence~$U_1$ and~$U_2$ are disjoint as well. Furthermore, by definition,~$U_3$ is disjoint from~$U_1$ and~$U_2$.
Let~$Z' \coloneq V(G) \setminus (U_1 \cup U_2 \cup U_3)$ be the set of remaining vertices. Partition~$Z' = Z_1' \cup Z_2' \cup Z_3'$ so that~$Z_i' = \emptyset$ if~$|U_i| \geq \tfrac n 3$ and~$|U_i|+|Z_i'| \leq \tfrac{n}{3}$ otherwise. Finally, let~$X_i = U_i \cup Z_i'$ for all~$i \in [3]$. Note that~$V(G) = X_1 \cup X_2 \cup X_3$ is indeed a partition.

We will first show that the sets~$Z_1, Z_2$ and~$Z'$ are small. Let~$i \in [2]$. Since~$|S_i|\ge\big(\tfrac13-\tau\big)n$, each vertex of~$S_i$ has at least~$\big(\tfrac13-2\tau\big)n$ non-neighbours in~$S_i$, and so at most~$2\tau n$ non-neighbours outside~$S_i$.
Therefore, the total number of non-edges between~$S_i$ and~$V(G)\setminus S_i$ is at most~$\tau n^2$ (using here that we certainly have~$|S_i|\le \tfrac{n}{2}$ for~$i=1,2$). Since every~$v \in Z_i$ has at least~$\tfrac n4$ non-neighbours in~$S_i$, this implies~$\card{Z_i} \leq 4\tau n$.
Moreover, the number of non-edges between~$U_1 \cup U_2$ and~$Z'$ is at most~$2\tau n^2 + (|Z_1|+|Z_2|)n \leq 10\tau  n^2$.
Observe that every~$v \in Z'$ has at least~$\rho n$ non-neighbours in~$U_1 \cup U_2$ (otherwise it would be in~$U_3$), and therefore~$|Z'| \leq \rho^8 n$, by our choice of~$\tau$. 
We now show that this implies condition~$(i)$. Indeed, we have that~$|S_1|,|S_2|=\big(\tfrac13\pm \tau\big)n$ where the lower bounds are directly from our assumption and the upper bounds are due to the fact that every vertex in~$S_i$ has~$\big(\tfrac{2}{3}-\tau\big)n$ neighbours outside of~$S_i$ for~$i=1,2$. For each~$i$, we add at most~$(4\tau+\rho^8) n$ vertices to~$S_i$ to obtain~$X_i$ and so we have that~$|X_i|=\big(\tfrac13\pm \rho^7\big)n$ for~$i=1,2$. Finally, the bounds on~$|X_3|$ can be deduced from the fact that the~$X_i$ partition~$V(G)$. 

Furthermore, for each~$v \in Z'$, we have~$\deg(v;S_i) \geq \rho n$ since~$v \not \in Z_i$ for~$i \in [2]$, and $\deg(v;U_3) \geq \rho n$ for otherwise~$v$ would be in~$U_3$. Clearly, we also have that~$\deg(v;X_j) \geq \rho n$ for all~$i \in [2]$,~$j \in [3] \setminus \{i\}$ and~$v \in X_i$ and so~$(ii)$ holds. Moreover, we have~$\deg(v;X_i) \geq |X_i| - 2\tau n$ for all~$v \in S_1$ and~$i =2,3$ as~$v$ already has at least~$\big(\tfrac13-2\tau \big)n$ non-neighbours in~$S_1$. Since~$|Z_1 \cup Z_1'| \leq \rho^7 n$, this implies~$d(X_1,X_i) \geq 1 - \rho^6$ for~$i =2,3$. Similarly~$d(X_2,X_3) \geq 1 - \rho^6$.

Finally, let~$i,j \in [3]$ be distinct. If~$|X_{i}| \geq \tfrac n3$, then~$X_{i} \cap Z' = \emptyset$ by construction. Now if~$i=1$ or~$i=2$,  then it is clear that ~$\deg(v;X_{j}) \geq |X_{j}| - 4\rho n$ for all~$v \in X_{i}$ as~$v$ as~$\deg(v;X_i)\le 2\rho n$ and so~$v$ already has many non-neighbours in~$X_i$ (considering the size of~$X_i$ given in~$(i)$). If~$i=3$,  then for any~$v\in X_i$, we have that~$\deg(v;X_{j}) \geq \deg(v;S_{j}) \geq \big(\tfrac{1}{3}-2\rho \big)n \ge |X_{j}| - 4\rho n$. This establishes~$(iv)$.
\end{claimproof}

We now perform a stage of removing some vertex-disjoint triangles in order to obtain a balanced tripartite graph.

\begin{claim}
Whp in~$G_1$, there is triangle matching~$\cT_1 \subset K_3(G_1)$ so that $\card{X_1 \setminus V(\cT_1)} =  \card{X_2 \setminus V(\cT_1)} = \card{X_3 \setminus V(\cT_1)} \geq (\frac13 - \rho^6)n$.
\end{claim}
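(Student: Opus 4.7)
The plan is to balance the tripartition by removing a small triangle matching, using the minimum degree of $G$ to produce enough internal edges in the larger parts to absorb the imbalance. Relabel so that $|X_1| \leq |X_2| \leq |X_3|$ and set $a_i \coloneq |X_i| - n/3$, so that $a_1 \leq a_2 \leq a_3$, $a_1+a_2+a_3=0$ and $|a_i| \leq \rho^6 n$. If all $a_i = 0$ we simply take $\cT_1 = \emptyset$. Otherwise $a_3 > 0$, and the key observation is that whenever $|X_i| \geq n/3$ (equivalently $a_i \geq 0$), every vertex $v \in X_i$ satisfies
\[
\deg_G(v;X_i) \geq \delta(G) - \card{V(G)\setminus X_i} \geq \tfrac{2n}{3} - (n-|X_i|) = a_i,
\]
so $G[X_i]$ has minimum degree at least $a_i$. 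In particular, $X_i$ contains at least $|X_i|a_i/2 \geq na_i/6$ internal edges whenever $a_i\geq 0$.

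Next, I split into two cases by the sign of $a_2$. In Case~A ($a_2 \leq 0$, so $a_3 = |a_1|+|a_2|$ and only $X_3$ is guaranteed to contain internal edges), I take $\cT_1$ to consist of $|a_1|$ triangles of type $(0,1,2)$ (two vertices in $X_3$, one in $X_2$) together with $|a_2|$ triangles of type $(1,0,2)$ (two vertices in $X_3$, one in $X_1$). A direct count shows that this matching removes $|a_2|,|a_1|,2a_3$ vertices from $X_1,X_2,X_3$ respectively, leaving each $X_j$ with exactly $L = n/3 - a_3 \geq n/3 - \rho^6 n$ vertices. In Case~B ($a_2 \geq 0$, so both $X_2$ and $X_3$ have internal edges), I take $\cT_1$ to consist of $a_2$ triangles of type $(0,2,1)$ together with $a_3$ triangles of type $(0,1,2)$, leaving each $X_j$ with $L = n/3 - |a_1| \geq n/3 - \rho^6 n$ vertices.

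To realise these matchings in $G_1 = G_{p/3}$ whp, I invoke \cref{lem:matchcover}\emph{(i)} in Case~A (with $X_3$ playing the role of the part holding the internal edges, $E = E(G[X_3])$, $\delta = a_3$, and requested counts $n_2 = |a_2|$, $n_3 = |a_1|$) and \cref{lem:matchcover}\emph{(ii)} in Case~B (with $E_1 = E(G[X_2])$, $E_2 = E(G[X_3])$, $\delta_1 = a_2$, $\delta_2 = a_3$, and requested counts $n_1 = a_2$, $n_2 = a_3$). Property~$(iv)$ of \cref{claim:rtt-structure-2sparse} ensures that for every edge $e$ inside a part of size at least $n/3$ and every other part $X_j$ we have $\deg_G(e;X_j) \geq |X_j| - 8\rho n$, which exceeds the constant $\mu$ from \cref{lem:matchcover} provided $\rho$ is chosen sufficiently small. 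Each requested count is bounded by $a_3 \leq \rho^6 n \leq \min(\delta,\mu^5 n)$, so all hypotheses of \cref{lem:matchcover} are met. The main substantive obstacle was guaranteeing enough internal edges in the larger parts to support the balancing corrections, and this is supplied by the bound $\deg_G(v;X_i) \geq a_i$ above, which is the only place the full strength of $\delta(G) \geq 2n/3$ is needed here.
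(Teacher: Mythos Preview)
Your proof is correct and follows essentially the same approach as the paper: you split into the same two cases (one part exceeding $n/3$ versus two), use the minimum degree condition $\delta(G)\ge\tfrac{2n}{3}$ to force $\deg_G(v;X_i)\ge a_i$ inside any oversized part, and then invoke \cref{lem:matchcover}\emph{(i)} and \emph{(ii)} exactly as the paper does (the paper just labels the big parts $X_1,X_2$ rather than $X_2,X_3$). One cosmetic point: where you write ``exceeds the constant $\mu$'' you mean $\mu n$, and rather than saying ``provided $\rho$ is chosen sufficiently small'' you should note that an appropriate $\mu<1/1000$ exists given the fixed hierarchy $\rho\ll 1/1000$ (e.g.\ any $\mu$ with $\rho^{6/5}\le\mu\le 1/2000$ works).
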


\begin{claimproof}
If all three sets~$X_1, X_2, X_3$ have size exactly~$\tfrac{n}{3}$, we are done. Otherwise, one or two of these sets has size exceeding~$\tfrac{n}{3}$.

\underline{Case 1.} Assume first that only one set exceeds~$\tfrac{n}{3}$ in size and, without loss of generality, this set is~$X_1$. Let~$n_2 \coloneq \tfrac{n}{3} - |X_3|$ and~$n_3 \coloneq \tfrac{n}{3} - |X_2|$, and let~$E = E(G[X_1])$. Observe that~$\delta(E) \geq |X_1|-\tfrac n3 = n_2 + n_3$. Furthermore, we have~$\deg(e;X_i) \geq |X_i| - 10 \rho n \geq \tfrac n4$ for both~$i=2,3$. Therefore, by \cref{lem:matchcover}~$(i)$, there is a triangle matching~$\cT_1$ of size~$n_2 + n_3$  in~$G_1$ such that the  triangles in~$\cT_1$  all have two vertices in~$X_1$,~$n_2$ of them have their third vertex in~$X_2$, and~$n_3$ of them have their third vertex in~$X_3$. We then have~$\card{X_1 \setminus V(\cT_1)} = \card{X_2 \setminus V(\cT_1)} = \card{X_3 \setminus V(\cT_1)} = \tfrac{2n}{3} - |X_1| \geq (\frac13 - \rho^6)n$, as claimed, by our definitions of~$n_2$ and~$n_3$.

\underline{Case 2.} Assume now that there are two sets (say~$X_1$ and~$X_2$) exceeding~$\tfrac n 3$ in size. For~$i \in [2]$, let~$n_i \coloneq |X_i| - \tfrac{n}{3}$ and~$E_i = E(G[X_i])$. Observe that, for~$i \in [2]$,~$\delta(E_i) \geq n_i$ and~$\deg(e;X_{3-i}) \geq |X_{3-i}| - 10 \rho n \geq \tfrac n4$ for all~$e \in E_i$.
Therefore, by \cref{lem:matchcover}~$(ii)$, there is a triangle matching~$\cT_1$ of size~$n_1 + n_2$ in~$G_1$, with~$n_1$ triangles having two vertices in~$X_1$ and one in~$X_2$, and~$n_2$ triangles having two vertices in~$X_2$ and one in~$X_1$. Therefore, we have~$\card{X_1 \setminus V(\cT_1)} = \card{X_2 \setminus V(\cT_1)} = \card{X_3 \setminus V(\cT_1)} = \card{X_3} \geq (\frac13 - \rho^6)n$, as claimed.
\end{claimproof}

Let now~$X_i' = X_i \setminus V(\cT_1)$ and observe that~$|X'_1| = |X'_2| = |X'_3|$. Define
\[
Y_i' \coloneq \left\{v \in X_i': \deg(v;X_j') \leq \big(1-\tfrac{\rho}{2}\big)|X_j'| \text{ for some } j \in [3] \setminus \{i\}\right\}.
\]
Since~$d(X_i',X_j') \geq 1- 4\rho^6$ for all~$1 \leq i < j \leq 3$, we have~$\card{Y_i'} \leq 4\rho^5 n$ for each~$i \in [3]$.
Furthermore, for each~$i\in [3]$ and vertex~$v \in Y_i'$ there are at least~$\frac{1}{8} \rho^2n^2$ triangles of~$G$ containing~$v$ and one vertex in each~$X_j'\setminus Y_j'$ for~$j\in[3]\setminus \{i\}$. Indeed, we have that \[\deg(v;X_j'\setminus Y_j')\ge \deg(v;X_j)-2\card{V(\cT_1)}-\card{Y_j'}\ge \tfrac{3\rho}{4}n,\]
for each~$j\in[3]\setminus\{i\}=:\{j_1,j_2\}$. Due to the defining condition of the~$Y'_j$, we then have that for each~$ x\in N(v;X'_{j_1}\setminus Y'_{j_1})$, we have that~$\deg(v,x;X'_{j_2}\setminus Y'_{j_2})\geq \tfrac{\rho}{4}n$. This implies the claimed lower bound on the number of triangles containing~$v\in Y'_i$.  

By applying \cref{lem:vtxcover} (with~$t = 0$), whp in~$G_2$, we can find a triangle matching~$\cT_2 \subset K_3(G_2)$ with each triangle using one vertex from each part and such that~$Y_1' \cup Y_2' \cup Y_3' \subset V(\cT_2) \subset X_1' \cup X_2' \cup X_3'$ and~$|V(\cT_2)| \leq 3(|Y_1'| + |Y_2'| + |Y_3'|) \leq \rho^4 n$.

Let now~$X''_i \coloneq X'_i\setminus V(\cT_2)$ for each~$i \in [3]$ and observe that~$|X_1''| = |X_2''| = |X_3''| \geq (\frac13 - 2\rho^4) n$. Furthermore,~$(X_1'',X_2'',X_3'')$ is~$(\sqrt{\rho},(1-\rho)^+)$-super-regular by \cref{lem:very-dense-implies-regular}. Hence, by \cref{thm:main-super-reg}, whp there is a triangle matching~$\cT_3$ in~$G_3$ covering the~$X_i''$. Together with~$\cT_1$ and~$\cT_2$ this gives a full triangle factor in~$G_p$.
\end{proof}

\subsection{Case: One large sparse set}
\label{sec:onesparse}

Finally, we deal with the second case sketched in the discussion at the beginning of \cref{sec:reduction}, when there is one large sparse set but not a further disjoint one. We will use several of the ideas from the previous two lemmas, and so will abbreviate the details in places.

\begin{proof}[Proof of Lemma~\ref{lem:onesparse}]

Fix some~$0<\mu\ll 1$ and choose~$0 <   \tfrac{1}{m_0} \ll \e \ll d \ll \mu$.
Let~$M_0 \geq m_0$ be returned by \cref{lem:reglem} with input~$m_0,\eps$ and fix   
$0<\tfrac{1}{C}\ll \tau\ll \rho \ll \tfrac{1}{M_0}$.
Assume that~$n \in 3\N$ is large enough for the following arguments.
Let~$p$,~$G$ and~$S$ be as in the statement of the lemma and let~$G_1, \ldots, G_5$ be independent copies of~$G_{p/5}$. We will show that~$G_1 \cup \ldots \cup G_5$ contains a triangle factor whp.

We begin with a claim that gives us a lot of structure. For~$\eta>0$ we will call a set~$X \subseteq V(G)$~\emph{$\eta$-strongly connected} if
$\noe(X',X \setminus X') \leq \tfrac{|X|^2}{4} - \eta n^2$ for all~$X' \subseteq X$,
where we denote by~$\noe(X,Y) = |X||Y| - e(X,Y)$ the number of non-edges between~$X$ and~$Y$.
(This definition might appear somewhat strange now but will assure that the reduced graph in this proof is connected.)
Furthermore, we say that~$X$ is~\emph{$\eta$-close to complete} if~$e(G[X]) \geq \big(\tfrac12 -\eta \big)|X|^2$ and~$\deg(v;X)\ge \tfrac{1}{10}|X|$ for all~$v\in X$.

\begin{claim}\label{claim:one-sparse-structure}
Whp there is a triangle matching~$\cT_1$ in~$G_1 \cup G_2$ and disjoint sets~$X_1, X_2 \subset V(G)$ so that
\begin{enumerate}[(i)]
\item~$X_1 \cup X_2 = V(G) \setminus V(\cT_1)$ and~$|X_1| = \tfrac{|X_2|}{2} = \big(\tfrac13 \pm \rho\big) n$,
\item~$\deg(v;X_{3-i}) \geq (1-4\rho)|X_{3-i}|$ for all~$i \in [2]$ and~$v \in X_i$,
\item~$X_2$ is~$8d$-strongly connected or there is a partition~$X_2 = X_{2,1} \cup X_{2,2}$ so that, for each~$j\in[2]$, we have that~$|X_{2,j}| \geq \frac n4$ is even and~$X_{2,j}$ is~$200d$-close to complete.
\end{enumerate}
\end{claim}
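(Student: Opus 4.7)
The plan is to take $X_1\approx S$ and $X_2\approx V(G)\setminus S$, using triangles from $G_1\cup G_2$ to cover atypical vertices and to calibrate sizes and parities. Since each $v\in S$ has at least $(\tfrac13-\tau)n$ non-neighbours inside $S$ and $\delta(G)\ge\tfrac{2n}{3}$, we have $|S|\le(\tfrac13+2\tau)n$; with $T\coloneq V(G)\setminus S$ there are at most $\tau n^2$ non-edges across $(S,T)$. Hence the atypical sets
\[
B_1\coloneq\{v\in T:\deg(v;S)\le(1-\rho)|S|\},\quad B_2\coloneq\{v\in S:\deg(v;T)\le(1-\rho)|T|\}
\]
satisfy $|B_1|,|B_2|\le O(\tau n/\rho)\ll\rho n$. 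For each $v\in B_1\cup B_2$, at least $\rho n^2$ triangles at $v$ have one further vertex in $S\setminus B_2$ and one in $T\setminus B_1$, since $v$ has $\Omega(n)$ neighbours on both sides and the bipartite graph between $S\setminus B_2$ and $T\setminus B_1$ has density $\ge 1-2\rho$. Apply \cref{lem:vtxcover} in $G_1$ to produce whp a triangle matching $\cT^{(c)}$ covering $B_1\cup B_2$ with each triangle spanning $B_1\cup B_2$, $S\setminus B_2$ and $T\setminus B_1$. Set $X_1^\circ\coloneq (S\setminus B_2)\setminus V(\cT^{(c)})$ and $X_2^\circ\coloneq(T\setminus B_1)\setminus V(\cT^{(c)})$; by construction $(X_1^\circ,X_2^\circ)$ satisfies property (ii) with margin better than $4\rho$.

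Next, analyse $X_2^\circ$. Each $v\in X_2^\circ$ has $\deg(v;X_2^\circ)\ge \tfrac{n}{3}-O(\tau n)$ since $\deg(v;S)\le|S|$ and $\deg(v)\ge\tfrac{2n}{3}$. If $X_2^\circ$ is $10d$-strongly connected the first alternative of (iii) holds; otherwise, pick $A\subsetneq X_2^\circ$ witnessing $\noe(A,X_2^\circ\setminus A)>|X_2^\circ|^2/4-10dn^2$. Combined with the above degree bound and $\noe(A,X_2^\circ\setminus A)\le|A|\cdot|X_2^\circ\setminus A|$, a convexity calculation forces $|A|,|X_2^\circ\setminus A|=(\tfrac13\pm 4\rho)n$. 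Now invoke the ``no second sparse set'' hypothesis: were fewer than $(1-\sqrt d)|A|$ vertices of $A$ to have $\deg_{G[A]}(\cdot)\ge 2dn$, the remaining $\ge (\tfrac13-2\mu)n$ vertices of $A$ would form a sparse set inside $V(G)\setminus S$, a contradiction. Averaging then gives $e(G[A])\ge(1-200d)\binom{|A|}{2}$, so $A$ (and similarly $B\coloneq X_2^\circ\setminus A$) is $200d$-close to complete.

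Finally, append a second triangle matching $\cT^{(a)}$ in $G_2$ to $\cT^{(c)}$ to achieve exact sizes and, in the split case, even parities of $|A\cap X_2|$ and $|B\cap X_2|$. The imbalance $|X_1^\circ|-|X_2^\circ|/2$ has magnitude $O(\tau n)$, so $O(\tau n)$ triangles of type $(2,1)$ (two vertices in $X_1^\circ$, one in $X_2^\circ$) or $(0,3)$ (three in $X_2^\circ$) suffice; \cref{lem:matchcover} in $G_2$ supplies these whp. In the split case we pick the $X_2^\circ$-vertices of each removed triangle entirely inside $A$ or entirely inside $B$, keeping each half $200d$-close to complete. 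Because $|X_2|=2n/3$ is always even for $3\mid n$, the parities of $|A\cap X_2|$ and $|B\cap X_2|$ agree after balancing; if both end up odd, flip them simultaneously by including one extra $G_2$-triangle with one vertex in each of $A,B,X_1^\circ$. Such triangles exist whp since $|E(A,B)|\ge|A||B|-\noe(A,X_2^\circ\setminus A)=\Omega(dn^2)$ and each cross-edge extends via the near-complete bipartite structure to many candidate third vertices in $X_1^\circ$.

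\textbf{Main obstacle.}
The delicate point is the simultaneous satisfaction of all constraints in the split case of (iii): preserving $200d$-closeness of both halves of $X_2$ while exactly calibrating sizes and fixing parities, using only triangles of $G_2$ disjoint from $\cT^{(c)}$. Concentrating too many removals in one half destroys near-completeness, while too few crossing triangles leave a parity mismatch uncorrected. Careful accounting via the lemmas of \cref{sec:triangle matchings} is needed to verify that $G_2$ whp contains triangles of each required type in sufficient quantity, and that the $O(\tau n)$-size discrepancies can be resolved without disturbing either the density condition or the relation $|X_1|=|X_2|/2$.
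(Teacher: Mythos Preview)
Your overall architecture is close to the paper's, but there are genuine gaps in the disconnected (split) case.

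First, your derivation of $200d$-closeness is incorrect. The ``no second sparse set'' hypothesis together with averaging only gives you roughly $(1-\sqrt d)|A|$ vertices of degree at least $2dn$ in $A$, hence $e(G[A])\gtrsim d n|A|$, which is far short of $(1-200d)\binom{|A|}{2}$. The right route is the one you set up but did not use: from $\noe(A,B)>|X_2^\circ|^2/4-10dn^2$ and $|A||B|\le|X_2^\circ|^2/4$ you get $e(A,B)\le 10dn^2$; combining this with $\deg(v;X_2^\circ)\ge n/3-O(\tau n)$ for every $v\in X_2^\circ$ yields $2e(G[A])\ge |A|(n/3-O(\tau n))-10dn^2$, which is what is needed. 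Moreover, the definition of $200d$-close also demands $\deg(v;A)\ge|A|/10$ for every $v\in A$; an arbitrary witness cut need not satisfy this. The paper takes the cut \emph{maximising} $\noe(A,B)$, so that any vertex with few neighbours in its own side would increase the cut by being moved --- this is how the pointwise degree bound is obtained, and your argument omits it.

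Second, your parity fix is based on the claim $|E(A,B)|\ge |A||B|-\noe(A,B)=\Omega(dn^2)$, but the inequality goes the wrong way: the computation above gives $e(A,B)\le 10dn^2$, with no non-trivial lower bound --- indeed $e(A,B)$ could be zero. So you cannot just pull out a transversal triangle from edge density. The paper instead handles parity during the balancing step (choosing which of $U_{2,1},U_{2,2}$ absorbs the extra vertex), and in the residual case $|U_1|=n/3$ exploits that the smaller odd part has size strictly below $n/3$, forcing each of its vertices to have a neighbour in the other part by the minimum-degree condition; this produces the single transversal triangle. Finally, a smaller but related issue: by covering $B_1$ \emph{before} balancing you lose exactly $|B_1|$ from $\delta(E(G[X_1^\circ]))$, so \cref{lem:matchcover} is short by $|B_1|$ when $|S|>n/3$; the paper avoids this by balancing first and absorbing the worst vertices of $T$ into $U_1$ rather than covering them.
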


\begin{claimproof}
Let~$Y_1 = \{v \in V(G) \setminus S: \deg(v;S) \leq \rho n \}$.
Let~$U_1 = S \cup Y_1$ and~$U_2 = V(G) \setminus U_1$.
 With a similar (and simpler)  proof to that  of  \cref{claim:rtt-structure-2sparse}, one can show that 
\begin{enumerate}[(P1), font=\normalfont]
\item~$\deg(v;U_2) \geq  |U_2| - 2\rho n$ for all~$v \in U_1$ and~$\deg(v;U_1) \geq \rho n$ for all~$v \in U_2$,\label{it:rtt-P1}
\item~$|U_1| = (\frac13 \pm \rho^6)n$ and~$|U_2|= (\frac23 \pm \rho^6)n$, and\label{it:rtt-P2}
\item~$d(U_1,U_2) \geq 1 - \rho^6$.\label{it:rtt-P3}
\end{enumerate}
Let~$\sigma = 10d$ and let~$U_2 = U_{2,1} \cup U_{2,2}$ be the partition of~$U_2$ which maximises~$\noe(U_{2,1} , U_{2,2})$. Throughout this proof, we will have to distinguish between two cases: either~$U_2$ is~$\sigma$-strongly-connected (this we will call the \emph{connected case} from now on) or~$\noe(U_{2,1}, U_{2,2}) \geq \tfrac{|U_2|^2}{4} - \sigma n^2$ (which we call the \emph{disconnected case}). 
Although the process is very similar for both, we will handle them separately, starting with the disconnected case.

\par\smallskip
\underline{The disconnected case.}
We claim that
\begin{enumerate}[(Q1), font=\normalfont]
\item~$|U_{2,j}| = \big(\tfrac13 \pm 2\sigma\big)n$ and~$e(U_{2,j}) \geq \tfrac12 |U_{2,j}|^2 - 2\sigma n^2$ for both~$j \in [2]$, and\label{it:rtt-Q1}
\item~$\deg(v;U_{2,j}) \geq \tfrac{n}{10}$ for any~$j\in[2]$  and~$v \in U_{2,j}$.\label{it:rtt-Q2}
\end{enumerate}
Indeed, \cref{it:rtt-Q1} follows from the case assumption and the fact that~$\delta(G) \geq \tfrac{2n}{3}$, and \cref{it:rtt-Q2} since $U_{2,1}, U_{2,2}$ are chosen to maximise non-edges in between (otherwise, moving a vertex violating \cref{it:rtt-Q2} to the other set increases the count).

In a first round of probability ($G_1$), our goal is to balance the sizes.
Assume first that~$|U_1| > \tfrac{n}{3}$. Let~$n_2 = 0$ if~$\card{U_{2,1}}$ is even and~$n_2 = 1$ otherwise, and let~$n_3 = |U_1| - \tfrac n3 - n_2\ge 0$. Let~$E = E(G[U_1])$, and observe that~$\delta(E) \geq n_2 + n_3$. Furthermore, we have~$\deg(e;U_{2,j}) \geq |U_{2,j}| - 10 \rho n \geq \tfrac n4$ for both~$j \in [2]$ by \cref{it:rtt-P1} and \cref{it:rtt-Q1}.
Therefore, by \cref{lem:matchcover}~$(i)$, whp there is a triangle matching~$\cT_1'$ of size~$n_2+n_3=|U_1| - \tfrac n3$  in~$G_1$ with each triangle having two vertices in~$U_1$ and one vertex in~$U_2$ ($n_2$ have their third vertex in~$U_{2,1}$ and~$n_3$ have their third vertex in~$U_{2,2}$). Let~$U_i' = U_i \setminus V(\cT_1')$ and~$U_{2,j}' = U_{2,j} \setminus V(\cT_1')$ for~$i,j \in [2]$. By construction, we have~$|U_2'| = 2 |U_1'| = \tfrac{4n}{3} - 2|U_1| \geq 2 \big(\tfrac13 - \rho^5\big)n$ and~$|U_{2,j}'|$ is even for both~$j \in [2]$.

Assume now that~$|U_2| > \tfrac{2n}{3}$. Observe that for each~$j \in [2]$ and~$X \subseteq U_{2,j}$ of size~$|X| \geq 
\tfrac n9$, we have~$\card{K_3(G[X])} \geq \tfrac{n^3}{1000}$ by \cref{it:rtt-Q1}. Thus, by \cref{lem:greedy-triangles}~$(i)$, there are triangle matchings of size~$\tfrac n{15}$ in each of~$G_1[U_{2,j}]$ whp for both~$j=1,2$. Thus, we can pick a triangle matching~$\cT_1'$ of exactly~$\tfrac{n}{3} - |U_1|$ from these, again taking either one or no triangle in~$U_{2,1}$ depending on its parity. By construction, we then have~$|U_2'| = 2 |U_1'| = 2|U_1| \geq 2 \big(\tfrac13 - \rho^5,\big)n$ and~$|U_{2,j}'|$ is even for both~$j \in [2]$ (where~$U_i'$ and~$U_{2,j}'$ are defined as above by removing the vertices of~$\cT'_1$ from the sets~$U_i$ and~$U_{2,j}$ ).

Finally it remains to deal with the case that~$|U_2|=2|U_1|=\tfrac{2n}{3}$. Note that as~$|U_2|$ is even in this case, we have that~$|U_{2,1}|$ and~$|U_{2,2}|$ have the same parity. If they are both  even, there is no need to take any triangles in~$\cT'_1$ and we can move to the next stage. However, if they are odd in size, we have to do a little more work. We say a triangle~$T$ is \emph{transversal} if~$|V(T)\cap U_1|=|V(T)\cap U_{2,1}|=|V(T)\cap U_{2,2}|=1$. We aim to prove the existence of a single transversal triangle in~$G_1$. In order to do this, we first show that there are at least~$\tau n^2$  transversal triangles in~$G$. Indeed, without loss of generality suppose that~$|U_{2,1}|\leq |U_{2,2}|$ and let~$Y_0\subseteq U_{2,1}$ be the set of vertices~$y$ in~$U_{2,1}$ such that~$\deg(y;U_1)\geq (1-\rho^2)|U_1|$. Due to \cref{it:rtt-P3}, we have that~$|Y_0|\geq \tfrac{n}{10}$.  Now for each vertex~$y\in Y_0$, as~$|U_{2,1}|\leq |U_{2,2}|$ we have that~$y$ has some neighbour~$z$ in~$U_{2,2}$ and due to \cref{it:rtt-P1} and the fact that~$y\in Y_0$, we have that~$\deg(y,z;U_{1})\geq \tfrac\rho2n$ and hence~$y$ is contained in at least~$\tfrac\rho2n$ transversal triangles. Considering all~$y\in Y_0$ thus gives the existence of~$\tau n^2$ transversal triangles in~$G$. A simple application of Janson's inequality (\cref{lem:janson}) gives that whp at least one of these transversal triangles survives in~$G_1$ and so taking~$\cT_1'$ to be this single triangle,~$U_i'=U_i\setminus V(\cT_1')$ for~$i\in[2]$ and~$U'_{2,j}=U_{2,j}\setminus V(\cT_1')$ for~$j\in [2]$, we again have in this case that~$|U_2'| = 2 |U_1'| = 2(|U_1|-1) \geq 2 \big(\tfrac13 - \rho^5\big)n$ and~$|U_{2,j}'|$ is even for both~$j \in [2]$.

In a second round of probability ($G_2$), we will remove `atypical' vertices in~$U_2'$. From this point onwards, we will only remove triangles with one vertex in~$U_1'$ and two vertices in~$U_{2,j}'$ for some~$j \in [2]$, thus maintaining the right balance between~$U_1'$ and~$U_2'$ and the parity of~$U_{2,1}'$ and~$U_{2,2}'$.
For~$j \in [2]$, let~$Y_{2,j} \coloneq \{v \in U_{2,j}': \deg(v;U_1') \leq  |U_1'| - \tfrac{\rho }{2}n\}$ and for each~$v \in Y_{2,j}$ let~$E_v \coloneq \{ u_1u_2: u_1 \in U_1', u_2 \in U_{2,j}'\setminus Y_{2,j}, vu_1u_2 \in K_3(G)\}$.
It follows from \cref{it:rtt-P3} (and counting non-edges between~$U_1$ and~$U_2$) that~$|Y_{2,j}| \leq 2\rho^5 n$ for both~$j \in [2]$.
Furthermore, \cref{it:rtt-P1} and \cref{it:rtt-Q2} imply that~$|E_v| \geq  (\rho - \tfrac{\rho}{2})n\cdot\big(\tfrac{1}{10} - \rho^4\big)n  \geq  \rho^2 n$ for all~$v \in Y_{2,1} \cup Y_{2,2}$.
Thus, by \cref{lem:vtxcover}, whp there is a triangle matching~$\cT_1''$ of size at most~$4 \rho^5 n$  in~$G_2[U_1' \cup U_2']$ of the desired form (each triangle having one vertex in~$U_1'$ and two vertices in~$U_{2,j}'$ for some~$j \in [2]$) such that~$Y_{2,1}\cup Y_{2,2}\subset V(\cT_1'')$.
Let~$\cT_1 = \cT_1' \cup \cT_1''$,~$X_i = U_i' \setminus V(\cT_1'')$ and~$X_{2,j} = U_{2,j}' \setminus V(\cT_1'')$ for each~$i,j \in [2]$.
These resulting sets have all the desired properties~$(i)$-$(iii)$.

\par\smallskip
\underline{The connected case.}
This case is very similar but less technical since we do not have to worry about the sets~$U_{2,1}$ and~$U_{2,2}$. We will therefore skip some details.

In a first round of probability ($G_1$), our goal is to balance the sizes.
The case~$|U_1| > \tfrac{n}{3}$ is completely analogous to the disconnected case and we find a  triangle matching~$\cT_1'$ of size~$|U_1| - \tfrac n3$  in~$G_1$ with  each triangle having  two vertices in~$U_1$ and one vertex in~$U_2$. Let~$U_i' = U_i \setminus V(\cT_1')$ for~$i \in [2]$. By construction, we have~$|U_2'| = 2 |U_1'| = \tfrac{4n}{3} - 2|U_1| \geq 2 \big(\tfrac13 - \rho^5\big)n$.

Assume now that~$|U_2| \geq \tfrac{2n}{3}$. Observe that for every set~$Z \subset U_2$ with~$|Z| \leq dn$ and every~$v \in U_2 \setminus Z$, we have~$\deg(v;U_2 \setminus Z) \geq \big(\tfrac{1}{3} - d\big) n$ and thus there are at least~$dn^2$ edges in~$N(v;U_2\setminus Z)$. Indeed due to the fact that  there is no set~$S' \subseteq  X_2$ with~$|S'|\geq \big(\tfrac{1}{3} - 2\mu\big) n$  and~$\Delta(G[S']) \leq 2dn$, we can find~$dn^2$ edges by repeatedly removing high degree vertices from~$N(v;U_2\setminus Z)$ and taking the edges adjacent to them. 
Thus there are at least~$\tfrac{d}{10} n^3$ triangles in~$G[U_2 \setminus Z]$. It follows from \cref{lem:greedy-triangles}~$(i)$ that whp there are at least~$\tfrac d3 n$ vertex-disjoint triangles in~$G_1[U_2]$.
Let~$\cT_1'$ be a triangle matching  consisting  of exactly~$\tfrac{n}{3} - |U_1|$ of these and let~$U_i'=U_i\setminus V(\cT_1')$ for~$i=1,2$. By construction, we have~$|U_2'| = 2 |U_1'| = 2|U_1| \geq 2 \big(\tfrac13 - \rho^5\big)n$.

The process of removing \emph{bad} vertices~$v$ in~$U_2'$  such that~$\deg(v;U_1')\le\card{U_1'}-\tfrac{\rho}{2}n$ is analogous to (and simpler than) the disconnected case and an application of \cref{lem:vtxcover} gives a triangle matching~$\cT_1''\subset K_3(G_2[U_1'\cup U_2'])$ containing all the bad vertices and such that defining~$\cT_1=\cT_1'\cup\cT_1''$ and~$X_i=U_i'\setminus V(\cT_1'')$  for~$i=1,2$, gives the required conditions for the claim. Here in order to verify condition~$(iii)$, we use that for any~$X\subset X_2$, we have 
\[\noe(X,X_2\setminus X)\le \noe(X,U_2\setminus X)\le \frac{|U_2|^2}{4}-10d n^2\le \frac{|X_2|^2}{4}-8dn^2,\]
using that~$|U_2|-|X_2|\le 3 \card{V(\cT_1)}\le \rho n$.
\end{claimproof}

The disconnected case now follows without much more work, as we show now. Let us first remove more atypical vertices of our near-cliques. For~$j \in [2]$, let~$Z_{2,j} \coloneq \{v \in X_{2,j}: \deg(v;X_{2,j}) \leq  |X_{2,j}| - \sqrt{d} n\}$. Observe that, since~$X_{2,j}$ is~$200d$-close to complete, by counting non-edges in~$X_{2,j}$ we have~$|Z_{2,j}| \leq 10 \sqrt{d} n$ for both~$j\in[2]$. Note that any two vertices in~$X_2$ have at least~$\tfrac{n}{4}$ common neighbours in~$X_1$ by \cref{claim:one-sparse-structure}~$(ii)$ and for~$j\in[2]$, any vertex~$v\in X_{2,j}$ has~$\deg(v;X_{2,j}\setminus Z_{2,j})\ge \tfrac{n}{50}$ by \cref{claim:one-sparse-structure}~$(iii)$ and our upper bound on~$|Z_{2,j}|$. Hence it follows from \cref{lem:vtxcover} that whp (in~$G_3$) there is a triangle matching~$\cT_2$ of size  at most~$20 \sqrt{d} n$  in~$G_3[X_1 \cup X_2]$ with each triangle having one vertex in~$X_1$ and two vertices in~$X_2$ (both of which are in the same~$X_{2,j}$) covering~$Z_{2,1} \cup Z_{2,2}$.
Let~$X_i' = X_i \setminus V(\cT_2)$ and~$X_{2,j}' = X_{2,j} \setminus V(\cT_2)$ for each~$i,j \in [2]$.
Let~$X_1' = X_{1,1}' \cup X_{1,2}'$ be a partition such that~$|X_{1,j}'| = \tfrac12 |X_{2,j}'|$ for each~$j \in [2]$ (note that here the parity of~$|X_{2,j}'|$ is important).
Now, for both~$j\in [2]$,~$X_{1,j}' \cup X_{2,j}'$ induces a~$\left(d^{1/6},\left(1-d^{1/3}\right)^+\right)$-super-regular triple (after splitting~$X_{2,j}'$ arbitrarily in two sets of equal sizes) by \cref{lem:very-dense-implies-regular}. Therefore, by \cref{thm:main-super-reg}, whp there are vertex-disjoint triangles in~$G_4$ covering the remaining vertices.

Thus, we may assume that~$X_2$ is~$8d$-strongly connected. This case is very similar to the proof of \cref{lem:nosparse}. Let~$n_i \coloneq |X_i|$ for both~$i \in [2]$ and recall that~$n_2 = 2n_1$.
We apply \cref{lem:reglem} to~$G[X_2]$ with input~$m_0,\e$  and fixing~$\gamma:=\tfrac{1}{2}-\eps$  to get an~$\e$-regular partition~$X_2 = V_0 \cup V_1 \cup \ldots \cup V_m$ for some~$m_0 \leq m \leq M_0$. Let~$R$ be the corresponding~$(\eps,d)$-reduced graph (seen as a graph on~$[m]$) and observe that we have~$\delta(R) \ge \big(\tfrac12-2d\big)m$ and, as in the proof of \cref{lem:nosparse}, we have~$\alpha(R) < \big(\tfrac12-\mu\big)m$.
It is well-known that every graph~$H$ contains a matching of size~$\min\{ \delta(H), \lfloor \tfrac{v(H)}{2} \rfloor\}$.
Indeed, if~$v(H)$ is even this is the~$k=2$ case of \cref{thm:HajSze-factor}, whilst if~$n$ is odd this can be derived from \cref{thm:HajSze-factor} by adding a vertex to~$H$ that is adjacent to all other vertices.
We conclude that~$R$ contains a matching~$\cM^*$ of size~$\big(\tfrac12-2d\big)m$; let~$R'$ be the subgraph of~$R$ induced by~$M^* \coloneq V(\cM^*)$.
Note that~$\delta(R') \geq \big(\tfrac12-6d\big)m$ and we claim that~$R'$ is connected. Indeed, if not, there is a set~$B \subset V(R')$ such that~$e(B, V(R') \setminus B) = 0$. Observe that~$|B|,|V(R')\setminus B| \geq \delta(R') \geq \big(\tfrac12 - 6d\big)m$. Let now~$X' \coloneq \bigcup_{h \in B} V_h$ and observe that~$|X'| = \big(\tfrac12 \pm 20d\big)|X_2|$.
Furthermore, we have~$e(X',X_2 \setminus X') \leq (d + 4d + 2\e )n^2$ and consequently \[\noe(X',X_2 \setminus X') \geq  |X'||X_2 \setminus X'| - 6d n^2 \ge  \left(\tfrac{|X_2|}{2}+20d|X_2|\right)\cdot \left(\tfrac{|X_2|}{2}-20d|X_2|\right) - 6d n^2 > \tfrac{|X_2|^2}{4} - 8d n^2,\] contradicting the fact that~$X_2$ is~$8d$-strongly connected.

By \cref{lem:super-reg}, there are~$V'_h \subset V_h$ for each~$h \in M^*$ such that~$|V_h'| = \lceil (1- 2\eps)|V_h| \rceil$ and, for every edge~$h\ell \in \cM^*$, the pair~$(V_h',V_\ell')$ is~$(2\eps,(d-\e)^+,d-2\e)$-super-regular.
Let~$Y = X_2 \setminus \bigcup_{h \in M^*} V_h'$ be the set of vertices in~$X_2$ which are not in a cluster~$V_h'$ corresponding to a vertex in an edge of~$\cM^*$. Observe that~$|Y| \leq 2\eps n +\eps n+ 4dn \le 5dn$, where the terms in the upper bound come from  bounding the number of vertices in sets~$V_h\setminus V_h'$ for~$h\in M^*$, the number of vertices in~$V_0$ and number of vertices in a set~$V_h$ for~$h\in [m]\setminus M^*$, respectively.
Let~$W \subset X_2 \setminus Y$ be a set such that
\begin{enumerate}
\item~$\card{W \cap V_h'} =  \big(\tfrac12 \pm \tfrac1{20}\big) \tfrac{n_2}{m}$ for each~$h \in M^*$,
\item~$\deg_G(v;W) \geq \tfrac13 |W|$ for each~$v \in X_2$, and
\item we have that~$\deg_G(v;V_h'\cap W) = \big(\tfrac12 \pm \tfrac14\big) \deg_G(v;V_h')$ for each~$h \in M^*$ and~$v \in X_2$ with $\deg_G(v;V_h') \geq \e |V_h'|$.
\end{enumerate}
Such a set~$W$ can be found by choosing each vertex of~$X_2 \setminus Y$ independently with probability~$\tfrac12$ and applying Chernoff's inequality (\cref{thm:chernoff}) and a union bound.

We will start by covering~$Y$. We will not touch vertices outside of~$W$ in order to maintain super-regularity properties.

\begin{claim}
Whp in~$G_3$, there is a triangle matching~$\cT_2 \subset K_3(G_1)$ of  size~$|Y|$  with each triangle  having two vertices in~$W \cup Y \subset X_2$ and one in~$X_1$, so that~$Y \subset V(\cT_2)$ and~$\card{V(\cT_2) \cap V_h'} \leq 50\sqrt{d}|V_h'|$ for all~$h \in M^*$.
\end{claim}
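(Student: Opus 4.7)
The plan is to deduce this claim as a direct application of \cref{lem:vtxcover} with probability $p/5$ (absorbing the factor~$5$ into the constant $C$). I would take the distinguished vertices of that lemma to be an enumeration $v_1,\dots,v_\ell$ of $Y$, so $\ell=|Y|\le 5dn$, set the disjoint sets $A_k$ to be the clusters $V_h'$ for $h\in M^*$ (which are indeed disjoint from $Y$ by the very definition $Y=X_2\setminus\bigcup_{h\in M^*}V_h'$), and choose the constant there to be $\mu\coloneq\sqrt{5d}$, so that $\ell\le\mu^2 n$. For each $v_i\in Y\subseteq X_2$, I will define
\[
E_i\coloneq\bigl\{\,wx\in E(G)\,:\,w\in N_G(v_i;W),\ x\in N_G(v_i;X_1)\,\bigr\}\subseteq\tr_{v_i}(G),
\]
which is tailored so that any triangle of the form $v_i\cup e$ with $e\in E_i$ has two vertices ($v_i$ and $w$) in $W\cup Y\subseteq X_2$ and one vertex ($x$) in $X_1$, exactly as required.

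The one estimate to verify is $|E_i|\ge\mu n^2$. Property~(2) of $W$ gives $|N_G(v_i;W)|\ge |W|/3\ge n/15$ (since $|W|\ge n/4$ by the earlier size calculation), and \cref{claim:one-sparse-structure}(ii) gives $|N_G(v_i;X_1)|\ge(1-4\rho)|X_1|$. Moreover, each $w\in W\subseteq X_2$ itself satisfies $|N_G(w;X_1)|\ge(1-4\rho)|X_1|$ by the same claim, so for every $w\in N_G(v_i;W)$ we have $|N_G(v_i,w;X_1)|\ge|X_1|-8\rho n\ge n/4$. Summing the edges $wx\in E(G)$ over such pairs yields $|E_i|\ge(n/15)\cdot(n/4)\ge n^2/100\ge\mu n^2$ for $d$ sufficiently small.

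With the hypotheses of \cref{lem:vtxcover} in hand, the lemma produces whp (in the copy $G_3$) a triangle matching $\cT_2$ of size $|Y|$ in which each triangle is $v_i$ joined to an edge of $E_i$; by construction $Y\subseteq V(\cT_2)$, the triangles have the required structure, and the intersection bound gives $|V(\cT_2)\cap V_h'|\le 12\mu|V_h'|+1=12\sqrt{5d}|V_h'|+1\le 50\sqrt{d}|V_h'|$ once $|V_h'|$ is large enough, which it is since $|V_h'|=\Theta(n/m)\to\infty$. There is no genuine obstacle in this claim; it is essentially a bookkeeping exercise, the only subtle point being to choose $E_i$ so that the resulting triangles carry exactly the prescribed pattern (two vertices in $W\cup Y$ and one in $X_1$) and to choose $\mu=\sqrt{5d}$ small enough that both $\mu^2 n\ge|Y|$ and $12\mu\le 50\sqrt{d}$ hold simultaneously.
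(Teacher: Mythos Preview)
Your proposal is correct and follows essentially the same approach as the paper, which states that the proof is ``essentially identical to the proof of \cref{claim:rtt-red-nosparse-cover-X} (appealing to \cref{lem:vtxcover})'' and omits the details. You have correctly adapted the choice of $E_i$ to force the required triangle pattern (one vertex in $X_1$, one in $W$) and chosen $\mu=\sqrt{5d}$ so that both the hypothesis $\ell\le\mu^2 n$ and the conclusion $12\mu|V_h'|+1\le 50\sqrt{d}|V_h'|$ go through.
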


The proof is essentially identical to the proof of \cref{claim:rtt-red-nosparse-cover-X} (appealing to \cref{lem:vtxcover}) and we  omit the details.
Let now~$X_i'' = X_i \setminus V(\cT_2)$ for each~$i \in [2]$ and let~$V_h'' = V_h' \setminus V(\cT_2)$ for each~$h \in M^*$. We will now balance the sizes of the clusters~$V_h''$.

\begin{claim}
Whp in~$G_4$, there is a triangle matching~$\cT_3 \subset K_3(G_4)$ with each triangle having one vertex in~$X_1''$ and two vertices in~$W$, so that~$|V_h'' \setminus V(\cT_3)| =  \lfloor \tfrac{9}{10} \tfrac{n_2}{m} \rfloor$ for all~$h \in M^*$.
\end{claim}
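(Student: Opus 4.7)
My plan is to mirror the analogous balancing step in the proof of \cref{lem:nosparse}, but with the triangle-matching weighting replaced by an edge weighting of the reduced graph, obtained from the $k=2$ case of \cref{thm:HSzFrac-integer}. I will define $\lambda : M^* \to \N$ by $\lambda(h) \coloneq |V_h''| - \lfloor \tfrac{9}{10} \tfrac{n_2}{m} \rfloor$. The bound $|V(\cT_2) \cap V_h'| \leq 50\sqrt{d}|V_h'|$ from the previous claim gives $\lambda(h) = (1 \pm O(\sqrt{d})) \tfrac{n_2}{10m}$, so the $\lambda(h)$ lie within a factor $1 \pm O(\sqrt{d})$ of their common average, and $\lambda(h) \geq \tfrac{n}{30 M_0} \gg M_0^4$ since $n \gg M_0$. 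The divisibility requirement $2 \mid \sum_h \lambda(h)$ is satisfied because $|X_2''| = \sum_{h \in M^*} |V_h''|$ is even (as $|X_2''| = 2|X_1''|$ by construction: in the connected case, every triangle of $\cT_1 \cup \cT_2$ was chosen so as to preserve the $2{:}1$ ratio between $X_2$ and $X_1$) and $|M^*| = 2|\cM^*|$ is even.

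Next I would verify the hypotheses of \cref{thm:HSzFrac-integer} on $R'$ with $k=2$ and $\eta = \tfrac{\mu}{2}$: connectedness was established earlier via the $8d$-strong connectedness of $X_2$; $\delta(R') \geq (\tfrac{1}{2} - 6d)m \geq (\tfrac{1}{2} - 10d)|R'|$ using $|R'| = (1-4d)m$; and $\alpha(R') \leq \alpha(R) < (\tfrac{1}{2} - \mu)m \leq (\tfrac{1}{2} - \tfrac{\mu}{2})|R'|$. Since $d \ll \mu$, \cref{thm:HSzFrac-integer} yields a weight function $\omega : E(R') \to \N_0$ with $\sum_{\ell : h\ell \in E(R')} \omega(h\ell) = \lambda(h)$ for every $h \in M^*$.

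For each edge $h\ell \in E(R')$ with $\omega(h\ell) > 0$ (processed in some order), I will greedily select $\omega(h\ell)$ vertex-disjoint triangles in $G_4$, each with one vertex in $X_1''$ and the other two in $W \cap V_h''$ and $W \cap V_\ell''$, disjoint from all previously chosen triangles; taking the union yields $\cT_3$. To ensure enough triangles exist, I use that $(V_h, V_\ell)$ is $(\e, d^+)$-regular in $G$ (since $h\ell \in E(R)$), so by \cref{lem:reg-slicing} sufficiently large subsets of $W \cap V_h''$ and $W \cap V_\ell''$ span a bipartite graph of density at least $d/2$, while every $v \in W \subseteq X_2$ has $\deg_G(v; X_1'') \geq (1-5\rho)|X_1''|$ by \cref{claim:one-sparse-structure}(ii). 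Combined, any triple of subsets of $(W \cap V_h'', W \cap V_\ell'', X_1'')$ of sizes at least $d\tfrac{n}{m}$ induces at least $\mu_0 n^3$ triangles of $G$ for some $\mu_0 = \mu_0(d) > 0$; hence \cref{lem:greedy-triangles}(ii), applied once per edge of $R'$ with a union bound over the at most $M_0^2$ edges, whp supplies the required triangles in $G_4$. The main obstacle is bookkeeping: one must confirm that each cluster $V_h''$ loses only $\lambda(h) = O(\tfrac{n}{m})$ vertices during the process, so all residuals retain size $\Theta(\tfrac{n}{m})$ and the greedy selection never fails.
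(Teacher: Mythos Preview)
Your setup is essentially the same as the paper's: you correctly define~$\lambda$, verify its bounds and the parity of~$\sum_h\lambda(h)$, check the hypotheses of \cref{thm:HSzFrac-integer} on~$R'$ (connectedness, minimum degree, no large independent set), and obtain the edge-weighting~$\omega$. The divergence---and the gap---is in the greedy triangle selection.

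You apply \cref{lem:greedy-triangles}~(ii) once per edge~$h\ell$ to the triple~$(W\cap V_h'',\,W\cap V_\ell'',\,X_1'')$. That lemma returns a triangle matching of size~$n_0-k$ where~$n_0$ is the \emph{smallest} part size, here~$n_0\approx |W\cap V_h''|\approx\tfrac{n_2}{2m}$. When you then greedily pick triangles for edge~$h\ell$ disjoint from previous choices, the triangles blocked by previously used vertices of~$V_h''\cap W$ and~$V_\ell''\cap W$ number at most~$\lambda(h)+\lambda(\ell)\le\tfrac{n_2}{5m}$, as you note. But the triangles blocked by previously used vertices of~$X_1''$ can be as many as~$\tfrac12\sum_h\lambda(h)\approx\tfrac{n_2}{20}$, since~$X_1''$ is shared by \emph{all} edges of~$R'$. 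Once~$m>6$ (and~$m\ge m_0$ is large), this exceeds the matching size~$\approx\tfrac{n_2}{2m}$, and the greedy step can fail entirely. Your bookkeeping remark only tracks losses in~$V_h''$; the global loss in~$X_1''$ is the real obstruction.

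The paper fixes this by first partitioning~$X_1''$ into disjoint sets~$Y_1,\dots,Y_m$, each of size at least~$\tfrac25\lceil\tfrac{n_2}{m}\rceil$, and then, for each edge~$h\ell$ with~$h<\ell$, drawing the required~$\omega(h\ell)$ triangles from~$G_4[Y_h,\,V_h''\cap W,\,V_\ell''\cap W]$. Now~$Y_h$ is used only for edges whose smaller endpoint is~$h$, so the total number of vertices removed from~$Y_h$ is at most~$\sum_{\ell>h}\omega(h\ell)\le\lambda(h)$, bringing the~$X_1''$-side under the same~$O(\tfrac{n_2}{m})$ control as the cluster sides. With this, the number of unavailable triangles is at most~$3\max\{\lambda(h),\lambda(\ell)\}\le\tfrac{3}{10}\tfrac{n_2}{m}$, which is strictly less than the~$\tfrac{7}{20}\tfrac{n_2}{m}$ supplied by \cref{lem:greedy-triangles}~(ii), and the greedy selection succeeds.
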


\begin{claimproof}
Let~$\lambda: M^* \to \N$ be given by~$\lambda(h) = |V_h''| - \lfloor \tfrac{9}{10} \tfrac n m \rfloor$. Note that we have
$\big(\tfrac{1}{10} - 60\sqrt d\big) \tfrac{n_2}{m} \leq \lambda(h) \leq \lceil \tfrac{1}{10} \tfrac{n_2}{m} \rceil$,
and that~$\sum_{h \in M^*} \lambda(h) = n_2 - 2 \card{\cT_2} - 2 \card{\cM^*} \lfloor \tfrac{9}{10} \tfrac n m \rfloor$ is even. Note also that~$\delta(R')\geq \big(\tfrac{1}{2}-6d\big)m\geq \big(\tfrac{1}{2}-6d\big)|R'|$ and~$\alpha(R')\leq \alpha(R)\le \big(\tfrac{1}{2}-\mu\big)m\le \big(\tfrac{1}{2}-\tfrac{\mu}{2}\big)|R'|$.
Hence, by applying \cref{thm:HSzFrac-integer} to the connected graph~$R'$, there is a weight function~$\omega: E(R') \to \N$ such that for each~$h\in M^*$ we have~$\sum_{\ell \in N_{R'}(h)} \omega(h\ell) = \lambda(h)$.
We claim that we can remove~$\omega(h\ell)$ triangles from~$G_4[X_1'',V''_h \cap W, V''_\ell \cap W]$ for each edge~$h\ell$ of~$R'$, making sure that all our choices are vertex-disjoint. Indeed, let~$Y_1, \ldots, Y_m \subset X_1''$ be disjoint sets of size at least~$\tfrac25 \cdot \lceil \tfrac{n_2}{m}\rceil$ and observe that for~$i=1,2$ we have that~$\deg(v;X_{3-i}'') \geq |X_{3-i}''| - 4\rho n$ for each~$v \in X_i''$ by \cref{claim:one-sparse-structure}.
Since~$\rho \ll \tfrac{1}{m} \ll \e$, this implies that, for each~$k \in [m]$ and~$h \in M^*$, the pair~$(Y_k,V_h''\cap W)$ is~$\left(\e,\left(1-\e^2\right)^+\right)$-super-regular, appealing to \cref{lem:very-dense-implies-regular}.
It further follows from the Slicing Lemma (\cref{lem:reg-slicing}) and the choice of~$W$ that~$(V''_h \cap W, V''_\ell \cap W)$ is~$(10\e,(d/10)^+)$-super-regular for each~$h\ell \in E(R')$.
Hence the triple~$(Y_k,V''_h \cap W, V''_\ell \cap W)$ is~$(10\e,(d/10)^+)$-super-regular for each~$h\ell \in E(R')$ and~$k \in [m]$. Furthermore, we have~$|V''_h\cap W| \geq \tfrac25 \cdot \tfrac{n_2}{m}$.
Hence, an application of \cref{lem:super-reg-triangle-count} and  \cref{lem:greedy-triangles}~$(ii)$  implies that whp, there are~$\tfrac{7}{20} \cdot \tfrac{n_2}{m}$  vertex-disjoint triangles in~$G_4[Y_k,V''_h \cap W, V''_\ell \cap W]$ for each~$h\ell \in E(R')$ and~$k \in [m]$. Thus we can select the desired number of triangles for each~$e \in E(R')$ one at a time greedily as follows. When we look to find a triangle corresponding to the edge~$h\ell\in E(R')$ with~$h<\ell$ (one of~$\omega(h\ell)$ many), we take the triangle from~$G_4[Y_h,V''_h \cap W, V''_\ell \cap W]$, ensuring that it is vertex-disjoint from previous choices. From above we have that there is a collection of at least~$\tfrac{7}{20} \cdot \tfrac{n_2}{m}$ vertex-disjoint triangles in~$G_4[Y_h,V''_h \cap W, V''_\ell \cap W]$ to choose from  and at most~$3\max\{\lambda(h),\lambda(\ell)\}\leq \tfrac{3}{10}\cdot \tfrac{n_2}{m}<\tfrac{7}{20} \cdot \tfrac{n_2}{m}$ are unavailable due to their vertices having already been used in triangles in our triangle matching. This shows that the greedy process will succeed in finding a triangle matching~$\cT_3$  in~$G_4$ such that~$\cT_3$ contains~$\omega(h\ell)$ triangles in~$G_4[X_1'',V''_h \cap W, V''_\ell \cap W]$ for each edge~$h\ell$ of~$R'$.
\end{claimproof}

Let now~$X_i''' = X_i'' \setminus V(\cT_3)$ for each~$i \in [2]$ and~$V_h''' = V_h'' \setminus V(\cT_3)$ for all~$h \in M^*$ and observe that we have covered all vertices except for those in~$X_1''' \cup X_2'''$.
Since~$|X_1'''| = \tfrac12 |X_2'''|$, we can partition~$X_1''' = \bigcup_{e \in \cM^*} X_e'''$ into~$|\cM^*|$ sets of size exactly~$\lfloor \tfrac{9}{10} \tfrac{n_2}{m} \rfloor$. Observe that~$\deg(v;X_{2}''') \geq |X_{2}'''| - 4\rho n$ for each~$v \in X_1'''$ and vice versa by \cref{claim:one-sparse-structure}. Since~$\rho \ll \tfrac{1}{m} \ll \e$,  \cref{lem:very-dense-implies-regular} implies that, for each~$e \in \cM^*$ and~$h \in M^*$, the pair~$(X_e''',V_h''')$ is~$\left(\e,\left(1-\e^2\right)^+\right)$-super-regular. Furthermore, the pair~$(V_h''',V_\ell''')$ is~$(8\e, (d/8)^+)$-super-regular for each~$h\ell \in \cM^*$ by the Slicing Lemma (\cref{lem:reg-slicing}) and~$\deg(v;V_\ell''') \geq \deg(v;V_\ell' \setminus W) \geq \frac14 \deg_G(v;V_\ell') \geq \frac{d}{8} |V_\ell'|$ for all~$v \in V_h'''$ and vice versa.
Therefore,~$(X_{h\ell}''',V_h''',V_\ell''')$ is~$(8\e, (d/8)^+)$-super-regular for all~$h\ell \in \cM^*$. 
Finally, we apply \cref{thm:main-super-reg} to each of these triples individually in~$G_5$ to obtain whp a triangle matching~$\cT_4$  covering exactly~$X_1''' \cup X_2'''$. So we have that whp all of  the triangle matchings~$\cT_1,\ldots,\cT_4$ exist  and taking~$\cT=\cT_1\cup \cT_2\cup\cT_3\cup \cT_4$, we have that~$\cT$ is a triangle factor in~$G_p$ as required. 
\end{proof}

\section{Concluding Remarks} \label{sec:conclude}

\paragraph{\textbf{Clique factors.}}

Generalising the definition of a triangle factor, a  $K_k$-factor in a graph $G$ is a collection of vertex-disjoint copies of $K_k$ covering the vertex set of $G$.
We say that an $n$-vertex graph~$G$ is \emph{$k$-full} if $n\in k \N$ and $\delta(G)\geq (1-\tfrac{1}{k})n$.
Analogously to \cref{thm:CorradiHajnal}, Hajnal and Szemer\'edi~\cite{Hajnal1970} proved that for any $k\geq 2$,
any $k$-full graph contains a $K_k$-factor, and this is tight. 
Moreover, Johansson, Kahn and Vu~\cite{Johansson2008} also proved the threshold for the existence of clique factors (and indeed many other factors in graphs and hypergraphs), showing that it is 
\[p^*_k(n):=(\log n)^{2/(k^2-k)}n^{-2/k}.\]
We believe that our methods can also be used to give a robust Hajnal--Szemer\'edi
theorem. That is, there is $C>0$ such that for $p\geq
Cp^*_k(n)$ and any $k$-full graph~$G$
the random sparsification $G_p$  contains a $K_k$-factor.
We do not believe that significant new ideas
would be needed for this, but that it would be technically much more involved,
in particular in the analysis of the extremal cases in the proof of
\cref{thm:main}. Consequently, we concentrated on triangle factors here.

It would also be interesting to establish how many~$K_k$-factors are necessarily
contained in a~$k$-full graph. In particular, it would be interesting to
establish the following.

\begin{prob}\label{prob:no of factors}
  Show that there is some constant~$c=c(k)$ such that in
  any~$n$-vertex $k$-full graph the number of distinct~$K_k$-factors is at
  least~$(cn)^{n(1-1/k)}$.
\end{prob}

This would be tight up to the value of~$c$ and is
established for triangle factors in \cref{cor:triangle factor count} with an
extra $\log$-factor.

Similarly, it is interesting to consider edge-disjoint~$K_k$-factors.  By
considering a random partition of edges, \cref{thm:main} implies that
any~$n$-vertex $3$-full graph contains a family of at least~$\Omega(n^{2/3}(\log
n)^{-1/3})$ edge-disjoint triangle factors. In terms of upper bounds, by
considering triangles at a fixed vertex~$v$ with~$\deg(v)=\tfrac{2n}{3}$, it is
clear that one cannot hope for more than~$\tfrac{n}{3}$ edge-disjoint triangle
factors. In fact one can do slightly better than this by considering a
construction similar to that of Nash-Williams~\cite{nash1970hamiltonian} for the
number of edge-disjoint Hamilton cycles in Dirac graphs. Indeed, let~$n\in 3
\NN$ and~$m:=\tfrac{n}{3}$. Consider the~$n$-vertex complete tripartite graph on
vertex parts~$X\cup Y\cup Z$ such that~$|X|=m+2$ and~$|Y|=|Z|=m-1$. Let~$G$ be
the graph obtained from this tripartite graph by adding the edges of some
cycle~$C$ of length~$m+2$ on the vertices of~$X$. It is easy to check that~$G$
is~$3$-full. Moreover, any triangle factor in~$G$ must contain at least~$2$
edges of~$C$. Hence~$G$ contains at
most~$\floor{\tfrac{m+2}{2}}=\floor{\tfrac{n}{6}}+1$ edge-disjoint triangle
factors. This leaves a big gap and it would be very interesting to bring these
bounds closer together.

\begin{prob}
  Determine the number maximal number of edge-disjoint triangle factors
  guaranteed in any $n$-vertex $3$-full graph.
\end{prob}

\paragraph{\textbf{Universality.}}

For~$2\le k \in\NN$, we say an~$n$-vertex graph~$G$ is~\emph{$k$-universal} if
it contains a copy of every graph~$F$ on at most~$n$ vertices with maximum degree at most~$k$. Understanding universality in graphs seems to be a considerable challenge and many beautiful conjectures remain open.

A moment's thought may suggest  that a~$K_{k+1}$-factor is the `\emph{hardest}' maximum degree~$k$ graph to find in a graph~$G$, as a clique is the densest graph with maximum degree~$k$ and a clique factor  maximises the number of cliques. This intuition appears to hold true and has manifested in various settings. For example, we know from the theorem of Hajnal and Szemer\'edi \cite{Hajnal1970} that any~$n$-vertex graph~$G$ with~$\delta(G)\geq \big(\tfrac{k}{k+1}\big)n$ contains a~$K_{k+1}$-factor and that this is tight. Bollob\'as and Eldridge~\cite{bollobas1978packings}, and independently Catlin~\cite{Catlin}, conjectured that the same minimum degree condition actually guarantees~$k$-universality. This has been proven for~$k=2,3$~\cite{aigner1993embedding,alon19962,csaba2003proof} (and large~$n$ when~$k=3$) but remains open in general. In the case of random graphs, we know from the theorem of Johansson, Kahn and Vu \cite{Johansson2008}  that the threshold for the appearance of a~$K_{k+1}$-factor is~$p^*_{k+1}(n)$. The recent breakthrough result of Frankston, Kahn, Narayanan and Park~\cite{frankston2019thresholds} on thresholds implies that for any~$n$-vertex graph~$F$ with maximum degree~$k$, the threshold for the appearance of~$F$ in~$G(n,p)$ is at most~$p_{k+1}^*(n)$. Note that this is \emph{not} implying that~$G(n,p)$ is~$k$-universal whp when~$p=\omega(p^*_{k+1}(n))$ as we can only guarantee that some fixed~$F$ appears whp. However, the stronger version that~$p^*_{k+1}(n)$ is the threshold for~$k$-universality is believed to be true but only verified for~$k=2$~\cite{FKL16}. We remark that in general the~$2$-universality question is considerably more assailable than the general case due to the fact that every maximum degree~$2$ graph
is of a relatively simple structure, that is,
a union of disjoint cycles and paths, and thus this class of graphs is comparatively small.

We also believe that a robustness version for universality holds true as follows.

\begin{conj} \label{conj:universalrobust}
For any~$k\geq 2$, there exists a~$C>0$ such that for all~$n\in \NN$ and~$p\geq Cp^{*}_{k+1}$, the following holds.  If~$G$ is  a graph with~$\delta(G)\geq \big(\tfrac{k}{k+1}\big)n$ then whp~$G_p$ is~$k$-universal.
\end{conj}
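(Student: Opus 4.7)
The plan is to split \cref{conj:universalrobust} into two subproblems and attack them in sequence. First, establish the $K_{k+1}$-factor analogue of \cref{thm:main}, that is, a robust Hajnal--Szemer\'edi theorem. Second, leverage the $K_{k+1}$-factor result as an absorbing device to cover an arbitrary spanning subgraph $F$ of maximum degree at most $k$. The motivation for this split is the heuristic (emphasised in the paper) that a $K_{k+1}$-factor is the hardest max-degree-$k$ graph to embed, so a sharp robustness statement for $K_{k+1}$-factors should be the crucial input for the general case.

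For the first subproblem, I would follow the template of the present paper: reduce to a partite super-regular version (an analogue of \cref{thm:main-super-reg}) via the regularity lemma and a suitable stability version of Hajnal--Szemer\'edi, the general $k$ case of which is already developed in \cref{sec:HSzFrac}. Inside the super-regular $(k+1)$-partite setting, one then proves generalisations of \cref{prop:almost-factor}, \cref{prop:full-step} and the Local Distribution Lemma (\cref{lem:LDL}), with the benchmark in \cref{eq:hhat} replaced by $H(n,p,d)=\log\bigl((pd)^{\binom{k+1}{2}} n^k\bigr)$, the triangle-neighbourhood $\tr_v(\Gamma)$ replaced by the family of $K_k$-copies in the common neighbourhood of $v$, and the switching argument used in \cref{lem:compare} swapping $K_{k+1}$-cliques rather than triangles. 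The Entropy Lemma (\cref{lem:entropy lemma}) and the counting lemmas of \cref{sec:smallsubgraphs} should extend along the same lines, although the Kim--Vu and Janson bounds become delicate as the combinatorics of overlapping $K_{k+1}$-copies grows with $k$.

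For the second subproblem, the strategy is absorption. Given a target $F$ with $\Delta(F)\le k$, one reserves from $G_p$ a small \emph{absorbing subgraph} $A$ which is flexible enough that, for any small set $L$ of leftover vertices, a suitable piece of $F$ can be embedded to cover $V(A)\cup L$. The absorbers would be assembled from local configurations of $K_{k+1}$-copies, whose robust existence follows from the stage-one result together with the sparse blow-up lemma~\cite{AllenBoettcherHanKohayakawaPerson-blowup-sparse}. After reserving $A$, one embeds the bulk of $F$ into $G_p\setminus V(A)$ by a sparse regularity/blow-up argument, and finally closes up using $A$. Establishing the existence of absorbers at the threshold $p^*_{k+1}(n)$, rather than at the larger probabilities for which the sparse blow-up lemma directly applies, is the main technical hurdle here.

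The hard part will be twofold. First, the extremal Bollob\'as--Eldridge--Catlin conjecture is only known for $k\le 3$, so for $k\ge 4$ one cannot even hope to succeed without a breakthrough on the deterministic problem. Second, $k$-universality of $G(n,p)$ at the threshold $p^*_{k+1}(n)$ is itself only proven for $k=2$ by Ferber, Kronenberg and Luh~\cite{FKL16}, so even in the case $G=K_n$ the conjecture is open for $k\ge 3$. A realistic intermediate target is therefore $k=2$: max-degree-$2$ graphs are disjoint unions of paths and cycles, and combining the framework developed in this paper with the Ferber--Kronenberg--Luh approach to $2$-universality should be feasible, since the rigid structure of such $F$ makes the absorption step substantially easier than in the general case.
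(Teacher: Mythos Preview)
The statement you are addressing is a \emph{conjecture}, not a theorem: the paper does not prove it and explicitly presents it as open, remarking that ``a full solution to this conjecture at this point would be remarkable'' and that the case $k=2$ ``seems attainable and would be interesting.'' There is therefore no proof in the paper to compare your proposal against.

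What you have written is not a proof but a research programme, and you yourself correctly identify the two genuine obstructions that make it one: the Bollob\'as--Eldridge--Catlin conjecture is open for $k\ge 4$ (so the $p=1$ case of \cref{conj:universalrobust} is already unknown there), and $k$-universality of $G(n,p)$ at the threshold $p^*_{k+1}$ is open for $k\ge 3$ (so the $G=K_n$ case is unknown there). Your suggestion to target $k=2$ first, combining the methods of this paper with the Ferber--Kronenberg--Luh approach, is exactly in line with what the paper singles out as the feasible next step. The first subproblem you describe --- a robust Hajnal--Szemer\'edi theorem for $K_{k+1}$-factors --- is also discussed separately in the paper's concluding remarks as something the authors believe their methods can handle, though with substantially more technical work; your sketch of how the entropy and switching arguments would generalise is consistent with that assessment.
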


 \cref{conj:universalrobust} is a common strengthening of the conjecture of Bollob\'as--Eldridge--Catlin and the threshold for universality and so a full solution to this conjecture at this point would be remarkable. However, establishing  the case~$k=2$ seems attainable and would be  interesting. 

\paragraph{\textbf{Powers of Hamilton cycles.}}

For~$1\le k\in \NN$,  we say an~$n$-vertex graph~$G$ contains \emph{the $k$-th power of a Hamilton cycle}  if it contains a copy of the graph obtained by taking a cycle~$C_n$ of length~$n$ and adding an edge between any pair of vertices that have distance at most~$k$ in~$C_n$. When~$k=1$, this just corresponds to~$G$ being Hamiltonian. For~$k=2$, we say~$G$ contains the \emph{square of a Hamilton cycle}. Powers of Hamilton cycles are a natural generalisation of Hamilton cycles and are well-studied. Note that for~$k\geq 2$, if~$G$ has~$n\in {(k+1)}\NN$ vertices then the existence of the $k$-th power of a Hamilton cycle in~$G$ implies the existence of a~$K_{k+1}$-factor in~$G$. Therefore any threshold for containing the $k$-th power of a Hamilton cycle must be at least as large  as the threshold for a~$K_{k+1}$-factor.
 
In the extremal setting, perhaps surprisingly, it turns out that the minimum
degree thresholds coincide. Indeed, Koml\'os, S\'ark\"ozy and
Szemer\'edi~\cite{KomSarSze1998Seymour} confirmed 
conjectures of P\'osa and Seymour for large~$n$ by showing that any~$n$-vertex
graph with~$\delta(G)\geq \big(\tfrac{k}{k+1}\big)n$ contains the $k$-th power
of a Hamilton cycle. In the probabilistic setting, the situation is different
and we see a separation between the thresholds
for~$K_{k+1}$-factors, which as discussed earlier is $p_{k+1}^*=n^{-2/(k+1)}(\log n)^{2/(k^2+k)}$, and the
thresholds for $k$-th powers of Hamilton cycles, which has been shown to
be~$n^{-1/k}$. For~$k\geq 3$, this threshold follows from a general result of
Riordan~\cite{Riordan} using an argument based on the second moment method. For
squares of Hamilton cycles, the problem of establishing the threshold took much
longer and was only recently proven by Kahn, Narayanan and
Park~\cite{kahn2021threshold}.

 In the robustness setting, the sparse blow-up
 lemma~\cite{AllenBoettcherHanKohayakawaPerson-blowup-sparse} gives that for
 all~$\eps>0$ and~$n$-vertex graphs~$G$ with~$\delta(G)\geq
 \big(\tfrac{k}{k+1}+\eps)n$, if~$p=\omega\big(\tfrac{\log n}{n}\big)^{1/2k}$,
 then~$G_p$ whp contains the $k$-th power of a Hamilton cycle. For squares of
 Hamilton cycles, this bound on~$p$ was improved to $p\geq n^{-1/2+\eps}$
 by Fischer~\cite{fischer2016robustness}. It is believable that for
 all~$k\geq 2$, an analogue of \cref{thm:main} holds in this setting and that
 the conclusions of the above results remain true without the~$\eps$ in the
 minimum degree condition and with probability values all the way down to the
 threshold $n^{-1/k}$ observed in random graphs.

 \begin{conj}
   For every~$k$ there is $C$ such that for $p\ge C n^{-1/k}$ and
   every~$n$-vertex graph~$G$ with~$\delta(G)\geq\tfrac{k}{k+1}n$ the random
   sparsification~$G_p$ whp contains the $k$-th power of a Hamilton cycle.
 \end{conj}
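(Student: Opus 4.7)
The plan is to mirror the structural framework developed in this paper for triangle factors and adapt it to the conjectured robustness statement for $k$-th powers of Hamilton cycles. The overall strategy would be: (i) prove a ``main technical theorem'' analogous to \cref{thm:main-super-reg}, stating that in a suitable super-regular $(k+1)$-partite-like structure $\Gamma$ on $n$ vertices per part (or rather in a super-regular `cycle-of-cliques' arrangement that can host the $k$-th power of a Hamilton cycle), $\Gamma_p$ whp contains the desired spanning structure for $p \ge C n^{-1/k}$; (ii) reduce the general minimum-degree statement to this via the regularity method plus an analysis of extremal configurations.

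For step~(ii), I would apply \cref{lem:reglem} to $G$ to obtain a reduced graph $R$ of minimum degree essentially $\tfrac{k}{k+1}m$, and appeal to the Koml\'os--S\'ark\"ozy--Szemer\'edi theorem (Seymour's conjecture) to find the $k$-th power of a Hamilton cycle in $R$; however, what we actually want in $R$ is the \emph{$(k+1)$-th power} of a Hamilton cycle, so that each window of $k+1$ clusters induces a super-regular tuple that can host a clique of the embedded structure. A stability/extremal analysis analogous to \cref{sec:HSzFrac,sec:reduction} would reduce the general case to either (a) the near-extremal situation where $G$ is close to the Tur\'an-type extremal graph for $K_{k+1}$-factors, which is handled directly, or (b) the generic case where $R$ contains the required $(k+1)$-th power of a Hamilton cycle; after passing to super-regular sub-clusters via \cref{lem:super-reg} and using \cref{lem:reg-exact-density}, one is reduced to the main technical theorem.

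For step~(i), I propose a two-phase embedding. First, I would establish an ``almost-power-of-cycle'' lemma analogous to \cref{prop:almost-factor}, counting labelled partial embeddings of the $k$-th power of a long path into $\Gamma_p$, by greedily choosing vertices one by one along the cyclic arrangement of clusters and using concentration (Janson/Kim--Vu, as in \cref{lem:triangle-count}) to lower bound, at each step, the number of valid extensions. Second, I would establish an ``extension'' lemma analogous to \cref{prop:full-step}, showing that once we have covered $(1-\eta)n$ vertices we can still extend vertex by vertex all the way to a full closed $k$-th power of a Hamilton cycle. This second phase would be driven by an appropriate Local Distribution Lemma: when we have placed a long partial $k$-th power and wish to choose the next vertex $v$ (which must together with the last $k$ placed vertices form a $K_{k+1}$), we view the number of valid completions as a function of $v$ and show, via entropy, that most candidate vertices admit essentially the expected $p^{k}$-fraction of completions. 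To close the cycle I would incorporate an absorption structure: reserve, before starting the main embedding, an absorbing path built from $(k+1)$-cliques whose flexibility lets any small leftover set be inserted; with $p \ge Cn^{-1/k}$, the expected count of the required absorbing gadgets (each using $\binom{k+1}{2}$ edges at each step) is polynomially large, so Janson's inequality yields their existence whp.

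The hard part will undoubtedly be the Local Distribution Lemma in this cyclic setting. In the factor case the triangles are mutually independent, so revealing $\psi_v^*$ only affects one triangle, and the Entropy Lemma (\cref{lem:entropy lemma}) can bound the entropy vertex by vertex cleanly. For the $k$-th power of a Hamilton cycle, the commitment at vertex $v$ constrains the next $k$ choices, creating long-range correlations; the entropy decomposition via \cref{lem:chain rule} along the cyclic order must therefore be delicate, and the analog of the switching argument in \cref{lem:compare} (replacing a triangle at $v$ by one at $u$) becomes a much more intricate local rerouting of a length-$k$ window of $K_{k+1}$'s, which must preserve the cliquishness of all overlapping windows. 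A plausible approach is to work with the line graph (or $k$-shadow) formulation, where vertices of the target are ``slots'' and the constraint between consecutive slots is local, which may permit the entropy argument of \cref{sec:entropylem} to go through with the quantitative bound $H(n,p,d) = \log(p^{k}d^{\binom{k+1}{2}} n^{k})$ replacing the triangle quantity. Pushing the entropy stability (\cref{lem:large-entropy}) through a chain of $n$ correlated local choices at the correct tightness of $p \ge Cn^{-1/k}$ is the central technical hurdle, and probably requires a genuinely new idea beyond a direct transcription of the triangle-factor proof.
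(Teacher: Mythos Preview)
The statement you are addressing is a \emph{conjecture}, not a theorem: the paper states it in the concluding remarks as an open problem and does not prove it. There is therefore no proof in the paper to compare your proposal against.

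What you have written is a plausible research outline rather than a proof, and you yourself identify the central obstruction: in the $k$-th power of a Hamilton cycle the local choices are chained together, so the entropy decomposition and the switching argument from \cref{lem:compare} and \cref{lem:entropy lemma} do not transfer directly. In the triangle-factor case, replacing the triangle at $v$ by one at $u$ is a local operation that leaves all other triangles intact; for a $k$-th power of a cycle, rerouting a window of $k+1$ consecutive vertices forces compatibility with the $k$ vertices on either side, and it is not clear that the relevant ``common-neighbourhood'' edge sets (the analogues of $F(v)$ in \cref{lem:compare}) are large enough, or that the near-uniformity furnished by \cref{lem:large-entropy} survives the accumulated conditioning along the cycle. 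Your proposal correctly flags this as ``probably requiring a genuinely new idea,'' which is an accurate assessment: this is precisely why the statement remains a conjecture.
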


\bibliographystyle{abbrv}
\bibliography{bib}

@article{szemeredi,
 author               = {Szemerédi, Endre},
journal            = {{P}robl\'{e}mes {C}ombinatoires et {T}h\'{e}orie des {G}raphes {C}olloques {I}nternationaux {CNRS} 260},
 pages                = {399-401},
 title                = {Regular partitions of graphs},
 year                 = {1978},
 }

@Article{Johansson2008,
  author    = {Johansson, A. and Kahn, J. and Vu, V. H.},
     TITLE = {Factors in random graphs},
   JOURNAL = {Random Structures Algorithms},
  FJOURNAL = {Random Structures \& Algorithms},
    VOLUME = {33},
      YEAR = {2008},
    NUMBER = {1},
     PAGES = {1--28},
      ISSN = {1042-9832},
       DOI = {10.1002/rsa.20224},
       URL = {https://doi.org/10.1002/rsa.20224},
			}

@unpublished{Galvin2014,
  author  = {Galvin, D.},
  title   = {Three tutorial lectures on entropy and counting},
  note = {arXiv:1406.7872},
  year    = {2014},
}

@Book{Alon2015,
    AUTHOR = {Alon, Noga and Spencer, Joel H.},
     TITLE = {The probabilistic method},
    SERIES = {Wiley Series in Discrete Mathematics and Optimization},
   EDITION = {Fourth},
 PUBLISHER = {John Wiley \& Sons, Inc., Hoboken, NJ},
      YEAR = {2016},
     PAGES = {xiv+375},
      ISBN = {978-1-119-06195-3},
}

@Article{Kim2000,
  author    = {Kim, J. H. and Vu, V. H.},
     TITLE = {Concentration of multivariate polynomials and its
              applications},
   JOURNAL = {Combinatorica},
  FJOURNAL = {Combinatorica. An International Journal on Combinatorics and
              the Theory of Computing},
    VOLUME = {20},
      YEAR = {2000},
    NUMBER = {3},
     PAGES = {417--434},
      ISSN = {0209-9683},
       DOI = {10.1007/s004930070014},
       URL = {https://doi.org/10.1007/s004930070014},
}

@InProceedings{Hajnal1970,
  author    = {Hajnal, A. and Szemer\'{e}di, E.},
  title     = {Proof of a conjecture of {P}. {E}rd{\H{o}}s},
 BOOKTITLE = {Combinatorial theory and its applications, {II} ({P}roc.
              {C}olloq., {B}alatonf\"{u}red, 1969)},
     PAGES = {601--623},
 PUBLISHER = {North-Holland, Amsterdam},
      YEAR = {1970},
}

@Article{Janson1990,
    AUTHOR = {Janson, S.},
     TITLE = {Poisson approximation for large deviations},
   JOURNAL = {Random Structures Algorithms},
  FJOURNAL = {Random Structures \& Algorithms},
    VOLUME = {1},
      YEAR = {1990},
    NUMBER = {2},
     PAGES = {221--229},
      ISSN = {1042-9832},
       DOI = {10.1002/rsa.3240010209},
       URL = {https://doi.org/10.1002/rsa.3240010209},
}

@Article{Spencer1990,
    AUTHOR = {Spencer, J.},
     TITLE = {Counting extensions},
   JOURNAL = {J. Combin. Theory Ser. A},
  FJOURNAL = {Journal of Combinatorial Theory. Series A},
    VOLUME = {55},
      YEAR = {1990},
    NUMBER = {2},
     PAGES = {247--255},
      ISSN = {0097-3165},
       DOI = {10.1016/0097-3165(90)90070-D},
       URL = {https://doi.org/10.1016/0097-3165(90)90070-D},
}

@Article{Chernoff1952,
    AUTHOR = {Chernoff, H.},
     TITLE = {A measure of asymptotic efficiency for tests of a hypothesis
              based on the sum of observations},
   JOURNAL = {Ann. Math. Statistics},
  FJOURNAL = {Annals of Mathematical Statistics},
    VOLUME = {23},
      YEAR = {1952},
     PAGES = {493--507},
      ISSN = {0003-4851},
       DOI = {10.1214/aoms/1177729330},
       URL = {https://doi.org/10.1214/aoms/1177729330},
}

@Article{Korshunov1976,
    AUTHOR = {Kor\v{s}unov, A. D.},
     TITLE = {Solution of a problem of {P}. {E}rd{\H{o}}s and {A}. {R}\'{e}nyi on {H}amiltonian cycles in undirected graphs},
   JOURNAL = {Dokl. Akad. Nauk SSSR},
  FJOURNAL = {Doklady Akademii Nauk SSSR},
    VOLUME = {228},
      YEAR = {1976},
    NUMBER = {3},
     PAGES = {529--532},
      ISSN = {0002-3264},
}

@Article{Posa1976,
  author   = {L. P{\'o}sa},
  journal  = {Discrete Math.},
  title    = {{H}amiltonian circuits in random graphs},
  year     = {1976},
  issn     = {0012-365X},
  number   = {4},
  pages    = {359--364},
  volume   = {14},
  doi      = {https://doi.org/10.1016/0012-365X(76)90068-6},
  url      = {http://www.sciencedirect.com/science/article/pii/0012365X76900686},
}

@article{Riordan2018,
 title={Random cliques in random graphs and sharp thresholds for ${F}$-factors},
  author={Riordan, Oliver},
  journal={Random Structures \& Algorithms},
  volume={61},
  number={4},
  pages={619--637},
  year={2022},
  publisher={Wiley Online Library}
}

@unpublished{Kahn2019,
  author  = {J. Kahn},
  note = {arXiv:1909.06834},
  title   = {Asymptotics for {S}hamir's {P}roblem},
  year    = {2019},
}

@article{kahn2020hitting,
  title={Hitting times for {S}hamir's Problem},
  author={Kahn, Jeff},
JOURNAL = {Trans. Amer. Math. Soc.},
  FJOURNAL = {Transactions of the American Mathematical Society},
    VOLUME = {375},
      YEAR = {2022},
    NUMBER = {1},
     PAGES = {627--668},
      ISSN = {0002-9947},
}

@Article{Dirac1952,
    AUTHOR = {Dirac, G. A.},
     TITLE = {Some theorems on abstract graphs},
   JOURNAL = {Proc. London Math. Soc. (3)},
  FJOURNAL = {Proceedings of the London Mathematical Society. Third Series},
    VOLUME = {2},
      YEAR = {1952},
     PAGES = {69--81},
      ISSN = {0024-6115},
       DOI = {10.1112/plms/s3-2.1.69},
       URL = {https://doi.org/10.1112/plms/s3-2.1.69},
}

@Article{Corradi1963,
    AUTHOR = {Corr\'{a}di, K. and Hajnal, A.},
     TITLE = {On the maximal number of independent circuits in a graph},
   JOURNAL = {Acta Math. Acad. Sci. Hungar.},
  FJOURNAL = {Acta Mathematica. Academiae Scientiarum Hungaricae},
    VOLUME = {14},
      YEAR = {1963},
     PAGES = {423--439},
      ISSN = {0001-5954},
       DOI = {10.1007/BF01895727},
       URL = {https://doi.org/10.1007/BF01895727},
}

@Article{Krivelevich2014,
  author  = {M. Krivelevich and C. Lee and B. Sudakov},
       TITLE = {Robust {H}amiltonicity of {D}irac graphs},
   JOURNAL = {Trans. Amer. Math. Soc.},
  FJOURNAL = {Transactions of the American Mathematical Society},
    VOLUME = {366},
      YEAR = {2014},
    NUMBER = {6},
     PAGES = {3095--3130},
      ISSN = {0002-9947},
       DOI = {10.1090/S0002-9947-2014-05963-1},
       URL = {https://doi.org/10.1090/S0002-9947-2014-05963-1},
}

@Unpublished{AllenBoettcherHanKohayakawaPerson-blowup-sparse,
	title={Blow-up lemmas for sparse graphs}, 
	author={Peter Allen and Julia B\"{o}ttcher and Hiep H\`{a}n and Yoshiharu Kohayakawa and Yury Person},
	year={2016},
	note={arXiv:1612.00622}
}

@Unpublished{Allen2020+,
  author  = {Allen, P. and Böttcher, J. and Davies, E. and Kohayakawa, Y. and Jenssen, M. and Roberts, B.},
  note = {Unpublished},
  title   = {Shamir’s problem revisited},
}

@Book{Janson2011,
  AUTHOR = {Janson, S. and \L{}uczak, T. and Ruci{\'n}ski, A.},
     TITLE = {Random graphs},
    SERIES = {Wiley-Interscience Series in Discrete Mathematics and
              Optimization},
 PUBLISHER = {Wiley-Interscience, New York},
      YEAR = {2000},
     PAGES = {xii+333},
      ISBN = {0-471-17541-2},
       URL = {https://doi.org/10.1002/9781118032718},
}

@Article{Hoeffding1963,
  author  = {Hoeffding, W.},
  title   = {Probability inequalities for sums of bounded random variables},
JOURNAL = {J. Amer. Statist. Assoc.},
  FJOURNAL = {Journal of the American Statistical Association},
    VOLUME = {58},
      YEAR = {1963},
     PAGES = {13--30},
}

@Article{Chvatal1979,
    AUTHOR = {Chv\'{a}tal, V.},
     TITLE = {The tail of the hypergeometric distribution},
   JOURNAL = {Discrete Math.},
  FJOURNAL = {Discrete Mathematics},
    VOLUME = {25},
      YEAR = {1979},
    NUMBER = {3},
     PAGES = {285--287},
      ISSN = {0012-365X},
       DOI = {10.1016/0012-365X(79)90084-0},
       URL = {https://doi.org/10.1016/0012-365X(79)90084-0},
}

@InCollection{Komlos1996,
  author    = {Koml\'{o}s, J. and Simonovits, M.},
  title     = {Szemer\'{e}di's regularity lemma and its applications in graph theory},
 BOOKTITLE = {Combinatorics, {P}aul {E}rd\H{o}s is eighty, {V}ol. 2
              ({K}eszthely, 1993)},
    SERIES = {Bolyai Soc. Math. Stud.},
    VOLUME = {2},
     PAGES = {295--352},
 PUBLISHER = {J\'{a}nos Bolyai Math. Soc., Budapest},
      YEAR = {1996},
}

@unpublished{skala2013hypergeometric,
  title={Hypergeometric tail inequalities: ending the insanity},
  author={Skala, Matthew},
  note={arXiv:1311.5939},
  year={2013}
}

@unpublished{vsileikis2019counting,
  title={Counting extensions revisited},
  author={{\v{S}}ileikis, Matas and Warnke, Lutz},
  note={arXiv:1911.03012},
  year={2019}
}

@unpublished{pham2022toolkit,
  title={A toolkit for robust thresholds},
  author={Pham, HT and Sah, A and Sawhney, M and Simkin, M},
  note={arXiv preprint arXiv:2210.03064},
  year={2022}
}

@article{sarkozy2003number,
  title={On the number of {H}amiltonian cycles in {D}irac graphs},
  author={S{\'a}rk{\"o}zy, G{\'a}bor N and Selkow, Stanley M and Szemer{\'e}di, Endre},
   JOURNAL = {Discrete Math.},
  FJOURNAL = {Discrete Mathematics},
    VOLUME = {265},
      YEAR = {2003},
    NUMBER = {1-3},
     PAGES = {237--250},
}

@inproceedings{nash1970hamiltonian,
    AUTHOR = {Nash-Williams, C. St. J. A.},
     TITLE = {Hamiltonian lines in graphs whose vertices have sufficiently
              large valencies},
 BOOKTITLE = {Combinatorial theory and its applications, {III} ({P}roc.
              {C}olloq., {B}alatonf\"{u}red, 1969)},
     PAGES = {813-819},
 PUBLISHER = {North-Holland, Amsterdam},
      YEAR = {1970},
  }

@article{cuckler2009hamiltonian,
  title={Hamiltonian cycles in {D}irac graphs},
  author={Cuckler, Bill and Kahn, Jeff},
  journal={Combinatorica},
  volume={29},
  number={3},
  pages={299--326},
  year={2009},
  publisher={Springer}
}

@article{johansson2020hamilton,
  title={On {H}amilton cycles in {E}rd{\H{o}}s-{R}{\'e}nyi subgraphs of large graphs},
  author={Johansson, Tony},
   JOURNAL = {Random Structures Algorithms},
  FJOURNAL = {Random Structures \& Algorithms},
    VOLUME = {57},
      YEAR = {2020},
    NUMBER = {1},
     PAGES = {132--149},
}

@article{alon2019hitting,
  title={Hitting time of edge disjoint {H}amilton cycles in random subgraph processes on dense base graphs},
  author={Alon, Yahav and Krivelevich, Michael},
   JOURNAL = {SIAM J. Discrete Math.},
  FJOURNAL = {SIAM Journal on Discrete Mathematics},
    VOLUME = {36},
      YEAR = {2022},
    NUMBER = {1},
     PAGES = {728--754},
      ISSN = {0895-4801},
}

@article{frankston2019thresholds,
  title={Thresholds versus fractional expectation-thresholds},
  author={Frankston, Keith and Kahn, Jeff and Narayanan, Bhargav and Park, Jinyoung},
   JOURNAL = {Ann. of Math. (2)},
  FJOURNAL = {Annals of Mathematics. Second Series},
    VOLUME = {194},
      YEAR = {2021},
    NUMBER = {2},
     PAGES = {475--495},
      ISSN = {0003-486X},
}

@incollection{sudakov2017robustness,
  title={Robustness of graph properties},
  author={Sudakov, Benny},
 BOOKTITLE = {Surveys in combinatorics 2017},
    SERIES = {London Math. Soc. Lecture Note Ser.},
    VOLUME = {440},
     PAGES = {372--408},
 PUBLISHER = {Cambridge Univ. Press, Cambridge},
      YEAR = {2017},
}

@article{bollobas1978packings,
  title={Packings of graphs and applications to computational complexity},
  author={Bollob{\'a}s, B{\'e}la and Eldridge, Stephen E.},
   JOURNAL = {J. Combin. Theory Ser. B},
  FJOURNAL = {Journal of Combinatorial Theory. Series B},
    VOLUME = {25},
      YEAR = {1978},
    NUMBER = {2},
     PAGES = {105--124},
}

@phdthesis{Catlin,
author       = {P. A. Catlin}, 
  title        = {Embeddings subgraphs and coloring graphs under extremal degree conditions},
  school       = {Ohio State University},
  year         = {1976},
}

@article{aigner1993embedding,
  title={Embedding arbitrary graphs of maximum degree two},
  author={Aigner, Martin and Brandt, Stephan},
   JOURNAL = {J. London Math. Soc. (2)},
  FJOURNAL = {Journal of the London Mathematical Society. Second Series},
    VOLUME = {48},
      YEAR = {1993},
    NUMBER = {1},
     PAGES = {39--51},
}

@article{alon19962,
  title={2-factors in dense graphs},
  author={Alon, Noga and Fischer, Eldar},
   JOURNAL = {Discrete Math.},
  FJOURNAL = {Discrete Mathematics},
    VOLUME = {152},
      YEAR = {1996},
    NUMBER = {1-3},
     PAGES = {13--23},
}

@article{csaba2003proof,
  title={Proof of a conjecture of {B}ollob{\'a}s and {E}ldridge for graphs of maximum degree three},
  author={Csaba, B{\'e}la and Shokoufandeh, Ali and Szemer{\'e}di, Endre},
  journal={Combinatorica},
  volume={23},
  number={1},
  pages={35--72},
  year={2003},
  publisher={Springer}
}

@article{FKL16,
  title={Optimal Threshold for a Random Graph to be $2$-Universal},
  author={Ferber, Asaf and Kronenberg, Gal and Luh, Kyle},
    JOURNAL = {Trans. Amer. Math. Soc.},
  FJOURNAL = {Transactions of the American Mathematical Society},
    VOLUME = {372},
      YEAR = {2019},
    NUMBER = {6},
     PAGES = {4239--4262},
}

@article{Riordan,
	Author = {Oliver Riordan},
	Bibsource = {dblp computer science bibliography, http://dblp.org},
	Biburl = {http://dblp.uni-trier.de/rec/bib/journals/cpc/Riordan00},
	Journal = {Combinatorics, Probability {\&} Computing},
	Number = {2},
	Pages = {125--148},
	Timestamp = {Fri, 20 Feb 4432348 11:52:48 +raphs},
	Title = {Spanning Subgraphs Of Random Graphs},
	Url = {http://journals.cambridge.org/action/displayAbstract?aid=46773},
	Volume = {9},
	Year = {2000},
	Bdsk-Url-1 = {http://journals.cambridge.org/action/displayAbstract?aid=46773}}

@article{kahn2021threshold,
  title={The threshold for the square of a {H}amilton cycle},
  author={Kahn, Jeff and Narayanan, Bhargav and Park, Jinyoung},
   JOURNAL = {Proc. Amer. Math. Soc.},
  FJOURNAL = {Proceedings of the American Mathematical Society},
    VOLUME = {149},
      YEAR = {2021},
    NUMBER = {8},
     PAGES = {3201--3208},
}

@mastersthesis{fischer2016robustness,
  title={Robustness of {P}{\'o}sa’s conjecture},
  author={Fischer, Manuela},
  year={2016},
  school={Eidgen\"ossische Technische Hochschule (ETH) Z\"urich}
}

@article {KomSarSze1998Seymour,
    AUTHOR = {Koml\'{o}s, J\'{a}nos and S\'{a}rk\"{o}zy, G\'{a}bor N. and Szemer\'{e}di, Endre},
     TITLE = {Proof of the {S}eymour conjecture for large graphs},
   JOURNAL = {Ann. Comb.},
  FJOURNAL = {Annals of Combinatorics},
    VOLUME = {2},
      YEAR = {1998},
    NUMBER = {1},
     PAGES = {43--60},
}


\end{document}